      \numberwithin{equation}{section}
      \theoremstyle{plain}
      \newtheorem{theorem}{Theorem}[section]
            \newtheorem{thm}[theorem]{Theorem}
      \newtheorem{lemma}[theorem]{Lemma}
      \newtheorem{lem}[theorem]{Lemma}
      \newtheorem{corollary}[theorem]{Corollary}
      \newtheorem{proposition}[theorem]{Proposition}
      \newtheorem{prop}[theorem]{Proposition}
      \theoremstyle{definition}
      \newtheorem{defn}[theorem]{Definition}
      \theoremstyle{remark}
      \newtheorem{remark}[theorem]{Remark}
\renewcommand{\P}{\mathbb P}
\newcommand{\R}{\mathbb R}
\newcommand{\E}{\mathbb E}
\newcommand{\Z}{\mathbb Z}
\newcommand{\N}{\mathbb N}
\newcommand{\lr}[4]{#3\xleftrightarrow[#1]{#2} #4}
     \newcommand{\nlr}[4]{#3\mathrel{\mathop{\centernot\longleftrightarrow}_{#1}^{#2}} #4}
\newcommand\couprad{10} 
\newcommand\rangeofdep{40}
\newcommand\pivrange{\the\numexpr\couprad + \rangeofdep\relax}
\titleformat{\subsection}[runin]{\normalfont\bfseries}{\thesubsection.}{.5em}{}[.]\titlespacing{\subsection}{0pt}{2ex plus .1ex minus .2ex}{.8em}
\titleformat{\subsubsection}[runin]{\normalfont\bfseries}{\thesubsubsection.}{.5em}{}[.]
\titlespacing{\subsubsection}{0pt}{2ex plus .1ex minus .2ex}{.8em}
\title{{\textbf{\normalsize{ 
FINITE RANGE INTERLACEMENTS AND COUPLINGS
}}}}
\date{}
\begin{document}
\thispagestyle{empty}
\maketitle
\vspace{0.1cm}
\begin{center}
\vspace{-1.7cm}
Hugo Duminil-Copin$^{1,2}$, Subhajit Goswami$^3$, Pierre-Fran\c{c}ois Rodriguez$^4$,\\[1em] Franco Severo$^{5}$ and Augusto Teixeira$^6$

\end{center}
\vspace{0.1cm}
\begin{abstract}
In this article, we consider the interlacement set $\mathcal{I}^u$ at level $u>0$ on $\Z^d$, $d \geq3$, and its finite range version $\mathcal{I}^{u,L}$ for $L >0$, given by the union of the ranges of a Poisson cloud of random walks on $\Z^d$ having intensity $u/L$ and killed after $L$ steps.
As $L\to \infty$, the random set $\mathcal{I}^{u,L}$ has a non-trivial (local) limit, which is precisely $\mathcal{I}^u$.  
A natural question is to understand how the sets $\mathcal{I}^{u,L}$ and $\mathcal{I}^{{u}}$ can be related, if at all,~in such a way that their intersections with a box of large radius $R$ almost coincide. We address this question, which depends sensitively on $R$, by developing couplings allowing for a similar comparison to hold with very high probability for $\mathcal{I}^{u,L}$ and $\mathcal{I}^{{u'},2L}$, with $u' \approx u$. In particular, for the vacant set $\mathcal{V}^u=\Z^d \setminus \mathcal{I}^u$ with values of $u$ near the critical threshold, our couplings remain effective at scales $R \gg \sqrt{L}$, which corresponds to a natural barrier across which the walks of length $L$ comprised in $\mathcal{I}^{u,L}$ \textit{de-solidify} inside $B_R$, i.e.~lose their intrinsic long-range structure to become increasingly `dust-like'. These mechanisms are complementary to the \textit{solidification} effects recently exhibited in \cite{zbMATH07227743}.  By iterating the resulting couplings over dyadic scales $L$, the models $\mathcal{I}^{u,L}$ are seen to constitute a stationary finite range approximation of $\mathcal{I}^u$ at large spatial scales near the critical point $u_*$. 
Among others, these couplings are important ingredients for the characterization of the phase transition for percolation of the vacant sets of random walk and random interlacements in the companion articles \cite{RI-I,RI-II}.
\end{abstract}

\vspace{0.5cm}

\begin{flushleft}
\thispagestyle{empty}
\vspace{0.1cm}
{\footnotesize
\noindent\rule{6cm}{0.35pt} \hfill {\normalsize August 2023} \\[2em]

\begin{multicols}{2}
\small
$^1$Institut des Hautes \'Etudes Scientifiques
 \\   35, route de Chartres \\ 91440 -- Bures-sur-Yvette, France.\\ \url{duminil@ihes.fr}\\[2em]
 
$^2$Universit\'e de Gen\`eve\\
 Section de Math\'ematiques\\
 2-4 rue du Li\`evre \\
1211 Gen\`eve 4, Switzerland.\\
\url{hugo.duminil@unige.ch} \\[2em]

 $^3$School of Mathematics\\
 Tata Institute of Fundamental Research\\
 1, Homi Bhabha Road\\
 Colaba, Mumbai 400005, India. \\ \url{goswami@math.tifr.res.in} \columnbreak
 
\hfill$^4$Imperial College London\\
\hfill Department of Mathematics\\
\hfill London SW7 2AZ \\
\hfill United Kingdom.\\
\hfill \url{p.rodriguez@imperial.ac.uk}  \\[2em]

\hfill$^5$ETH Zurich\\
\hfill Department of Mathematics\\
\hfill R\"amistrasse 101\\
\hfill 8092 Zurich, Switzerland.\\
\hfill \url{franco.severo@math.ethz.ch}\\ [2em]

\hfill $^6$Instituto de Matem\'atica Pura e Aplicada\\
\hfill  Estrada dona Castorina, 110\\
\hfill  22460-320, Rio de Janeiro - RJ, Brazil.\\
\hfill  \url{augusto@impa.br}

\end{multicols}
}
\end{flushleft}

%
%
%
\newpage
\setcounter{page}{1}




\section{Introduction}
\label{Sec:intro}

Random interlacements form a prime example of a model exhibiting long-range dependence which gives rise to intriguing critical phenomena. An instance of this is the percolation transition associated to the vacant set $\mathcal{V}^u$ of random interlacements as $u>0$ varies across a critical threshold $u_*$, see \cite{MR2680403,MR2512613}, which is intrinsic to various geometric questions concerning random walk (or Brownian motion) in transient setups; see, e.g.,~\cite{zbMATH05054008,benjamini2008giant,sznitman2009, MR2838338, zbMATH06797082,MR3602841, zbMATH07227743}.

Within the framework considered in the present article, the set $\mathcal{V}^u$ is a random translation invariant subset of $\Z^d$, $d\geq3$, decreasing in $u>0$ and obtained as follows. One introduces a Poisson point process $\eta^*$ on $ W^* \times \R_+$, the space of labeled bi-infinite 
transient lazy $\Z^d$-valued trajectories modulo time-shift; see \S\ref{subsec:RI} for precise definitions, in particular \eqref{e:RI-intensity} regarding its intensity measure. 
The interlacement set $ \mathcal{I}^u= \Z^d \setminus \mathcal{V}^u$ at level $u$ is defined, for a given realization $\eta^*=\sum_i \delta_{(w_i^*, u_i)}$, as the trace of all trajectories in this 
Poisson cloud with label at most $u$, 
\begin{equation}\label{eq:def-I}
\mathcal{I}^u= \mathcal{I}^u(\eta^*)= \bigcup_{i: u_i \leq u} \text{range}(w_i^*).
\end{equation}
The use of lazy random walks in the construction is a matter of convenience and amounts to an inconsequential rescaling of $u$. 

Among its essential and also most daunting features, correlations in the occupation field of $\mathcal{I}^u$ are governed by the Green's function of the random walk, see, e.g.,~\cite[(1.68)]{MR2680403}, and thus decay like $|x-y|^{2-d}$ at large distances $|x-y|\to \infty$ for any $u>0$. A natural way to try to tame this long-range dependence is to truncate the model by introducing a finite time horizon for the trajectories; truncations of this and similar kinds have appeared  in the literature, see e.g.~\cite{MR3962876, zbMATH07577023, 10.1214/23-EJP950, 10.1214/22-EJP824}. The family of finite range models we will consider is defined as follows. Let $P_x$ denote the canonical law of the discrete-time lazy random walk on $\Z^d$ started at $x$ and $X = (X_n)_{n \ge 0}$ the corresponding process. Consider the product measure $\nu$ on $ W_+ \times \R_+$, where $W_+$ is the space of forward $\Z^d$-valued trajectories (supporting $P_x$), with
\begin{equation}
\label{eq:mu_intensity}
\nu ( B \times [0,u])  =u \sum_{x \in \Z^d} P_x[X \in  B],
\end{equation}
for measurable sets $B$. The measure $\nu$ in \eqref{eq:mu_intensity} induces a Poisson point process $ \eta$ on 
$ W_+ \times \R_+$, defined on its canonical space $(\Omega_+, \mathcal{A}_+)$, with 
$\nu$ as its intensity measure. For an arbitrary (density) 
function $f:\Z^d \to \R_+$ and $L \geq 1$, one then defines, in analogy with \eqref{eq:def-I}, 
\begin{equation}
	\label{eq:J}
	\mathcal{I}^{f ,L}= \mathcal{I}^{f ,L} (\eta) =\bigcup_{i \, : u_i \leq \frac{4d}{L} f(w_i(0))} w_i[0,L-1],
\end{equation}
for a realization $\eta = \sum_{i } \delta_{( w_i,u_i)}$, where $w_i[s,t]  \stackrel{{\rm def.}}{=} \{ x\in \Z^d: x= w_i(n)  \text{ for some $s \leq n \leq t$}\}$, for $ t \geq s \geq 0$. In words, $\mathcal{I}^{f ,L}$ comprises the trace of the first $L$ steps of a Poissonian number of trajectories, started with density proportional to $\frac{1}{L}f(\cdot)$. For $u 
\geq 0$, we write $\mathcal{I}^{u ,L} $  when $f(x)=u$ for all $x \in \Z^d$. 
The random set $\mathcal{I}^{u ,L} $ is translation invariant, and, as will be shown in Proposition~\ref{P:loclimit}, for any $u>0$ one has that
\begin{equation}
\label{eq:conv-law}
\mathcal{I}^{u ,L} \stackrel{d}{\longrightarrow} \mathcal{I}^u \text{ as $L \to \infty$.}
\end{equation} 
In view of \eqref{eq:conv-law}, the random set $\mathcal{I}^{u ,L}$ thus constitutes a finite range approximation of $\mathcal{I}^u$ in law. One thus naturally wonders in how far (if at all) the limit $L \to \infty$ can be understood in a `pathwise' sense. Existing coupling techniques, which have a long history in the area, see~\cite{MR2891880,MR2838338,PopTeix,MR3563197, PRS23, zbMATH06247265, CaioSerguei2018, 10.1214/23-EJP950}, are virtually all \textit{local} in the sense that trajectories entering the picture evolve for a time much larger than the diffusive time scale associated to the box in which the coupling is constructed. By adapting these methods, it is thus plausible (and actually true, see Section~\ref{sec:local_coup}) to expect $\mathcal I^u$ and $\mathcal I^{u,L}$ to be comparable inside a box of radius $R$ as long as $L = L(R) \gg R^2$, which in itself is already not entirely straightforward to show, cf.~Proposition~\ref{prop2:cube}~below.

In contrast, for matters relating e.g.~to the near-critical regime around $u_*$, one is often interested in pushing such comparisons much further, to scales $L=L(R)$ well below the diffusive scale $R^2$. This is related to the conjectured fractal nature of large clusters near the critical point. The underlying question thus becomes one of witnessing `extended objects' that carry long-range information at spatial scale $R$ (for instance,~random walk trajectories evolving for time $\gtrsim R^2$) `materialize' out of smaller (sub-diffusive) `particles'.
We will return to this problem at the end of this introduction, see Theorem~\ref{thm:main III}, which illustrates our main results by yielding a coupling with much smaller `localization scale' $L(R)$ than $R^2$ in the regime near $u_*$. Theorem~\ref{thm:main III} is but one application of the main couplings developed in this article, which we now present.

\subsection{Couplings and obstacles}\label{subsec:intro_LL'_coup} Our first two main results,
Theorems~\ref{thm:short_long-intro} and~\ref{thm:long_short_obstacle} below, will allow us to couple the sets $\mathcal{I}^{u ,L}$ and $\mathcal{I}^{u' ,L'}$ for $L'  \leq L$ and suitable values of $u,u'$ with
$ u \approx u'$ in such a way that their ranges almost coincide in large regions. These results will in turn lead to meaningful couplings between $\mathcal{I}^{u ,L}$ and $\mathcal{I}^{u'}$, as will be seen subsequently. We will henceforth always assume that $L \geq L' \geq 1$ are integers with $L'$ dividing $L$ 
and  such that 
\begin{equation}\label{e:couplings-params}
{L}{(\log L)^{-\gamma}}\le L'\le { L }{ (\log L)^{-10}},
\end{equation} 
for some parameter $ \gamma > 10$. The restriction on $L'$ inherent to \eqref{e:couplings-params} is not severe, one can typically extend the range of $L'$ by iterating the following results.

In attempting to compare $\mathcal{I}^{f,L}$ from \eqref{eq:J} for a given profile $f$ with $\mathcal{I}^{f',L'}$, let us first make a reasonable guess at what a good choice of $f'$ may be. A natural way to proceed is to cut the trajectories comprising $\mathcal{I}^{f,L}$ into pieces of length $L'$. Foregoing for a moment the (strong) dependence between successive starting points of the length-$L'$ walks (inherited from the longer length-$L$ trajectories) induced by this cutting procedure, one may plausibly choose $f' = P_L^{L'}(f)  $, where for all $f : \Z^d \to \R$,
\begin{equation}
\label{eq:f'}
P_L^{L'}(f) \stackrel{\textnormal{def.}}{=}  \frac{L'}{L}\sum_{ k=1}^{ {L}/{L'}} P_{(k-1)L'}( f)
\end{equation}
and $P_nf(x) = E_x[f(X_n)]$ denotes the $n$-step transition operator associated to $P_x$. Note in particular that $P_L^{L'}(\cdot)$ acts as identity map on constant functions $f(x)=u$, $x\in \Z^d$. 

The considerations leading to \eqref{eq:f'} are but a simple heuristic. For, unlike the trajectories of length $L'$ constituting $\mathcal I^{f',L'}$ with $f'=P_L^{L'}(f)$, which have independent starting points, the ones obtained 
after cutting $\mathcal I^{f,L}$ carry long-range dependence (for 
instance, most of the time a walk of length $L'$ needs to start where a walk of same length $L'$ ends). The comparison between the two sets is thus a-priori far from clear. A first idea to overcome this issue is to allow some room for homogenization by leaving a suitable gap time $1 \ll t_g\ll L'$ in the cutting procedure, all while still retaining one of two possible inclusions. Of course this does not come free of cost; in particular one should expect to end up with a slightly smaller proportion of walks of length $L'$.	
The following result turns this intuition into a theorem. We refer to the end of this introduction regarding our policy with constants $c,C$ etc., which only depend on $d \geq 3$. Let $B_N = [-N, N]^d \cap \Z^d$ for any $N \ge 0$ and recall that \eqref{e:couplings-params} is in force.

\begin{thm}\label{thm:short_long-intro} 
For all $u \in (0,\infty)$ and integer $K \ge 0$ the following holds. Given any function $f:\Z^d \to [0,u]$ 
such that $f (x) \ge (\log L)^{-\gamma}$ for all $x \in B_{K+L}$, there exists a coupling $\mathbb Q$ of two $\{0, 1\}^{\Z^d}$-valued random variables $\mathcal I_1$, $\mathcal I_2$ such that 
\begin{equation}
\label{eq:short_long-intro}
\begin{split}
&\mathcal I_1 \stackrel{\textnormal{law}}{=}  \mathcal I^{f, L},\, \mathcal I_2 
\stackrel{\textnormal{law}}{=} \mathcal I^{(1 - C(L'/L)^{1/2})P^{L'}_L(f1_{B_K}), L'}
\text{ and}\\
&	\mathbb Q \left[ {\mathcal I}_1  \supset  {\mathcal I}_2  \right] \geq 1 - C 
(u \vee 1)(K+L)^d \,e^{-c \,(L/L')^{1 / 4}}. 
\end{split}
\end{equation}
\end{thm}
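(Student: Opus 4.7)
My plan is to construct the coupling from a single Poisson cloud $\eta = \sum_i \delta_{(w_i, u_i)}$ of length-$L$ walks with intensity $\frac{4d}{L} f(x)$ at each starting point $x$, setting $\mathcal{I}_1 := \bigcup_i w_i[0, L-1] \stackrel{d}{=} \mathcal{I}^{f, L}$ and reading off $\mathcal{I}_2$ as a union of length-$L'$ sub-walks already contained in the length-$L$ trajectories of $\eta$. Concretely, I cut each $w_i$ into $m := L/L'$ consecutive pieces $w_i^{(j)} := w_i[jL', (j{+}1)L'-1]$ for $j = 0, \ldots, m-1$, each a length-$L'$ walk from $w_i(jL')$. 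By the Markov property, the $j$-th family of sub-walks has starting intensity $\frac{4d}{L} P_{jL'}(f)(x)$, so summing over $j$ yields a total sub-walk-start intensity $\frac{4d}{L'} P^{L'}_L(f)(x)$ that already matches the target intensity of $\mathcal{I}^{P^{L'}_L(f), L'}$. Hence $\mathcal{I}_1 \supset \mathcal{I}_2$ would follow once $\mathcal{I}_2$ is realized from a slightly thinned subset of these sub-walks with the correct (\emph{independent}) joint law.

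The main obstacle is that, although sub-walks from distinct length-$L$ walks are independent, within a single length-$L$ trajectory the $m$ sub-walks are strongly correlated (consecutive ones are linked by a single walk step), so the aggregate sub-walk point process is far from Poisson. I would absorb this clustering via a short gap time $t_g$, tuned of order $L'(L'/L)^{1/2}$, so that the density loss $t_g/L'$ matches the declared factor $C(L'/L)^{1/2}$. Within each length-$L$ trajectory, the first $t_g$ steps of each block serve as a mixing buffer; conditional on the positions $w(jL' + t_g)$ being well-distributed (an event of probability $\geq 1 - e^{-c(L/L')^{1/4}}$ per walk, via a moderate-deviation estimate aggregated over the $m$ cut-points), a soft-local-times / Poissonization comparison in the spirit of \cite{PopTeix, CaioSerguei2018} couples the post-gap sub-walks with an independent Poisson cloud of length-$L'$ walks of intensity $\frac{4d}{L'}\bigl(1 - C(L'/L)^{1/2}\bigr) P^{L'}_L(f \mathbf{1}_{B_K})(x)$. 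The restriction to $f \mathbf{1}_{B_K}$ enters because only sub-walks whose cut-point $w(jL')$ lies in $B_K$ contribute to $\mathcal{I}_2$, and the hypothesis $f \geq (\log L)^{-\gamma}$ on $B_{K+L}$ (which contains all starting points of length-$L$ walks that can reach $B_K$) ensures that no cut-point has pathologically small target density, allowing the comparison to close. A union bound over the $O(u(K{+}L)^d)$ length-$L$ walks that can contribute a sub-walk intersecting $B_K$ then yields the announced failure probability.

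\textbf{Where I expect the main technical difficulty.} The heart of the argument is the Poissonization step: turning the clustered sub-walk process into an independent Poissonian family at a cost of only a multiplicative factor $\bigl(1 - C(L'/L)^{1/2}\bigr)$ in intensity. The quarter-power exponent $(L/L')^{1/4}$ in the final bound is diagnostic — a direct Bernstein inequality applied to sub-walk counts falls short because each length-$L$ walk contributes atoms of size up to $m = L/L'$, and one must instead combine concentration on mesoscopic spatial cells with a chaining or union bound across those cells. Tuning $t_g$, the mesoscopic scale and the thinning factor simultaneously, so as to balance the mixing error in the gap, the density loss $\sqrt{L'/L}$, and the final deviation exponent $(L/L')^{1/4}$, is, I expect, the delicate combinatorial core of the proof.
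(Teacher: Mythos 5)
Your strategy is the one the paper follows: realize $\mathcal I_1$ from a single Poisson cloud of length-$L$ walks, cut each walk into blocks, sacrifice a gap time $t_g$ per block as a mixing buffer, Poissonize the resulting cut-points via soft local times on a partition into mesoscopic cells, and absorb the density loss $t_g/L'$ into the sprinkling factor $1-C(L'/L)^{1/2}$; the ellipticity hypothesis $f\ge(\log L)^{-\gamma}$ on $B_{K+L}$ enters exactly as you say, to lower-bound the expected number of cut-points per cell so that relative fluctuations are controlled. Two points, however, need correction. First, you misread the target marginal: in $P_L^{L'}(f1_{B_K})$ the indicator sits inside the transition operator, so by reversibility $P_{jL'}(f1_{B_K})(x)=\sum_{y\in B_K}f(y)p_{jL'}(y,x)$ is the density of $j$-th cut-points of length-$L$ walks \emph{started} in $B_K$ — not of cut-points lying in $B_K$. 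Your selection rule produces intensity $1_{B_K}\,P_L^{L'}(f)$, which is a different function, so $\mathcal I_2$ as you construct it has the wrong law. The fix is to retain all sub-walks of the trajectories launched from $B_K$ (whose cut-points then automatically lie in $B_{K+L}$, where the ellipticity holds).

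Second, the bookkeeping of block lengths is not as innocent as your sketch suggests, and it is the actual source of the exponent $(L/L')^{1/4}$. Cutting into blocks of length $L'$ and discarding the first $t_g$ steps of each does not leave full length-$L'$ sub-walks; the paper instead cuts into $\lfloor L/L''\rfloor$ blocks of length $L''=L'+t_g$, proves the coupling for the resulting "gapped" intensity $f''$ (a sum of $P_{kL''}$'s), and only then converts $P_{kL''}$ to $P_{kL'}$ and restores the missing last blocks via local-CLT comparisons of heat kernels at nearby times. The moderate-deviation cost of those comparisons, $P_0[|X_{L-L'}|>c\sqrt{L}\,(L/L')^{1/8}]\le e^{-c(L/L')^{1/4}}$, is where the quarter power comes from; the Poissonization error itself is $e^{-c\kappa\varepsilon^2L^{c}}$, i.e.\ stretched-exponentially small in $L$ and negligible by comparison. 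So your instinct that the exponent is "diagnostic" is right, but it diagnoses the time-reparametrization step, not the concentration step. Neither issue changes the architecture of the proof, but both must be repaired for the argument to close.
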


Theorem~\ref{thm:short_long-intro} will follow from a more general result, Theorem~\ref{thm:short_long}, proved in Section~\ref{sec:easyCOUPLINGS}; see also Remark~\ref{R:short_long},\ref{rmk:short_length}. The discussion leading to \eqref{eq:short_long-intro} crucially relied on the fact that the inclusion of range-$L'$ trajectories into range-$L$ ones permits one to {\em forget} about gaps 
between walks. On the contrary, the opposite inclusion requires gluing shorter length-$L'$ trajectories into longer ones. This is much more difficult to achieve, and the coupling we derive to this effect in the next result, Theorem~\ref{thm:long_short_obstacle} below, is correspondingly more involved. As one of the main innovations of this article, we now introduce an obstacle set $\mathcal{O}$, which is at the heart of this coupling. We will in fact couple the models outside an enlarged obstacle set $\widetilde{\mathcal{O}}$.
In a nutshell, the walks of length $L'$ will be `wired'  into walks of length $L$ using obstacles, met frequently and by many of the random walks entering the picture, as `hubs;' cf.~Figure~\ref{F:obs}.
\bigskip
\begin{figure}[h!]
  \centering 
  \includegraphics[scale=0.80]{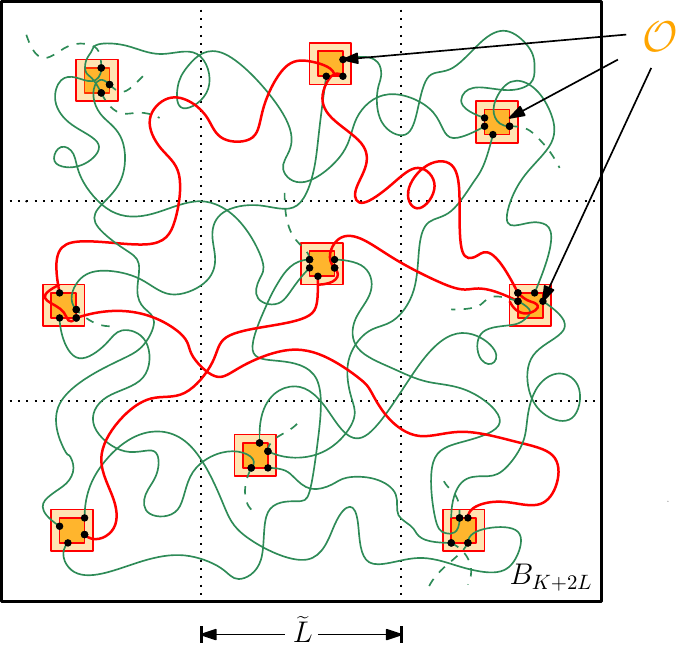}
  \caption{A longer length-$L$ trajectory (red) is obtained by `gluing' pieces of shorter length-$L'$ trajectories (green) within the enlarged obstacle set (orange).}
  \label{F:obs}
\end{figure}

We now introduce the obstacle set $\mathcal{O}$. We assume that $\mathcal O$ is a disjoint union of boxes of equal radius $\ell_\mathcal O \geq1$, called {\em obstacles}. The collection of such 
boxes is denoted by $\mathcal B_\mathcal O$, whence
\begin{equation}\label{eq:obstacles}
\mathcal{O}= \bigcup_{B\in \mathcal{B_{\mathcal{O}}}} B.
\end{equation}
For $U \subset \Z^d$, let $H_U$ denote the entrance time of $X$ in $U$, see below \eqref{eq:Greenasympt} for notation. The key features of $\mathcal{O}$ in \eqref{eq:obstacles} are encapsulated in the following:
\begin{defn}\label{def:O}
Let $K \ge 0$, $L \geq 1$, $\delta_{\mathcal{O}} \in (0,1)$ and
$M_{\mathcal{O}} \geq 1$.  
An obstacle set $\mathcal{O}\subset B_{K + 2L}$ is called $(\delta_{\mathcal{O}},M_{\mathcal{O}})$-\textit{good} if
\begin{align}
\text{(Visibility condition): } &\begin{array}{l} P_x[H_{\mathcal{O}} >  L {(\log L)^{-10 \gamma}} ] \leq \delta_{\mathcal{O}},  \text{ for all } x \in B_{K+L}; 
\end{array}\label{eq:obs-visible}\\[0.7em]
\text{(Density condition): } &\begin{array}{l}\displaystyle P_{\mu}[X_{H_{\mathcal{O}}} \in B , \, H_{\mathcal{O}} < \infty] \geq M_{\mathcal{O}} L, 
\\[0.3em]
\displaystyle 
\text{for all $B \in \mathcal B_{\mathcal{O}}$ with $B \cap U_{\mathcal O} \neq \emptyset$,
}
\end{array} \label{eq:obs-dense}
	\end{align}
where $U_{\mathcal O}= B_{K+\sqrt{L} (\log L)^{\gamma}}$, $P_{\mu}=\sum_x\mu(x)P_x$ and 
\begin{equation}
\label{eq:mu-cond} 
\mu=\alpha^{-1} 1_{U_{\mathcal O}}, \text{ with } \alpha = (\log L)^{10\gamma}. 
\end{equation}
	\end{defn}
The term \textit{obstacle} is fitting, cf.~for instance~\cite{MR1717054}: indeed verifying the conditions of Definition~\ref{def:O} will notably require some control on $g_{\mathcal{O}}(\cdot,\cdot)$, the Green's function of the walk killed on the obstacle set $\mathcal{O}$, see \eqref{eq:Greenkilled} for notation. Further note that, apart from $(\delta_{\mathcal{O}},M_{\mathcal{O}})$, a good obstacle set also depends implicitly on $L$, $K$, $\ell_{\mathcal{O}}$ and $\gamma \, ( > 10)$, cf.~above \eqref{e:couplings-params}. The parametrization in \eqref{eq:obs-dense} is chosen so that $M_{\mathcal{O}}$ will eventually correspond to a number of trajectories.
The manufacture of a good obstacle set is somewhat intricate because $\mathcal{O}$ needs to meet competing interests in satisfying \eqref{eq:obs-visible} and \eqref{eq:obs-dense} simultaneously. We return to this below the next theorem. In doing so, we will give concrete examples of obstacle sets $\mathcal{O}$ satisfying Definition~\ref{def:O}. In applications of interest, $\mathcal{O}$ itself will typically also be random.

In order to state our second main result, which exhibits a coupling with inclusions in the `hard' direction, opposite to that of Theorem~\ref{thm:short_long-intro}, we introduce in analogy with \eqref{eq:obstacles} the set 
\begin{equation}
\label{eq:obs-tilde}
	\widetilde{\mathcal{O}}= \bigcup_{B\in\mathcal B_\mathcal O}  \widetilde{B},
\end{equation}
where, for every $B\in\mathcal B_\mathcal O$, $\widetilde B$ is the concentric ball of radius $\tilde \ell_\mathcal O = \ell_\mathcal O^{1+\frac1{100}}$. We refer to the set $\widetilde{\mathcal{O}}$ defined by \eqref{eq:obs-tilde} as {\em enlarged} obstacle set in the sequel.
\begin{thm}\label{thm:long_short_obstacle} 
For all $u \in (0,\infty)$, integer $K \ge 0$ and $\varepsilon \in (0,1)$, the following holds.
Given any $f:\Z^d \to [0, u]$ such that $f\vert_{B_{K + L}} \geq (\log L)^{-\gamma} $ and $f=0$ outside $B_{K + L}$, 
and any
$(\delta_{\mathcal O},M_\mathcal O)$-good obstacle set 
$\mathcal{O} (\subset B_{K + 2L})$ 
with $C \ell_{\mathcal 
O}^{-1/100} \le \varepsilon$, there exists a coupling $\mathbb Q$ of $(\mathcal I_1, \mathcal I_2)$ such that
\begin{equation}\label{eq:long_short}
\begin{split}
&\mathcal I_1 \stackrel{\textnormal{law}}{=}  \mathcal I^{f1_{B_K}, L},\, \mathcal I_2 \stackrel{\textnormal{law}}{=} \mathcal I^{(1 + \varepsilon)P^{L'}_L(f), L'}; \text{ and, for $L \ge L_0(d, \gamma,u)$,}\\
&	\mathbb Q \big[ ({\mathcal I}_1\setminus \widetilde{\mathcal O} ) \subset  ({\mathcal I}_2\setminus  \widetilde{\mathcal O} ) \big] \geq  1 - C (u \vee 1) (K+L)^d\big( e^{-c(\varepsilon^2 M_{\mathcal{O}} \wedge (L/L')^{1/4})}  \vee   \delta_{\mathcal{O}} \big).
\end{split}
\end{equation}
\end{thm}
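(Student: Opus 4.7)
The plan is to build both $\mathcal{I}_1$ and $\mathcal{I}_2$ on a single probability space in such a way that each length-$L$ trajectory contributing to $\mathcal{I}_1$ is traced out, outside the enlarged obstacle set $\widetilde{\mathcal{O}}$, by initial excursions of distinct length-$L'$ trajectories of $\mathcal{I}_2$ (see Figure~\ref{F:obs}). First, I would decompose each such length-$L$ trajectory $w$ at its successive visits to $\mathcal{O}$, setting $\tau_0 = H_\mathcal{O}(w)$ and $\tau_{k+1} = \inf\{n > \tau_k : w(n) \in \mathcal{O}\}$. Applying the visibility condition \eqref{eq:obs-visible} together with a union bound over the Poisson number of contributing walks and their at most $L$ excursions each, absorbing the total volume factor into the estimate, yields that with probability at least $1 - C(u \vee 1)(K+L)^d\delta_\mathcal{O}$ every gap $\tau_{k+1} - \tau_k$ is bounded by $L(\log L)^{-10\gamma} \ll L'$. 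On this event, the range of $w$ outside $\widetilde{\mathcal{O}}$ splits into a disjoint family of excursions from $\mathcal{O}$ to $\mathcal{O}$, each of duration much less than $L'$.

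Next I would match each such $\mathcal{I}_1$-excursion, based at an obstacle $B \in \mathcal{B}_\mathcal{O}$, with the initial excursion-from-$\mathcal{O}$ of a distinct length-$L'$ trajectory of $\mathcal{I}_2$ whose first visit to $\mathcal{O}$ lies in $B$. The key intensity comparison is between the Poisson ``demand'' from $\mathcal{I}_1$,
\begin{equation*}
\tfrac{4d}{L}\sum_{x} f(x) 1_{B_K}(x)\, E_x\bigl[\#\{k : w(\tau_k) \in B\}\bigr],
\end{equation*}
and the Poisson ``supply'' from $\mathcal{I}_2$,
\begin{equation*}
\tfrac{4d}{L'}(1+\varepsilon)\sum_{y} P_L^{L'}(f)(y)\, P_y\!\left[X_{H_\mathcal{O}} \in B,\, H_\mathcal{O} < L'\right].
\end{equation*}
Exploiting the symmetry of lazy SRW and the definition of $P_L^{L'}$ in \eqref{eq:f'} rewrites the demand into a form directly comparable to the supply; the density condition \eqref{eq:obs-dense}, together with $f \geq (\log L)^{-\gamma}$ on $B_{K+L}$ (so one may replace the weighted integration over $y$ by the reference measure $\mu$ of \eqref{eq:mu-cond}), then ensures that supply exceeds demand by a factor of at least $(1 + \varepsilon/2)$ per obstacle. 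A Chernoff bound for Poisson variables combined with a union bound over the $\lesssim (K+L)^d$ obstacles yields simultaneous domination with probability at least $1 - C(u \vee 1)(K+L)^d e^{-c\varepsilon^2 M_\mathcal{O}}$.

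Given the domination of intensities, the coupling can be assembled via a soft Poissonian matching, for instance using the soft local time framework of \cite{PopTeix}, which associates to each $\mathcal{I}_1$-excursion a unique length-$L'$ trajectory of $\mathcal{I}_2$ whose excursion from its first $\mathcal{O}$-hit onwards coincides with it. The main technical obstacle lies in guaranteeing that the joint law of the resulting $\mathcal{I}_2$ is exactly that of $\mathcal{I}^{(1+\varepsilon)P_L^{L'}(f), L'}$: one must fill in the pre-hit segments $w'[0, H_\mathcal{O}(w'))$, the post-excursion continuations, and all length-$L'$ trajectories not touched by the matching with fresh randomness that jointly reconstructs a Poisson process of the correct intensity. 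This is precisely where the enlargement to $\widetilde{\mathcal{O}}$ is essential: because inclusion is only required outside $\widetilde{\mathcal{O}}$, small mismatches near obstacle entry/exit points and short excursions entirely swallowed by $\widetilde{\mathcal{O}}$ do not threaten the target event, and the quantitative gap $\tilde \ell_\mathcal{O} = \ell_\mathcal{O}^{1+1/100}$ supplies the slack needed to absorb them. Finally, adding to the previous error estimates an additional $e^{-c(L/L')^{1/4}}$ term, arising from length-$L$ trajectories with anomalously many excursions or atypical step counts (handled by arguments parallel to those for Theorem~\ref{thm:short_long-intro}), yields the bound~\eqref{eq:long_short}.
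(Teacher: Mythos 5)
Your overall architecture (obstacles as hubs, soft local time matching, homogenization inside $\widetilde{B}$ so that mismatches are hidden in $\widetilde{\mathcal O}$) is in the right spirit, but the unit of decomposition is wrong, and the scheme fails at the level of counting. You propose to cover \emph{each excursion of a length-$L$ walk between consecutive returns to $\mathcal O$} by the initial excursion of a \emph{distinct} length-$L'$ trajectory. The number of such excursions per length-$L$ walk is of order $L/t_{\mathcal O}$, where $t_{\mathcal O}=\lambda\ell_{\mathcal O}^2(\widetilde L/\ell_{\mathcal O})^d=\lambda\alpha L\,\ell_{\mathcal O}^{-(d-2)/2}$ is the mean free path time of \eqref{e:mfp1} (the visibility condition only gives the \emph{upper} bound $L(\log L)^{-10\gamma}$ on the gaps; the typical gap is $t_{\mathcal O}$). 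Summing over the Poisson cloud, the total demand is $\approx \frac{u|B_K|}{t_{\mathcal O}}$ matched excursions, while the total supply of length-$L'$ trajectories in $\mathcal I^{(1+\varepsilon)P_L^{L'}(f),L'}$ is $\approx \frac{u|B_K|}{L'}$, so demand exceeds supply by a factor $L'/t_{\mathcal O}\ge c\,(\log L)^{-\gamma}\alpha^{-1}\ell_{\mathcal O}^{(d-2)/2}\ge c\,(\log L)^{39\gamma}$ under \eqref{e:couplings-params}, \eqref{eq:mu-cond} and \eqref{eq:obs-cond-scales}. No sprinkling by $(1+\varepsilon)$ with $\varepsilon<1$ can repair this, and the density condition \eqref{eq:obs-dense} does not do what you claim: it is a \emph{lower} bound on the supply per obstacle (needed so that the Poisson concentration error is $e^{-c\varepsilon^2 M_{\mathcal O}}$), not a comparison between supply and an excursion-count demand.

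The fix — and the route the paper takes — is to use each length-$L'$ trajectory in its entirety, not just its first excursion from $\mathcal O$. One cuts the length-$L$ walks into $\lceil L/L''\rceil$ consecutive segments of length $L''=L'-t_o$ and matches, per obstacle $B$ and per layer $k$, the \emph{single} endpoint of the $k$-th segment of each length-$L$ walk (more precisely, its subsequent entrance point into $\mathcal O$) with the \emph{single} first $\mathcal O$-hitting point of a length-$L'$ trajectory drawn from the $k$-th layer of $\mathcal I_2$, of intensity $\tfrac{L'}{L}P_{kL''}f$. Both counts are then one point per trajectory per layer; a reversibility computation shows the demand intensity is at most $(1+\varepsilon)^{-1}$ times the supply intensity, and \eqref{eq:obs-dense} guarantees both are at least $M_{\mathcal O}$. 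After homogenizing both point clouds inside $\widetilde B$ (this is where your soft local time step belongs), the matched length-$L'$ trajectory is run for its full length and covers the \emph{entire} next segment of the length-$L$ walk, i.e.\ all of its $\approx L'/t_{\mathcal O}$ sub-excursions at once; the overlap $t_o$ absorbs the time lost entering $\mathcal O$ and gluing inside $\widetilde{\mathcal O}$. Without this change of granularity your construction cannot produce a set with the law of $\mathcal I^{(1+\varepsilon)P_L^{L'}(f),L'}$ while containing $\mathcal I_1\setminus\widetilde{\mathcal O}$.
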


The inclusion \eqref{eq:long_short} complements \eqref{eq:short_long-intro} but obviously the price to pay is to avoid the enlarged obstacle set $\widetilde{\mathcal O}$. 
This is because $\widetilde{\mathcal{O}}$ delimits a region in which pieces of trajectories are glued together to form 
longer ones; cf.~Figure~\ref{F:obs}. We will soon see (\S\ref{subsec:env}) how \eqref{eq:long_short} can be employed to deduce meaningful statements e.g.~concerning boundary clusters of vacant sets. For the time being, an obvious question is to construct $(\delta_{\mathcal{O}},M_{\mathcal{O}})$-good obstacle sets, i.e.~having small $\delta_{\mathcal O}$ and large $M_{\mathcal O}$ in view of the error term appearing in \eqref{eq:long_short}.


\subsection{Constructing good obstacle sets}  
Considering the event on the left-hand side of \eqref{eq:long_short}, interesting choices for $\mathcal{O}$ (and a fortiori, $\widetilde{\mathcal O}$) ought to be as \textit{small} as possible. Indeed, the set $\mathcal{O}= B_{K+L}$ for instance is a good obstacle set (in particular, Definition~\ref{def:O} is not vacuous), however it renders \eqref{eq:long_short} moot. The obstacle sets we exhibit below do in fact have vanishing asymptotic density in $B_{K+2L}$ as $L \to \infty$; see Corollary~\ref{cor:long_short_obstacle}. These examples, which will be used in applications below, underline the strength of the coupling exhibited in Theorem~\ref{thm:long_short_obstacle}. 

Before giving concrete examples let us first highlight one key point. The obstacles comprising $\mathcal{O}$ have two 
defining properties, \eqref{eq:obs-visible} and \eqref{eq:obs-dense}, which act in opposite ways with regards to choosing scales and finding the right resolution for~$\mathcal{O}$: on the one hand, 
$\mathcal{O}$ needs to have good trapping properties, see \eqref{eq:obs-visible}, i.e.~be hard to avoid for the random walk started in the bulk, a feature which naturally improves upon increasing $\ell_{\mathcal{O}}$, the radius of an individual obstacle. On the 
other hand each box $ B\in \mathcal{O}$ needs to be small enough as to retain a high `surface 
density' of incoming trajectories, see \eqref{eq:obs-dense}, which intuitively favors mixing and facilitates the gluing. This will later be quantified in terms of a mean free path $\sqrt{t_{\mathcal{O}}}$ for the random walk among the obstacles $\mathcal{O}$, see~Lemma~\ref{L:mfp} below, which has to be sufficiently large (much larger than the typical obstacle separation; see Remark~\ref{R:MFP} below). 

We now discuss some examples of good obstacle sets, including suitable periodic arrangements of boxes, which constitute the simplest example. Eventually though, we will be interested in the disordered case where obstacles are random, so we immediately formulate a suitable relaxation of the periodicity condition.

We will let the obstacle set $\mathcal{O}$
consist of boxes $B$ of radius $\ell_{\mathcal O} \geq 1$, separated by a mesoscopic scale 
$\widetilde{L}$ with $\ell_{\mathcal O} \ll \widetilde{L} \ll L$. Suppose that
\begin{equation}\label{eq:obs-cond-scales}
(\log L)^{100\gamma} \le \ell_{\mathcal O} \le 
L^{\frac1{3d}}.\end{equation} 
We then introduce the scale $\widetilde{L}$, which will govern the typical distance between obstacles, as
\begin{equation}\label{e:obs-Ltilde-choice}
\widetilde{L} =  \lfloor ( \alpha L  \ell_{\mathcal O}^{({d-2})/{2}})^{1/d}  \rfloor, 
\text{ where } \alpha = (\log L)^{10\gamma} \text{ (cf.~\eqref{eq:mu-cond})};
\end{equation}
in fact, any positive exponent less than $d-2$ for $\ell_{\mathcal{O}}$ would do in \eqref{e:obs-Ltilde-choice}, which is related to the capacity of a box of radius $\ell_{\mathcal O}$, see~\eqref{e:cap-box} below. 
The mechanism behind the inverse proportionality of $\widetilde{L}$ as a function of the `ellipticity' lower bound $\alpha^{-1}$ introduced in \eqref{eq:mu-cond} is easy to grasp intuitively. If $\alpha^{-1}$ decreases, satisfying the density condition \eqref{eq:obs-dense} becomes harder, and retaining a given `surface density' $M_{\mathcal{O}}L$ on an individual obstacle $B$ will be eased by making the obstacles sparser, i.e.~increasing $\widetilde{L}$, which indeed grows with $\alpha$ by \eqref{e:obs-Ltilde-choice}.

Continuing with the constuction of $\mathcal{O}
$, under the assumption \eqref{eq:obs-cond-scales}, it is plain to see that 
 $\ell_{\mathcal O} \ll \widetilde L \ll L$ when $L$ is large. Now, given the mesoscopic scale $\widetilde{L}$ in \eqref{e:obs-Ltilde-choice}, let $\widetilde{\mathcal{C}}$ denote the collection of boxes 
$B(z,\widetilde{L}) \stackrel{{\rm def.}}{=} z + B_{\widetilde L}$ where, roughly speaking and as will be 
made precise momentarily, see \eqref{e:obs-tilde-C} below, $z$ ranges over all points of 
$\widetilde{ \mathbb{L}} \stackrel{{\rm def.}}{=}3\widetilde{L} \Z^d \cap B_{K + 3L/2}$. For various applications we have in mind, see e.g.~the next paragraph \S\ref{subsec:env}, see also~\cite{RI-I}, we sometimes need the enlarged obstacle set $\widetilde{\mathcal O}$ to 
avoid the boundary $\partial B$ (see Section~\ref{s:not} for notation) of a given box $B=B(x, N)$, for some $x \in \Z^d$ and $N \geq 0$. For the purposes of this exposition, the reader may choose focus on the case $B=\emptyset$ (in which case $\partial B=\emptyset$ by convention in what follows). Accordingly, we now set, for $B \in \{ B(x, N) : x \in \Z^d, N \geq 0 \} \cup  \{ \emptyset\}$,
\begin{equation}
\label{e:obs-tilde-C}
\widetilde{\mathcal{C}}= \big\{ \widetilde{C}: \, \widetilde{C}= B(z,\widetilde{L}) \text{ for some }  z \in \widetilde{ \mathbb{L}} \text{ s.t.~} B(z, 2\widetilde{L}) \cap \partial B= \emptyset \big\}.
\end{equation}
In the sequel, we usually employ the notation $\widetilde{C}$ to denote a generic element of $\widetilde{\mathcal{C}}$, which we refer to as a \textit{cell}. 

A good obstacle set will in essence comprise one obstacle (a box of radius $\ell_{\mathcal{O}}$) per cell. Thus, let $ \{y_{\widetilde{C}} : \widetilde{C} \in \widetilde{\mathcal C}\}$ denote an arbitrary collection with $y_{\widetilde{C}} \in\widetilde{C}$ for each $\widetilde{C} \in \widetilde{\mathcal C}$.
Proposition~\ref{L:final-good-obs} will imply that, under \eqref{eq:obs-cond-scales} and for all $K \geq 0$, $u \in (0,\infty)$ and $L \geq C(\gamma)$,
\begin{equation}
\label{eq:ex-good-obs}
\text{\parbox{9.0cm}{the obstacle set $\textstyle \mathcal{O}  =  \bigcup_{\widetilde{C} \in \widetilde{\mathcal{C}}}  
B(y_{\widetilde{C}}, \ell_{\mathcal O}) \subset B_{K+2L}$ is $(\delta_{\mathcal{O}},M_{\mathcal{O}})$-good with $\delta_{\mathcal{O}} = e^{-c(\log 
L)^{\gamma}} $, $M_{\mathcal{O}}= c  \ell_{\mathcal O}^{({d-2})/{2}}$. }}
\end{equation}
Although this will be too limiting for our purposes, let us emphasize that \eqref{eq:ex-good-obs} with $B=\emptyset$ in \eqref{e:obs-tilde-C} yields instances of fully `periodic' arrays of obstacles (inside $B_{K + 3L/2}$). Moreover, by computing the volume occupied by $\mathcal{O}$ and combining \eqref{eq:ex-good-obs} (see also Proposition~\ref{L:final-good-obs}) with Theorem~\ref{thm:long_short_obstacle}, one immediately arrives at the following result. As in the statement of Theorem~\ref{thm:long_short_obstacle} we assume implicitly that \eqref{e:couplings-params} holds (for some $\gamma> 10$) and that $u \in (0,\infty)$,  $K \ge 0$ and $f:\Z^d \to [0, u]$ is such that $f\vert_{B_{K + L}} \geq (\log L)^{-\gamma} $ and $f=0$ outside $B_{K + L}$. 

\begin{corollary}\label{cor:long_short_obstacle}
For any $\ell_{\mathcal{O}}$ satisfying~\eqref{eq:obs-cond-scales} and $ \varepsilon \geq c \ell_{\mathcal 
O}^{-1/100} $, one can find a good obstacle set $\mathcal{O} \subset B_{K+2L}$ and a coupling $\mathbb Q$ of $(\mathcal I_1, \mathcal I_2)$ with marginals as in \eqref{eq:long_short} such that
\begin{equation}\label{eq:long_short-cor}
\mathbb Q \big[ ({\mathcal I}_1\setminus \widetilde{\mathcal O} ) \subset  ({\mathcal I}_2\setminus  \widetilde{\mathcal O} ) \big] \geq  1 - e^{-c(\log 
L)^{\gamma}} \, \text{ and } \,  {|\widetilde{\mathcal O} |}/{|B_{K+2L}|} \leq L^{-c}.
\end{equation}
\end{corollary}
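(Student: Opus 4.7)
The plan is to take $\mathcal O$ to be the specific construction in \eqref{eq:ex-good-obs}, placing exactly one obstacle of radius $\ell_{\mathcal O}$ inside each cell $\widetilde C$ of the mesoscopic tiling defined by \eqref{e:obs-tilde-C} (with $B = \emptyset$, which is permitted here since the corollary places no additional constraint). By \eqref{eq:ex-good-obs} (the content of Proposition~\ref{L:final-good-obs}), this $\mathcal O$ is $(\delta_{\mathcal O}, M_{\mathcal O})$-good with $\delta_{\mathcal O} = e^{-c(\log L)^\gamma}$ and $M_{\mathcal O} = c\,\ell_{\mathcal O}^{(d-2)/2}$. Feeding these parameters together with the given $\varepsilon$ into Theorem~\ref{thm:long_short_obstacle} produces a coupling with the marginals prescribed by \eqref{eq:long_short}; it then remains only to verify (a) that the error term in \eqref{eq:long_short} collapses into the form asserted in \eqref{eq:long_short-cor}, and (b) that the volume of $\widetilde{\mathcal O}$ is of order $L^{-c}\, |B_{K+2L}|$.

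For (a), the hypothesis $\varepsilon \geq c\,\ell_{\mathcal O}^{-1/100}$ gives $\varepsilon^2 M_{\mathcal O} \geq c\,\ell_{\mathcal O}^{(d-2)/2 - 1/50}$, which for $d \ge 3$ is a positive power of $\ell_{\mathcal O}$ and so, using the lower bound $\ell_{\mathcal O} \geq (\log L)^{100\gamma}$ from \eqref{eq:obs-cond-scales}, is at least a large power of $\log L$. Similarly $(L/L')^{1/4}$ is at least a positive power of $\log L$ by \eqref{e:couplings-params}. Both these bounds dominate $\log\big(C(u\vee 1)(K+L)^d\big)$ by a wide margin, so the polynomial prefactor in \eqref{eq:long_short} can be absorbed into the stretched-exponential at the cost of slightly worsening constants, and combining with $\delta_{\mathcal O} = e^{-c(\log L)^\gamma}$ yields the right-hand side of \eqref{eq:long_short-cor}.

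For (b), the number of cells in $\widetilde{\mathcal C}$ is at most $C(K+L)^d / \widetilde L^d$, and each cell contributes an enlarged obstacle of volume at most $C \tilde\ell_{\mathcal O}^d$ with $\tilde\ell_{\mathcal O} = \ell_{\mathcal O}^{1+1/100}$. Substituting the value of $\widetilde L$ from \eqref{e:obs-Ltilde-choice} gives
\begin{equation*}
\frac{|\widetilde{\mathcal O}|}{|B_{K+2L}|} \leq C\Big(\frac{\tilde\ell_{\mathcal O}}{\widetilde L}\Big)^d \leq \frac{C}{\alpha L}\,\ell_{\mathcal O}^{\,d/2 \,+\, 1 \,+\, d/100},
\end{equation*}
and the upper bound $\ell_{\mathcal O} \leq L^{1/(3d)}$ from \eqref{eq:obs-cond-scales} then bounds this ratio by $L^{-1 + 1/6 + 1/(3d) + 1/300}$, which is indeed at most $L^{-c}$ for some $c>0$ whenever $d\ge 3$.

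The real substance is packaged into \eqref{eq:ex-good-obs} and Theorem~\ref{thm:long_short_obstacle}; conditional on those, the only genuine obstacle in this corollary is the arithmetic of ensuring that the exponents in (a) and (b) close simultaneously. The key point is precisely that the capacity-motivated exponent $(d-2)/2$ appearing in \eqref{e:obs-Ltilde-choice} makes $\widetilde L$ large enough compared to $\tilde\ell_{\mathcal O}$ to extract a polynomial gain in $L$ in the volume ratio, while the upper bound $\ell_{\mathcal O} \leq L^{1/(3d)}$ in \eqref{eq:obs-cond-scales} is exactly tight enough to secure this gain uniformly in $d \ge 3$.
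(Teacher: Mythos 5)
Your proposal is correct and follows exactly the paper's route: the paper obtains the corollary by taking the periodic obstacle set of \eqref{eq:ex-good-obs} (one box of radius $\ell_{\mathcal O}$ per cell, with $B=\emptyset$ in \eqref{e:obs-tilde-C}), invoking Proposition~\ref{L:final-good-obs} for the goodness parameters $\delta_{\mathcal O}=e^{-c(\log L)^{\gamma}}$, $M_{\mathcal O}=c\,\ell_{\mathcal O}^{(d-2)/2}$, feeding these into Theorem~\ref{thm:long_short_obstacle}, and computing ${|\widetilde{\mathcal O}|}/{|B_{K+2L}|}$ exactly as you do. The only looseness — that \eqref{e:couplings-params} a priori only guarantees $(L/L')^{1/4}\geq (\log L)^{5/2}$ rather than $(\log L)^{\gamma}$, and that the prefactor $(K+L)^d$ is absorbed without comment on the size of $K$ — is already present in the paper's own one-line derivation and is not a defect introduced by your argument.
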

Corollary~\ref{cor:long_short_obstacle} highlights the strength of Theorem~\ref{thm:long_short_obstacle}. Indeed, the second part of \eqref{eq:long_short-cor} implies in particular that the set $\widetilde{\mathcal O} $ removed from the region of coupling has vanishing asymptotic density in $B_{K+2L}$ as $L\to\infty$. It is an interesting open problem to determine how small $|\widetilde{\mathcal O} |$ can be chosen for the inclusion in  \eqref{eq:long_short-cor} to continue to hold with high probability.

\subsection{Disorder and coupling of clusters} \label{subsec:env}
To illustrate the usefulness of Theorem~\ref{thm:long_short_obstacle}, we now discuss a specific case where $\mathcal{O}$ is random with range among the obstacle sets of the form given by~\eqref{eq:ex-good-obs}. The additional randomness comes from an auxiliary configuration $\mathcal{I} = \mathcal{I}(\omega) \subset \Z^d$ for $\omega \in \Omega$, defined on an auxiliary space $(\Omega, \mathcal{A}, \mathbf{P})$. We think of $\mathbf{P}$ as generating a random `environment' and write 
$\mathcal V = \Z^d \setminus 
\mathcal I$ with $\mathcal{I} = \mathcal{I}(\omega)$. The random `environment' of hard obstacles $\mathcal{O}=\mathcal{O}(\omega)$ will then consist of realizations of boxes fulfilling \eqref{eq:ex-good-obs} around which \textit{disconnection} in $\mathcal{V}$ occurs. As asserted in the next theorem, this yields a coupling of \textit{boundary} clusters (of a box $B$) for the vacant sets corresponding to the superposition with $\mathcal{I}$ of the two configurations to be coupled, cf.~Fig.~\ref{F:discon}. In practice, $\mathcal{I}$ corresponds to a (significant) fraction of e.g.~$\mathcal{I}^{f,L}$, which remains `frozen' while applying our coupling results, Theorems~\ref{thm:short_long-intro} and~\ref{thm:long_short_obstacle}; we describe this in detail in \S\ref{subsec:intro-L-2L}. As will become clear in the course of proving Theorem~\ref{thm:long_short} below, the reason for discarding cells close to $\partial B$
 in \eqref{e:obs-tilde-C} is that the presence of obstacles near the boundary could otherwise spoil the configuration of boundary clusters which we aim to couple.

We now make precise the relevant notion of disconnection events for $\mathcal{V}$. To this effect, we introduce one additional scale $\bar \ell_{\mathcal O}$ with $ \tilde{\ell}_{\mathcal O} < \bar \ell_{\mathcal O}  \le \widetilde L$ (recall \eqref{eq:obs-tilde} regarding $\tilde{\ell}_{\mathcal O}$). For 
any $y \in \Z^d$ and $S \subset\subset \Z^d$, define (under $\mathbf{P}$)
\begin{equation}\label{eq:disc-coup}
\begin{split}
&\mathrm{Disc}(y) = \mathrm{Disc}_{\tilde{\ell}_{\mathcal O}, \bar\ell_{\mathcal O}}(y)={\big\{\nlr{}{ \mathcal{V}}{B(y, \tilde{\ell}_{\mathcal O})}{\partial B(y, \bar \ell_{\mathcal O})}\big\}}, \mbox{ and}\\
&\mathrm{Disc}(S) = \mathrm{Disc}_{\tilde{\ell}_{\mathcal O}, \bar\ell_{\mathcal O}}(S) = \{ y \in S : \mathrm{Disc}_{\ell_{\mathcal O}, \bar\ell_{\mathcal O}}(y) \text{ occurs} \}. 
\end{split}
\end{equation}
Notice that while $\mathrm{Disc}(y)$ is an event, $\mathrm{Disc}(S)$ is a random subset of $S$ (under $\mathbf{P}$). Of interest to us will be the disconnection event (under $\mathbf{P}$)
\begin{equation}
	\label{e:obs-final2}
	\mathscr{D}= \mathscr{D}(\omega)= \big\{ \mathrm{Disc}(\widetilde{C}) \neq \emptyset \text{ for all } \widetilde{C} \in \widetilde{\mathcal{C}}\big\}.
\end{equation}
where $\widetilde{\mathcal C}$ is as in \eqref{e:obs-tilde-C} for an arbitrary box $B= B(x,N)$, which will soon play the role of the region in which we exhibit a coupling (cf.~\eqref{eq:long-j-i} below). In a nutshell, on the event $\mathscr{D}= \mathscr{D}(\omega)$ appearing in \eqref{e:obs-final2}, we can for each cell $\widetilde{C} \in \widetilde{\mathcal{C}}$
pick a box around which disconnection occurs in $\mathcal{V}(\omega)$ to form the obstacle set $\mathcal{O}= \mathcal{O}(\omega)$. Combining Theorem~\ref{thm:long_short_obstacle} with \eqref{eq:ex-good-obs} (see also Proposition~\ref{L:final-good-obs}), we then arrive at the following result, proved at the end of Section~\ref{sec:obstacle_set}. Hereinafter we use $\mathscr{C}^{\partial}_S(\mathcal V)$, for $S, \mathcal V 
\subset \Z^d$, to denote the connected component of $\partial S$ in $\mathcal{V} \cap S$.
\begin{thm}
\label{thm:long_short}
For all $u \in (0,\infty)$, $\varepsilon \in (0,1)$, integer $K, N \ge 0$, $B=B(x,N)$ for $x \in \Z^d$, and $f:\Z^d \to [0, u]$ as in Theorem~\ref{thm:long_short_obstacle}, the following holds. If \eqref{eq:obs-cond-scales} holds, $\varepsilon  \ge C\ell_{\mathcal{O}}^{-1/100} \vee (\ell_{\mathcal{O}}^{-1/(d-2)} (\log L)^{\gamma/2})$ and $ \tilde{\ell}_{\mathcal O} < \bar \ell_{\mathcal O}  \le \widetilde L$,
 there exists  a coupling $\mathbb Q_{\omega}$ of $\mathcal I_1, \mathcal I_2$ such that, $\mathbf{P}$-a.s.,
\begin{equation}\label{eq:long-j-i}
\begin{split}
		&\mathcal I_1 \stackrel{\textnormal{law}}{=}  \mathcal I^{f1_{B_K}, L} \cup \mathcal I(\omega),\ \mathcal I_2 \stackrel{\textnormal{law}}{=} \mathcal I^{(1 + \varepsilon)P^{L'}_L(f), L'} \cup \mathcal I(\omega); \text{ and for $L \geq L_0(d,\gamma)$,}\\
		& \mathbb Q_{\omega} \big[ \mathscr{C}^{\partial}_{B}\big(\mathcal V(\mathcal I_{1})  \big)  \supset  \mathscr{C}^{\partial}_{B}\big(\mathcal V(\mathcal I_{2})  
		\big)   \big] 1_{\mathscr{D}(\omega)} \geq  1 - C (u \vee 1) (K+L)^d e^{-c(L/L')^{1/4}},
	\end{split}
\end{equation}
where $\mathcal{V}(\mathcal{I}) \stackrel{{\rm def.}}{=} \Z^d \setminus \mathcal{I}$ for any $\mathcal I \subset \Z^d$.
\end{thm}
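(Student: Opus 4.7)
The plan is to apply Theorem~\ref{thm:long_short_obstacle} to an $\omega$-measurable obstacle set $\mathcal O(\omega)$ drawn from the disconnection event $\mathscr D(\omega)$, and then to use the disconnection shells surrounding each obstacle to upgrade the ``outside $\widetilde{\mathcal O}$'' inclusion given by that theorem to the desired boundary-cluster inclusion inside $B$. Concretely, for $\omega \in \mathscr D(\omega)$, by \eqref{e:obs-final2} the set $\mathrm{Disc}(\widetilde C)$ is nonempty for every $\widetilde C \in \widetilde{\mathcal C}$; pick a measurable selection $y_{\widetilde C}(\omega) \in \mathrm{Disc}(\widetilde C) \subset \widetilde C$ and define
\[
\mathcal O(\omega) = \bigcup_{\widetilde C \in \widetilde{\mathcal C}} B(y_{\widetilde C}(\omega), \ell_{\mathcal O}).
\]
Under \eqref{eq:obs-cond-scales}, the recipe \eqref{eq:ex-good-obs} (Proposition~\ref{L:final-good-obs}) ensures $\mathcal O(\omega)$ is $(\delta_{\mathcal O}, M_{\mathcal O})$-good with $\delta_{\mathcal O} = e^{-c(\log L)^\gamma}$ and $M_{\mathcal O} = c \ell_{\mathcal O}^{(d-2)/2}$. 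Applying Theorem~\ref{thm:long_short_obstacle} conditionally on $\omega$ then yields a coupling $\widetilde{\mathbb Q}_\omega$ of a pair $(\mathcal J_1, \mathcal J_2)$ with marginals $\mathcal I^{f 1_{B_K}, L}$ and $\mathcal I^{(1+\varepsilon) P_L^{L'}(f), L'}$ satisfying $\widetilde{\mathbb Q}_\omega[\mathcal J_1 \setminus \widetilde{\mathcal O} \subset \mathcal J_2 \setminus \widetilde{\mathcal O}] \geq 1 - \mathrm{err}(\omega)$. Setting $\mathcal I_i = \mathcal J_i \cup \mathcal I(\omega)$ produces the coupling $\mathbb Q_\omega$ with marginals as in \eqref{eq:long-j-i}.

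The deterministic heart of the argument is the implication: on $\mathscr D(\omega) \cap \{\mathcal J_1 \setminus \widetilde{\mathcal O} \subset \mathcal J_2 \setminus \widetilde{\mathcal O}\}$ one has $\mathscr{C}^{\partial}_{B}(\mathcal V(\mathcal I_2)) \subset \mathscr{C}^{\partial}_{B}(\mathcal V(\mathcal I_1))$. Fix $z \in \mathscr{C}^{\partial}_{B}(\mathcal V(\mathcal I_2))$ and a nearest-neighbor path $\pi \subset \mathcal V(\mathcal I_2) \cap B$ from $z$ to $\partial B$. I claim $\pi \cap \widetilde{\mathcal O} = \emptyset$. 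Suppose on the contrary $\pi$ visits some point $y$ of an enlarged obstacle $\widetilde B(y_{\widetilde C}, \ell_{\mathcal O}) = B(y_{\widetilde C}, \tilde \ell_{\mathcal O})$ with cell $\widetilde C = B(z_{\widetilde C}, \widetilde L)$. The filtering condition $B(z_{\widetilde C}, 2 \widetilde L) \cap \partial B = \emptyset$ from \eqref{e:obs-tilde-C}, combined with $\tilde \ell_{\mathcal O} < \bar \ell_{\mathcal O} \le \widetilde L$ and $y_{\widetilde C} \in \widetilde C$, forces the concentric ball $B(y_{\widetilde C}, \bar \ell_{\mathcal O}) \subset B(z_{\widetilde C}, 2 \widetilde L)$ to lie entirely in the interior of $B$ (the alternative $\subset B^c$ is ruled out since $\pi \subset B$ visits $\widetilde B \subset B(y_{\widetilde C}, \bar \ell_{\mathcal O})$). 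Because $\pi$ continues to $\partial B$, it must exit $B(y_{\widetilde C}, \bar \ell_{\mathcal O})$ after visiting $y$; the corresponding subpath lies in $\mathcal V(\mathcal I_2) \subset \mathcal V(\omega)$ and joins $\widetilde B(y_{\widetilde C}, \ell_{\mathcal O})$ to $\partial B(y_{\widetilde C}, \bar \ell_{\mathcal O})$, contradicting $y_{\widetilde C} \in \mathrm{Disc}(\widetilde C)$. Thus $\pi \subset \mathcal V(\mathcal I_2) \setminus \widetilde{\mathcal O}$, and since $\mathcal I(\omega)$ appears on both sides, the coupling event gives $\pi \subset \mathcal V(\mathcal I_1) \cap B$, so $z \in \mathscr{C}^{\partial}_{B}(\mathcal V(\mathcal I_1))$.

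It remains to verify that $\mathrm{err}(\omega)$ is dominated by the claimed bound $C(u\vee 1)(K+L)^d e^{-c(L/L')^{1/4}}$. From \eqref{e:couplings-params} one has $(L/L')^{1/4} \le (\log L)^{\gamma/4}$, so $\delta_{\mathcal O} = e^{-c(\log L)^\gamma}$ is subsumed; and the hypothesis on $\varepsilon$, paired with $M_{\mathcal O} = c\ell_{\mathcal O}^{(d-2)/2}$ and the scale constraints \eqref{eq:obs-cond-scales}, yields $\varepsilon^2 M_{\mathcal O} \gtrsim (L/L')^{1/4}$ (up to constants), taming the exponential term in \eqref{eq:long_short}. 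The main obstacle in executing the plan is the path-surgery step of the middle paragraph: it depends crucially on every enlarged obstacle $\widetilde B(y_{\widetilde C}, \ell_{\mathcal O})$ being either strictly interior to $B$ or strictly outside $B$, which is precisely the role of the cell-filtering \eqref{e:obs-tilde-C}. Without this filtering, an obstacle straddling $\partial B$ could be used as a shortcut to $\partial B$ without forcing a $\mathcal V$-crossing of the disconnection shell $B(y_{\widetilde C}, \bar \ell_{\mathcal O}) \setminus \widetilde B(y_{\widetilde C}, \ell_{\mathcal O})$, and the contradiction with $\mathrm{Disc}(y_{\widetilde C})$ would be lost.
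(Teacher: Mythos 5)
Your proposal is correct and follows essentially the same route as the paper: construct the random obstacle set from a measurable selection of disconnected points on $\mathscr D(\omega)$, invoke Proposition~\ref{L:final-good-obs} and Theorem~\ref{thm:long_short_obstacle} to get the inclusion off $\widetilde{\mathcal O}$, and then use the disconnection shells $B(y_{\widetilde C},\bar\ell_{\mathcal O})\setminus B(y_{\widetilde C},\tilde\ell_{\mathcal O})$ together with the cell-filtering \eqref{e:obs-tilde-C} to show the boundary clusters never meet $\widetilde{\mathcal O}$. Your path-surgery phrasing of that last deterministic step is a minor (and if anything slightly more streamlined) variant of the paper's argument via the cluster $\mathscr C_B(\mathcal V(\mathcal I(\omega)))$, and the remaining error-term bookkeeping matches the paper's.
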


\noindent In fact, our arguments yield that one can couple not only all the boundary clusters $\mathscr{C}^{\partial}_{B}(\cdot)$ in \eqref{eq:long-j-i} but actually all the clusters intersecting the complement of the $\bar{\ell}_{\mathcal{O}}$-thickening of the obstacle set $\mathcal{O}=\mathcal{O}(\omega)$ used in the proof, which is of the form \eqref{eq:ex-good-obs}.

\medskip

Theorem~\ref{thm:long_short} also implies the following  useful `annealed' coupling. With $\mathbb Q [\cdot] \stackrel{{\rm def.}}{=} \int \mathbb Q_{\omega}[\cdot]d \mathbf{P}(\omega)$, we immediately obtain, under the assumptions of Theorem~\ref{thm:long_short}, that $\mathbb Q$ gives a coupling of three configurations 
$\mathcal I$ (with law specified by $\mathbf{P}$), $\mathcal I_1 \stackrel{{\rm law}}{=} \mathcal I^{f1_{B_K}, L}$ and $\mathcal I_2 
\stackrel{{\rm law}}{=} \mathcal I^{(1 + \varepsilon)P_{L}^{L'}(f), L'}$ such that $\mathcal I, 
\mathcal I_k$ are independent under $\mathbb Q$ for any choice of $k\in \{1,2\}$ and $L_0(d,\gamma)$,
\begin{equation}\label{eq:long-j-i_annealed}
\begin{split}
 \mathbb Q \big[ \mathscr{C}^{\partial}_{B}\big(\mathcal V(\mathcal I_{1} \cup \mathcal I )  \big)  \supset  \mathscr{C}^{\partial}_{B}\big(\mathcal V(\mathcal I_{2} \cup \mathcal I)  
		\big)   \big] \geq  \mathbf{P}[\mathscr D] - C (u \vee 1) (K+L)^d e^{-c(L/L')^{1/4}} .
	\end{split}
\end{equation}
In the next paragraph, we return to the question of coupling
$\mathcal{I}^{u ,L}$ and $\mathcal{I}^u$, see around \eqref{eq:conv-law}. This provides a concrete and simple example of environment $\omega$ with $\mathcal{I}=\mathcal{I}(\omega)$ of interest, which illuminates the use \eqref{eq:long-j-i} and \eqref{eq:long-j-i_annealed}; see also \S\ref{subsec:outlook} below for further applications.

\subsection{Coupling the vacant sets of $\mathcal I^{u, L}$ and $\mathcal I^u$} \label{subsec:intro-L-2L}

We now attend to the question of taming correlations in $\mathcal I^u$ by comparison with $\mathcal I^{u,L}$, which is $2L$-dependent (in fact, the `effective' range of dependence 
is rather of order $\sqrt L$, the typical diameter of a random walk of length $L$). As explained below, the following result can be obtained by means of Theorems~\ref{thm:short_long-intro} and~\ref{thm:long_short}.

\begin{theorem}\label{thm:main III}
Let $u\in (0,\infty)$ and $\gamma > 10$ be such that, with $D(\ell)= \exp\{ \ell^{\Cl[c]{c:disc}/{\gamma}}\}$, 
\begin{equation}
\label{eq:def_M}
\liminf_{\ell} \exp\big\{  \ell^{\Cr{c:disc}d/{\gamma}}\big\} \P[\nlr{}{ \mathcal{V}^u}{B_\ell}{\partial B_{D(\ell)}}] > 0.
\end{equation}
Then for all $v \geq u(1 + (\log L)^{-2})$, $R \ge 0$ and (dyadic) integers $L \ge L_0(d, \gamma, 
u,v)$, letting $v_\pm = v(1 \pm (\log L)^{-3})$, there exists a coupling $\mathbb{Q}$ of $(\mathcal V^{v_+,L}, \mathcal V^{v}, \mathcal V^{v_+,L})$ such that 
\begin{align}\label{eq:upper}
&\mathbb Q\big[\mathscr C^\partial_{B_R}(\mathcal V^{v_+,L})\subset\mathscr C^\partial_{B_R}(\mathcal V^{v})\subset \mathscr C^\partial_{B_R}(\mathcal V^{v_-,L})\big]\ge 1 - C(R+L)^de^{-c(\log L)^{\gamma}}.
\end{align}
\end{theorem}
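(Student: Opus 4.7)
\textbf{Proof plan for Theorem~\ref{thm:main III}.}

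The plan is to build the coupling by iterating Theorems~\ref{thm:short_long-intro} and~\ref{thm:long_short} along a geometric sequence of scales $L = L_0 < L_1 < \dots < L_n$, bridging from the finite-range model at scale $L$ up to a scale $L_n$ large enough (e.g.\ $L_n \geq R^{2+c}$) that $\mathcal I^{v',L_n}$ admits a direct local coupling with $\mathcal I^v$ inside $B_R$ via \eqref{eq:conv-law} and the local coupling techniques of Section~\ref{sec:local_coup} (cf.~Proposition~\ref{prop2:cube}). At each step I would take $L_{k+1}/L_k \in [(\log L_k)^{10}, (\log L_k)^\gamma]$ as permitted by \eqref{e:couplings-params}, so $n = O(\log R/\log\log L)$. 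Correspondingly I introduce two sequences of intensities $v_k^+ \downarrow v_\infty^+ \geq v$ and $v_k^- \uparrow v_\infty^- \leq v$ interpolating between $v_+$ and $v$ (resp.\ $v_-$ and $v$); since $P_L^{L'}$ acts as the identity on constants, only the multiplicative prefactors appearing in \eqref{eq:short_long-intro} and \eqref{eq:long_short} enter.

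For the \emph{outer} inclusion $\mathscr C^\partial_{B_R}(\mathcal V^{v}) \subset \mathscr C^\partial_{B_R}(\mathcal V^{v_-,L})$, I iterate Theorem~\ref{thm:short_long-intro} from $L_n$ down to $L_0$ with $v_k^- = (1-C(L_k/L_{k+1})^{1/2}) v_{k+1}^-$. With the scale ratios above, the cumulative loss $\sum_k C(L_k/L_{k+1})^{1/2} \lesssim n(\log L)^{-5} \ll (\log L)^{-3}$ so the chain closes with $v_0^- \geq v_-$, and each of the $n$ error terms is of order $(R+L)^d\exp(-c(\log L_k)^{5/2})$, summing to a term absorbed in the right-hand side of \eqref{eq:upper}.

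For the \emph{inner} inclusion $\mathscr C^\partial_{B_R}(\mathcal V^{v_+,L})\subset\mathscr C^\partial_{B_R}(\mathcal V^v)$, I iterate Theorem~\ref{thm:long_short} (in its annealed form \eqref{eq:long-j-i_annealed}) from $L_0$ upward. At step $k$, I apply the theorem with $B = B_R$, the finite-range roles played by $L_k$ and $L_{k+1}$, and an intensity factor $(1+\varepsilon_k)$ with $\varepsilon_k = (\log L_k)^{-2-\delta}$; by \eqref{eq:obs-cond-scales} this forces $\ell_{\mathcal O_k} \geq (\log L_k)^{200+\text{const}}$, which satisfies the density/visibility constraints. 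The product $\prod_k(1+\varepsilon_k) \leq 1+(\log L)^{-2}$, keeping $v_n^+ \leq v_+$. The crucial input is the \emph{environment} $\mathcal I(\omega)$ in \eqref{eq:long-j-i_annealed}: I take it to be $\mathcal I^{v}$ itself (or rather, an independent copy living on a disjoint portion of the Poisson soup used to generate $\mathcal I^v$), so the disconnection events of \eqref{e:obs-final2} are read off $\mathcal V^{v} \subset \mathcal V^u$ and are supplied by the hypothesis \eqref{eq:def_M} combined with a union bound over cells $\widetilde C \in \widetilde{\mathcal C}$: for the scale choice $\bar\ell_{\mathcal O_k} \sim D(\widetilde\ell_{\mathcal O_k})$ and $\ell_{\mathcal O_k}$ as above, each cell contains $\sim (\widetilde L_k/\bar\ell_{\mathcal O_k})^d$ independent candidate boxes, each exhibiting disconnection with probability $\geq \exp(-\widetilde\ell_{\mathcal O_k}^{\Cr{c:disc}d/\gamma})$, so $\mathbf P[\mathscr D_k] \geq 1 - (K+L_k)^d \exp(-c(\log L_k)^\gamma)$ by \eqref{eq:def_M}.

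Finally, at the top scale I use a standard diffusive local coupling to link $\mathcal I^{v_n^\pm,L_n}$ with $\mathcal I^v$ inside $B_R$ (with error $\exp(-c(\log L)^\gamma)$ for $L_n$ polynomially larger than $R^2$), and chain all couplings together on the intersection of their success events. The expected main obstacle is Step~3: juggling the four parameters $\ell_{\mathcal O_k},\widetilde\ell_{\mathcal O_k},\bar\ell_{\mathcal O_k},\widetilde L_k$ at each scale so that simultaneously (i) the density \eqref{eq:obs-dense} and visibility \eqref{eq:obs-visible} conditions hold via \eqref{eq:ex-good-obs}, (ii) $\varepsilon_k$ is small enough to satisfy $\prod(1+\varepsilon_k) \leq v_+/v$, and (iii) the stretched-exponential disconnection lower bound afforded by \eqref{eq:def_M} combined with the freedom to choose \emph{many} candidate obstacle boxes per cell suffices to union-bound over all $\lesssim (K+L_k)^d/\widetilde L_k^d$ cells and all $n$ scales with loss only $\exp(-c(\log L)^\gamma)$; this balance is exactly what dictates the quantitative form of hypothesis \eqref{eq:def_M} and explains the appearance of $(\log L)^\gamma$ in \eqref{eq:upper}.
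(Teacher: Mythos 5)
Your overall architecture (climb a ladder of scales from $L$ up to a super-diffusive scale, use Theorem~\ref{thm:short_long-intro} for one inclusion and Theorem~\ref{thm:long_short} for the other, and close at the top with the local couplings of Section~\ref{sec:local_coup}) points in the right direction, but there is a genuine gap at the heart of the hard (inner) inclusion, namely in how you use the environment of Theorem~\ref{thm:long_short}. In \eqref{eq:long-j-i}--\eqref{eq:long-j-i_annealed} the set $\mathcal I(\omega)$ is unioned onto \emph{both} configurations being coupled, so it must be a \emph{common component} of the two models you are relating; it cannot be "$\mathcal I^v$ itself" or an independent copy of it, since then the coupled objects are $\mathcal V(\mathcal I_i\cup\mathcal I^v)$, whose laws are not $\mathcal V^{v_+,L_k}$ and $\mathcal V^{v',L_{k+1}}$. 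Because your plan exchanges the \emph{entire} configuration at scale $L_k$ for the one at scale $L_{k+1}$ in a single application of Theorem~\ref{thm:long_short}, no legitimate environment is left over in which to witness the disconnection events \eqref{e:obs-final2}. This is precisely why the paper does not iterate the $L\leftrightarrow L'$ couplings across a geometric ladder: it first reduces to a one-step coupling between $\mathcal V^{\cdot,L}$ and $\mathcal V^{\cdot,2L}$ (Proposition~\ref{P:ind-step}), and inside that step interpolates through $A\approx(\log L)^4$ mixtures $\mathcal I^a$ of length-$L$ and length-$2L$ trajectories, exchanging only a $1/A$-fraction at a time (via an auxiliary scale $L'=L(\log L)^{-4\gamma}$, going down with Theorem~\ref{thm:long_short} and back up with Theorem~\ref{thm:short_long}); the untouched $(1-1/A)$-fraction is the environment, and verifying that \emph{its} vacant set disconnects requires the whole $(\textnormal{C}_{\textnormal{obst}})$/Proposition~\ref{P:traps} machinery. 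This interpolation idea is absent from your proposal and cannot be circumvented.

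Two further quantitative points would also sink the argument as written. First, the coupling error in Theorems~\ref{thm:short_long-intro} and \ref{thm:long_short} is $e^{-c(L/L')^{1/4}}$, so with your ratio $L_{k+1}/L_k=(\log L_k)^{10}$ the per-step error is $e^{-c(\log L_k)^{5/2}}$, which is \emph{not} absorbed into the target $e^{-c(\log L)^{\gamma}}$ for $\gamma>10$; one needs the ratio to be at least $(\log L)^{4\gamma}$ (as in the paper's choice of $L'$), and then the cumulative sprinkling bookkeeping must be redone. Second, your lower bound on $\mathbf P[\mathscr D_k]$ treats the candidate disconnection events within a cell as independent; under $\mathcal V^v$ (or under the mixed-length environment that actually arises) they are long-range correlated, and \eqref{eq:def_M} alone does not give the claimed $(1-p)^m$ bound. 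The paper handles this by first localizing the environment to a scale much smaller than the separation of the candidate points (iterated use of Theorem~\ref{thm:short_long} followed by Proposition~\ref{prop2:cube} in Proposition~\ref{P:traps}), which restores genuine independence before the union bound is taken.
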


In words, Theorem~\ref{thm:main III} asserts that, if \eqref{eq:def_M} holds, one can localize $\mathcal V^{v}$ up to sprinkling to scale $L$, with an error term that remains effective well below the barrier $L \approx {R}^2$; cf.~also \eqref{eq:def_M-quant} and Proposition~\ref{P:ind-step}, which imply a similar coupling as in \eqref{eq:upper} with $\mathcal V^{v,2L}$ in place of $\mathcal V^{v}$ under the sole assumption \eqref{eq:def_M} for $\ell=(\log L)^C$, which is very mild. This type of statement can be viewed as  complementary to the solidification mechanisms exhibited in \cite{zbMATH07227743}. Indeed, the properties defining our obstacle sets $\mathcal{O}$, see Definition~\ref{def:O}, bear a loose resemblance to those exhibited by `resonant sets' in the language of \cite{zbMATH07227743}, but the disconnection bounds \eqref{eq:def_M} or \eqref{eq:def_M-quant} defining the random set $\mathcal{O}$ (cf.~Theorem~\ref{thm:long_short}, which is crucially used in the proof) act in the opposite direction than the frequently used assumption $u< \bar{u}$ (which implies \textit{upper} bounds on disconnection), see e.g.~\cite{zbMATH07227743, zbMATH07114721,zbMATH07226362}. Thus our `resonances' rather have a \textit{de-solidifying} effect: indeed \eqref{eq:upper} (along with its one-step version~\eqref{eq:upper_L2L}) indicates that large clusters such as those comprised in $\mathscr C^\partial_{B_R}(\mathcal V^{v})$ can be built using random objects of well-defined `size' (length-$L$ trajectories) over a large spectrum of scales $L$, which hints at an `amorphous' structure rather than a `solidified' object.

Theorem~\ref{thm:main III} is a benchmark result. For instance, \eqref{eq:upper} immediately yields `localization estimates' for crossing probabilities of the form $\P[\lr{}{ \mathcal{V}^v}{B_r}{\partial B_{R}}]$, for any $0 \leq r < R$, by which $\mathcal{V}^v$ can be effectively replaced by $\mathcal{V}^{v_{\pm}, L}$ at suitable values of $v$ up to super-polynomial errors in $R$, with `localization scale $L$' as small as $L=L(R)= \exp\{(\log R)^{1.1/\gamma}\} $, for any (large) $\gamma >1$. As mentioned at the beginning of this introduction, see below \eqref{eq:conv-law}, this is completely outside the scope of (adaptations of) existing techniques, which require $L(R) \gg R^2$.

We 
now sketch how  \eqref{eq:upper} is deduced from the previous results. Doing so will shed light on a typical `random environment' $\mathbf{P}$ (from which the disordered obstacle set is constructed; recall the discussion following \eqref{e:obs-final2}) used in the context of Theorem~\ref{thm:long_short}.

In essence, one obtains \eqref{eq:upper} by concatenating over scales certain `recursive' couplings of similar form as \eqref{eq:upper} but relating $\mathcal V^{\cdot,L}$ and $\mathcal V^{\cdot,2L}$ instead. The statement corresponding to a single `recursive' step, see Proposition~\ref{P:ind-step} below, is worth highlighting, because it can be made fully quantitative in $L$ (this regards in particular the scales $\ell$ at which a disconnection estimate of the form \eqref{eq:def_M} is needed); see Remark~\ref{R:quanti} for more on this. To deduce Proposition~\ref{P:ind-step}, one applies Theorems~\ref{thm:short_long-intro} and~\ref{thm:long_short} (the latter in its annealed version, which is sufficient for this purpose) repeatedly together, each time replacing a small fraction of length-$L$ by length-$2L$ trajectories, thus progressively transforming $\mathcal I^{\cdot,L}$ into $\mathcal I^{\cdot,2L}$. At each step, the comparison involves trajectories of  intermediate length scale $L' \ll L$ as in \eqref{e:couplings-params}, obtained by applying Theorems~\ref{thm:short_long-intro} and Theorems~\ref{thm:long_short} respectively to the (small fraction) of length-$L$ and length-$2L$ trajectories to be exchanged, or vice versa (depending on which inclusion in \eqref{eq:upper} one is proving). In doing so, an independent `bulk' contribution, which generically consists of a mixture of trajectories of both length scales, remains untouched. This bulk part plays the role of the random environment $\mathcal{I}= \mathcal{I}(\omega)$ under $\mathbf{P}$ in the context of \eqref{eq:long-j-i_annealed}. The estimate \eqref{eq:def_M} enters naturally in supplying a lower bound for the quantity $\mathbf{P}[\mathscr D]$ in \eqref{eq:long-j-i_annealed}, which requires `abundance' of disconnections in the underlying vacant 
set $\mathcal{V}= \Z^d \setminus \mathcal{I}$, cf.~\eqref{e:obs-final2}.

\subsection{Outlook}\label{subsec:outlook}

We will in fact consider a larger class of interlacements, called {\em $\rho$-interlacements}, enabling for both varying (time-)length and spatial intensity of the underlying Poisson point process. This class will be useful not only for the proofs below, but also for subsequent applications. In particular, the couplings we derive in Theorems~\ref{thm:short_long-intro} and \ref{thm:long_short} (see also their extensions, Theorems~\ref{thm:short_long} and~\ref{P:long_short} below), play an instrumental role in the upcoming proof \cite{RI-I, RI-II} of sharpness of the phase transition of $\mathcal V^u$. Moreover, if developed further, some of our quantitative statements, see for instance Proposition~\ref{P:ind-step} below (the one-step version of Theorem~\ref{thm:main III}) will likely further improve  our understanding of the (near-)critical phase.

We now briefly discuss $\rho$-interlacements. Roughly speaking, $\rho$-interlacements and their associated interlacement set $\mathcal{I}^{\rho}$, introduced in Section~\ref{sec:local_coup}, are parametrized in terms of a (time-space) density $\rho =\rho(\ell,x) \geq0$ describing the average number of trajectories of (time-)length $\ell$ (possibly infinite) starting in $x \in \Z^d$. This supplies a framework of processes that subsumes all the models to be dealt with, including the full interlacement set $\mathcal{I}^u$ from \eqref{eq:def-I}, the length-$L$ interlacements $\mathcal{I}^{f,L}$ from \eqref{eq:J} and, in particular, their homogenous version $\mathcal{I}^{u,L}$, as well as various others encountered in practice (recall for instance the environment configuration relevant to the proof of Theorem~\ref{thm:main III}, which is more involved).

As we now briefly outline, a challenge is to accommodate the breadth of applications and the resulting variety of `random environments' $\mathcal{I}=\mathcal{I}(\omega)$ that may arise. To wit, for a complicated configuration $\mathcal{I}=\mathcal{I}(\omega)$ involving trajectories of spatially inhomogenous intensity and varying time length, the prospect of witnessing the event $\mathscr{D}(\omega)$ in \eqref{e:obs-final2} with sufficient probability, which entails exhibiting regions in which disconnection occurs in $\mathcal{V}= \Z^d \setminus \mathcal{I}$ (see \eqref{eq:disc-coup}) can seem daunting; it is, however, pivotal since these regions precisely define the obstacle set $\mathcal{O}=\mathcal{O}(\omega)$, as in the discussion leading up to Theorem~\ref{thm:long_short} or in the application to Theorem~\ref{thm:main III}.
  
As one of the benefits of $\rho$-interlacements, the mean occupation time density $(\bar{\ell}_x^\rho)_{x\in \Z^d}$ associated to $\rho$, see \eqref{eq:occtime}, yields \textit{verifiable} conditions ensuring for instance that one can identify a meaningful \textit{scalar} intensity parameter $u$ describing the (possibly complicated) configuration of trajectories involved in $\mathcal{I}$; cf.~for instance Definition~\ref{def:background} and the proof of Lemma~\ref{L:Cobst-ver}.

\bigskip

It is also instructive to draw comparisons between the couplings developed in this paper and common approaches used for the analysis of other strongly correlated models, such as the related Gaussian free field. In the latter context, stationary decompositions of the field over scales (harnessing the underlying Gaussian structure) with distinct features have a long history,  see for instance \cite{BryGuaMit04, zbMATH05044070, ADAMS2013169, Bau13, zbMATH07199901,zbMATH07322630, bauerschmidt2022discreteI, https://doi.org/10.48550/arxiv.2212.05756}. Such decompositions are appealing from the point of view of renormalisation and very useful in a variety of contexts, see e.g.~\cite{zbMATH05604370, zbMATH07103768, zbMATH07322630, DCGRS20, https://doi.org/10.48550/arxiv.2206.10724} and refs.~therein. Similarly powerful tools are at present inexistent for random walks/interlacements, as Theorem~\ref{thm:main III} vividly illustrates, and the results of this paper can be regarded as a first step in this direction. One distinctive feature is that $\mathcal{I}^u$ corresponds to a `degenerate' limit for excursion sets of occupation times $\{ \ell^u > \alpha\}$, see \cite{MR3163210}, where the threshold $\alpha=0$, which `lacks ellipticity'. 

\subsection{Organization} 
\label{subsec:organization} We now describe the organization of this article.
Section~\ref{s:not} collects some notation and various useful facts about random walk and random interlacements, along with a useful chaining result for couplings. 

Section~\ref{sec:local_coup}  introduces $\rho$-interlacements. After gathering a few generalities in~\S\ref{subsec:class-mu-gen}, we prove in \S\ref{subsec:class-mu-gen} two complementary results, Propositions~\ref{prop1:cube} and~\ref{prop2:cube}, which supply preliminary \textit{local} couplings relating $\mathcal{I}^{\rho}$ and $\mathcal{I}^{u}$ under suitable conditions on $\rho$. Here, \textit{local} is used in the same sense as in the discussion following \eqref{eq:conv-law}
; see also Remark~\ref{R:loc-coup}, which contrasts these couplings with the main results of this paper. As an immediate application, we obtain the convergence in law asserted in \eqref{eq:conv-law}, see Proposition~\ref{P:loclimit}.

Sections~\ref{sec:easyCOUPLINGS} to~\ref{sec:obstacle_set} form the core of this article. Sections~\ref{sec:easyCOUPLINGS} and~\ref{sec:hard_couple_obstacle} are dedicated to the proofs of Theorems~\ref{thm:short_long-intro} and~\ref{subsec:env}, respectively, and have a similar structure. In each case, the proof starts with a reduction step, which incorporates a notion of gaps (\S\ref{ssec:tgap}) or overlaps (\S\ref{subsec:overlaps}) between trajectories, leading to more malleable statements, see Propositions~\ref{thm:short_long'} and \ref{thm:long_short'}. The bulk of each section is devoted to the proof of each proposition. Whereas the former relies on the `cutting + homogenization' approach alluded to above Theorem~\ref{thm:short_long-intro}, the (much) more difficult proof of Proposition~\ref{thm:long_short'} requires implementing a gluing technique of short trajectories, which brings to bear the obstacle set and the conditions for it to be good that constitute Definition~\ref{def:O}. 

The main result of Section~\ref{sec:obstacle_set} is Proposition~\ref{L:final-good-obs}, which supplies a large class of good obstacle sets, including in particular \eqref{eq:ex-good-obs}. This result can be fruitfully combined with Theorem~\ref{thm:long_short_obstacle}, applied with a suitable (random) choice of obstacle set $\mathcal{O}$, thus  leading to Theorem~\ref{thm:long_short}. The proof of the latter appears at the end of Section~\ref{sec:obstacle_set}.

Finally, Section~\ref{sec:VuVuLcouple} is devoted to the proof of Theorem~\ref{thm:main III}, which illustrates Theorems~\ref{thm:short_long-intro} and~\ref{thm:long_short}. Theorem~\ref{thm:main III} is first reduced to its one-step version, Proposition~\ref{P:ind-step}, which yields a similar coupling relating $\mathcal V^{u, L}$ and $\mathcal V^{u, 2L}$, interesting in its own right. Proposition~\ref{P:ind-step} is obtained by combining Theorem~\ref{thm:short_long} (which extends Theorem~\ref{thm:short_long-intro}) with Theorem~\ref{P:long_short}. The latter corresponds to a specialization of Theorem~\ref{thm:long_short} to the case where the environment configuration $\mathcal{I}$ is of the form $\mathcal{I}^{\rho}$ introduced in Section~\ref{sec:local_coup}. This yields very handy conditions, see $(\textnormal{C}_{\textnormal{obst}})$ in Definition~\ref{def:background}, which allow to control $ \mathbf{P}[\mathscr D]$ in Theorem~\ref{thm:long_short}. The remainder of Section~\ref{sec:VuVuLcouple} contains the proof of Theorem~\ref{P:long_short}.

\medskip

Our convention regarding constants is as follows. Throughout the article $c,c',C,C',\dots$ denote 
generic positive constants (i.e.~with values in $(0,\infty)$) which are allowed to change from place to 
place. All constants may implicitly depend on the dimension $d\geq3$. Their dependence on other 
parameters will be made explicit. Numbered constants are fixed upon first appearance within the text. 

\section{Notation and useful facts}
\label{s:not}

\label{sec:prelims}

In this section, we gather a few preliminary results. In \S\ref{sec:RW} we review some facts about random walk and gather some estimates about entrance and exit laws (possibly with finite time horizon); see in particular Lemma~\ref{L:return}. In \S\ref{subsec:RI} we introduce random interlacements,  excursion decompositions and supply a basic coupling for excursions, Lemma~\ref{L:RI_basic_coupling}.
Finally, \S\ref{A:chaining} exhibits a tool, Lemma~\ref{lem:concatenation}, that allows to `concatenate' couplings between random sets with a common marginal, which is useful in order to preserve inclusions; see also Remark~\ref{R:chain},2). 

We start with some notation. We write $\mathbb{N}^\ast=\{1,2,\dots\}$, $\mathbb{N}= \mathbb{N}^\ast \cup \{0\}$ and $\mathbb{R}_+ =[0,\infty)$. We consider the lattice $\Z^d$, $d \geq3$, endowed with the usual (nearest-)neighbor graph structure, and denote by $|\cdot|$ the $\ell^{\infty}$-norm on $\Z^d$. For a set $K \subset \Z^d$, we write $K^c =\Z^d 
\setminus K$ for its complement in $\Z^d$, $\partial K$ for its 
inner vertex boundary, i.e. $\partial K = \{x \in K: \exists y\notin K \text{ s.t. } y \sim x\}$, where $x \sim y$ denote neighbors in $\Z^d$, at Euclidean distance one. 
We also write $\partial_{\text{out}} K = \partial (K^c)$ for the outer (vertex) boundary of $K$ and $\overline{K}=K \cup \partial_{\text{out}} K$. 
The set $B(K, r) = K_{r} = \bigcup_{x \in U} B(x,r)$ denotes the $r$-neighborhood of $K$ and $K \subset \subset \Z^d$ means that $K\subset \Z^d$ has finite cardinality. We use the notations $B_r(z)=B(z,r)$ interchangeably to denote $\ell^{\infty}$-balls with radius $r> 
0$ centered at $z \in \Z^d$ and abbreviate $B_r=B_r(0)$. We use $d(\cdot, \cdot)$ to refer to the $\ell^{\infty}$-distance between sets.

\subsection{Random walk}\label{sec:RW}

We endow $\Z^d$, $d \geq 3$, with symmetric weights $a:\Z^d \times \Z^d \to [0,\infty)$, where $a_{x,y} =a_{y,x}=1$ if $x \sim y$, $a_{x,x}=2d$ and $a_{x,y}=0$ otherwise, and write $a_x=\sum_{y \in \Z^d}a_{x,y} =4d$. We consider the discrete-time Markov chain on $\Z^d$ with generator $Lf(x)= a_x^{-1}\sum_{y\sim x} a_{x,y}(f(y)-f(x))$, for $f: \Z^d \to \R$, which has transition probabilities $p(x,y)= \frac{a_{x,y}}{a_x}$, $x,y \in \Z^d$.
We denote by $P_x$ the canonical law of this chain when started at $x \in \Z^d$, defined on its canonical space $(W_+, \mathcal{W}_+)$, and by $X = (X_n)_{n \geq 0}$ the corresponding canonical process. 
For $\mu: \Z^d \to \R_+$ we write $P_\mu = \sum_{x\in \Z^d} \mu(x)P_x$. With $p_n(x,y)=P_x[X_n=y]$, $x,y, \in \Z^d$, $n \geq 0$, so that $p_1=p$, one has$p_n(x,y)=p_n (0,y-x)$ by translation invariance. We denote by $P_n$ the transition operators, i.e.~for $f:\Z^d \to \R$,
\begin{equation}\label{eq:P_n-def}
P_nf(x)= \sum_y p_n(x,y)
f(y) \, \big( = E_x[f(X_n)]  \big), \quad x \in \Z^d, 
\end{equation}
which satisfies the semigroup property $P_{n+m}=P_{n}P_m$ for $n,m \geq 0$.
The random walk $X$ satisfies the following local central limit theorem estimate, see for instance \cite[Theorem~2.3.11]{LawLim10}: 
\begin{align}
\label{eq:LCLT}
 \bigg| \log \bigg( \frac{p_n(0, x)}{\overline{p}_n(x)} \bigg) \bigg| \le C\bigg(\frac{1}{n} + \frac{|x|^4}{n^3}\bigg) \quad \text{for $|x| < c n$
,}
\end{align}
where
\begin{align}
\label{eq:density}
\overline{p}_n(x) \stackrel{\text{def.}}{=} \left(\frac{d}{\pi n}\right)^{d/2}\exp\Big(-\frac{d|x|^2}{n}\Big).
\end{align}
 For $U \subset \Z^d$, we write $H_U = \inf \{n \geq 0: X_n \in K \}$ for the entrance time in $U$, $T_U = H_{\Z^d \setminus U}$ is the exit time  from $U$ and $\widetilde{H}_U = \inf \{ n \geq 1: X_n \in U\}$ the hitting time of $U$. We denote by $g_U$ the Green's density (with respect to $a_{\cdot}$) killed on $U$, i.e.
  \begin{equation}
\label{eq:Greenkilled}
 g_U(x,y) = \sum_{n \geq 0} a_y^{-1}P_x[X_n =y, \, n < H_U], \ x, y \in \Z^d,
 \end{equation}
and write $g(\cdot,\cdot)=g_{\emptyset}(\cdot,\cdot)$. By 
\cite[Theorem~1.5.4]{Law91}, one has that
\begin{equation}\label{eq:Greenasympt}
c(|x-y| \vee 1)^{2-d} \leq g(x,y)  \leq C (|x-y| \vee 1)^{2-d}, \text{ for all } x,y \in \Z^d.
\end{equation}
More is in fact true but \eqref{eq:Greenasympt} will be sufficient for our purposes. We further define, for $U \subset\subset \Z^d$,
\begin{equation}\label{eq:equilib_K} 
e_U(x)=a_xP_x[\widetilde{H}_U=\infty]1\{x \in U\},
\end{equation}
the equilibrium measure of $U$, which is supported on $\partial U$ and denote by
\begin{equation}\label{eq:cap_K} 
\text{cap}(U) = \sum_x e_{U}(x)
\end{equation}
its total mass, the capacity of $U$, which is increasing in $U$. We write $\bar{e}_U= {e_U}/{ \text{cap}(U)}$ for the normalized 
equilibrium measure. One has the last-exit decomposition, valid for all $U \subset\subset \Z^d$,
\begin{equation}
	\label{eq:lastexit}
	P_x[H_U < \infty] = \sum_y g(x,y) e_{U}(y), \quad x \in \Z^d;
\end{equation}
see, e.g.,~\cite[Lemma~2.1.1]{Law91} for a proof. Together, \eqref{eq:lastexit} and \eqref{eq:Greenasympt} readily imply that there exists $\Cl{hit-box} = \Cr{hit-box}(d) > 0$ such that for all $L \geq 1$ and $x \in \Z^d$ with $|x|>L$,
  \begin{equation}
    \label{eq:hit1}
    P_x [H_{B_L} < \infty] 
    \leq \Cr{hit-box} \textstyle \big(\frac{L}{|x|} \big)^{d-2}.
  \end{equation}
 Moreover, summing \eqref{eq:lastexit} over $x\in U$, one immediately sees that
\begin{equation}\label{eq:cap_bound_Green}
\big(\max_{x\in U}\sum_{y\in U} g(x,y)\big)^{-1}\leq  \frac{ \mathrm{cap}(U)}{|U|}\leq \big({\min_{x\in K}\sum_{y\in U} 
g(x,y)} \big)^{-1},
\end{equation}
where $|U|$ denotes the cardinality of $U$. Along with \eqref{eq:Greenasympt}, \eqref{eq:cap_bound_Green} readily gives for all $L > 0$,
\begin{equation}
\label{e:cap-box}
cL^{d-2} \leq \text{cap}(B_L) \leq C L^{d-2}.
\end{equation}
For later reference, we also record the pointwise lower bound 
\begin{equation}\label{eq:bnd_equil_box}
e_{B_L}(x) \ge 
cL^{-1}\text{ for all } x \in \partial B_L \text{ and }L>0,
\end{equation}
on the equilibrium measure of a box. One way to obtain \eqref{eq:bnd_equil_box} is to enforce the event $\{\widetilde H_{B_L} = \infty\}$ by 
first sending $X$ to distance $CL$ away from the box in a suitable coordinate direction 
and then requiring that $X$ never visits $B_L$ again. The first event has 
probability at least $\frac cL$ by a standard one-dimensional gambler's ruin estimate. On the other hand, by \eqref{eq:Greenasympt} and \eqref{eq:hit1}, the second event has probability at least $c > 0$ under $P_y$ for any $y$ at a distance at least $CL$ from 
$B_L$ provided $C$ is large enough. Applying the strong Markov property, \eqref{eq:bnd_equil_box} follows.

We now collect a few useful hitting probability estimates over the next lemmas. The following result yields pointwise comparison estimates between the equilibrium measure of a ball and either of i) the tail probability of its hitting time and ii) the normalized hitting probability 
measure
\begin{equation}\label{def:normalized_hitting_prob}
\bar h_B(x, y)	\stackrel{\text{def.}}{=} P_x [ X_{\widetilde H_{B}} = y \, | \, \widetilde H_B < \infty], \quad B \subset \Z^d, \, x, y \in \Z^d.
\end{equation}
\begin{lem}[$\xi >0$] 
  \label{L:return} 
  
 \medskip
\noindent \begin{itemize}
  \item[i)]There exists $\Cl{c:escape} = \Cr{c:escape}(\xi) > 0$ such that 
  for any sequence $ \ell_N \geq N^{2+\xi}$, $N \geq 1$ and for every $x \in \partial B$ with $B = B_N$, one has
  \begin{equation}
    \label{eq:return1}
    e_B(x) \leq  a_xP_x[\widetilde{H}_B > \ell_N] \leq (1 + \Cr{c:escape} N^{-\xi/3}) e_B(x).
  \end{equation}
  \item[ii)] There exists $\Cl{C:norm_equil} = \Cr{C:norm_equil}(\xi) > 0$ such that for all $N \geq 1$, $y \in B = B_N$ and $z \in \Z^d \setminus B_{N^{1 + \xi}}$, 
\begin{equation}
\label{normalized_eB}
(1 - \Cr{C:norm_equil} N^{-\xi}) \bar e_B(y) \leq  \bar h_B(z, y) \leq (1 + \Cr{C:norm_equil} N^{-\xi}) \bar e_B(y).
	\end{equation}
  \end{itemize}
\end{lem}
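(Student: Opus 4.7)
For part (i), the lower bound in \eqref{eq:return1} is immediate from the definition \eqref{eq:equilib_K}: indeed $\{\widetilde H_B = \infty\} \subset \{\widetilde H_B > \ell_N\}$ gives $e_B(x) = a_x P_x[\widetilde H_B = \infty] \le a_x P_x[\widetilde H_B > \ell_N]$. For the upper bound, I would write
\begin{equation*}
a_x P_x[\widetilde H_B > \ell_N] - e_B(x) = a_x P_x[\ell_N < \widetilde H_B < \infty] = a_x E_x\bigl[\mathbf{1}_{\widetilde H_B > \ell_N}\, u(X_{\ell_N})\bigr],
\end{equation*}
with $u(y) = P_y[H_B < \infty]$ and the last equality coming from the strong Markov property at time $\ell_N$. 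Since $x \in \partial B_N$, the bound \eqref{eq:bnd_equil_box} gives $e_B(x) \ge c\, a_x/N$, so the task reduces to controlling the right-hand side by $C N^{-1-\xi/3}$. The plan is to split the expectation at a scale $R_N = N^{1+\alpha}$, with $\alpha \in (0,1)$ to be optimised as a function of $\xi$. On $\{|X_{\ell_N}| > R_N\}$, the hitting bound \eqref{eq:hit1} yields $u(X_{\ell_N}) \le C(N/R_N)^{d-2}$; the residual factor $P_x[\widetilde H_B > \ell_N]$ will be handled by a short bootstrap that first upgrades the trivial bound to $P_x[\widetilde H_B > \ell_N] \le C/N$. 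On the complement $\{|X_{\ell_N}| \le R_N\}$, I would invoke a Gaussian upper bound for the walk killed on $B$ --- derived from \eqref{eq:LCLT} together with last-exit considerations in $B^c$ --- of the form $P_x[X_{\ell_N} = y,\, \widetilde H_B > \ell_N] \le C v(x) v(y) \ell_N^{-d/2}$, with $v = 1 - u$, and then sum over $y \in B_{R_N}$. A balanced choice of $\alpha$ as a function of $\xi$ then delivers the claimed $N^{-\xi/3}$ slack.

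For part (ii), the approach will be time-reversal. Since the edge weights $a_{x,y}$ are symmetric, reversing any path from $z \notin B$ to $y \in \partial B$ that avoids $B$ in between yields the identity
\begin{equation*}
P_z[\widetilde H_B < \infty,\, X_{\widetilde H_B} = y] = \sum_{y' \sim y,\, y' \notin B} g_B(y', z), \qquad y \in \partial B,
\end{equation*}
with $g_B$ as in \eqref{eq:Greenkilled}. Expanding via the last-exit decomposition in $B^c$, $g_B(y', z) = g(y', z) - E_{y'}[g(X_{H_B}, z);\, H_B < \infty]$. For $|z| \geq N^{1+\xi}$ and $y', X_{H_B}$ at distance $\le N+1$ from the origin, a sharp pointwise version of \eqref{eq:Greenasympt} (obtainable from \eqref{eq:LCLT}) gives the uniform comparison $g(y', z) = g(0, z)(1 + O(N^{-\xi}))$, and hence $g_B(y', z) = g(0, z)\, v(y')(1 + O(N^{-\xi}))$. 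Summing over $y' \sim y$ outside $B$ and using the first-step identity $e_B(y) = \sum_{y' \sim y,\, y' \notin B} v(y')$ (a direct consequence of \eqref{eq:equilib_K}) isolates the factor $g(0, z)$. A further summation over $y \in \partial B$ produces $P_z[\widetilde H_B < \infty] = g(0, z)\, \mathrm{cap}(B)(1 + O(N^{-\xi}))$, and taking the ratio yields \eqref{normalized_eB} once the error factors combine.

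The main obstacle will be the sharp control in part (i) on the event $\{|X_{\ell_N}| \le R_N\}$: the naive estimate $u \le 1$ loses a full factor of $N$ and is insufficient in low dimensions unless one exploits the constraint $\widetilde H_B > \ell_N$ to extract the crucial factor $v(x) \sim 1/N$. This is precisely what the killed heat-kernel bound (or the bootstrap variant) will accomplish, and the resulting optimisation of $\alpha$ dictates the loss $\xi/3$ recorded in \eqref{eq:return1}.
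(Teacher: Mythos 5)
Your part~(i) is correct in outline but follows a genuinely different route from the paper. The paper splits the event $\{\ell_N<\widetilde H_B<\infty\}$ according to whether the walk has left the intermediate ball $\tilde B=B_{N^{1+\xi/3}}$ by time $\ell_N$, kills the ``slow'' contribution with the exit-time bound \eqref{e:exittime-mgf}, and handles the ``escaped'' contribution via the strong Markov property at $T_{\tilde B}$, the hitting bound \eqref{eq:hit1}, and the rearrangement $q=P_x[T_{\tilde B}\le\widetilde H_B]-P_x[\widetilde H_B=\infty]$ --- a trick that avoids needing any a priori \emph{upper} bound on escape probabilities (only the lower bound \eqref{eq:bnd_equil_box} is used). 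Your deterministic-time split works, with two caveats. First, the two-sided factorisation $P_x[X_{\ell_N}=y,\widetilde H_B>\ell_N]\le Cv(x)v(y)\ell_N^{-d/2}$ is a Dirichlet heat-kernel boundary estimate that does not follow readily from \eqref{eq:LCLT}; fortunately only one boundary factor is needed, and the Markov property at time $\ell_N/2$ gives $P_x[X_{\ell_N}=y,\widetilde H_B>\ell_N]\le P_x[\widetilde H_B>\ell_N/2]\sup_z p_{\ell_N/2}(z,y)\le C\ell_N^{-d/2}P_x[\widetilde H_B>\ell_N/2]$, which suffices for your summation over $B_{R_N}$. Second, your ``bootstrap'' $P_x[\widetilde H_B>\ell_N]\le C/N$ needs an argument; the cleanest is $P_x[\widetilde H_B>T_{B_{2N}}]\le C\,P_x[\widetilde H_B=\infty]$ (strong Markov at $T_{B_{2N}}$ plus \eqref{eq:hit1}), after which you can simply carry the factor $a_x^{-1}e_B(x)$ through rather than converting it to $1/N$.

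In part~(ii) there is a genuine gap at the word ``hence.'' From the pointwise comparison $g(w,z)=g(0,z)(1+O(N^{-\xi}))$, valid uniformly for $|w|\le 2N$, the last-exit formula $g_B(y',z)=g(y',z)-E_{y'}[g(X_{H_B},z);H_B<\infty]$ only yields the \emph{additive} estimate $g_B(y',z)=g(0,z)\bigl(v(y')+O(N^{-\xi})\bigr)$: you are subtracting two quantities each of size $\asymp g(0,z)$ whose difference is of size $g(0,z)v(y')\asymp g(0,z)/N$, so the $O(N^{-\xi})$ replacement error is not absorbed multiplicatively. Propagating this through the sums over $y'$ and $y$ gives $\bar h_B(z,y)=\bar e_B(y)(1+O(N^{1-\xi}))$ at best, which is weaker than \eqref{normalized_eB} and vacuous precisely in the regime $\xi\le1$ in which the lemma is applied (e.g.\ $\xi=c\varepsilon$ with $\varepsilon$ small in the proof of Proposition~\ref{prop2:cube}). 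The missing idea is that $h(w)=g(w,z)-g(0,z)$ is harmonic near $B$, so optional stopping at $H_B\wedge T_{B_{2N}}$ gives
\begin{equation*}
h(y')-E_{y'}\big[h(X_{H_B});H_B<\infty\big]=E_{y'}\big[h(X_{T_{B_{2N}}});T_{B_{2N}}<H_B\big]-E_{y'}\big[h(X_{H_B});T_{B_{2N}}<H_B<\infty\big],
\end{equation*}
and both terms carry the factor $P_{y'}[T_{B_{2N}}<H_B]\le Cv(y')$; only then does the error become $O(N^{-\xi})g(0,z)v(y')$ and the rest of your computation go through. This upgrade is exactly the content of \cite[Theorem~2.1.3]{Law91}, which the paper invokes (comparing $\bar h_B(z,\cdot)$ with the relative equilibrium measure $\bar e_{B,U}$ for $U=B_{N^{1+\xi}}$ and then comparing $\bar e_{B,U}$ with $\bar e_B$ via the bound on $q$ from part~(i)); as written, your argument silently assumes it.
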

\begin{proof}We first show $i)$.
  The first inequality in \eqref{eq:return1} is immediate. For the second one, abbreviating $\ell=\ell_N$, one writes
  \begin{equation}
    \label{eq:return2}
    P_x[\widetilde{H}_B > \ell] =a_x^{-1}  e_{B}(x) + P_x[\ell < \widetilde{H}_B < \infty].
  \end{equation}
  Defining $\tilde{B} = B(0,N^{1 + \xi/3})$, one estimates the second term in \eqref{eq:return2} by
  \begin{equation}
    \label{eq:return3}
    P_x[\ell \leq \widetilde{H}_B < \infty] \leq P_x[T_{ \tilde{B}} > \ell] + q \text{ where } q = P_x[T_{ \tilde{B}} \leq \widetilde{H}_B < \infty].
  \end{equation}
One knows that, for suitably small $c\,(>0)$,
\begin{equation}  
\label{e:exittime-mgf}
\sup_{x \in B_L} E_x\big[\textstyle\exp\big\{\frac{cT_{B(0,L)}}{L^2}\big\}\big] \leq 2, \ L \geq 1.
\end{equation}
 Applying \eqref{e:exittime-mgf} with $L= N^{1 + \xi/3}$ readily yields that
 \begin{equation}\label{eq:long_excursion} a_x P_x[T_{ \tilde{B}} > \ell] \leq a_x\exp\{ - cN^{\xi/3}\} \leq \exp\{ - c'N^{\xi/3}\} e_B(x),\end{equation}
 where the last bound follows as $e_B(x) \geq \frac cN$, see \eqref{eq:bnd_equil_box}. 
 All that is left is to estimate is $q$. Applying the strong Markov property, one has
  \begin{multline}
    \label{eq:bound_q}
    q  = E_x \Big[ T_{ \tilde{B}} \leq \widetilde{H}_B, P_{X_{T_{ \tilde{B}}}}[\widetilde{H}_B < \infty] \Big]
  \\  \leq \sup_{y \in \tilde{B}^c} P_y [\widetilde{H}_B < \infty]
    P_x [ T_{ \tilde{B}} \leq \widetilde{H}_B ] 
     \overset{\eqref{eq:hit1}}\leq \Cr{hit-box} N^{-\frac{\xi(d - 2)}{3}} P_x [ T_{ \tilde{B}} \leq \widetilde{H}_B ].
  \end{multline}
Moreover, $q$ can also be written as $q = P_x[T_{ \tilde{B}} \leq \widetilde{H}_B] - P_x [\widetilde{H}_B = \infty]$. Substituting this on the left-hand side of \eqref{eq:bound_q} and rearranging terms yields that
  \begin{equation*}
    P_x [T_{ \tilde{B}} \leq \widetilde{H}_B] \leq (1 - \Cr{hit-box}N^{-(\xi/3)(d-2)})^{-1} P_x [\widetilde{H}_B = \infty] \leq  C(\xi) e_B(x).
  \end{equation*}
  Plugging this into \eqref{eq:bound_q} readily gives 
  \begin{equation}\label{eq:bound-q-final}
   q \leq C'(\xi)  
  N^{-(\xi/3)(d-2)} a_x^{-1}  e_B(x).
  \end{equation}
   Applying this bound along with \eqref{eq:long_excursion} in \eqref{eq:return3} yields the second inequality in \eqref{eq:return1}.
  
  We now show $ii)$. For $U\supset B (=B_N)$, let $e_{B,U}(x)= a_xP_x[\widetilde{H}_B> T_U]1\{x \in B\}$ denote the equilibrium measure of $B$ relative to $U$ (so $e_B=e_{B,\Z^d}$, cf.~\eqref{eq:equilib_K}), and $\bar e_{B,U}(\cdot)$ be its normalized version, a probability measure on $\Z^d$. With $U= B(0,N^{1 + \xi})$, Theorem~2.1.3 in \cite{Law91} gives 
\begin{equation}\label{eq:barHA_identity}
\Big|  \frac{\bar h_B(z, y)}{\bar{e}_{B,U}(y)} -1 \Big| \leq C N^{-\xi},
\end{equation}
for all $y \in \partial B$ and $z \in \Z^d \setminus B_{N^{1 + \xi}}$.
In order to show that $\bar{e}_{B,U}(y)$ is comparable to $\bar e_B(y)$, write
\begin{equation*}
a_x^{-1}e_{B,U}(x)= P_{x}[\tilde H_B = \infty] + P_{x}[T_U < \tilde H_B < \infty] =  a_x^{-1}e_B(x) + q
\end{equation*}
where $q$ is as in \eqref{eq:return3} but with $U = B(0,N^{1 + \xi})$ in place of $\tilde B=  B(N^{1 + \xi/3}) $. Using the 
bound on $q$ from \eqref{eq:bound-q-final}, one gets $\big| \frac{\bar{e}_{B,U}(x)}{\bar e_B(x)} -1 \big| \leq C(\xi) N^{-\xi}$ uniformly in $x \in \partial B$. Combined with \eqref{eq:barHA_identity}, this yields \eqref{normalized_eB}, thus completing the proof.
\end{proof}

The next result deals with regularity of exit distributions in the starting point.
\begin{lemma}
For all $L \geq 1$, $B=B_L$, and $z \in \partial_{{\rm out}} B$, letting $\pi_B(x,z) = P_x[X_{T_{B}}=z]$, one has
\begin{equation}
\label{e:exit-dist1}
\Big| \frac{\pi_B(x,z)}{\pi_B(y,z)} -1 \Big| \leq  \frac{C|x-y|}{L}, \quad  x,y \in B_{L/2}.
\end{equation}
\end{lemma}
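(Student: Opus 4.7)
The plan is to exploit the fact that, for each fixed $z \in \partial_{\mathrm{out}} B$, the function $h(x) := \pi_B(x,z) = P_x[X_{T_B}=z]$ is non-negative and discrete-harmonic on $B = B_L$: conditioning on the first step of $X$ gives $h(x) = \sum_{x' \sim x} p(x,x') h(x')$ for $x \in B$. Moreover $\pi_B(y,z)>0$ for $y \in B_{L/2}$ since the lazy walk can reach any $z \in \partial_{\mathrm{out}} B$ from $y$ along a positive-probability path staying inside $\overline{B}$. Thus \eqref{e:exit-dist1} is equivalent to the Lipschitz estimate $|h(x) - h(y)| \leq C|x-y| L^{-1} h(y)$ for $x, y \in B_{L/2}$, which is what I plan to establish.

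To prove this Lipschitz bound, I would combine two classical ingredients for non-negative discrete-harmonic functions on $B_L$. First, a standard discrete gradient estimate (as in \cite[Theorem~1.7.1]{Law91}) yields, for any two neighbors $w \sim w'$ inside $B_{3L/4}$,
\[
|h(w) - h(w')| \leq \frac{C}{L}\max_{B_{3L/4}} h.
\]
Summing this along a nearest-neighbor lattice path from $x$ to $y$ of length at most $C|x-y|$ staying inside $B_{3L/4}$ (such a path exists since $x,y \in B_{L/2}$) gives
\[
|h(x) - h(y)| \leq \frac{C|x-y|}{L}\max_{B_{3L/4}} h.
\]
Second, the discrete Harnack inequality (e.g., \cite[Theorem~1.7.2]{Law91}) applied to the non-negative harmonic function $h$ on $B_L$ yields $\max_{B_{3L/4}} h \leq C\, h(y)$ for any $y \in B_{L/2} \subset B_{3L/4}$. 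Combining these two bounds and dividing by $h(y)$ produces \eqref{e:exit-dist1}.

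The only (very modest) obstacle is the scale bookkeeping: the gradient estimate in its usual form involves the maximum of $h$ over a ball strictly larger than the one in which the gradient is being estimated, which is easily accommodated by nesting $B_{L/2} \subset B_{3L/4} \subset B_L$ and invoking Harnack at the intermediate scale. Should one prefer a self-contained probabilistic route, a coalescence/mirror coupling of two lazy random walks started from $x$ and $y$ yields the same conclusion: a one-dimensional gambler's-ruin estimate for the coordinate-wise difference shows that the two walks merge before exiting $B$ with probability $1 - O(|x-y|/L)$, so that $|h(x) - h(y)| \leq C|x-y| L^{-1} \max_w h(w)$, after which another application of Harnack absorbs the maximum into $h(y)$.
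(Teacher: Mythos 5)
Your proposal is correct and follows essentially the same route as the paper: a discrete gradient estimate (Lawler, Theorem 1.7.1) for the harmonic function $\pi_B(\cdot,z)$ combined with the Harnack inequality to replace the sup norm by $\pi_B(y,z)$. The only differences are cosmetic (your intermediate scale $B_{3L/4}$ versus the paper's $B_{L/2}$, and your explicit remark on positivity of the denominator).
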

\begin{proof} Let $B'= B_{L/2}$.
For fixed $z$, the function $\pi_B(\cdot,z): \overline{B'} \to [0,\infty)$ is harmonic in $B'$. Hence, by repeated use of a gradient estimate, see Theorem 1.7.1 in \cite{Law91}, one obtains for any $x,y \in B'$,
\begin{equation}
\label{e:exit-dist2}
\big| \pi_B(x,z) - \pi_B(y,z) \big| \leq  C L^{-1} |x-y| \Vert \pi_B(\cdot,z) \Vert_{\ell^{\infty}(B')}.
\end{equation}
By the Harnack inequality applied in $B'$ one knows that
\begin{equation}
\label{e:exit-dist3}
\Vert \pi_B(\cdot,z) \Vert_{\ell^{\infty}(B')} \leq C' \pi_B(x,z).
\end{equation}
Together, \eqref{e:exit-dist2} and \eqref{e:exit-dist3} readily yield \eqref{e:exit-dist1}.
\end{proof}

\subsection{Random interlacements}
\label{subsec:RI}
The interlacement point process $\eta^*$ is a Poisson point process on the space $W^* \times \R_+$, $W^* = W/\sim$, of labeled bi-infinite trajectories modulo time-shift (denoted by $\sim$), see e.g.~\cite{MR2525105} for definitions in the present context of the weighted graph $(\Z^d,a)$, cf.~\S\ref{sec:RW}. Let $W^*_U \subset W^*$ denote the set of trajectories visiting~$U \subset \mathbb{Z}^d$ and $\pi^\ast: W\to W^\ast$ denote the canonical projection corresponding to $\sim$.
The intensity measure of $\eta^*$ on $W^* \times \mathbb{R}_+$ is given by $\nu_{\infty}(\mathrm{d}w^*) \mathrm{d}u$, where $\mathrm{d}u$ denotes the Lebesgue measure and the measure $\nu_{\infty}$ on $W^*$ is specified by requiring that $1_{W^*_U} \nu_{\infty} = \pi^* \circ Q_U$, for all $U \subset \subset \Z^d$,  where $Q_U$ is the finite measure on $W$ with 
\begin{equation}\label{e:RI-intensity}
Q_U[(X_{-n})_{n \geq 0} \in {A}, \, X_0 =x , \, (X_{n})_{n \geq 0} \in {A}' ]= P_x[ {A} \, | \, \widetilde{H}_U =\infty \,]e_U(x)P_x[ {A}'],
\end{equation}
 for all $x \in \Z^d$ and ${A}, {A}' \in \mathcal{W}^+$, with $e_U$ as in \eqref{eq:equilib_K}.
We write $(\Omega^*, \mathcal{A}^*, \P)$ for the canonical space of the interlacement point process. Given a realization $\eta^* \in \Omega^*$, the interlacement set $\mathcal{I}^u(\eta^*)$ at level $u > 0$ is defined as in \eqref{eq:def-I}, and $\mathcal{V}^u = \Z^d \setminus \mathcal{I}^u$ is the corresponding vacant set. The parameter $u$, which controls the number of trajectories entering the picture, can for instance becharacterised in terms of the occupation time field $\ell^u= (\ell_x^u)_{x\in \Z^d}$, where for $x \in \Z^d$,
\begin{equation}
\label{e:ell-u}
\ell_x^u(\eta^*) = a_x^{-1} \sum_{i} \sum_{n \in \Z} 1\{ w_i(n)=x, \, u_i \leq u\}, \,\
\end{equation}
if $\eta^*=\sum_i \delta_{(w_i^*, u_i)}$, with $w_i$ such that $\pi^*(w_i)=w_i^*$. One then has that\begin{equation}
\label{e:ell-u-mean}
\E[\ell_x^u] =u, \, \text{ for all } x \in \Z^d;
\end{equation}
i.e.~$u$ is the average number of visits at $x$ by any of the trajectories with label at most $u$. The perspective \eqref{e:ell-u-mean} will be useful later, in order to associate a meaningful scalar parameter $u$ to a (possibly complicated) model comprising trajectories of different types (e.g.~varying length).

We now set up the framework to decompose trajectories into excursions. We then present a straightforward coupling result for the excursions associated to $\mathcal{I}^u$, which will be useful in the next section.
We assume henceforth that for \textit{any} realization $\eta^* =\sum_{i \geq 0} \delta_{(w_i^*, u_i)} \in \Omega^* $, the labels $u_i$, $i \geq 0$, are pairwise distinct, that $\eta^*(W_A^*\times[0,u])<\infty$ for all $u \geq 0$ and that $\eta^*(W_A^*\times \R_+)=\infty$, which is no loss of generality since these sets have full $\P$-measure.

Let $A ,U$ be finite subsets of $\mathbb{Z}^d$ with 
$\emptyset \neq A \subset U$. The infinite, resp.~doubly infinite 
transient trajectories, elements of $W^+$, resp.~$W$, are split into excursions between $A$ and $\partial_{{\rm out}} U$ by introducing the following successive return and departure times 
between these sets. Let $D_0 =0$ and $R_k = D_{k-1} + H_A \circ \theta_{D_{k-1}}$, $D_k = R_k + T_U \circ \theta_{R_k}$, for $k \geq 1$, where all of $D_k, R_j, D_j$, $j > k$ are understood to be $=\infty$ whenever $R_k=\infty$ for some $k \geq 0$.
We denote by $W_{A, \partial_{{\rm out}} U}^+$ the set of all excursions between $A$ and $\partial_{{\rm out}} U$, i.e.~all finite trajectories starting in $A$, ending in $\partial_{{\rm out}}U$ and not exiting $U$ in between. 
Given $\eta^*=\sum_{i \geq 0} \delta_{(w_i^*, u_i)}$, we order all the excursions from $A$ to $\partial_{{\rm out}} U$, first by increasing value of $\{u_i: w_i^* \in W_A^*\}$, 
then by order of appearance within a given trajectory $w_i^* \in W_A^*$. This yields a sequence of 
$W^+_{A, \partial_{{\rm out}} U}$-valued random variables under $\P$, encoding the successive 
excursions,
\begin{equation}
\label{eq:RI_Z}
\big( Z^{A, U}_n(\eta^*) \big)_{n \geq 1} = \big( w_0[R_1, D_1], \dots, 
w_0[R_{N^{A, U}}, D_{N^{A, U}}],\, w_1[R_1, D_1], \dots \big),
\end{equation}
where $N^{A,U}= N^{A,U}(w_0^*)$ is the total number of excursions from $A$ to $\partial_{{\rm out}} U$ in $w_0^*$, i.e. $N^{A, U}(w_0^*) = \sup\{j : D_j(w_0) < \infty\}$ and $w_0$ is any point in the equivalence class $w_0^*$. We will omit the superscripts $A,U$ whenever no risk of confusion arises.

We now proceed to couple the excursions \eqref{eq:RI_Z} induced by the interlacements with a suitable family of i.i.d.~excursions between $A$ and $\partial_{\rm out} U$. The corresponding result appears in Lemma~\ref{L:RI_basic_coupling} below. For $x\in \Z^d$, let $Q_x$ be the joint law of two independent simple random walks $X^1$, $X^2$ on $\Z^d$, respectively sampled from $P_x$ and from $P_{\bar{e}_A}$, the normalized equilibrium measure on $A$, see below \eqref{eq:cap_K} for notation.
Define
\begin{equation}
\label{eq:RI_Y}
Y = \begin{cases}
X^1 \circ \theta_{H_A^1}, \quad & \text{if $H_A^1\stackrel{\text{def.}}{=}\inf\{ n \geq 0: X^1_n \in A\} < \infty$}\\
X^2, & \text{otherwise,}
\end{cases}
\end{equation}
and observe that by the Markov property for the simple random walk and the defining properties of $\eta^*$, the law of $(Z_n(\eta^*))_{n \geq 1}$ is characterized as follows: 
$\P[Z_1 = w] = P_{\bar{e}_A}[ X_{[0, T_U]} = w]$ for all $w \in W^+_{A, \partial_{{\rm out}} U}$, and for all $n \geq 2$,
\begin{equation}
\label{eq:RI_Zlaw}
\P[Z_n = w \, | \, Z_1,\dots, Z_{n-1}] = Q_{Z_{n-1}^{\text{end}}}[\,Y_{[0, T_U]}  =  w], \text{ for all $w \in \partial W^+_{A, \partial_{{\rm out}} U}$},
\end{equation}
where $Z_{n-1}^{\text{end}} \in \partial_{{\rm out}} U$ is the endpoint of $Z_{n-1}$.

Let $\mu$ be a finite positive measure supported on $ A$ and write $\bar{\mu}$ for the 
normalized probability measure $\mu / \mu(A)$. The following simple result supplies a coupling between the 
(dependent) sequence $Z= (Z_n)_{n \geq 1}$ (under $\P$) given by \eqref{eq:RI_Z} and an i.i.d.~sequence of random 
variables $\widetilde{Z}= (\widetilde{Z}_n)_{n \geq 1}$ having common distribution $P_{\bar 
\mu}[X_{[0,T_U]} = \cdot \,]$ in a way that certain inclusions, tailored to our later purposes, hold with high probability under 
suitable assumptions on ${\mu}$. In the sequel, we let $Z_v = Z_{\lfloor v \rfloor \vee 1}$ for $v\in \R_+$.

\begin{lem}[Coupling $Z$ and $\widetilde{Z}$]
  \label{L:RI_basic_coupling} For all sets $A,U$ with $ \emptyset \neq A \subset U \subset \subset \Z^d$, there exists a probability measure $\mathbb{Q} = \mathbb{Q}_{A,U}$ with the following property. If, for some $\delta \in (0,1)$,
\begin{align}
&Q_y[Y_0 = x] \leq \textstyle \big(1+\frac{\delta}{4}\big)  \bar{e}_A(x), \text{ for all $y\in \partial_{{{\rm out}}} U$ and $x \in \partial A$} \label{eq:RI_cond_Q}
\end{align}
and $\mu$ is some 
(finite) measure supported on $A$ satisfying
\begin{align}
&\mu(x) \geq \textstyle \big(1-\frac{\delta}{4}\big)  e_A(x), \text{ for all $x \in A$,} \label{eq:RI_cond_mu}
\end{align}
then for every $\alpha \in (0,\infty)$ there exists an event $\mathcal{U}_{\alpha}$ with
\begin{equation}
\label{eq:RI_basic_coupling1}
\mathbb{Q}[\mathcal{U}_{\alpha}^{\mathsf{c}}] \leq C e^{-c \delta \alpha \textnormal{cap}(A)}
\end{equation}
and $\mathbb{Q}$ carries a coupling of the sequences $Z$ and $\widetilde{Z}$ such that, on $\mathcal{U}_{\alpha}$, for all $\alpha' \geq \alpha$,
\begin{equation}
\label{eq:RI_basic_coupling2}
 \{ Z_1,\dots Z_{(1-\delta)\alpha' \textnormal{cap}(A)} \} \subset  \{ \widetilde{Z}_1,\dots \widetilde{Z}_{(1+\delta)\alpha' \mu(A)} \}.
\end{equation}
\end{lem}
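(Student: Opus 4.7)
The plan is to build the coupling by acceptance--rejection sampling on the starting points of excursions and then reduce the tail estimate \eqref{eq:RI_basic_coupling1} to a Chernoff bound for Bernoulli trials. By \eqref{eq:RI_Z}--\eqref{eq:RI_Zlaw} together with \eqref{eq:RI_Y}, the process $(Z_n)_{n \geq 1}$ is a Markov chain whose step, given $y := Z_{n-1}^{\text{end}}$, draws a starting point $x \in \partial A$ with density $p_y(x) := Q_y[Y_0 = x]$ (with the convention $p_\emptyset := \bar e_A$ for $n = 1$), and then runs a random walk from $x$ up to $T_U$. The i.i.d.~sequence $(\widetilde Z_k)_{k \geq 1}$ is built identically except that its start is drawn from $\bar\mu$. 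Combining \eqref{eq:RI_cond_Q} and \eqref{eq:RI_cond_mu} yields, uniformly in $y \in \partial_{\textnormal{out}} U$ and $x \in A$ (interpreting $0/0 := 0$ on the interior of $A$),
\[
\frac{p_y(x)}{\bar\mu(x)} \leq \frac{1+\delta/4}{1-\delta/4} \cdot \frac{\mu(A)}{\textnormal{cap}(A)} \leq M, \qquad M := (1+\delta)\frac{\mu(A)}{\textnormal{cap}(A)},
\]
for $\delta \in (0,1)$, which is exactly the envelope needed for rejection sampling with proposal $\bar\mu$.

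I would then construct $\mathbb Q$ explicitly. Sample $(\widetilde Z_k)_{k \geq 1}$ together with independent uniforms $(U_k)_{k \geq 1}$ on $[0,1]$, let $x_k$ denote the start of $\widetilde Z_k$, and define $(Z_n)_{n \geq 1}$ inductively by $k(0) := 0$, $y_0 := \emptyset$, and
\[
k(n) := \min\{k > k(n-1) : U_k \leq p_{y_{n-1}}(x_k) / (M \bar\mu(x_k))\}, \quad Z_n := \widetilde Z_{k(n)}, \quad y_n := Z_n^{\text{end}}.
\]
Standard verification of rejection sampling shows that $(Z_n)$ has the law prescribed by \eqref{eq:RI_Zlaw}, and by construction $\{Z_1,\dots,Z_n\} \subset \{\widetilde Z_1,\dots,\widetilde Z_{k(n)}\}$. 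The crucial observation is that the per-trial acceptance probability, conditional on any history and any $y$, equals $\sum_x \bar\mu(x)\, p_y(x)/(M\bar\mu(x)) = 1/M$, since each $p_y$ is a probability measure. Hence the indicators $\xi_k := \mathbf{1}\{U_k \leq p_{y_{(\cdot)}}(x_k)/(M\bar\mu(x_k))\}$ are i.i.d.~Bernoulli$(1/M)$ under $\mathbb Q$, and $S_n := \xi_1 + \cdots + \xi_n$ is Binomial with mean $n/M$.

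Set
\[
\mathcal U_\alpha := \big\{ S_{\lfloor (1+\delta)\alpha'\mu(A)\rfloor} \geq \lceil (1-\delta)\alpha'\textnormal{cap}(A) \rceil \text{ for all } \alpha' \geq \alpha \big\}.
\]
On $\mathcal U_\alpha$, one has $k(\lceil (1-\delta)\alpha'\textnormal{cap}(A)\rceil) \leq \lfloor (1+\delta)\alpha'\mu(A)\rfloor$, which gives \eqref{eq:RI_basic_coupling2}. Since $\mathbb E[S_{(1+\delta)\alpha'\mu(A)}] = \alpha'\textnormal{cap}(A)$, the threshold is a factor $1-\delta$ below the mean, so $\mathcal U_\alpha^c$ is a lower-tail deviation of relative size $\delta$. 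For each fixed $\alpha'$, Cram\'er--Chernoff yields a bound of order $\exp(-c\delta^2 \alpha'\textnormal{cap}(A))$. To obtain this uniformly over $\alpha' \geq \alpha$, I would dyadically partition $[\alpha, \infty)$ into blocks $[2^j\alpha, 2^{j+1}\alpha)$ and apply Doob's maximal inequality to the exponential supermartingale $M_n := \exp(-\theta S_n + n\psi(\theta))$ (with $\psi(\theta) := \log \mathbb E[e^{-\theta \xi_1}]$) within each block, optimizing in $\theta$. Summing the geometric series in $j$ yields
\[
\mathbb Q[\mathcal U_\alpha^c] \leq C \exp(-c\delta^2 \alpha\, \textnormal{cap}(A)),
\]
which gives \eqref{eq:RI_basic_coupling1} after adjusting constants in the exponent (using $\delta \in (0,1)$).

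The main obstacles are the two non-obvious ingredients: (i) recognizing that, despite the history-dependent acceptance densities $p_y(\cdot)/(M\bar\mu(\cdot))$, each trial succeeds with the constant probability $1/M$, so the tail analysis collapses to a clean Binomial deviation; and (ii) the uniformization in $\alpha'$, which must be handled via a maximal inequality (a crude union bound over all $\alpha'$ would be wasteful). Everything else---the correctness of the rejection scheme, the fact that the trajectory of the accepted $\widetilde Z_{k(n)}$ automatically has the correct conditional law given its start---is routine.
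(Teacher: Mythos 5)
Your argument is correct, but it follows a genuinely different route from the paper's. The paper implements the soft local time machinery of Popov--Teixeira: it realizes both $Z$ and $\widetilde Z$ from a marked Poisson process on $W^+_{A,\partial_{\rm out}U}\times\R_+$, builds accumulated densities $G_n$, $\widetilde G_n$ on $\partial A$ via exponential clocks (see \eqref{eq:RI_basic_coupling3}--\eqref{eq:RI_basic_coupling4}), derives \eqref{eq:RI_basic_coupling2} from the pointwise comparison $G_{v_1}\leq\widetilde G_{v_2}$ via \eqref{eq:RI_basic_coupling5}, and controls the error event through large deviations of the two unit-rate Poisson counting processes in \eqref{eq:RI_basic_coupling6}. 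You instead use classical acceptance--rejection with proposal $\bar\mu$ and the constant envelope $M=(1+\delta)\mu(A)/\textnormal{cap}(A)$, which is legitimate precisely because \eqref{eq:RI_cond_Q}--\eqref{eq:RI_cond_mu} give the uniform pointwise domination $p_y\leq M\bar\mu$ (your check $\tfrac{1+\delta/4}{1-\delta/4}\leq 1+\delta$ is valid for $\delta\in(0,1)$, and $\bar\mu(x)=0$ forces $e_A(x)=0$ and hence $p_y(x)=0$, so the ratio is everywhere controlled). The decisive observation — that the acceptance rate is exactly $1/M$ irrespective of the history, so the $\xi_k$ are i.i.d.\ Bernoulli$(1/M)$ and the error event collapses to a Binomial lower tail — is correct, and the marginals come out right: $\P[x_k=x,\,\xi_k=1\mid\mathcal F_{k-1}]=p_{y}(x)/M$ reproduces the transition kernel \eqref{eq:RI_Zlaw} (after the one-line remark that under $Q_y$, conditionally on $Y_0=x$, the process $Y$ is a walk from $x$ in both cases of \eqref{eq:RI_Y}), the acceptance test reads only the starting point, so the continuation of an accepted proposal retains the correct law, and the inclusion is automatic since $Z_n=\widetilde Z_{k(n)}$. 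What each approach buys: yours is self-contained and more elementary for this lemma; the paper's is the uniform template it must reuse later (Lemmas~\ref{lem:easy_coup_bnd} and~\ref{L:obs-condcoup}), where no constant envelope is available and only the soft-local-time comparison of accumulated densities survives.

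Two minor points. First, your exponent is $\delta^2\alpha\,\textnormal{cap}(A)$ rather than the stated $\delta\alpha\,\textnormal{cap}(A)$, which for $\delta\in(0,1)$ is formally weaker; but the paper's own Poisson lower-tail estimate (a relative deviation of order $\delta$ from the mean) yields the same $\delta^2$, and every downstream use (e.g.\ \eqref{eq:cube13}) only needs a fixed power of $\delta$, so this is a shared, harmless imprecision rather than a gap in your argument. Second, the exponential supermartingale should be $\exp(-\theta S_n-n\psi(\theta))$ (sign of the compensator); in any case the uniformization over $\alpha'\geq\alpha$ can also be done by a union bound over the integer values of $\lfloor(1+\delta)\alpha'\mu(A)\rfloor$, absorbing the resulting prefactor into $C$ and $c$.
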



\begin{proof}
We use a version of the soft local time technique from \cite{PopTeix}. We consider, under a suitable probability $\mathbb{Q}$, a Poisson point 
process $\eta=\sum_{\lambda} \delta_{(z_{\lambda},u_{\lambda})}$ on 
$W_{A, \partial_{{\rm out}} U}^+ \times \R_+$ with intensity measure \begin{equation*}\nu (D \times [0,v])= v \sum_{x \in \partial A} P_x[X_{[0, 
T_U]} \in  D ]\, \end{equation*}
for $v>0$ and measurable set $D \subset W_{A, \partial_{{\rm out}} U}^+$, and introduce the random variables (functions of $\eta$)
\begin{equation}
\label{eq:RI_basic_coupling3}
\begin{split}
&\xi_1 = \inf\{ s \geq 0: \exists \lambda \text{ s.t. } s \, \bar{e}_A(z_{\lambda}(0)) \geq u_{\lambda} \},\\
&G_1(x)= \bar{e}_A(x) \xi_1, \text{ for } x \in \partial A.
\end{split}
\end{equation}
Let $(z_1,u')$ denote the $\mathbb{Q}$-a.s.~unique pair among the points $ (z_{\lambda},u_{\lambda})$ in $\text{supp}(\eta)$ such that $\xi_1  \bar{e}_A(z_1(0))$ $= 
u'$. 
For $n \geq 2$, one defines recursively (recall \eqref{eq:RI_Y}, \eqref{eq:RI_Zlaw})
\begin{equation}
\label{eq:RI_basic_coupling4}
\begin{split}
&\xi_n = \inf\big\{ s \geq 0: \exists (z_\lambda, u_{\lambda}) \notin \{(z_k,u_k)\}_{ 1\leq k < n }\text{ s.t. }  G_{n-1}(z_{\lambda}(0))+ s \, Q_{z_{n-1}^{\text{end}}}[Y_0 = z_{\lambda}(0)] \geq u_{\lambda} \big\},\\
&G_n(x)= G_{n-1}(x) +  \xi_n Q_{z_{n-1}^{\text{end}}}[Y_0 = x], \text{ for } x \in \partial A.
\end{split}
\end{equation}
Similarly, one defines sequences $(\tilde{\xi}_n)_{n \geq1}$, $(\tilde{z}_n, \tilde{u}_n)_{n \geq 1}$ and $(\widetilde{G}_n(\cdot))_{n \geq1}$, replacing all occurrences of $\bar{e}_A(\cdot)$ by $\bar{\mu}(\cdot)$ in \eqref{eq:RI_basic_coupling3} and
$Q_{z_{n-1}^{\text{end}}}[Y_0 =\cdot]$ by $\bar{\mu}(\cdot)$ in \eqref{eq:RI_basic_coupling4}. 

In view of \eqref{eq:RI_Zlaw}, it then follows by Propositions 
4.1 and 4.3 in \cite{PopTeix} that for all $n \geq 1$, the sequence $(z_1,\dots,z_n)$ has the 
same law under $\mathbb{Q}$ as $(Z_1,\dots, Z_n)$ under $\P$ and is independent of 
$(\xi_1,\dots,\xi_n)$, which are i.i.d.~mean one exponential random variables. Similarly, $(\tilde{z}_1, \dots, \tilde{z}_n) \stackrel{\text{law}}{=} (\widetilde{Z}_1, 
\dots, \widetilde{Z}_n)$, which is independent of $(\tilde\xi_1,\dots,\tilde\xi_n)$, another 
sequence of i.i.d.~mean one exponential variables. In particular $\mathbb{Q}$ provides a 
coupling between $Z$ and $\widetilde{Z}$ and by construction (see \cite{PopTeix}, Corollary 
4.4), for any $0\leq v_1 \leq v_2$,
\begin{equation}
\label{eq:RI_basic_coupling5}
\big\{ G_{v_1}(x) \leq \widetilde{G}_{v_2}(x) \text{ for all $x \in \partial A$} \big\} \subset \big\{ \{ Z_1,\dots Z_{v_1} \} \subset  \{ \widetilde{Z}_1,\dots \widetilde{Z}_{v_2} \} \big\},
\end{equation}
where $G_v(\cdot) = G_{\lfloor v \rfloor \vee 1}(\cdot)$ for $v\in \R_+$.

For the remainder of the proof, let $v_1= (1-\delta)\alpha' \textnormal{cap}(A)$ and $v_2= (1+\delta)\alpha' \mu(A)$. Note that $v_1$ and $v_2$ depend implicitly on $\alpha' >0$ and that $v_1 \leq v_2$ by \eqref{eq:RI_cond_mu}. We proceed to define an event $\mathcal{U}_{\alpha}$ satisfying \eqref{eq:RI_basic_coupling1} which will imply the event appearing on the left-hand side of \eqref{eq:RI_basic_coupling5} for any $\alpha' \geq \alpha$, thus completing the proof. Let $\mathcal{P}_t =\sup \{ k \geq 0: \sum_{n=1}^k \xi_n \leq t\}$ and define $(\widetilde{\mathcal{P}}_t)_{t \geq 0}$ similarly, with $\tilde{\xi}_n$ in place of $\xi_n$. Thus $(\mathcal{P}_t)_{t \geq 0}$ and $(\widetilde{\mathcal{P}}_t)_{t \geq 0}$ are each Poisson counting processes with unit intensity, vanishing at time $t=0$. Define
\begin{equation}
\label{eq:RI_basic_coupling6}
\mathcal{U}_{\alpha} = \big\{ \forall \alpha' \geq \alpha: \mathcal{P}_{(1-\frac{\delta}{2}) \alpha' \textnormal{cap}(A)} \geq v_1 \text{ and }  \widetilde{\mathcal{P}}_{(1+\frac{\delta}{2}) \alpha' \mu(A)} \leq v_2 \big\}.
\end{equation}
By a union bound, standard large-deviation estimates for Poisson variables, and using that $\mu(A) \geq c \,\text{cap}(A)$, which follows from \eqref{eq:RI_cond_mu}, one sees that the event $\mathcal{U}_{\alpha}$ 
defined in \eqref{eq:RI_basic_coupling6} satisfies \eqref{eq:RI_basic_coupling1}. Moreover, when 
$\mathcal{U}_{\alpha}$ occurs, using that $ (1+\frac{\delta}{4}) (1-\frac{\delta}{2}) \leq  (1-\frac{\delta}{4}) 
(1+\frac{\delta}{2})  $, which follows since $x\in\R_+ \mapsto \frac{1-x}{1+x}$ is decreasing, one obtains for all $x \in\partial A$
\begin{equation*}
\begin{split}
G_{v_1}(x)&\stackrel[\eqref{eq:RI_basic_coupling4}]
{\eqref{eq:RI_basic_coupling3}}{=}  \bar{e}_A(x) \xi_1 + \sum_{2\leq n\leq v_1} \sum_{y \in \partial_{\rm out} U}P_{Z_{n-1}}[X_{T_U} =y]Q_y[Y=x] \xi_n\\
&\stackrel{\eqref{eq:RI_cond_Q}}{\leq} \Big(1+\frac{\delta}{4}\Big)  \bar{e}_A(x) \sum_{1\leq n \leq v_1} \xi_n \stackrel{\eqref{eq:RI_basic_coupling6}}{\leq}  \Big(1+\frac{\delta}{4}\Big) \Big(1-\frac{\delta}{2}\Big) \alpha'  {e}_A(x)\\
& \stackrel{\eqref{eq:RI_cond_mu}}{\leq}  \Big(1+\frac{\delta}{2}\Big) \alpha'  \mu(A) \bar{\mu}(x) \stackrel{\eqref{eq:RI_basic_coupling6}}{\leq} \bar{\mu}(x) \sum_{1\leq n \leq v_2} \tilde{\xi}_n \stackrel{\eqref{eq:RI_basic_coupling4}}{=}\widetilde{G}_{v_2}(x),
\end{split}
\end{equation*}
as claimed.
\end{proof}
\subsection{Chaining of couplings}
\label{A:chaining}
We conclude this preliminary section with a simple result used repeatedly throughout the text, see Lemma~\ref{lem:concatenation} below, to the effect of concatenating two (or more) couplings having 
one marginal in common. The following setup 
will be more than sufficient for our purposes. Let $X$ and $Y$ be two random variables defined on the 
same probability space $(\Omega, \mathcal A, \mathbb Q)$, with $X$ taking values in some Polish space $E_X$, equipped with its Borel $\sigma$-algebra $\mathcal E_X$, and $Y$ in the measure space $(E_Y, \mathcal{E}_Y)$. Although this is not needed for what follows, in practice all relevant random variables will be in $L^1$. 

We first briefly recall the following central aspects of regular conditional distributions when conditioning on $Y$ in the above setup. One knows (see, e.g.~\cite[Theorems~8.36 and 8.37]{KlenkeA20} for a proof) that there exists a map
\begin{equation}
	\label{a:chain1}
	\kappa_{X,Y}: E_Y \times \mathcal{E}_X \to [0,1], 
\end{equation}
with the following properties: i) for each $A \in \mathcal{E}_X $, the map $y \mapsto \kappa_{X,Y}(y,A)$ is $\mathcal{E}_Y$-measurable, ii) for every $y \in E_Y$, $A \mapsto \kappa_{X,Y}(y,A)$ is a probability measure on $(E_X,  \mathcal{E}_X$ and iii) $\kappa_{X,Y}$ is a version of the conditional distribution, that is, for all $B \in \mathcal{E}_Y$,
\begin{align}
	& \int_{B} \kappa_{X,Y}(y,A)  ( \mathbb{Q} \circ Y^{-1}) (dy) = \mathbb{Q}[X \in A, \, Y \in B ]. \label{a:chain2}
\end{align}
In particular, \eqref{a:chain2} immediately yields that
\begin{align}\label{a:chain3}
	&\begin{array}{l}\text{if $\kappa(\cdot) \equiv \kappa_{X,Y}(y,\cdot)$ does not depend on $y \in E_Y$,}\\
		\text{then $\kappa \stackrel{\text{law}}{=} \mathbb{Q} \circ X^{-1}$ and $X$ and $Y$ are independent.} \end{array}
\end{align}
We now proceed to concatenate two (coupling) measures $\mathbb Q_1$ and $\mathbb Q_2$ having a common marginal.
For simplicity, all the random variables appearing in the next lemma are tacitly assumed to take values in (possibly different) Polish spaces equipped with their respective Borel $\sigma$-algebra.
\begin{lemma}[Chaining of couplings]\label{lem:concatenation}
	Let $(X, Y)$ and $(Y', Z)$ be pairs of random variables defined on $(\Omega_1, A_1, \mathbb Q_1)$ and 
	$(\Omega_2, \mathcal A_2, \mathbb Q_2)$ respectively such that $Y \stackrel{{\rm law}}{=} Y'$. Then there 
	exists a probability space $(\Omega, \mathcal 
	A, \mathbb Q)$ carrying a triplet of random variables $(X'', Y'', Z'')$ such that 
	\begin{equation}\label{a:chain3.5}
		\text{$(X'', Y'') \stackrel{{\rm law}}{=} (X, Y)$ and $(Y'', Z'') \stackrel{{\rm law}}{=} (Y', 
			Z)$.}
	\end{equation}
	In particular, $\mathbb Q$ is a coupling of (the laws of) $X$, $Y$ and $Z$. 
\end{lemma}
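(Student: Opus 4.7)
The plan is to construct $\mathbb{Q}$ by disintegrating both $\mathbb{Q}_1$ and $\mathbb{Q}_2$ along the common marginal, and then glueing the two resulting kernels over an independent copy of that marginal. Concretely, denote by $\lambda$ the common law of $Y$ under $\mathbb{Q}_1$ and of $Y'$ under $\mathbb{Q}_2$, and apply the existence result for regular conditional distributions recalled around \eqref{a:chain1}--\eqref{a:chain2} to obtain Markov kernels
\[
\kappa_{X|Y}: E_Y \times \mathcal{E}_X \to [0,1], \qquad \kappa_{Z|Y'}: E_Y \times \mathcal{E}_Z \to [0,1],
\]
such that $\mathbb{Q}_1[X \in A,\, Y \in B] = \int_B \kappa_{X|Y}(y, A)\, \lambda(dy)$ and analogously for $\mathbb{Q}_2$, for all measurable $A \subset E_X$, $B \subset E_Y$, $C \subset E_Z$.

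Next, I would take $(\Omega, \mathcal{A}) = (E_X \times E_Y \times E_Z,\, \mathcal{E}_X \otimes \mathcal{E}_Y \otimes \mathcal{E}_Z)$ with coordinate maps $X''$, $Y''$, $Z''$, and define the probability measure $\mathbb{Q}$ on rectangles by
\[
\mathbb{Q}[X'' \in A,\, Y'' \in B,\, Z'' \in C] \;=\; \int_B \kappa_{X|Y}(y, A)\, \kappa_{Z|Y'}(y, C)\, \lambda(dy),
\]
extended to $\mathcal{A}$ by Carathéodory's theorem (the integrand is jointly measurable in $y$ by property (i) of regular conditional distributions, so the right-hand side defines a countably additive set function). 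In plain terms, sample $Y''$ from $\lambda$ and then, conditionally on $Y'' = y$, sample $X''$ and $Z''$ independently from $\kappa_{X|Y}(y, \cdot)$ and $\kappa_{Z|Y'}(y, \cdot)$ respectively.

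To verify \eqref{a:chain3.5}, I would compute the law of $(X'', Y'')$: setting $C = E_Z$ above makes the $\kappa_{Z|Y'}$-factor equal to $1$, leaving exactly the disintegration of $\mathbb{Q}_1 \circ (X, Y)^{-1}$, so $(X'', Y'') \stackrel{\text{law}}{=} (X, Y)$; the symmetric computation with $A = E_X$ yields $(Y'', Z'') \stackrel{\text{law}}{=} (Y', Z)$. In particular $\mathbb{Q}$ is a coupling of the laws of $X$, $Y$ and $Z$, as claimed.

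The only genuine subtlety is the existence of the regular conditional kernels $\kappa_{X|Y}$ and $\kappa_{Z|Y'}$, which is why the Polish-space hypothesis is imposed; this is precisely what was recorded in \eqref{a:chain1}--\eqref{a:chain2}, so no extra work is needed. Note also that, as an immediate by-product of the construction and of the observation \eqref{a:chain3}, if $\kappa_{Z|Y'}(y, \cdot)$ does not depend on $y$ then $Z''$ is independent of $(X'', Y'')$ under $\mathbb{Q}$, a property that will be convenient in the applications where one of the two couplings is a product.
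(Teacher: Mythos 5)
Your proof is correct and takes essentially the same route as the paper: both construct the conditionally-independent gluing of $X$ and $Z$ over the common marginal, the only cosmetic difference being that the paper disintegrates just $\mathbb Q_2$ (via the kernel $\kappa_{Z,Y'}$) and integrates it against the full joint law $\mathbb Q_1\circ(X,Y)^{-1}$ as in \eqref{a:chain6}, which yields exactly the measure you obtain after additionally disintegrating $\mathbb Q_1$ over $\lambda$. Your marginal checks and the closing observation about independence when the kernel is constant line up with \eqref{a:chain2}--\eqref{a:chain3}, so nothing further is needed.
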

We refer to the marginal of $\mathbb Q$ on $(X'', Z'')$, which is a 
coupling between the laws of $X$ and $Z$, as a coupling obtained by {\em concatenating} $\mathbb 
Q_1$ and $\mathbb Q_2$ (but see Remark~\ref{R:chain},1)).

\begin{proof}
	Suppose that $X, Y$ (hence $Y'$) and $Z$ take values in $(E_X, \mathcal E_X)$, $(E_Y, \mathcal E_Y)$ and 
	$(E_Z, \mathcal E_Z)$ respectively. Define the 
	measurable space $(\Omega, \mathcal A) = (E_1 \times E_2 \times E_3, \mathcal E_1 
	\otimes \mathcal E_2 \otimes E_3)$, the random variables $X'',Y'',Z''$ to be the projections on the first, second and third coordinate, respectively, and the probability measure $\mathbb Q$ as follows. For any $A_i \in \mathcal E_i$, $i=1,2,3$,
	\begin{equation}\label{a:chain6}
		\mathbb Q [X'' \in A_1, \, Y'' \in A_2, \, Z'' \in A_3] \stackrel{\text{def.}}{=} \int_{A_1 \times A_2} \kappa_{Z,Y'}(y, A_3) \, (\mathbb Q_1 \circ (X, Y)^{-1}) (dx, dy)
	\end{equation}
	where $\kappa_{Z,Y'}$ refers to the regular conditional distribution of $Z$ given $Y'$ under $\mathbb{Q}_2$, cf.~\eqref{a:chain1}. It follows directly from \eqref{a:chain6} and \eqref{a:chain2} by integrating over various subsets of the coordinates that $\mathbb Q$ is a probability measure satisfying \eqref{a:chain3.5}. 
\end{proof}

\begin{remark} \label{R:chain}
\begin{enumerate}[label*=\arabic*)] \label{R:chain}
		\item The conclusions of Lemma~\ref{lem:concatenation} do not uniquely characterize $\mathbb{Q}$. Another measure (on the same space $(\Omega, \mathcal{A})$) with the same properties is given by replacing the right-hand side of \eqref{a:chain6} by $\int_{A_2 \times A_3} \kappa_{X,Y}(y, A_1) \, (\mathbb Q_2 \circ (Y, Z)^{-1}) (dy, dz)$.

\item \label{R:chain-appli} A typical application of Lemma~\ref{lem:concatenation} is as follows. Suppose $X,Y, Z$ are random sets (subsets of $\Z^d$, say). If for some $\varepsilon_1, \varepsilon_2 \geq 0$ (possibly $=0$),
		\begin{equation}
			\mathbb Q_1[X \subset Y] 
			\geq 1- \varepsilon_1, \  \mathbb Q_2[Y' \subset Z ] \geq 1- \varepsilon_2,
			\label{a:chain4}
		\end{equation}
		then for $\mathbb Q$ as supplied by Lemma~\ref{lem:concatenation}, by \eqref{a:chain3.5} and a union bound, one obtains that
		\begin{equation}
			\mathbb Q[X'' \subset Z'' ] \geq 1- \varepsilon_1 - \varepsilon_2.
			\label{a:chain5}
		\end{equation}
		Loosely speaking, in view of \eqref{a:chain4}-\eqref{a:chain5}, the concatenation $\mathbb{Q}$ preserves inclusions with high probability.
	\end{enumerate}
\end{remark}

\section{Random $\rho-$interlacements and local couplings}\label{sec:local_coup}
We now introduce a framework of 
interlacement processes with trajectories of varying spatial intensity and (time-)length, parametrized by an intensity measure $\rho$, 
see \eqref{e:this-is-rho} below. We call these $\rho$-interlacements. The corresponding interlacement set $\mathcal{I}^{\rho}$, see \eqref{eq:prelim2}, allows in principle for (forward) trajectories of any length started anywhere in space. 
In particular, it can be used to describe both the usual interlacement set $\mathcal{I}^u$, see 
\eqref{eq:def-I}, as well as the finite range models $\mathcal{I}^{f,L}$ from \eqref{eq:J}, but the measure $\rho$ allows for more flexibility, which will be needed in due time; recall for instance the discussion below Theorem~\ref{thm:main III}, which involves a choice of random environment $\mathcal{I}$ that can be quite involved (e.g.~non-homogenous). 

After introducing $\rho$-interlacements in \S\ref{subsec:class-mu-gen} and gathering a few 
generalities, including a simple but important re-rooting property, see Lemma~\ref{L:reroot}, we develop in \S\ref{subsec:loc-coup} two couplings, see Propositions~\ref{prop1:cube} and~\ref{prop2:cube}, which provide conditions on $\rho$ under 
which the induced interlacement set  $\mathcal{I}^{\rho}$ can be \textit{locally} 
compared to a full interlacement $\mathcal{I}^u$ at suitable intensity $u>0$. 
 Each proposition yields 
one of two possible inclusions. In particular, the mean occupation time density $\bar \ell^\rho=(\bar \ell_{x}^\rho)_{x\in \Z^d}$ corresponding to $\mathcal{I}^{\rho}$, introduced in \eqref{eq:occtime} below, plays a key role in associating a scalar parameter $u>0$ to $\mathcal{I}^{\rho}$ and facilitating a comparison with $\mathcal{I}^u$, see \eqref{eq:cubecond1.1} and \eqref{eq:cubecond2.2}. The mean occupation time density $\bar \ell=\bar \ell^\rho$ will also figure prominently later on and allow to formulate within more complicated setups stringent conditions on the `environment' (recall \S\ref{subsec:env}) that can nonetheless be verified with bounded effort, see for instance \eqref{eq:disconnect_background0} and (the proof of) Lemma~\ref{L:Cobst-ver}.

Returning to matters in the present section, similarly as in the discussion following \eqref{eq:conv-law}, the attribute `local' in the context of Propositions~\ref{prop1:cube} and~\ref{prop2:cube} below refers to the fact that the smallest length  scale 
$\ell$ in the support of $\rho$ satisfies $\ell \gg N^2$, where $N$ denotes the linear size of the box in 
which the coupling is constructed; we refer to Remark~\ref{R:loc-coup} for more on this. Roughly speaking, Propositions~\ref{prop1:cube} and~\ref{prop2:cube} are the best one can hope for when adapting available techniques to the present framework and pushing them to their limits, which already requires some efforts owing to the generality of our setup.

With Propositions~\ref{prop1:cube} and~\ref{prop2:cube} at our disposal, we focus in \S\ref{subsec:fr-models} on a case in point, the finite range models $\mathcal{I}^{u,L}$ mentioned in the introduction, see \eqref{eq:J}, and use these 
results to prove that their local limits as $L \to \infty$ is indeed $\mathcal{I}^u$, as asserted in \eqref{eq:conv-law}; see 
Proposition~\ref{P:loclimit}.

\subsection{Generalities}
\label{subsec:class-mu-gen}
Consider a (density) function 
\begin{equation}\label{e:this-is-rho}
\rho:(\mathbb{N}^\ast\cup\{\infty \}) \times \Z^d \to \mathbb{R}_{+}
\end{equation} 
(recall that $\mathbb{N}^\ast = \{1,2\dots\}$). Intuitively, $\rho(\ell, x)$ gives the intensity of 
trajectories that have length $\ell$ and start at $x$. We often think of $\rho$ as a measure on $(\mathbb{N}^\ast\cup\{\infty \}) \times \Z^d$, or on any of its factors rather than as a function, and not distinguish between the two. For instance, we routinely write $\rho (A, 
x)=\sum_{\ell \in A } \rho(\ell,x)$, for $A \subset \mathbb{N}^\ast$ etc.~in the sequel.

Recall the measurable space $(W^+, \mathcal{W}^+)$ from \S\ref{sec:RW} on which $P_x$, $x\in \Z^d$ is defined. For $\rho$ as in \eqref{e:this-is-rho}, we introduce a Poisson point process $\eta$ on the space $(\mathbb{N}^\ast\cup\{\infty \}) 
 \times W^+$ with intensity measure $\nu_\rho$ given by
\begin{equation}
\label{eq:prelim1}
\nu_\rho (\ell, A ) = \sum_{x\in\Z^d}\rho(\ell, x)P_x[X \in A], \text{ for $A \in \mathcal{W}^{+}$}
\end{equation}
and define
\begin{equation}
\label{eq:prelim2}
\mathcal{I}^{\rho} = \bigcup_{(\ell,w)\in \eta}w[0,\ell-1].
\end{equation}
In view of \eqref{eq:prelim2}, the label $\ell$ indeed corresponds to the (time-)length of a trajectory in $\mathcal{I}^{\rho}$, as indicated above. We denote by $\mathbb{P}_{\rho}$ the canonical law of $\eta$. Notice that, for any finite $K \subset \mathbb{Z}^d$, as follows immediately by comparing \eqref{eq:prelim1} with \eqref{e:RI-intensity}, one has that
\begin{equation}
\label{eq:prelim3}
\begin{split}
&\mathcal{I}^u\cap K \stackrel{\text{law}}{=} \mathcal{I}^{\rho_u} \cap K, \text{ for } \rho_u(\ell,x)=u 1_{\infty}(\ell) \, e_K(x).
\end{split}
\end{equation}
Similarly, the set $\mathcal{I}^{f, L}$ from \eqref{eq:J} is in the realm of \eqref{e:this-is-rho}, see~\eqref{eq:prelim3.1} below. We now give an alternative description of the law of $\mathcal{I}^\rho$ when restricted to a finite set $K \subset \Z^d$, which will make comparison to $\mathcal{I}^u\cap K$ as in \eqref{eq:prelim3} easier.
The following lemma roughly asserts that, to describe $\mathcal{I}^\rho \cap K$, one can replace the intensity $\rho$ with  $\rho_K$ which fasts forwards the walk until time $H_K$.
\begin{lem}[Re-rooting] \label{L:reroot}For a measure $\rho$ supported on $\mathbb{N}^\ast  \times \Z^d$ and finite $K \subset \Z^d$, defining
\begin{equation}
\label{eq:prelim4}
{\rho_K(\ell,x)}=  \sum_{\ell' \ge 0} E_x \big[ \rho(\ell+\ell', X_{\ell'}) 1_{\{\widetilde{H}_K > \ell'\}} \big] 1_{x\in K},
\end{equation}
one has, with $\geq_{\textnormal{st.}} $ denoting stochastic domination,
\begin{equation}
\label{eq:prelim5}
\mathcal{I}^{\rho}\cap K \stackrel{\textnormal{law}}{=}  \mathcal{I}^{\rho_K}\cap K\quad \text{and}\quad \mathcal{I}^{\rho} \geq_{\textnormal{st.}}  \mathcal{I}^{\rho_K}.
\end{equation}
\end{lem}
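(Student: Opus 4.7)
The plan is to realize $\mathcal I^{\rho_K}$ inside $\mathcal I^\rho$ by re-rooting the underlying trajectories at their first entry into $K$. Let $\eta = \sum_i \delta_{(\ell_i, w_i)}$ be the Poisson point process with intensity $\nu_\rho$ driving $\mathcal I^\rho$, and consider the re-rooting map $\Phi(\ell, w) = (\ell - H_K(w),\, \theta_{H_K(w)} w)$ defined on $\{(\ell, w) : H_K(w) < \ell\}$ and taking values in $\mathbb N^\ast \times W_K^+$, where $W_K^+ = \{w \in W^+ : w_0 \in K\}$. By the Poisson mapping theorem, $\eta' := \Phi_\ast(\eta|_{\{H_K < \ell\}})$ is a Poisson point process with intensity $\Phi_\ast(1_{\{H_K < \ell\}} \nu_\rho)$.

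The main step is to check that this pushforward coincides with $\nu_{\rho_K}$. Substituting $\ell' = H_K(w)$, writing $\ell = \tilde\ell + \ell'$, and applying the Markov property at time $\ell'$ yields, for $\tilde\ell \geq 1$ and measurable $A \subset W^+$,
\[
\Phi_\ast(1_{\{H_K < \ell\}} \nu_\rho)(\tilde\ell, A) = \sum_{x \in \Z^d} \sum_{\ell' \geq 0} \rho(\tilde\ell + \ell', x) \sum_{y \in K} P_x[H_K = \ell',\, X_{\ell'} = y]\, P_y[X \in A].
\]
Unpacking the definition \eqref{eq:prelim4} of $\rho_K$ analogously, equality with $\nu_{\rho_K}(\tilde\ell, A) = \sum_{y \in K} \rho_K(\tilde\ell, y)P_y[X\in A]$ reduces to the time-reversal identity
\[
P_x[H_K = \ell',\, X_{\ell'} = y] \;=\; P_y[X_{\ell'} = x,\, \widetilde H_K > \ell'], \qquad x \in \Z^d,\; y \in K,\; \ell' \geq 0,
\]
which is the heart of the proof. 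It follows by reversibility of the lazy walk: since $p(\cdot,\cdot)$ is symmetric (as $a_{\cdot,\cdot}$ is symmetric and $a_x \equiv 4d$, cf.~\S\ref{sec:RW}), each path contributing to the left-hand side is in weight-preserving bijection with its time-reverse on the right, the condition $\{x_j \notin K : 0 \leq j \leq \ell' - 1\}$ on the forward path translating under reversal into $\{z_j \notin K : 1 \leq j \leq \ell'\}$ on the reversed path. The boundary cases $\ell' = 0$, and $x \in K$ with $\ell' \geq 1$, are handled by direct inspection: both sides are either $1_{\{x = y\}}$ or vanish, as appropriate.

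With this identification, both assertions of the lemma follow from the same coupling. For any atom $(\ell_i, w_i) \in \eta$, visits to $K$ can only occur from time $H_K(w_i)$ onwards, so
\[
w_i[0, \ell_i - 1] \cap K = (\theta_{H_K(w_i)} w_i)[0, \ell_i - H_K(w_i) - 1] \cap K
\]
whenever $H_K(w_i) < \ell_i$, which gives $\mathcal I^\rho \cap K = \bigcup_{(\tilde\ell, \tilde w) \in \eta'} \tilde w[0, \tilde\ell - 1] \cap K$; since $\eta'$ is a Poisson process with intensity $\nu_{\rho_K}$ by the previous step, the right-hand side is distributed as $\mathcal I^{\rho_K} \cap K$, yielding the first claim. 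For the stochastic domination, each atom of $\eta'$ records a tail of an atom of $\eta$, so $\bigcup_{(\tilde\ell, \tilde w) \in \eta'} \tilde w[0, \tilde\ell - 1] \subseteq \mathcal I^\rho$ under the coupling, while the left-hand side has the law of $\mathcal I^{\rho_K}$. The only genuinely delicate point is the time-reversal identity above, which I expect to be the main (though modest) obstacle.
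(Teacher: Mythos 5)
Your proposal is correct and follows essentially the same route as the paper: re-root each trajectory at its first entrance into $K$, identify the law of the re-rooted point process via the Markov property, and convert the resulting sum over hitting events into the form \eqref{eq:prelim4} by reversibility of the (lazy) walk, with the stochastic domination coming from the observation that each re-rooted trajectory is a tail of an original one. The only cosmetic difference is that the paper phrases the Poisson mapping step through a family of maps $\Phi_{\ell,x}$ with disjoint domains $A_{\ell,x}$ rather than a single pushforward, which is equivalent to your use of the mapping theorem.
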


\begin{proof}
  For $x \in K$ and $\ell \in \mathbb{N}^\ast$, let $A_{\ell, x} \subset  (\mathbb{N}^\ast \times W^+)$ consist of all pairs $(t, w)$ such that $\ell \leq t < \infty$, $H_K(w) = t - \ell$ and $X_{H_K}(w)=x$. Then by definition of $\mathcal{I}^{\rho}$ in \eqref{eq:prelim2}, one has
  \begin{equation}\label{eq:reroot1}
    \mathcal{I}^{\rho} \cap K = \bigcup_{x \in K} \ \bigcup_{\ell \in \mathbb{N}} \ \bigcup_{(t, w) \in ( \text{supp}(\eta) \cap A_{\ell, x} ) } w[t - \ell, t-1] \cap K.
  \end{equation}
  The unions over $x$ and $\ell$ in \eqref{eq:reroot1} are over independent processes. We make this decomposition more explicit by introducing $\Phi_{\ell, x}$, which acts on $\eta$ by mapping every pair $(t, w) \in \text{supp}(\eta) \cap A_{\ell, x}$ to the trajectory $(\ell, \tilde{w})$, where $\tilde{w}(\cdot) = \theta_{t-\ell} w( \cdot)$, where $(\theta_nw) (\cdot)= w(n+ \cdot)$ for $w \in W^+$ and $n \geq 0$ denote the canonical shifts. With this, \eqref{eq:reroot1} can be recast as
  \begin{equation}\label{eq:reroot1bis}
    \mathcal{I}^{\rho} \cap K = \bigcup_{x \in K} \ \bigcup_{\ell \in \mathbb{N}} \ \bigcup_{(\ell, \tilde{w}) \in ( \text{supp}(\Phi_{\ell, x}(\eta)) ) }\tilde{w}[0, \ell-1] \cap K. 
  \end{equation}
 Observe now that $\big( \Phi_{\ell,x}(\eta) \big)_{\ell, x}$ are independent Poisson point processes since the sets $A_{\ell,x}$ are disjoint, and $\mathcal{I}^{\rho}$ depends on $\eta$ only through $\Phi_{\ell, x}(\eta)$, 
  $\ell \in \mathbb{N}^\ast$, $x \in K$. On the other hand, omitting the intersection with $K$ on both sides of \eqref{eq:reroot1bis} clearly yields the inclusion `$\supset$' in place of an equality.

  To conclude the proof, we compute the intensity measure $\nu_{\ell, x}$ of each $\Phi_{\ell, x}(\eta)$.
  Since $\Phi_{\ell, x}(\eta)$ is concentrated on points $(\ell, w)$ with $w(0)=x \in K$, it follows that $\nu_{\ell, x}(t, W^+) = 0$ if $t \neq \ell$ or $x \not \in K$.
  On the other hand, for $x \in K$ and $D \in \mathcal{W}^+ $, applying the Markov property,
  \begin{equation} \label{eq:expr_nuellx}
    \begin{split}
      \nu_{\ell, x}(\ell, D) &= \nu_\rho \big(\{ (t, w) \in A_{\ell, x}: w(t - \ell + \, \cdot) \in D \} \big)\\[0.5em]
      &= \sum_{\ell \leq t < \infty} \sum_{y \in \Z^d} \rho(t, y) P_y[H_K = t - \ell, X_{H_K} = x, X_{t - \ell + \, \cdot} \in D]\\
      &= P_x[X_{\cdot} \in D] \sum_{\ell \leq t < \infty} \sum_{y \in \Z^d} \rho(t, y) P_y[H_K = t - \ell, X_{H_K} = x].
    \end{split}
  \end{equation}
  Using reversibility of the simple random walk one rewrites
  \begin{equation}\label{eq:reversibility}
  \begin{split}
    \sum_{\ell \leq t < \infty} \sum_{y \in \Z^d} \rho(t, y) &P_y [H_K = t - \ell, X_{H_K} = x] \\&= \sum_{\ell \leq t < \infty} \sum_{y \in \Z^d} \rho(t, y) \frac{a_x}{a_y} P_x [\widetilde{H}_K > t - \ell, X_{t - \ell} = y],
  \end{split}
  \end{equation}
  which equals $\sum_{\ell' \geq 0} a_x E_x \big[ a_{ X_{\ell'}}^{-1}\rho(\ell + \ell', X_{\ell'}) 1_{\{\widetilde{H}_K > \ell'\}} \big]$ after 
  the substitution $\ell' = t - \ell$, thus finishing the proof on account of \eqref{eq:prelim4}.
\end{proof}
\begin{remark} Albeit notationally simpler, the formula~\eqref{eq:prelim4} for the re-rooted density $\rho_K$ could be replaced by the wordier, but more transparent (and equivalent in the present setup) definition 
\begin{equation}
\label{eq:prelim4'} \tag{\ref{eq:prelim4}'}
\frac{\rho_K(\ell,x)}{a_x} =  \sum_{\ell' \ge 0} E_x \Big[\frac{ \rho(\ell+\ell', X_{\ell'}) }{a_{X_{\ell'}}}1_{\{\widetilde{H}_K > \ell'\}} \Big] 1_{x\in K}.
\end{equation}
The uniformity of $a_x(=4d)$ allows us to effectively work with a `flat' density $\rho$ in \eqref{e:this-is-rho}-\eqref{eq:prelim1}, i.e.~with reference measure in the second argument of \eqref{e:this-is-rho} given by counting measure on $\Z^d$ rather than one with density $a_{\cdot}$. Although slightly less stringent, this choice, reflected in our formula \eqref{eq:prelim4}, cf.~also \eqref{eq:occtime} below, somewhat simplifies the exposition in the sequel.
\end{remark}

\subsection{Local couplings between $\mathcal I^\rho$ and $\mathcal{I}^u$}\label{subsec:loc-coup}

We now exhibit sufficient conditions on the intensity $\rho$ in \eqref{e:this-is-rho} ensuring that $\mathcal I^\rho$ locally resembles $\mathcal{I}^u$ for a given $u>0$. The main results appear in Proposition~\ref{prop1:cube} and~\ref{prop2:cube} and yield (local) couplings between the two objects. The proximity between the two sets involves an 'average occupation time density' field $\bar \ell_x^\rho= \bar{\ell}_x$, $x \in 
\Z^d$, for $\mathcal{I}^\rho$, which acts as a surrogate for the scalar parameter $u$ in view of \eqref{e:ell-u-mean}. It is defined as  
\begin{multline}
\label{eq:occtime}
 \bar \ell_x  =\bar \ell_x^\rho=a_x^{-1}\sum_{k > 0} \int d\nu_\rho(k,\cdot)  \sum_{0 \le \ell < k}1_{\{ X_{\ell} 
=x\}}\\
\stackrel{\eqref{eq:prelim1}, \eqref{eq:P_n-def}}{=} a_x^{-1}\sum_y \sum_{k > 0} \rho(k,y) \sum_{0\leq \ell < k }p_{\ell}(y,x)= \sum_{\ell \geq 0} E_x\big[\textstyle\frac{\rho(\ell+\mathbb{N}^\ast, X_\ell)}{a_{X_\ell}}\big]=\displaystyle \frac1{4d}\sum_{\ell \geq 0} E_x\big[{\rho(\ell+\mathbb{N}^\ast, X_\ell)}\big].
\end{multline}
The next two propositions yield the desired local couplings relating $\mathcal{I}^{\rho} \cap B$ 
to $\mathcal{I}^u \cap B$ for a box $B=B_N$ under certain assumptions on~$\rho$ and $\bar{\ell}$. The simplest instances to keep in mind are the `pure length-$L$' models $\mathcal I^{u, L}$, see in particular \eqref{eq:loc-time-uL} below. The first (and easier) of the two results yields a coupling by which $\mathcal{I}^\rho$ comprising short (i.e.~finite) walks, is covered by the long (infinite) walks of the full interlacement 
$\mathcal{I}^u$. 

\begin{prop}[Local coupling I]
	\label{prop1:cube}
	If, for some $N \geq 1$, $a>6d$ and 
	$\rho$ supported on $\mathbb{N}^\ast \times \Z^d$, 
	\begin{align}
	\label{eq:cubecond1}
	& 
	\rho(\mathbb{N}^\ast, \,\cdot\,) \leq N^{-a}	\end{align}
	and moreover, for some $u >0$ and $\delta \in (0,1)$,
	\begin{equation}\label{eq:cubecond1.1}
	\text{$\bar{\ell}_x  \leq u(1-\delta)$ for all $x \in \Z^d$,}
	\end{equation}
	then there exists a coupling of $\mathcal{I}^\rho \cap B$ and $\mathcal{I}^{u} 
	\cap B $ with $B=B_N$ such that, for all $N \ge C \delta^{-3}$,  
	\begin{equation}
	\label{eq:cube1}
	\big( \mathcal{I}^\rho \cap B \big) \subset \big( \mathcal{I}^{u} \cap B \big) \text{ with probability at least } 1- N^{-\frac a2}.
	\end{equation}
\end{prop}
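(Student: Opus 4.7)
The plan is to combine the re-rooting identity of Lemma~\ref{L:reroot} with a soft local time construction comparing the two clouds at the level of excursions in $B$, using the mean occupation time bound \eqref{eq:cubecond1.1} as the main quantitative input.

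First apply Lemma~\ref{L:reroot} with $K=B$: this gives $\mathcal I^\rho \cap B \stackrel{\textnormal{law}}{=} \mathcal I^{\rho_B} \cap B$, so $\mathcal{I}^\rho$ is recast inside $B$ as a Poisson cloud of finite-length walks all starting in $B$. By \eqref{eq:prelim3}, $\mathcal I^u \cap B$ is similarly realized as $\mathcal{I}^{\rho_u}\cap B$ with $\rho_u(\infty,x)=u\,e_B(x)$ supported on $\partial B$. For any $x \in B \setminus \partial B$ every neighbor of $x$ lies in $B$, hence $\widetilde H_B=1$ $P_x$-a.s., so the formula \eqref{eq:prelim4} collapses to $\rho_B(\ell,x)=\rho(\ell,x)$. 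Combined with \eqref{eq:cubecond1}, the total intensity of $\mathcal{I}^{\rho_B}$-trajectories starting in $B\setminus\partial B$ is at most $|B|\,N^{-a}\leq CN^{d-a}$; since $a>6d$ this rules out any such trajectory except on an event of probability $\leq \tfrac12 N^{-a/2}$. On its complement it suffices to cover the $\partial B$-starting trajectories of $\mathcal I^{\rho_B}$ by excursions of $\mathcal I^u$.

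For this remaining step, the idea is to run a soft local time construction in the same spirit as the proof of Lemma~\ref{L:RI_basic_coupling}, but now comparing directly the two point processes of excursions between $\partial B$ and $\partial_{\textnormal{out}} B$ generated by $\mathcal I^{\rho_B}$ and by $\mathcal I^{\rho_u}$. The crucial input is the intensity bound
\begin{equation*}
\rho_B(\mathbb{N}^\ast,x)\;\leq\; 4d\,\bar\ell_x^\rho\;\leq\; 4du(1-\delta), \qquad x\in\partial B,
\end{equation*}
obtained by dropping the indicator $1_{\widetilde H_B>\ell'}$ in the definition \eqref{eq:prelim4} and using \eqref{eq:cubecond1.1}, contrasted with the uniform mean-$4du$ visit count for $\mathcal I^u$ that follows from \eqref{e:ell-u-mean}. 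The resulting $\delta$-slack propagates through the soft local time scheme and yields an inclusion with error of order $\exp(-c\delta\,\mathrm{cap}(B))\leq \exp(-c\delta N^{d-2})$ by the mechanism underlying \eqref{eq:RI_basic_coupling1} and \eqref{e:cap-box}; for $N\geq C\delta^{-3}$ this is absorbed into $\tfrac12 N^{-a/2}$, yielding the stated total error.

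The main obstacle is that the excursions of $\mathcal I^{\rho_B}$ in $B$ are \emph{not} independent across a common trajectory (successive returns of a length-$\ell$ walk to $B$), and similarly those of $\mathcal I^u$ inherit cluster dependencies from the underlying bi-infinite walks, so the iid-excursion bookkeeping of Lemma~\ref{L:RI_basic_coupling} cannot be applied verbatim. One has to run the soft local time at the level of the Poisson trajectories together with their attached excursion clusters, and the slackness $\delta$ in \eqref{eq:cubecond1.1} is precisely the budget needed to close the domination despite these within-cluster correlations.
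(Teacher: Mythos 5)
Your reduction to trajectories started on $\partial B$ (discarding the interior starters via \eqref{eq:cubecond1}) is fine, but the quantitative heart of your argument breaks down at the intensity bound. Dropping the indicator in \eqref{eq:prelim4} gives $\rho_B(\mathbb{N}^\ast,x)\le 4d\,\bar\ell_x\le 4du(1-\delta)$ for $x\in\partial B$, whereas what one needs in order to dominate $\mathcal I^{\rho_B}$ by $\mathcal I^u$ is a bound of the form $\rho_B(\mathbb{N}^\ast,x)\le u\,e_B(x)$, and $e_B(x)\asymp N^{-1}$ by \eqref{eq:bnd_equil_box} and \eqref{e:cap-box}. Your bound is thus off by a factor of order $N$, and the multiplicative slack $(1-\delta)$ cannot absorb it. This is not cosmetic: with only your bound, the total entrance intensity of $\mathcal I^{\rho_B}$ on $\partial B$ is controlled merely by $u\,|\partial B|\asymp uN^{d-1}$, while $\mathcal I^u$ supplies of order $u\,\mathrm{cap}(B)\asymp uN^{d-2}$ trajectories (each producing $O(1)$ excursions on average), so the soft local time scheme you invoke could be short of excursions by a factor of order $N$; no bookkeeping at the level of excursion clusters can close such a deficit, and the within-cluster correlations you flag as "the main obstacle" are not in fact the obstacle.

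The missing idea is a separation of starting points at a mesoscopic scale. The paper first discards \emph{all} trajectories of $\rho$ started inside $B_{2N^3}$ --- affordable precisely because $a>6d$ gives total mass $\le CN^{3d-a}\le N^{-a/2}$ --- and only then re-roots the remaining, far-started trajectories onto $B$. For those, applying the Markov property at time $N^3$ yields $\tilde\rho_B(\mathbb{N}^\ast,x)\le u(1-\delta)\,a_xP_x[\widetilde H_B>N^3]$ (this is \eqref{eq:cube6}), and Lemma~\ref{L:return}(i) converts $a_xP_x[\widetilde H_B>N^3]$ into $(1+\Cr{c:escape}N^{-1/3})e_B(x)\le(1-\delta)^{-1}e_B(x)$ for $N\ge C\delta^{-3}$ --- this is the only place where the slack $\delta$ and the constraint on $N$ enter. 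Once the pointwise domination $\tilde\rho_B(\mathbb{N}^\ast,\cdot)\le u\,e_B(\cdot)$ is in hand, the coupling is simply a thinning of a single Poisson cloud (both sets are read off the same process $\eta_1$), so no soft local times and no excursion decomposition are needed for this inclusion; that machinery is reserved for the opposite inclusion in Proposition~\ref{prop2:cube}.
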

Proposition~\ref{prop1:cube} is sufficient for our purposes, but the coupling constructed is far from optimal; see Remark~\ref{R:loc-coup} at the end of this section for more on this.
\begin{proof}
	The coupling will be defined under a probability $\overline{\P}$ carrying two independent Poisson point processes $({\eta}_1, {\eta}_2)$, each of them defined on the space $\R_{+} \times  W^+$ with intensity measure
	\begin{equation}
	\label{eq:nu_bar}
	\bar{\nu} ([0,v] \times  A) \stackrel{\text{def.}}{=} v \sum_{x \in \Z^d} P_x[X \in  A], \, v \geq 0.
	\end{equation}	
	For $w \in W^+$ with $ {\rho}( \mathbb{N}^\ast \times \{w(0)\}) >0$ and $0< u \leq {\rho}( \mathbb{N}^\ast \times \{w(0)\})$, let $\ell(u,w)$ denote the unique element $\ell \in \mathbb{N}^\ast$ such that $ \rho(\{0,\dots, \ell-1\} \times \{w(0)\})< u\leq \rho(\{0,\dots, \ell\} \times \{w(0)\})$.
	Then define, for $ \eta = \sum_{i} \delta_{(u_i, w_i)}$ any point measure on $\R_{+} \times  W^+$,
	\begin{equation}
	\label{eq:cube4.01}
	\overline{\mathcal{I}}^\rho({\eta})  = \bigcup_{i:\, u_i \leq  {\rho}( \mathbb{N}^\ast \times \{w_i(0)\})} w_i[0,\ell(u_i, w_i)-1]. 
	\end{equation}
It follows readily from \eqref{eq:nu_bar}-\eqref{eq:cube4.01} that $ \overline{\mathcal{I}}^\rho({\eta}_i)$, $i=1,2$, has the same law  under $\overline{\P}$ as $\mathcal{I}^\rho$ defined in \eqref{eq:prelim2} (under $\P_\rho$), for any measure $\rho$ supported on $\mathbb{N}^\ast \times \Z^d$.

	Let $\tilde{B}=  B_{2N^{3}}$ and $\tilde{\rho}: \mathbb{N} \times \Z^d \to \mathbb{R}_{+}$ be defined as
	$\tilde{\rho}(\ell,x)=\rho(\ell,x)1_{\Z^d \setminus \tilde{B}}(x).$ With $\tilde{W}^+ \subset W^+$ denoting the subset of trajectories with starting point outside $\tilde{B}$, we write $\tilde \eta$ for the restriction of $\eta$ (a point measure on $\R_{+} \times  W^+$) to points $(u, w)$ with $w \in \tilde{W}_+$, and introduce two random sets (under $\overline{\P}$)
	\begin{equation}
	\begin{split}
	\label{eq:cube4}
	{\mathcal{I}}^{{\rho}}(\eta_1,\eta_2) &\stackrel{\text{def.}}{=} \overline{\mathcal{I}}^{\tilde{\rho}_B}({\eta}_1) \cup \overline{\mathcal{I}}^\rho({\eta}_2 -\tilde{\eta}_2) \quad \text{(with $\tilde{\rho}_B$ as defined in \eqref{eq:prelim4}),} \\[0.5em]
	{\mathcal{I}}^{u} (\eta_1) &\stackrel{\text{def.}}{=}  \bigcup_{\substack{i : \, 0 \leq u_i \leq u e_B(w_i(0))}} 
	w_i[0,\infty),\
	\text{ if } \eta_1= \sum_{i} \delta_{(u_i, w_i)}.
	\end{split}
	\end{equation}
	One readily verifies using \eqref{e:RI-intensity} that ${\mathcal{I}}^{u} (\eta_1) \cap B$ has the same law under $\overline{\P}$ as $\mathcal{I}^u \cap B$ under $\P$.
	As we now briefly explain, ${\mathcal{I}}^{{\rho}}(\eta_1,\eta_2) \cap B$ has the same law under $\overline{\P}$ as $\mathcal{I}^\rho \cap B$ under $\P_\rho$. Indeed, it suffices to argue that
	\begin{equation}
	\label{eq:cube4.001}
	( \overline{\mathcal{I}}^{\tilde{\rho}_B}({\eta}_1) \cap B,  \overline{\mathcal{I}}^\rho({\eta}_2 -\tilde{\eta}_2) \cap B) \stackrel{\text{law}}{=} ( \overline{{\mathcal{I}}}^{{\rho}}(\tilde{\eta}_2) \cap B,  \overline{\mathcal{I}}^\rho({\eta}_2 -\tilde{\eta}_2) \cap B).
	\end{equation}
	To see this, first note that $\overline{\mathcal{I}}^\rho(\tilde{\eta}_2)= \overline{\mathcal{I}}^{\tilde{\rho}}({\eta}_2) $ has the same law as $\mathcal{I}^{\tilde{\rho}}$ in \eqref{eq:prelim2} by the discussion following \eqref{eq:cube4.01}. Thus, Lemma \ref{L:reroot} applies and yields that $(\overline{\mathcal{I}}^\rho(\tilde{\eta}_2) \cap B) \stackrel{\text{law}}{=} ( \mathcal{I}^{\tilde{\rho}_B}({\eta}_2) \cap B)$. Since the sets $\overline{\mathcal{I}}^\rho(\tilde{\eta}_2) $ and $ \overline{\mathcal{I}}^\rho(\eta_2 -\tilde{\eta}_2)$ are independent and
	$\eta_1$ and $\eta_2$ are i.i.d., \eqref{eq:cube4.001} directly follows.
	
	We will now show that under the assumption \eqref{eq:cubecond1}, for all $N \geq C\delta^{-3}$,
	\begin{align}
	&\sup_x \bar{\ell}_x  \leq u(1-\delta) \ \Rightarrow \   \tilde{\rho}_B( \mathbb{N}^\ast, \, x) \leq u e_B(x) \text{ for all $x \in \Z^d$,}\label{eq:cube5.1}
	\end{align}
	Before proving \eqref{eq:cube5.1}, we first explain how to deduce \eqref{eq:cube1}.
	Using \eqref{eq:cubecond1} and the fact that $1-e^{-x}\leq x$ for $x >0$, we see that
	\begin{equation}
	\label{eq:cube3}
	\overline{\P}[ \, \overline{\mathcal{I}}^\rho(\eta_2- \tilde \eta_2) \neq \emptyset\, ]=  1- e^{-\sum_{\ell \geq 0, x\in \tilde{B} } \rho(\ell,x)} \leq CN^{3d-a}, 
	\end{equation}
	which is less than $N^{-a/2}$ as $a>6d$. Due to \eqref{eq:cube5.1}, we infer immediately from \eqref{eq:cube4.01} and \eqref{eq:cube4} that ${\mathcal{I}}^{u} (\eta_1)\supset \overline{\mathcal{I}}^{\tilde{\rho}_B}(\tilde{\eta}_1)$ whenever $\bar{\ell}_x  \leq u(1-\delta)$ for all $x \in \Z^d$ (and \eqref{eq:cubecond1} holds). Together with \eqref{eq:cube3}, this implies that ${\mathcal{I}}^{{\rho}}(\eta_1,\eta_2)\subset {\mathcal{I}}^{u} (\eta_1)$ with $\overline{\P}$-probability at least $1-N^{-a/2}$, which yields \eqref{eq:cube1} since the sets ${\mathcal{I}}^{{\rho}}(\eta_1,\eta_2) \cap B$ and $ {\mathcal{I}}^{u} (\eta_1) \cap B$ have the required marginal distributions.

		It remains to show \eqref{eq:cube5.1}. Since $\tilde{\rho}(\ell,\, \cdot \,)$ vanishes in $B$ for any $\ell \geq 0$, $\tilde{\rho}_B(\ell,\, \cdot \,)$ is supported on $\partial B$, see \eqref{eq:prelim4}, hence the conclusions of \eqref{eq:cube5.1} hold trivially except for $x \in \partial B$. For such $x$, using that $\tilde{\rho}(\cdot,X_{\ell})=0$ under $P_x$ unless $\ell \geq 2N^{3}-N \, (\geq N^{3})$, 
	we find, with the hopefully obvious notation $\mathbb{N}^\ast+n=\{ n+1, n+2,\dots\}$,
	\begin{equation}
	\label{eq:cube6}
	\begin{split}
	\begin{array}{rcl}
	\tilde{\rho}_B( \mathbb{N}^\ast,\, x) \hspace{-1ex}
	&\stackrel{\eqref{eq:prelim4} }{=} & \hspace{-1ex}\displaystyle 
	\sum_{\ell \geq N^{3}}E_x\big[1_{\{\widetilde{H}_B > \ell\}} \,  \tilde{\rho}(\mathbb{N}^\ast+\ell, X_{\ell})\big] \\
	&\stackrel{}{\leq}& \hspace{-1ex} 
	\displaystyle E_x\Big[1_{\{\widetilde{H}_B > N^{3}\}} \,  \sum_{\ell  \geq 0}  
	E_{X_{N^{3}}}\big[\tilde{\rho}(\mathbb{N}^\ast+N^{3}+\ell, X_{\ell})\big]\Big]\\
	& \stackrel{\eqref{eq:occtime}}{\leq}&  \hspace{-1ex} \displaystyle a_x E_x\big[1_{\{\widetilde{H}_B > N^{3}\}} 
	\bar{\ell}_{X_{N^{3}}}\big] \stackrel{\eqref{eq:cube5.1}}{\leq}  \hspace{-1ex} \displaystyle u(1-\delta) a_x P_x[\widetilde{H}_B > N^{3}].
	\end{array}
	\end{split}
	\end{equation}
	The desired bound in \eqref{eq:cube5.1} then follows from \eqref{eq:cube6}, using Lemma \ref{L:return} with $\xi=1$ to obtain that
	$ P_x[\widetilde{H}_B > N^3] \leq\frac{1}{1-\delta} e_B(x)$ uniformly in $x \in 
	\partial B$ whenever $N \geq C(d)\delta^{-3}$.
\end{proof}

We now state a companion result to Proposition \ref{prop1:cube} with opposite inclusions. It will be 
important that this inclusion occurs with sufficiently high probability, see \eqref{eq:cube2.1} below. 
The proof involves the excursion decomposition of interlacement trajectories introduced in 
\S\ref{subsec:RI} and relies on the basic coupling from Lemma~\ref{L:RI_basic_coupling}.
\begin{prop}[Local coupling II]
\label{prop2:cube} For all $a>2$ and $\varepsilon > 0$ such that $a(1-3\varepsilon)>2$, the following holds.
	Suppose that for some $N \geq 1$ and $u > 0$, 
	the 
	measure $\rho$ in \eqref{subsec:class-mu-gen} satisfies:	
\begin{equation}
\begin{split}
	 \label{eq:cubecond2}
& 
\text{for some set $S \subset \big(\mathbb{N}^\ast \cap [N^{a(1-\varepsilon)},N^{a(1+\varepsilon)}]\big)$ with $|S|\leq N^{\varepsilon}$,} \\
&\rho(\ell,x)= \rho(\ell,x)1_{\ell \in S} \mbox{ and } \rho(S, x) \leq 
{u}N^{-a}\text{ for all $x \in \Z^d$}
 \end{split}
 \end{equation}
and (recall \eqref{eq:occtime} for notation)
\begin{equation}
	 \label{eq:cubecond2.2}
\text{$\bar{\ell}_x  \ge u(1+\delta)$ 
for all $x \in B_{N^{1+\varepsilon}}$ as well as $\bar{\ell}_x \le uD$ for all $x \in \Z^d$,}
\end{equation}
for some $\delta \in (0,1)$ and $ D > 1$. Then with $B=B_N$, there exists a coupling of 
$\mathcal{I}^\rho \cap B$ and $\mathcal{I}^{u} \cap B $ such that for some $c = c(a, \varepsilon, 
D)$ and $C = C(\varepsilon)$,  
\begin{equation}
 \label{eq:cube2.1}
\big( \mathcal{I}^{u} \cap B \big) \subset \big( \mathcal{I}^\rho \cap B \big) \text{ with probability at least } 1- e^{-c(u \wedge1) \delta^{C} N^{c}}.
\end{equation}
\end{prop}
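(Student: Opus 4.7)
The plan is to decompose both $\mathcal{I}^u \cap B$ and $\mathcal{I}^\rho \cap B$ into excursions between $A = \partial B$ and $\partial_{\mathrm{out}} U$, for an intermediate box $U = B_{\lceil N^{1+\varepsilon'}\rceil}$ with $\varepsilon' \in (0,\varepsilon)$ chosen so that $a(1-\varepsilon) > 2(1+\varepsilon')$, which is possible since $a(1-3\varepsilon) > 2$. With this choice each trajectory length $\ell \in S$ exceeds the typical exit time from $U$ by a polynomial factor, so every $\mathcal{I}^\rho$-trajectory behaves essentially like an infinite walk inside $U$. By Lemma~\ref{L:reroot}, I first replace $\mathcal{I}^\rho$ with $\mathcal{I}^{\rho_B}$, so that all trajectories start inside $B$; then both $\mathcal{I}^u \cap B$ and $\mathcal{I}^{\rho_B} \cap B$ are determined by the excursions of their trajectories from $A$ to $\partial_{\mathrm{out}} U$, and it suffices to couple these excursion collections.

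For $\mathcal{I}^u$, the excursions $(Z_n^u)_{n \geq 1}$ between $A$ and $\partial_{\mathrm{out}} U$ form the standard interlacement excursion sequence of \S\ref{subsec:RI}, of cardinality concentrated near $u\cdot \textnormal{cap}(A)$ at level $u$. For $\mathcal{I}^{\rho_B}$, I extract the \emph{first visit to $A$} of each trajectory: since trajectories form a Poisson process and $P_y[H_A < \infty] = 1$ for every $y \in B$ (by transience and finiteness of $B$), these starting points form a Poisson process on $A$ with intensity $\mu^\rho(x) = \sum_{y \in B} \rho_B(\mathbb{N}^\ast,y)\, P_y[X_{H_A} = x]$, and conditionally on them, the subsequent excursions up to time $T_U$ are independent SRW excursions. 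The residual trajectory length $\ell - H_A$ is at least $\ell - C N^2$ with Gaussian tails via \eqref{e:exittime-mgf}, which thanks to the gap $a(1-\varepsilon) > 2(1+\varepsilon')$ greatly exceeds the exit time from $U$, so each first excursion to $\partial_{\mathrm{out}} U$ fits within its trajectory with overwhelming probability.

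The key step is the pointwise comparison $\mu^\rho(x) \geq u(1 + \delta/2)\, e_A(x)$ for $x \in A$. Using \eqref{eq:occtime} and the fact that $\ell \gg N^2$ for all $\ell \in S$, one derives the approximate identity $\bar{\ell}_x \approx \sum_{y \in B} g(x,y)\, \rho_B(\mathbb{N}^\ast,y)$ for $x \in B$, with error controlled by the Green function tail $\sum_{\ell\ge N^{a(1-\varepsilon)}} p_\ell(x,y)$ (negligible). Applying strong Markov at $H_A$ in the right-hand side rewrites this as $\bar{\ell}_x \approx \sum_{z \in A} g(x,z)\, \mu^\rho(z)$ for $x \in A$. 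Since $\sum_{z \in A} g(x,z)\, e_A(z) = 1$ for $x \in A$ by \eqref{eq:lastexit}, the hypothesis $\bar{\ell}_x \geq u(1+\delta)$ asserts that $\mu^\rho$ dominates $u(1+\delta) e_A$ in the sense of the $g$-weighted integral; to convert this into a pointwise lower bound I use crucially that the hypothesis also holds on the enlarged box $B_{N^{1+\varepsilon}} \supsetneq U$ (supplying harmonic buffer for inverting the positive kernel $g$) together with the global upper bound $\bar{\ell} \le uD$ to control oscillations. Concentration of $\mu^\rho(A)$ around its mean then follows from standard Poisson deviation bounds in view of \eqref{eq:cubecond2}, the constraint $\rho(S,x) \leq uN^{-a}$ ensuring that no single trajectory dominates.

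With this pointwise bound, I apply Lemma~\ref{L:RI_basic_coupling} with $A = \partial B$ and $\mu = u(1+\delta/4)\, e_A$ (which satisfies \eqref{eq:RI_cond_mu}), obtaining $\{Z_1^u,\ldots,Z_n^u\} \subset \{\widetilde Z_1,\ldots,\widetilde Z_{n'}\}$ with $n \approx u\,\textnormal{cap}(A)$ and $n' \approx u(1+O(\delta))\,\textnormal{cap}(A)$ on an event of probability at least $1 - e^{-c\delta\, u\,\textnormal{cap}(A)}$. The i.i.d.~excursions $\widetilde Z_j$ from $\bar\mu$ are in turn realized as a Poisson thinning of the $\mathcal{I}^{\rho_B}$ first-excursions using $\mu^\rho \geq \mu$, and chaining the two couplings via Lemma~\ref{lem:concatenation} yields $\mathcal{I}^u \cap B \subset \mathcal{I}^\rho \cap B$ with probability at least $1 - e^{-c(u\wedge 1)\delta^C N^{c''}}$ for some $c'' = c''(a,\varepsilon,D) > 0$, matching \eqref{eq:cube2.1}. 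The principal obstacle is the sharp pointwise comparison $\mu^\rho \geq u(1+\delta/2) e_A$ near $\partial B$: while the aggregated lower bound $\mu^\rho(A) \gtrsim u\,\textnormal{cap}(A)$ follows easily by integrating $\bar{\ell}$ over $B$, obtaining the pointwise estimate demands exploiting the buffer $B_{N^{1+\varepsilon}} \supsetneq U$ in \eqref{eq:cubecond2.2} and requires fine regularity of the harmonic measure via Lemma~\ref{L:return}(ii).
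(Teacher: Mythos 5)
Your overall architecture (re-root via Lemma~\ref{L:reroot}, extract entrance points, compare the resulting intensity to $u(1+\delta)$ times an equilibrium measure, invoke Lemma~\ref{L:RI_basic_coupling}, thin to recover finite lengths, chain) matches the paper's. But the proposal has a genuine gap exactly at the step you yourself flag as the ``principal obstacle'': the pointwise bound $\mu^\rho(x)\ge u(1+\tfrac{\delta}{2})e_A(x)$. You propose to derive it by ``inverting the positive kernel $g$'' from the integrated inequality $\sum_z g(x,z)\mu^\rho(z)\ge u(1+\delta)$, aided by the buffer $B_{N^{1+\varepsilon}}\supsetneq U$ and the global bound $\bar\ell\le uD$. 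This does not work as stated: a lower bound on the $g$-smoothed field, even on an enlarged region and even with a matching smoothed upper bound, does not yield a pointwise lower bound on the density — $\mu^\rho$ could vanish on a sparse subset of $\partial B$ while its Green-function averages stay above $u(1+\delta)$ everywhere. The paper never inverts $g$. Instead it writes the re-rooted intensity at $x\in\partial B$ directly as $\sum_{\ell\ge 0}E_x\bigl[1_{\{\widetilde H_B>\ell\}}\,\tilde\rho(\mathbb{N}^\ast+\ell+t_N,X_\ell)\bigr]$, uses the Markov property at time $2t_N$ and reversibility to make the main term equal $a_1=4d\,E_x\bigl[1_{\{\widetilde H_B>t_N\}}\bar\ell_{X_{2t_N}}\bigr]$, and then converts $P_x[\widetilde H_B>t_N]$ into $a_x^{-1}e_B(x)(1+o(1))$ via Lemma~\ref{L:return}(i). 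That identity — not kernel inversion — is the mechanism, and it is absent from your proposal.

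A second, related omission: you re-root \emph{all} of $\rho$ to $B$ and treat the entrance points as one Poisson process, whereas the paper first discards trajectories starting inside $U=B_{N^{1+\xi}}$ (setting $\tilde\rho=\rho\,1_{\cdot\notin U}$) and shows that the remaining ones alone already supply intensity $\ge u(1+\tfrac{\delta}{2})e_B$. The price is that the contribution to $\bar\ell_x$ from trajectories started inside $U$ must be subtracted — this is the term $a_3$ in Lemma~\ref{L:hardcube_sizes}, described as ``the most delicate,'' and it is precisely where the hypotheses $a(1-3\varepsilon)>2$, $\rho(S,x)\le uN^{-a}$ and the on-diagonal bound $P_y[X_\ell=z]\le C\ell^{-d/2}$ are consumed. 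Your proposal uses $a(1-3\varepsilon)>2$ only to guarantee that trajectories are long compared to exit times, which does not account for this subtraction; without it the claimed lower bound on the entrance intensity of usable (long-enough, well-mixed) trajectories is not established.
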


\begin{proof} In the notation of \S\ref{subsec:RI}, we choose $A=B$  and $U =B_{N^{1 + \xi}}$ for 
	some $\xi > 0$ whose precise value will be chosen as a function of $\varepsilon$ below in 
	Lemma~\ref{L:hardcube_sizes}. 
Consider the measures $\tilde{\mu}$ on $\mathbb{N}^* \times B$ and $\mu$ on $B$ defined as 
\begin{equation}\label{def:mutilde}
	\tilde{\mu} (\ell, x) = \tilde \rho_B(\ell, x) 1_{\ell \ge t_N}, \quad \mu(\cdot) = \tilde \mu(\mathbb 
N^\ast,\, \cdot),
\end{equation} where  $\tilde \rho(\ell, x) = \rho(\ell, x) 1_{x \notin U}$, $\tilde{\rho}_B$ is obtained from $\tilde{\rho}$ according to \eqref{eq:prelim4} and $t_N=\lfloor N^{2+3\xi}\rfloor
$. In plain words, 
$\tilde \mu (\ell, x)$ represents the intensity of walks comprising $\mathcal I^{\rho}$ that (a) 
start outside $U$, i.e.~sufficiently far from $B$, (b) enter $B$ for the first 
time through $x$ and (c) have at least time $t_N$ left after doing so.

We now apply Lemma~\ref{L:RI_basic_coupling} to construct the desired coupling. Exploiting 
property~(b) as well as \eqref{eq:cubecond2} and \eqref{eq:cubecond2.2}, we will prove in
Lemma~\ref{L:hardcube_sizes} below that with the choice $\xi=c \varepsilon$ for suitable $c\in (0,1)$, the measure $\mu$ satisfies 
\begin{equation}\label{eq:mulwr_bnd}
\mu(x) \ge u \textstyle\big(1+\frac{\delta}{2}\big) e_B(x), \text{ for all $x \in B$ and 
 $N \ge C(a, \varepsilon, D)\, \delta^{-C(\varepsilon)}$}
\end{equation}
(cf.~\eqref{eq:RI_cond_mu}). On the other hand, it follows from \eqref{normalized_eB} that condition \eqref{eq:RI_cond_Q} holds for the 
pair $(B, U)$ whenever $N \ge C(\xi) \delta^{-C(\xi)}$. Recall to this end the definition of $Q_x$ from above \eqref{eq:RI_Y}.
Thus, Lemma~\ref{L:RI_basic_coupling} applies and
yields a coupling $\mathbb Q = \mathbb Q_{B, U}$ between the excursions $Z = (Z_n(\omega))_{n \ge 1}$ introduced in \eqref{eq:RI_Z} and 
an i.i.d.~sequence $\widetilde Z = (\widetilde Z_n)_{n \ge 1}$ of excursions between $B$ and 
$\partial_{{{\rm out}}} U$ under $P_{\bar \mu}$, where $\bar \mu = \mu / \mu(B)$.


We will now generate a subset of $\mathcal{I}^\rho$ that will cover $\mathcal{I}^u \cap B$ using the excursions $\widetilde Z $. This requires truncating the latter to their actual deterministic length, which could be shorter than the hitting time of $\partial_{{{\rm out}}} U$. In order to do this, we first apply a thinning procedure that recovers the length of individual trajectories. By suitable extension of the probability space, we suppose that $\mathbb Q$ carries a Poisson variable $N_B^{\mu}$ with intensity $\mu(B)$ and a family $\{U_1, U_2, \dots\}$ of i.i.d.~uniform~random variables on $[0,1]$. All of the previous random variables are independent from each other as well as independent from $Z$ and $\widetilde{Z}$. To each $\widetilde{Z}_i$, we assign a length label $\ell_i = \ell_i(U_i)$ which is the unique element $\ell \in \mathbb{N}^*$ satisfying $a_{i,\ell-1}  < U_i \leq a_{i,\ell}$, where $a_{i,\ell}= {\tilde{\mu}(\{0,\dots, 
\ell\}, \widetilde{Z}_i(0)) 
}/{ \mu(\widetilde{Z}_i(0))}.$
Note that $\ell\geq t_N$ on account of \eqref{def:mutilde}. It then follows from the thinning property of Poisson processes that 
$\hat{\omega}= \sum_{i \le N_B^{\mu}} \delta_{(\widetilde{Z}_i(0), \ell_i, 
\widetilde{Z}_i)}$ is a Poisson process on $\Z^d \times \mathbb{N}^* \times W^+_f$, where $W_{f}^+$ denotes the space of all finite-length, nearest-neighbor trajectories in $\Z^d$,
having intensity $\hat{\mu}$, where
\begin{equation*}
\hat \mu (S \times I \times A) = \sum_{x \in S} \tilde{\mu}(I, x) P_x[X_{[0, T_U]} \in A].\end{equation*} 
Consequently,
\begin{equation}
\label{def:IvcapB}
\widetilde{\mathcal{I}}^{\mu} = \widetilde{\mathcal{I}}^{\mu} (\hat{\omega}) \stackrel{\text{def.}}{=} \bigcup_{i \le N_B^{\mu}} \widetilde{Z}_i[0, (t_N-1) \wedge T^i_U ] \leq_{\textnormal{st.}} \mathcal I^{\tilde \mu} \cap B \leq_{\textnormal{st.}} \mathcal I^{\rho} \cap B,
\end{equation}
where $T^i_U $ is the exit time of the excursion $\widetilde{Z}_i$ and the second stochastic domination follows immediately on account of \eqref{def:mutilde} and Lemma~\ref{L:reroot}.
\smallskip

We now generate a copy of $\mathcal I^u \cap B$ using the excursions $Z$ (under $\mathbb Q = \mathbb Q_{B, U}$). 
By suitable extension of $\mathbb Q$, conditionally on $(Z, \widetilde Z)$, we sample an integer-valued random 
variable $N_B^u$ according the conditional distribution under $\P$ of the number of excursions coming from trajectories with label at most $u$ given $\sigma ((Z_n(\omega))_{n \ge 1})$, cf.~\eqref{eq:RI_Z}. Although not necessary, we assume for definiteness that $N_B^u$ and $N_B^{\mu}$ are independent conditionally on $(Z, \widetilde 
Z)$. With these definitions, it follows that
\begin{equation}
\label{eq:cube101}
\widetilde{\mathcal{I}}^{u} \stackrel{\text{def.}}{=} \bigcup_{i \le N_B^u} \text{range}(Z_i) \mbox{ is distributed as } \mathcal I^u \cap B \text{ under $\P$}.
\end{equation}
Together \eqref{def:IvcapB} and \eqref{eq:cube101} immediately give
\eqref{eq:cube2.1}, provided one argues that 
\begin{equation}
\label{eq:cube13}
\mathbb Q[ \widetilde{\mathcal{I}}^{u} \subset \widetilde{\mathcal{I}}^{\mu}] \ge 1- e^{-c(u\wedge 1)\delta^C N^{c}},
\end{equation}
for $c=c(\xi)$, $C=C(\xi)$. However, combining \eqref{eq:RI_basic_coupling2} in Lemma~\ref{L:RI_basic_coupling}, \eqref{def:IvcapB} and 
\eqref{eq:cube101}, we see that
\begin{multline}
\label{eq:cube13_aux}
\{ \widetilde{\mathcal{I}}^{u} \subset \widetilde{\mathcal{I}}^{\mu}\}^{\mathsf{c}} \\
 \subset \{\,\max_{i \le N_B^u} L_i \ge t_N \} \cup \{N_B^{\mu} \le  u(1 + \tfrac{\delta}{4}){\rm cap}(B) \}\cup \{ N_B^u \ge u(1 + \tfrac{\delta}{8}){\rm 
cap}(B)\} \cup \mathcal{U}_{u}^{\mathsf{c}} 
\end{multline}
where $L_i$ is the length of $Z_i$. We now bound the probabilities of each of these events 
separately. The last event on the right is handled using \eqref{eq:RI_basic_coupling1}. Consider now the third event. By construction, the quantity $N_B^u$ has the same law under $\mathbb 
Q$ of the number of excursions (under $\P$) stemming from trajectories visiting $B$ with label 
at most $u$. For each such trajectory, the number of excursions between $B$ and ${\partial_{{\rm out}}U}$ it generates is stochastically dominated using \eqref{eq:hit1} by $G$, a geometric random variable with parameter $1 -  \Cr{hit-box}N^{-\xi(d-2)}$ (with values starting at $1$). Hence, $N_B^u$ is stochastically dominated by $\sum_{1 \le j \le M_B^u} G_j$, where $M_B^u$ is a Poisson random variable with mean $u 
\,{\rm cap}(B)$ and the $G_j$'s are i.i.d.~copies of $G$, independent of $M_B^u$.
By standard 
large deviation bounds for tail probabilities of Poisson 
and geometric random variables as well along with the bound ${\rm cap}(B) \geq cN^{d-2}$ (which follows from \eqref{eq:cap_bound_Green} and \eqref{eq:Greenasympt}), one deduces that
\begin{multline}\label{eq:tail_bnd}
\mathbb Q \big [ N_B^u \ge u(1 + \tfrac{\delta}{8}){\rm cap}(B) \big] \\\le  \mathbb Q \big [ 
M_B^u \ge m \big] + \mathbb Q \big [ \sum_{1\leq j \leq m} G_j \ge u(1 + \tfrac{\delta}{8}){\rm 
cap}(B) \big] \le e^{-\delta^{C} u N^{d-2}}, \end{multline}
where $C = C(\xi, d)$ and the last bound follows with the choice $m=u(1 + \tfrac{\delta}{4}){\rm 
cap}(B)$. The second event on the right of \eqref{eq:cube13_aux} is bounded 
similarly. As for the first term, one just combines \eqref{eq:tail_bnd} with the estimate 
\eqref{eq:long_excursion} and applies a union bound.

\smallskip
All that remains is to verify is \eqref{eq:mulwr_bnd}. To this effect, one writes for all $x \in \partial B$ 
\begin{equation}
  \label{eq:cube7}
\tilde{\rho}_B (\mathbb{N} + t_N, x ) \stackrel{\eqref{eq:prelim4}}{=} \sum_{\ell \geq 0} E_x\big[1_{\{\widetilde{H}_B > \ell\}}  \tilde{\rho}( \N+ \ell+t_N , X_{\ell})\big]
\geq a_1- a_2-a_3-a_4-a_5
 \end{equation}
(the lower bound in \eqref{eq:cube7} will be explained momentarily), where one defines
\begin{align*}
&a_1 =  4d E_x\big[1_{\{\widetilde{H}_B > t_N\}} 
\bar{\ell}_{X_{2t_N}} \big], \quad (\text{with } \bar\ell_{\cdot}=\bar\ell_{\cdot}^\rho, \, \text{cf.~}\eqref{eq:occtime})
\end{align*}
and, abbreviating $\tilde \rho(I , \cdot) = \tilde \rho(I\cap \mathbb{N}^* , \cdot)$ for $I\subset \R$,
 \begin{align*}
&a_2 =  \sum_{\ell \geq 0} E_x\big[1_{\{\widetilde{H}_B > t_N\}} \tilde{\rho}\big((\ell, \ell+2t_N], \,X_{\ell+2t_N}\big)\big], \\
&a_3 = \sum_{\ell \geq 0} E_x\big[1_{\{\widetilde{H}_B > t_N\}} E_{X_{2t_N}}[ (\rho-\tilde{\rho})(\ell + \mathbb{N}^\ast, X_{\ell})]\big],\\
&a_4 = \sum_{\ell \geq 2t_N} E_x\big[1_{\{\widetilde{H}_B > t_N\}}  \tilde{\rho}([\ell,\ell+ t_N),X_{\ell})\big],
\\
&a_5 = \sum_{\ell \geq 2t_N} E_x\big[1_{\{ t_N< \widetilde{H}_B \leq \ell\}}  \tilde{\rho}(\N+\ell+t_N, X_{\ell})\big].
 \end{align*}
To see the lower bound in \eqref{eq:cube7}, one first applies the Markov property at time $2t_N$ and combines with \eqref{eq:occtime} to find that
$$
a_1' \stackrel{\text{def.}}{=} \sum_{\ell \geq 2t_N} E_x\big[1_{\{\widetilde{H}_B > t_N\}}  \tilde{\rho}(\N+\ell, X_{\ell})\big] = a_1-a_2-a_3.
$$
Next, one writes $1_{\{\widetilde{H}_B > \ell\}}$ as $1_{\{\widetilde{H}_B > t_N\}} - 1_{\{ 
	t_N< \widetilde{H}_B \leq \ell\}}$ for $\ell \geq t_N$ whence
 \begin{equation*}
\tilde{\rho}_B (\mathbb{N} + t_N , x ) \stackrel{\eqref{eq:prelim4}}{\ge} \sum_{\ell \ge 2t_N} E_x\big[1_{\{\widetilde{H}_B > t_N\}}  \tilde{\rho}( \N+ \ell+t_N , X_{\ell})\big] - a_5.
\end{equation*}
Now the first term on the right-hand side is easily seen to equal $a_1' - a_4$ in view of the identity
 \begin{align*}
 \tilde{\rho}( \N+ \ell+t_N , X_{\ell}) = \tilde{\rho}( \N+ \ell , X_{\ell}) - \sum_{0 \le 
 \ell' < t_N} \tilde{\rho}(\ell' + \ell, X_{\ell}).
 \end{align*}
We will consider each $a_i$ separately. The results are summarized in the following lemma. 
Recall that the hypothesis
 \eqref{eq:cubecond2}  depends on two parameters $a>2$ and $\varepsilon>0$ satisfying $a(1-3\varepsilon)>2$.
 
 \begin{lemma}\label{L:hardcube_sizes} Under the hypotheses of Proposition~\ref{prop2:cube}, there exists $\xi = \xi(\varepsilon)>0$ such that, uniformly in $x \in 
 \partial B$ and whenever $N \geq C(a, \varepsilon, D)\, \delta^{-C(\varepsilon)}$,
 \begin{align}
 &a_1 \geq u \big(\textstyle1+ \frac{9}{10}\delta\big) e_B(x) \text{ and }\label{eq:cubesizea_1'}\\
 &a_i \leq \textstyle \frac{u \delta}{10} \cdot e_B(x) \text{ for } i=2,\dots,5.
 \label{eq:cubesizea_4}
 \end{align}
 \end{lemma}
 The proof of Lemma~\ref{L:hardcube_sizes} is given below. Once \eqref{eq:cubesizea_1'} and \eqref{eq:cubesizea_4} are 
 proved, \eqref{eq:mulwr_bnd} follows and the proof of Proposition~\ref{prop2:cube} is complete.
 \end{proof}

\begin{proof}[Proof of Lemma~\ref{L:hardcube_sizes}] Unless otherwise specified, all subsequent estimates  are uniform in $x \in \partial B (= \text{supp}(e_B))$. By assumption in~\eqref{eq:cubecond2.2} and monotonicity, cf.~\eqref{eq:equilib_K} (also recall that $a_x=4d$), the quantity $a_1$ is larger than 
\begin{equation}
 \label{eq:cube8.2}
u(1+\delta)e_B(x)- \textstyle4d E_x\big[1_{\{ X_{2t_N} \notin B_{N^{1+ \varepsilon}}\}} 
\bar{\ell}_{X_{2t_N}} \big].
\end{equation}
To deal with the second term in \eqref{eq:cube8.2} one uses that
$\bar{\ell}_x \leq u D$ for all $x \in \mathbb{Z}^d$ as implied by \eqref{eq:cubecond2.2} and combines this for $\xi \le \varepsilon /2$ with the bound (recall that $t_N =\lfloor N^{2+3\xi} \rfloor$, see below \eqref{def:mutilde})
\begin{equation}
 \label{eq:cube8.3}
P_x [X_{2t_N} \notin B_{N^{1 + \varepsilon}}] \leq P_x [X_{2t_N} \notin B_{N^{1 + 2\xi}}] \leq C \sum_{r > N^{1+ 2\xi} } r^{d-1} e^{-c'{r^2}/{t_N}} \le C e^{-c'N^{\xi}}
\end{equation}
which follows by standard heat kernel estimates. Using that $e_B(x) \geq \frac{c}{N}$ uniformly in 
$x \in \text{supp}(e_B)$, see \eqref{eq:bnd_equil_box}, one readily bounds the expectation in 
\eqref{eq:cube8.2} to deduce overall that $a_1$ satisfies \eqref{eq:cubesizea_1'} whenever $N \geq  C( \varepsilon, \xi, D)$ and $\xi \le \varepsilon /2$.

Next we bound $a_2$. First recall from \eqref{eq:cubecond2} that for any $x \in 
\Z^d$ and interval $I \subset \mathbb{N}^*$, $\tilde{\rho}(I, x) \,(\leq \rho(I,x))$ is bounded by $u N^{-a}$ and 
vanishes when $S \cap I = \emptyset$. Therefore,
\begin{multline}\label{eq:cube_sumbnd}
a_2 \le P_x[\widetilde{H}_B > t_N] \cdot \sum_{\ell}\sup_{x \in 
\Z^d}\tilde{\rho}\big((\ell, \ell+ 2t_N], x\big) \le P_x[\widetilde{H}_B > t_N] \cdot 
u N^{- a}\cdot|S|\cdot 2t_N\\
\stackrel{\eqref{eq:cubecond2}}{\le} uP_x[\widetilde{H}_B > t_N] \cdot 2N^{2 + 3\xi + 
\varepsilon - a} \stackrel{\eqref{eq:return1}}{\le} ue_B(x)(1 + o(1))\cdot 2N^{2 + 3\xi + 
\varepsilon - a} \text{ as }N \to \infty \end{multline}
where the rate $o(1)$ is subject to the choice of $\xi$. 
Hence $a_2 = u\delta e_B(x)\cdot o(1)$ as soon as $(2 + 3\xi + \varepsilon - a) < 0$ 
which holds for all $\xi \le \varepsilon$ since $a(1 - 3\varepsilon) > 2$.

The term $a_3$ is the most delicate. Recalling that $(\rho-\tilde{\rho})(\ell,x)= \rho(\ell,x) 1_{\{x \in B_{N^{1+\xi}}\}}$, cf.~below \eqref{def:mutilde}, it follows using \eqref{eq:return1} that $a_3$ is bounded by
\begin{equation}
\label{eq:cube15}
C e_B(x)\sup_{y \in B_{N^{1+ 2\xi}}} E_y\big[\textstyle \sum_{\ell \geq 0}\rho(\ell + \mathbb{N}^\ast, X_{\ell})1_{\{X_{\ell} \in B_{N^{1+\xi}}\}} \big] +  C u N^{a \varepsilon}e^{-c'N^{\xi}};
\end{equation}
in deducing \eqref{eq:cube15}, we have also used \eqref{eq:cube8.3}, along with the fact that $ \sum_{\ell \geq 0}\rho(\ell + \mathbb{N}^\ast, \cdot) \leq u N^{a\varepsilon}$, itself a consequence of \eqref{eq:cubecond2}, to deal with the 
case that $X_{2t_N} \notin B_{N^{1+ 2\xi}}$. In view of \eqref{eq:cube15}, in order to 
obtain \eqref{eq:cubesizea_4} for $a_3$, it is more than sufficient to argue that 
\begin{equation}
\label{eq:cube16}
\sup_{y \in B_{N^{1+ 2\xi}}} E_y\big[\textstyle \sum_{\ell \geq 0}\rho(\ell + \mathbb{N}^\ast, 
X_{\ell})1_{\{X_{\ell} \in B_{N^{1+\xi}}\}} \big] \le u N^{-c(a, \varepsilon, D)} 
\end{equation}
(intuitively, this will be because $\rho$ is supported at scales $\ell \approx N^a \gg N^2$, 
which renders the condition $X_{\ell} \in B_{N^{1+\xi}}$ costly if $\xi$ is chosen small 
enough). To get \eqref{eq:cube16}, first note that contributions to \eqref{eq:cube16} from 
$\ell \leq N^{a(1-\varepsilon)}$ are easily dispensed with: using \eqref{eq:cubecond2},
$$E_y\big[ \sum_{0 \leq \ell \leq N^{a(1-\varepsilon)}}\rho(\ell + \mathbb{N}^\ast, 
X_{\ell})1_{{\{X_{\ell} \in B_{N^{1+\xi}}\}}} \big] \leq N^{a(1-\varepsilon)} \sup_z \rho(S, z) 
\leq u  N^{-a\varepsilon}.$$
Now, observing that no contributions to \eqref{eq:cube16} arise from terms $\ell \geq N^{a(1+ \varepsilon)}$, 
using again the deterministic bound $\rho(\ell + \mathbb{N}^\ast, X_{\ell}) \leq \sup_z \rho(S, z)  \leq u N^{-a} $ implied by \eqref{eq:cubecond2} and applying the on-diagonal estimate $P_x[X_\ell = y] \leq C\ell^{-d/2}$, it follows that  
for all $y \in B_{N^{1 + 2\xi}}$,
\begin{align*}
E_y\big[ \sum_{ \ell > N^{a(1-\varepsilon)}}\rho(\ell + \mathbb{N}^\ast, X_{\ell})1_{\{X_{\ell} \in B_{N^{1+\xi}}\}} \big]  & \leq u N^{a \varepsilon}\sup_{\ell > N^{a(1-\varepsilon) }}P_y[X_{\ell} \in B_{N^{1+\xi}}]\\ 
&\leq u N^{a \varepsilon+(1+\xi)d - \frac{a}{2}(1-\varepsilon)d }.
\end{align*}
Since $d \geq3 $, the last exponent is negative (i.e.~the previous line is $u N^{-c}$) 
if the condition $\frac a2(1-2\varepsilon)> 1+  \xi$ is satisfied. As $a(1-3\varepsilon)>2$ 
by assumption, this condition is met by choosing $\xi=c \varepsilon$ with $c \in (0, 1/2)$ small enough so that $1+ \xi \leq \frac{(1-2\varepsilon)}{(1-3\varepsilon)}$, which we now fix (recall that we just need $\xi \leq \varepsilon/2$ in view of previous requirements). Overall, 
\eqref{eq:cube16} thus follows and with it \eqref{eq:cubesizea_4} for $a_3$.

The bound on $a_4$ follows from \eqref{eq:cube_sumbnd} in exactly similar manner as that on $a_2$. As for $a_5$, applying the Markov property at time $t_N$ (say), one obtains that
\begin{multline*}
a_5 \le E_x\Big[1_{\{ t_N< \widetilde{H}_B < \infty\}}  \Big(\sum_{\ell \geq t_N}\tilde{\rho}(\N+\ell+2t_N, X_{\ell}) \Big) \circ \theta_{t_N} \Big]  \\
\stackrel{\eqref{eq:occtime}}{\le} C P_x[t_N< \widetilde{H}_B < \infty] \sup_{z \in \Z^d} \bar{\ell}_z \stackrel{\eqref{eq:return3}\text{ff.}}{\le} uD  C(\varepsilon) N^{-c\varepsilon}
e_B(x), 
\end{multline*}
where we also used \eqref{eq:cubecond2}-\eqref{eq:cubecond2.2} in the last step.
 \end{proof}

\subsection{Local convergence of $\mathcal{I}^{u,L}$ to $\mathcal I^u$}
\label{subsec:fr-models}
We now focus on the case of the homogenous length-$L$ models $ \mathcal{I}^{f ,L}$ introduced in  \eqref{eq:J}, which  are of class $\mathcal{I}^\rho$. Indeed, in view of \eqref{eq:J} and \eqref{eq:prelim1}-\eqref{eq:prelim2}, for any positive functgion $f$ on $\Z^d$, one has
\begin{equation}
\label{eq:prelim3.1}
\mathcal{I}^{f,L} \stackrel{\text{law}}{=} \mathcal{I}^{\rho}, \, \text{with }\rho (\ell,x)= \textstyle\frac{a_x f(x)}L 1_{L}(\ell)=4d \textstyle\frac{ f(x)}L 1_{L}(\ell), \, x \in \Z^d,
\end{equation}
which specialises to $\mathcal{I}^{u,L}$ with $f(x)=u$, $x\in\Z^d$. In the latter case, in which we denote $\rho=\rho_{u,L}$ the measure appearing in \eqref{eq:prelim3.1}, it is instructive to observe that for all $x \in \Z^d$,
\begin{equation}
\label{eq:loc-time-uL}
\bar{\ell}_x \stackrel{\eqref{eq:occtime}}{=}\frac1{4d}  \sum_{\ell \geq 0} E_x[\rho(\ell+\mathbb{N}^\ast, X_\ell)]= \sum_{\ell \geq 0} \frac{u}{L} 1\{ \ell <L\} =u.
\end{equation}
With a view towards \eqref{e:ell-u-mean} and the conditions entering Propositions~\ref{prop1:cube} and \ref{prop2:cube}, \eqref{eq:loc-time-uL} suggests that $\mathcal{I}^{u,L}$  is a good local approximation for $\mathcal{I}^u$. Indeed, one has the following result. We tacitly endow $\{0,1\}^{\Z^d}$ with the product topology and convergence in distribution, as stated below, corresponds to convergence in law of all finite dimensional marginals.
\begin{prop}[$u\geq 0$]\label{P:loclimit}
The set $\mathcal{I}^{u,L}$ under $\P_{\rho_{u,L}}$ converges in distribution to $\mathcal{I}^u$ as $L \to \infty$.
\end{prop}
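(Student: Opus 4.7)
The plan is to deduce Proposition~\ref{P:loclimit} from the local couplings of Propositions~\ref{prop1:cube} and~\ref{prop2:cube}, applied to the intensity $\rho_{u,L}$ of \eqref{eq:prelim3.1}. Since $\{0,1\}^{\Z^d}$ carries the product topology, convergence in distribution reduces to convergence of finite-dimensional marginals, and the law of $\mathcal{I}\cap T_0$ for a finite $T_0 \subset \Z^d$ is determined, via inclusion-exclusion, by the capacity functional $T \mapsto \P[T \subset \mathcal{I}]$ over all finite $T \supseteq T_0$. It therefore suffices to show
\begin{equation}\label{eq:plan-goal}
\lim_{L \to \infty} \P_{\rho_{u,L}}[T \subset \mathcal{I}^{u,L}] = \P[T \subset \mathcal{I}^u] \quad \text{for every finite } T \subset \Z^d,
\end{equation}
with the case $u=0$ being trivial. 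The decisive feature of $\rho_{u,L}$ is that its mean occupation time density is constantly equal to $u$, cf.~\eqref{eq:loc-time-uL}, so the two propositions apply with a target intensity arbitrarily close to $u$ after sprinkling.

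\textbf{Sandwich.} Fix $\delta \in (0,1)$, choose $N$ so that $T \subset B_N$, and fix $a > 6d$. The condition $\rho_{u,L}(\mathbb{N}^\ast,\cdot) = 4du/L \leq N^{-a}$ holds once $L \geq 4du\, N^{a}$, and $\bar\ell^{\rho_{u,L}}_x \equiv u = \tfrac{u}{1-\delta}(1-\delta)$; hence Proposition~\ref{prop1:cube}, applied with its parameter $u$ replaced by $v=u/(1-\delta)$, yields, for all $L$ large enough,
\begin{equation}\label{eq:plan-up}
\P_{\rho_{u,L}}[T \subset \mathcal{I}^{u,L}] \leq \P[T \subset \mathcal{I}^{u/(1-\delta)}] + N^{-a/2}.
\end{equation}
For the reverse direction, pick $\varepsilon \in (0,1/3)$ with $a_0(1-3\varepsilon)>2$ for some $a_0>2$ and set $a = a(L,N) = (\log L)/((1+\varepsilon)\log N)$, so that $L = N^{a(1+\varepsilon)}$ and the single support point $S = \{L\}$ of $\rho_{u,L}$ lies in $[N^{a(1-\varepsilon)}, N^{a(1+\varepsilon)}]$ with $|S|=1 \leq N^{\varepsilon}$. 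With $v = u/(1+2\delta)$ and $D=2$, one has $\bar\ell^{\rho_{u,L}}_x = u \geq v(1+\delta)$ and $\bar\ell^{\rho_{u,L}}_x \leq vD$, while the bound $\rho_{u,L}(S,\cdot) = 4du/L \leq v N^{-a}$ reduces to $L^{\varepsilon/(1+\varepsilon)} \geq 4d(1+2\delta)$ and holds for $L$ large. Proposition~\ref{prop2:cube} then produces
\begin{equation}\label{eq:plan-low}
\P[T \subset \mathcal{I}^{u/(1+2\delta)}] \leq \P_{\rho_{u,L}}[T \subset \mathcal{I}^{u,L}] + e^{-c(v\wedge 1)\delta^{C}N^{c}}.
\end{equation}

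\textbf{Passage to the limit and main obstacle.} For $\delta>0$ fixed, one may select $N=N(\delta,u,\varepsilon)$ large enough that both $N^{-a/2}$ and $e^{-c(v\wedge 1)\delta^{C}N^{c}}$ are at most $\delta$; for all $L \geq L_0(N,\delta,u,\varepsilon)$ the bounds \eqref{eq:plan-up}-\eqref{eq:plan-low} then hold simultaneously. Sending $L \to \infty$ followed by $\delta \downarrow 0$ and using the continuity of $v \mapsto \P[T \subset \mathcal{I}^v]$ -- which follows from the canonical monotone coupling of $(\mathcal{I}^v)_{v>0}$, or equivalently from the identity $\P[\mathcal{I}^v \cap S = \emptyset] = e^{-v\,\mathrm{cap}(S)}$ for finite $S$ combined with inclusion-exclusion -- yields \eqref{eq:plan-goal}. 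The principal subtlety I anticipate is the order of limits: the error in Proposition~\ref{prop2:cube} decays in $N$ rather than $L$, so $N$ must be made large \emph{before} sending $L \to \infty$; this is consistent because, once $N$ is fixed, the remaining quantitative conditions of Proposition~\ref{prop2:cube} become vacuous for $L$ sufficiently large, as reflected in the elementary lower bound $L^{\varepsilon/(1+\varepsilon)} \geq 4d(1+2\delta)$.
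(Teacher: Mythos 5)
Your overall strategy coincides with the paper's: sandwich $\mathcal{I}^{u,L}$ between full interlacements at levels $u/(1-\delta)$ and $u/(1+2\delta)$ using Propositions~\ref{prop1:cube} and~\ref{prop2:cube}, then invoke continuity in the level and let $\delta\downarrow 0$. The upper bound \eqref{eq:plan-up} is fine. The gap is in your application of Proposition~\ref{prop2:cube}. Because condition~\eqref{eq:cubecond2} forces $S=\{L\}\subset[N^{a(1-\varepsilon)},N^{a(1+\varepsilon)}]$, keeping $N$ fixed compels you to take $a=a(L,N)=\log L/((1+\varepsilon)\log N)\to\infty$ as $L\to\infty$. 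But the constant $c=c(a,\varepsilon,D)$ in the error term $e^{-c(u\wedge 1)\delta^C N^c}$ of \eqref{eq:cube2.1} depends on $a$, and this dependence is not benign: the proof runs through Lemma~\ref{L:hardcube_sizes}, which requires $N\geq C(a,\varepsilon,D)\,\delta^{-C(\varepsilon)}$, and for instance the term $CuN^{a\varepsilon}e^{-c'N^{\xi}}$ in \eqref{eq:cube15} blows up as $a\to\infty$ with $N$ fixed, so that threshold on $N$ grows with $a$. Consequently the error in your \eqref{eq:plan-low} is not uniform in $L$, and your closing claim that ``once $N$ is fixed, the remaining quantitative conditions of Proposition~\ref{prop2:cube} become vacuous for $L$ sufficiently large'' is backwards for precisely this condition: it becomes harder, not vacuous.

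The repair is to reverse the roles of the parameters, which is what the paper does: fix $a$ once and for all (large enough for both propositions, with $a(1-3\varepsilon)>2$) and set $N=\lfloor L^{1/a}\rfloor$, which contains $T$ once $L$ is large and for which $S=\{L\}$ lies in $[N^{a(1-\varepsilon)},N^{a(1+\varepsilon)}]$ automatically. Then all constants are fixed, the thresholds $N\geq C\delta^{-3}$ and $N\geq C(a,\varepsilon,D)\delta^{-C(\varepsilon)}$ are met for $L$ large, and both error terms $N^{-a/2}$ and $e^{-c(u\wedge1)\delta^C N^c}$ tend to $0$ as $L\to\infty$; one then lets $\delta\downarrow 0$ exactly as you do. (Your reduction to the containment functional $\P[T\subset\mathcal{I}]$ rather than the avoidance functional $\P[\mathcal{I}\cap K=\emptyset]$ used in the paper is an equivalent and unproblematic variation.)
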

\begin{proof}
It is enough to show that for all finite $K \subset \Z^d$,
\begin{equation}
\label{e:loc-limit-J-u-L1}
\textstyle \lim_L\P_{\rho_{u,L}}[\mathcal{I}^{u,L} \cap K= \emptyset] = \exp\{-u\text{cap}(K)\}.
\end{equation}
Let $a=3$, $N= \lfloor L^{1/a}\rfloor$ and $\delta \in (0,1)$. 
The condition \eqref{eq:cubecond1} of Proposition~\ref{prop1:cube} is thus in force for $\rho=\rho_{u,L}$ on account of \eqref{eq:prelim3.1} and \eqref{eq:cubecond1.1} holds with $u'= \frac{u}{1-\delta}$ in place of $u$ by \eqref{eq:loc-time-uL}. Hence, Proposition~\ref{prop1:cube} applies with these choices and one readily finds applying \eqref{eq:cube1} that $$
\textstyle \liminf_L\P_{\rho_{u,L}}[\mathcal{I}^{u,L} \cap K= \emptyset] \geq \P[\mathcal{I}^{u'} \cap K= \emptyset] = \exp\{-u'\text{cap}(K)\}.$$
A corresponding upper bound is obtained by using Proposition~\ref{prop2:cube} instead, which applies with the same choices for $a$ and $N$ and $S=\{L\}$, $D=1$. The result \eqref{e:loc-limit-J-u-L1} follows by letting $\delta \downarrow 0$. 
\end{proof}

\begin{remark}\label{R:loc-coup}
\begin{itemize}
\item[1)] In 
the upcoming sections, we will face the challenging task of deriving couplings which operate
between walks having comparable lengths $\ell \approx L$, for a given $L \geq1$, with coupling errors decaying super-polynomially in $L$ and within boxes whose linear size $N$ is unrestricted (and may well be e.g.~comparable to the typical spatial extension of the walks, or even much larger). This is essentially disjoint from the regime covered by the above results (which will still be used, see the discussion below). Indeed, in the notation of Propositions~\ref{prop1:cube} and~\ref{prop2:cube}, this means replacing $\mathcal I^u$ by $\mathcal I^{\rho'}$ where the typical side length $L \in \text{supp}(\rho)$ is comparable to that of $ \text{supp}(\rho')$, and possibly $N \gg L^{\frac12}$. In contrast, with a view 
to \eqref{eq:prelim3.1} (a case in point), the conditions \eqref{eq:cubecond1} and \eqref{eq:cubecond2} require that $N \approx L^{\frac{1}a} \ll L^{\frac12}$, with an error at best polynomial in $L$ in Proposition~\ref{prop1:cube} (cf.~\eqref{eq:cube1}). We also refer to the results of \cite[Lemma 5.3]{10.1214/23-EJP950} in this context, which yield an exact comparison between models of length $L$ and $2L$, with a polynomial error term in $L$ (or equivalently, $N$) similarly as in \eqref{eq:cube1}, which becomes effective when $N \ll L^{\frac1{7d}}$.

\item[2)] We briefly indicate in how far the above couplings will be used below. Our choice to include Proposition~\ref{prop1:cube}, which causes little effort but could be dispensed with, stems from the fact that, together with Proposition~\ref{prop2:cube}, it already yields in a self-contained fashion the proof of the convergence in law asserted in Proposition~\ref{P:loclimit}. Whereas Proposition~\ref{prop1:cube} will soon be improved for a suitable class of models of the form $\mathcal{I}^{\rho}$ (including $\mathcal{I}^{u,L}$), essentially by iterating Theorem~\ref{thm:short_long} below, which has a stand-alone proof, Proposition~\ref{prop2:cube} is non-negotiable: it will be used as a crucial input in~\S\ref{sec:disconnection}, in order to exhibit the desired disconnection events defining the obstacle set $\mathcal{O}=\mathcal{O}(\omega)$ for the random environments $\omega$ of interest.  
\end{itemize}
\end{remark}

\section{Covering length-$L'$ by length-$L$ interlacements}
\label{sec:easyCOUPLINGS}
In this section we prove Theorem~\ref{thm:short_long-intro}.
In fact, we will prove a slightly more general 
statement, Theorem~\ref{thm:short_long} below, which is often easier to use in practice; see also Remark~\ref{R:short_long},\ref{rmk:short_length}. The proof starts with a reduction step, stated in \S\ref{ssec:tgap}, see Proposition~\ref{thm:short_long'}, which introduces \textit{gaps} between (pieces of) trajectories that will later favor mixing and drive the coupling. The `ungapped' theorem is then deduced from its `gapped' version, Proposition~\ref{thm:short_long'}, in \S\ref{ssec:tgap}. The proof of Proposition~\ref{thm:short_long'} appears in \S\ref{S:proofofeasycoupling}. We now state the main result of this section.

\begin{thm}\label{thm:short_long} 
For all $u \in (0,\infty)$, integers $K \ge 0$ and $L, L' \geq 1$ such that $L'$ divides $L/2$ and $L'\ge L^{1-c}$,
the following holds. Given any two functions $f_1,f_2:\Z^d \to \R_+$ 
such that $f=f_1+f_2$ satisfies $ u \ge f  \ge CL^{-c'}$ on $ B_{K+L}$, there exists a coupling $\mathbb Q$ of $(\mathcal I_1,\mathcal I_2)$ such that  
\begin{equation}
\label{eq:short_long}
\begin{split}
&\mathcal I_1 \stackrel{\textnormal{law}}{=}  \mathcal I^{\frac12(1 + P_{L/2})f_1, \frac L2} \cup \mathcal I^{f_2, L}, \\
&\mathcal I_2 
\stackrel{\textnormal{law}}{=} \mathcal I^{(1 - \Cl{C:sprinkle-easy}({L}/{L'})^{-1/2})P^{L'}_L(f1_{B_K}), L'}, 
\text{ and}\\
&	\mathbb Q \left[ {\mathcal I}_1  \supset  {\mathcal I}_2  \right] \geq 1 - C 
(u \vee 1)(K+L)^d \,e^{-c \,({L}/{L'})^{1 / 4}},
\end{split}
\end{equation}
where ${\mathcal I}^{\frac12(1 + P_{L/2})f_1, \frac{L}{2}}$ and ${\mathcal I}^{f_2, L}$ are sampled independently in the law defining $\mathcal I_1 $.
\end{thm}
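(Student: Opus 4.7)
The strategy is to cover $\mathcal I_2$ by the family of length-$L'$ sub-walks obtained by cutting the longer trajectories that make up $\mathcal I_1$ into pieces of length $L'$ separated by short gap intervals of some length $1\ll t_g \ll L'$. The gaps serve to decouple consecutive pieces: after running for time $t_g$, the walk has diffused over a distance $\sqrt{t_g}$, so by the local CLT \eqref{eq:LCLT} the starting point of the $(k+1)$-st piece is, conditionally on the end of the $k$-th piece, close in total variation to the distribution obtained by running the original starting density through $P_{(k+1)(L'+t_g)}$. A direct density computation confirms the arithmetic: cutting $\mathcal I^{f_2,L}$ at times $kL'$ yields, in aggregate, length-$L'$ sub-walks with total starting intensity $\frac{4d}{L'}P_L^{L'}(f_2)$, and cutting $\mathcal I^{\frac12(1+P_{L/2})f_1,L/2}$ similarly produces $\frac{4d}{L'}P_L^{L'}(f_1)$, using that $\frac{L'}{L}\sum_{k=0}^{L/(2L')-1}(P_{kL'}+P_{kL'+L/2}) = P_L^{L'}$. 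The factor $\tfrac12(1+P_{L/2})$ is designed precisely so that cuttings of length-$L/2$ walks reproduce the same operator $P_L^{L'}$; together the two contributions match the target intensity $\frac{4d}{L'}P_L^{L'}(f)$.

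The plan is then to prove a ``gapped'' version of the theorem (Proposition~\ref{thm:short_long'}), in which the cutting is formalized: from a length-$L$ walk one retains the sub-intervals $w[k(L'+t_g), k(L'+t_g)+L'-1]$ for $k=0,\dots,\lfloor L/(L'+t_g)\rfloor-1$ and discards the intervening pieces of length $t_g$. This sacrifices a fraction $t_g/L'$ of the total walk length; choosing $t_g$ so that $t_g/L'$ is of order $(L/L')^{-1/2}$, the loss is absorbed into the sprinkling factor $(1-C(L/L')^{-1/2})$ appearing in the statement, and the reduction from Theorem~\ref{thm:short_long} to the gapped version becomes purely a matter of bookkeeping which pieces of a trajectory are kept.

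The heart of the argument is the gapped version. Conditionally on the position of the walk at time $k(L'+t_g)$, its future is an independent walk, and the cut points $w(k(L'+t_g))$ form a Markov chain with transition kernel $p_{L'+t_g}(\cdot,\cdot)$. Using \eqref{eq:LCLT}, this transition kernel is close to a Gaussian density, and by a telescoping coupling one can replace the Markov chain of cut points by an independent sequence where the $k$-th point is sampled from $P_{k(L'+t_g)}f$ applied to the original starting density, at a total-variation cost per step of order $(L/L')^{-c}$. Assembling all cut pieces across all trajectories and combining this approximate independence with a Poisson coupling of the resulting family of length-$L'$ sub-walks with a genuine i.i.d.\ Poissonian family of intensity $(1-C(L/L')^{-1/2})\frac{4d}{L'}P_L^{L'}(f)$ (via the soft local time/thinning machinery of the type used in Lemma~\ref{L:RI_basic_coupling}) yields the desired coupling. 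Restricting the latter family to pieces starting in $B_K$ and using $P_L^{L'}(f) \ge P_L^{L'}(f 1_{B_K})$ then produces a coupling with $\mathcal I^{(1-C(L/L')^{-1/2})P_L^{L'}(f 1_{B_K}),L'}$; the overall error probability is dominated by a polynomial prefactor $(u\vee 1)(K+L)^d$, coming from a union bound over starting cells and a concentration bound for the number of trajectories involved, multiplied by $e^{-c(L/L')^{1/4}}$ produced by the combined homogenization, Poisson coupling, and sprinkling errors.

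The main obstacle is the homogenization step: consecutive cut pieces on the same trajectory are strongly correlated, since the $(k+1)$-st piece begins exactly $t_g$ steps after the $k$-th one ends, so the chain-like dependence must be converted into approximate independence with quantitative total-variation error shrinking as a small power of $L/L'$ per step (and eventually $e^{-c(L/L')^{1/4}}$ globally). Carrying this out relies on the quantitative LCLT \eqref{eq:LCLT}, together with the lower bound $f \ge CL^{-c'}$ on $B_{K+L}$, which ensures that the homogenized target density is not much smaller than the density carried by the original trajectory's starting point, thereby making an actual coupling \emph{inclusion} (and not merely a distributional comparison) possible.
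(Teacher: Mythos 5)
Your overall architecture --- cutting with gaps of length $t_g$, the intensity arithmetic explaining the factor $\frac12(1+P_{L/2})$, the reduction to a gapped statement, and the eventual appeal to soft local times together with the ellipticity bound on $f$ --- matches the paper's proof of Theorem~\ref{thm:short_long} via Proposition~\ref{thm:short_long'}. There is, however, a genuine gap in the central quantitative step. You propose to first ``replace the Markov chain of cut points by an independent sequence \dots at a total-variation cost per step of order $(L/L')^{-c}$'' and only afterwards perform a Poisson coupling. An additive TV cost per piece per trajectory cannot produce the error $(u\vee 1)(K+L)^d e^{-c(L/L')^{1/4}}$: the expected number of (trajectory, piece) pairs is of order $u(K+L)^d/L'$, so summing a per-step cost $(L/L')^{-c}$ yields a bound that is only polynomially small in $L$ (and in the regime $L/L'\approx(\log L)^{\gamma}$ relevant to \eqref{e:couplings-params} it is nowhere near super-polynomial). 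The paper avoids this by never coupling trajectories one at a time: at each stage $k$ it couples, box by box over a partition into boxes $D$ of intermediate radius $\widetilde L = L^{1/d+\alpha} \ll \sqrt{t_g}$, the \emph{entire} cloud of next starting points emanating from $D$ with an independent Poissonian family via soft local times (Lemma~\ref{L:main-coupling-easier} and Lemma~\ref{lem:easy_coup_bnd}). The LCLT ratio error $CL^{-c}$ from \eqref{eq:LCLT} is not accumulated additively but merely has to be dominated by the sprinkling parameter $\varepsilon$, and the smallness of the failure probability, $e^{-c\kappa\varepsilon^2 L^{c}}$ per box, is a large-deviation effect driven by the abundance $\lambda_D\gtrsim \kappa L^{d\alpha}$ of starting points per box --- which is exactly where the lower bound $f\geq \kappa$ enters (not, as you suggest, to compare the homogenized density with the density carried by a single trajectory's starting point).

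A second, more minor omission: the index sets do not line up exactly when passing from the gapped times $kL''=k(L'+t_g)$ back to multiples of $L'$; the sum defining the gapped intensity has one or two fewer terms than $P_L^{L'}(f)$, and the missing terms $l^{-1}(P_{L/2-L'}+P_{L-L'})(f1_{B_K})$ must be redistributed over nearby times $L-kL'$ and $L/2-kL'$ with $2\le k\le \sqrt{L/L'}$ using \eqref{eq:regularity} again. This is indeed bookkeeping, but it costs an extra factor $(1+C(L/L')^{-1/2})$ and an additive $u e^{-c(L/L')^{1/4}}$ term, and it is this step (not the homogenization over the gaps) that produces the exponent $1/4$ in the final error bound.
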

\begin{remark}\label{R:short_long}
\begin{enumerate}[label*=\arabic*)]
\item \label{rmk:short_length} One immediately deduces Theorem~\ref{thm:short_long-intro}  by setting $f_1 = 0$ and $f_2 = f$, at least when $L' | \frac L2$. The only loss of generality that we incur here is 
the requirement $L' | \frac L2$ instead of just $L' | L$.  However, our proof makes  clear that the 
assumption $L' | \frac L2$ is unnecessary when $f_1 = 0$, i.e. when we are in the set-up of 
Theorem~\ref{thm:short_long-intro}. 

\item \label{R:profile-compl}Profiles like the one defining $\mathcal{I}_1$ in \eqref{eq:short_long} naturally arise e.g.~while transitioning from the configuration $\mathcal I^{u, 
2L}$ to $\mathcal I^{u, L}$ through sequential couplings, as we will see in the proof of 
Theorem~\ref{thm:main III} in Section~\ref{sec:VuVuLcouple}. It also appears for a similar reason in our 
companion paper \cite{RI-I}.
\end{enumerate}
\end{remark}

We now give a brief overview of the proof of Theorem~\ref{thm:short_long}, which occupies the remainder of this section. Throughout, we often abbreviate
\begin{equation}\label{eq:no-loop-l}
l=  L/{L'}
\end{equation}
the (integer) ratio of the two spatial scales of concern. Recalling the definition of $P_L^{L'}(f)$ from \eqref{eq:f'}, one immediately sees that the law of 	${\mathcal I}^{f',\, L'}$ with $f'=P_L^{L'}(f)$ is the same as that of the union over $k$ of $l$-many {\em independent} 
configurations ${\mathcal I}^{f'_k, L'}$ for $0 \le k < l$, where $f_k'=l^{-1}P_{kL'}(f)$. On the other hand, if one cuts each of the length-$L/2$ and length-$L$ trajectories underlying ${\mathcal I}^{(1 + 
P_{L/2})(f_1/2), L/2}$ and ${\mathcal I}^{f_2, L}$ at times $kL'$, where $0 \le k <\tfrac{l}{2}$ in the former and $\frac{l}{2} 
\le k < l$ in the latter case, and collects the resulting length-$L'$ trajectories, one can similarly view 
the law of $\mathcal I_1$ in \eqref{eq:short_long} as that of the union of $l$-many configurations whose \textit{marginal} laws are readily seen to coincide with  ${\mathcal I}^{f_k', L'}$, for $0 \le k < l$. 
However, their joint law is nowhere near independent.

We deal with this problem by introducing a gap time $t_g\ll L'$ between any two successive segments, designed to be just long enough so as to allow the different sets of endpoints to `mix'. As will be seen in \S\ref{ssec:tgap}, the statement of Theorem~\ref{thm:short_long} is actually amenable to the introduction of gaps between segments, essentially due to the sprinkling inherent to $\mathcal{I}_2$ in \eqref{eq:short_long}. This leads to Proposition~\ref{thm:short_long'} below. 

We then prove Proposition~\ref{thm:short_long'} in \S\ref{S:proofofeasycoupling} by coupling the collections of starting points from these new segments using the soft local time 
technique which was already at play in \S\ref{subsec:RI} (see the proof of 
Lemma~\ref{L:RI_basic_coupling}). The strength of the resulting coupling depends on two factors 
which are actually entwined in the present context. Firstly, we need the mixing rate of the walk segments to be 
good enough which is only true when the segments start `nearby'. To this effect, we subdivide the domain 
into boxes of intermediate scale $\widetilde{L}$ and couple the walk segments starting from each 
such box separately (see Lemma~\ref{lem:easy_coup_bnd} below). To help convey an adequate picture, we stress that mixing happens at much larger scales, i.e.~$\widetilde{L} \ll \sqrt{t_g}$ in the end; cf.~for instance \eqref{e:Ltilde-cond}.
Secondly, the coupling error also depends on the number and concentration of the {\em difference} in the number of starting points of the 
two configurations to be coupled, which is where the `ellipticity' lower bound on $f$ and the multiplicative sprinkling  term $(1-l^{-1/2})$ in the definition of $\mathcal I_2$ 
(see~\eqref{eq:short_long}) enter. These features will play a role when determining the mean $\lambda_D$ (defined below \eqref{eq:wsk}) and typical fluctuations for the number of relevant trajectories starting inside a box $D$ of radius $\widetilde{L}$. 

\subsection{Gaps and the timescale $t_g$}\label{ssec:tgap}
Theorem~\ref{thm:short_long} will be obtained from the following result.
\begin{prop}\label{thm:short_long'}
For all $u \in (0,\infty)$, $\kappa \in (0, 1)$, any (integer) $K \geq 0$, $f_1, f_2:\Z^d \to \R_+$ such that $u \ge f=f_1+f_2 \ge \kappa$ 
pointwise on $B_{K + L}$, any $L, L' \geq 1$ such that $L'' = L' + t_g \leq \frac L2$, where $t_g =  \lfloor L^{7/8} \rfloor$, and all $\varepsilon 
\in (0, 1)$, there exists a coupling $\widetilde{\mathbb Q}$ of $(\mathcal I_1, \mathcal I_2)$ such that
\begin{equation}
	\label{eq:easy_cover}
	\begin{split}
		&\mathcal I_1 \stackrel{\textnormal{law}}{=} \mathcal I^{\frac12(1 + P_{L/2})f_1, \frac{L}{2}} \cup \mathcal I^{f_2, L}, \quad \mathcal I_2 \stackrel{\textnormal{law}}{=} 
		\mathcal I^{(1-\varepsilon)f'', L'} \text{ and }\\
		&	\widetilde{\mathbb Q} \left[ {\mathcal I}_1  \supset  {\mathcal I}_2  \right] \geq 1 -  C (u \vee 1)(K + L)^d e^{- c\kappa\varepsilon^2 L^{c}},
	\end{split}
\end{equation}
where ${\mathcal I}^{\frac12(1 + P_{L/2})f_1, \frac{L}{2}}$ and ${\mathcal I}^{f_2, L}$ are independent in the law defining $\mathcal I_1 $, and 
\begin{equation}
	\label{eq:f''_easy}
	f'' \stackrel{\textnormal{def.}}{=} \textstyle l^{-1} \,\sum_{0\leq k < \lfloor{\frac {L}{2L''}}\rfloor} 
	\big[P_{kL''}(f1_{B_K}) + P_{L/2 + kL'' + t_g}(f1_{B_K})\big].
\end{equation}
\end{prop}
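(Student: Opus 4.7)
My plan is to realise $\mathcal I_1$ by first sampling the length-$L/2$ and length-$L$ Poisson families of trajectories, then slicing each parent walk into length-$L'$ sub-segments separated by gap windows of $t_g$ steps. Concretely, each length-$L/2$ walk from $\tfrac12 f_1$ (resp.~$\tfrac12 P_{L/2}f_1$) is cut at times $0,L'',\ldots,(n-1)L''$ (resp.~$t_g,t_g+L'',\ldots$) inside the parent walk, with $n=\lfloor L/(2L'')\rfloor$; and each length-$L$ walk from $f_2$ is cut at times $0,L'',\ldots,(n-1)L''$ and also at $L/2+t_g,\ldots,L/2+t_g+(n-1)L''$, the extra $t_g$-gap inserted at $L/2$ decoupling the two halves just as in the $f_1$ family. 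Poisson thinning and the semigroup identity $P_m\circ P_n=P_{m+n}$ show that the marginal intensity of the union of all resulting starting points equals $(4d/L')\cdot l^{-1}\sum_{k=0}^{n-1}[P_{kL''}f+P_{L/2+kL''+t_g}f]$, which (after localising by $1_{B_K}$, i.e.~discarding the unneeded pieces) is exactly the intensity of $\mathcal I^{f'',L'}$ in the sense of~\eqref{eq:f''_easy}. Since the path of each sub-segment given its starting point is already that of a length-$L'$ lazy random walk, only the joint law of starting points remains to be coupled to that of an i.i.d.\ sample drawn from $(1-\varepsilon)f''$.

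For this coupling, fix a mesoscopic scale $\widetilde L:=L^{1/4}$, which satisfies $\widetilde L^2\ll t_g=\lfloor L^{7/8}\rfloor$, and tile a slight enlargement of $B_{K+L}$ by disjoint boxes $(D_j)_j$ of radius $\widetilde L$. Because $\mathcal I^{(1-\varepsilon)f'',L'}$ decomposes as an independent superposition of Poisson processes over the $D_j$'s, it suffices to build the coupling box by box. On each $D_j$, I run a soft-local-time argument in the spirit of Lemma~\ref{L:RI_basic_coupling}: I produce, jointly with the original sub-segment starting points landing in $D_j$, a Poisson number of i.i.d.\ samples from $\bar f''|_{D_j}$ of mean $(1-\varepsilon)\lambda_{D_j}$, where $\lambda_{D_j}\stackrel{\textnormal{def.}}{=}\int_{D_j}\mathrm{d}(f''1_{B_K})$, and arrange that on a high-probability event these i.i.d.\ samples are a subset of the original starting points in $D_j$. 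The associated length-$L'$ pieces are then contained in $\mathcal I_1$, and chaining the per-box couplings together (inclusions are preserved, cf.~Remark~\ref{R:chain}) yields the global coupling~$\widetilde{\mathbb Q}$.

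The crux of the proof is to verify, uniformly over the $D_j$'s, the soft-local-time hypothesis~\eqref{eq:RI_cond_Q} of Lemma~\ref{L:RI_basic_coupling} in the presence of correlations between sub-segments coming from a common parent walk. Conditionally on all earlier sub-segments of a given parent walk, the starting point of its $k$-th sub-segment ($k\ge 1$) is distributed as $X_{L''}$ under $P_y$, where $y$ is the endpoint of the $(k-1)$-st sub-segment. Since $L''\ge t_g\gg\widetilde L^2$, the local central limit theorem~\eqref{eq:LCLT}--\eqref{eq:density} gives that this conditional law, integrated against functions supported on $D_j$, deviates from the unconditional marginal $\bar f''|_{D_j}$ only by a multiplicative factor $1+O(\widetilde L^2/t_g)=1+o(\varepsilon)$, uniformly in the history. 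The lower bound $f\ge\kappa$ on $B_{K+L}$ then ensures $\lambda_{D_j}\ge c\kappa\widetilde L^d$, whence combining Chernoff bounds for Poisson counts with~\eqref{eq:RI_basic_coupling1}--\eqref{eq:RI_basic_coupling2} yields a per-box coupling error of $\exp\{-c\kappa\varepsilon^2\widetilde L^d\}=\exp\{-c\kappa\varepsilon^2 L^{d/4}\}$. A union bound over the $O((K+L)^d/\widetilde L^d)$ boxes then yields the announced bound in~\eqref{eq:easy_cover}. The mixing-across-the-gap estimate is the delicate step; the rest amounts to careful book-keeping of Poisson intensities and standard concentration.
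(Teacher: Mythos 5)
Your overall architecture is the paper's: slice the parent walks into length-$L'$ pieces separated by gaps of order $t_g$, observe that the resulting starting-point intensity reproduces $f''$, and match the (dependent) starting points to an i.i.d.\ Poisson family via soft local times on mesoscopic boxes, using the LCLT over the gap time. The intensity bookkeeping in your first paragraph is correct. However, the step you yourself identify as the crux is wrong as stated, in a way that breaks the argument. You tile by boxes $D_j$ containing the \emph{new} starting points and claim that, uniformly in the history, the conditional law $P_y[X_{\cdot}\in\,\cdot\,]$ of the next starting point, restricted to $D_j$, is within a multiplicative factor $1+O(\widetilde L^2/t_g)$ of $\bar f''|_{D_j}$. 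This is false: the mass that $p_{t_g+1}(y,\cdot)$ puts on a target box $D_j$ depends on $d(y,D_j)$ through a Gaussian factor and varies over unboundedly many orders of magnitude as $y$ ranges over $B_{K+L}$, so no uniform multiplicative comparison with a fixed measure on $D_j$ can hold. What the LCLT actually gives is a comparison between $p_{t_g+1}(y,z)$ and $p_{t_g+1}(y',z)$ for $y,y'$ in a \emph{common source box} of radius $\widetilde L\ll\sqrt{t_g}$ and $z$ within the diffusive range of that box (this is \eqref{eq:ratio_lower_bnd_easy}). Accordingly, the paper partitions by the box containing the \emph{pre-gap} positions (the endpoints of the previous sub-segments, resp.\ the latent variable $x$ in \eqref{eq:easy_cover1}) and runs one soft-local-time comparison per source box, matching entire post-gap trajectories; the independent superposition of $\mathcal I^{(1-\varepsilon)f'',L'}$ used there is over this latent source variable, not over the location of the observable starting point. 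With your target-box decomposition, a single previous endpoint contributes to many boxes and the per-box densities are far from flat, so the concentration of the accumulated soft local time at relative precision $\varepsilon$ does not follow.

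There is a second, quantitative problem: your choice $\widetilde L=L^{1/4}$ together with $\lambda_{D_j}=\int_{D_j}\mathrm{d}(f''1_{B_K})$ miscounts the Poisson intensity. The number of length-$L'$ trajectories of $\mathcal I^{(1-\varepsilon)f'',L'}$ starting in $D_j$ has mean $\tfrac{4d}{L'}\int_{D_j}f''$, and per layer $k$ the mean count in a box of radius $\widetilde L$ is of order $\kappa\widetilde L^d/L$. One therefore needs $\widetilde L\gg L^{1/d}$ (the paper takes $\widetilde L=L^{1/d+1/200}$, which is still $\ll\sqrt{t_g}=L^{7/16}$); with $\widetilde L=L^{1/4}$ and $d=3$ the per-box mean is $\asymp\kappa L^{-1/4}=o(1)$, so no concentration at relative precision $\varepsilon$ is possible and the claimed per-box error $\exp\{-c\kappa\varepsilon^2L^{d/4}\}$ cannot be correct. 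Both issues are repairable — group by source box and take $\widetilde L=L^{1/d+\alpha}$ for small $\alpha>0$ — but as written the proof does not go through. (A minor slip: the displacement from the endpoint of the $(k-1)$-st sub-segment to the start of the $k$-th is $t_g+1$ steps, not $L''$; this does not affect the homogenization since both exceed $t_g$.)
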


\bigskip

Assuming Proposition~\ref{thm:short_long'} to hold we now present the:
\begin{proof}[Proof of Theorem~\ref{thm:short_long} (assuming Proposition~\ref{thm:short_long'})] First observe that we can always assume $L' \le cL$ and $L \geq C$ (often implicit in the sequel); for, in all other cases choosing $\mathbb{Q}$ any coupling between $(\mathcal{I}_1, \mathcal{I}_2)$, the conclusion \eqref{eq:short_long} trivially holds. By choosing the constant $c$ small enough in the condition $L' \ge L^{1 - c}$ and $L \geq C$, we may assume that  
\begin{equation}\label{eq:gap_bnd1}
(L')^{2} > Lt_g /2.
\end{equation}
In particular, \eqref{eq:gap_bnd1} implies that $L' > t_g$ when $L \geq C$. Deducing 
Theorem~\ref{thm:short_long} involves replacing ${\mathcal{I}}_2$ in \eqref{eq:easy_cover} by the 
corresponding quantity in \eqref{eq:short_long}, with 
underlying intensities $f''$ and $f'=P_L^{L'}(f)$ given by \eqref{eq:f''_easy} and \eqref{eq:f'}, 
respectively. We will take care of the discrepancy in the definition of $f'$ and $f''$ in two steps: first, 
adjusting the times in \eqref{eq:f''_easy} at which the heat kernels are evaluated to be suitable 
multiples of $L'$, and second adjusting the summation over $k$.

In view of \eqref{eq:f'} and \eqref{eq:f''_easy}, we first compare $P_{kL''}$ to $P_{kL'}$ and $P_{L/2 + kL'' + t_g}$ to $P_{L/2 + kL'}$. Using \eqref{eq:LCLT}-\eqref{eq:density}, one obtains, for any integer $n \geq 1$,
$\Delta \in (n^{-2/3}, 1)$ and $|x| \le \sqrt{n} 
\Delta^{-1/4}$, whence $\frac{|x|^4}{n^3}= O(n^{1/3})= O(\Delta^{-1/2})$ as $n \to \infty$, that
\begin{align}
	\label{eq:regularity} \Big| \textstyle \frac{p_n(0, x)}{p_{\lfloor n(1 + \Delta) \rfloor}(0, x)} -1 \Big| \le C \Delta^{1/2}.
\end{align}
We apply \eqref{eq:regularity} with the choice $\Delta=\frac{kt_g }{ kL'} = \frac{t_g }{ L'}$ and $n=kL'$ 
for $1 \le k < \lfloor \frac{L}{2L''} \rfloor$ to compare $P_{kL''}$ to $P_{kL'}$. Notice that $\Delta \in (n^{-2/3}, 1)$ with these choices when $L \geq C$, as follows from the fact that $L' > t_g$ noted below \eqref{eq:gap_bnd1}. Similarly, we compare $P_{L/2 + kL'' + t_g}$ to $P_{L/2 + kL'}$ with the choice $\Delta = \frac{(k + 
	1)t_g}{L/2 + kL'} \le \frac{t_g}{L'}$ for $0 \le k < \lfloor \frac{L}{2L''} \rfloor$. Since $f \leq u$ 
on $B_{K}$, we then have for any $1 \le k < \lfloor L/2L'' \rfloor$, pointwise on $B_{K+L}$,
\begin{multline}\label{eq:regularity_bnd}
	\textstyle\big(1 - C\big(\frac{t_g }{ L'} \big)^{1/2}\big )P_{kL'}(f 1_{B_K}) \\ \le P_{kL''}(f 1_{B_K}) + \textstyle uP_0\big [\,|X_{kL'}| > c\sqrt{kL'} \big(\frac{L'}{ t_g}\big)^{1/4}\,\big]  \le P_{kL''}(f 1_{B_K}) + u\, e^{-c (L' / t_g)^{1/2}},
\end{multline}
and the same bound holds true with $P_{L/2 + kL'}$ and $P_{L/2 + kL'' + t_g}$ in place of $P_{kL'}$ and $P_{kL''}$. From \eqref{eq:regularity_bnd}, one 
immediately infers the existence for every $\varepsilon \in (0, \sqrt{t_g / L'})$ of a coupling between three random sets 
distributed as ${\mathcal I}^{(1-\varepsilon)f'', L'}$, $ {\mathcal I}^{u_11_{B_{K+L}}, L'}$ and $ {\mathcal I}^{(1- 
	C\sqrt{t_g / L'})\tilde{f}', L'} $ respectively, with 
$f''$ as in \eqref{eq:f''_easy}, $u_1\stackrel{\textnormal{def.}}{=} u l e^{-c (L' / t_g)^{1/2}}$ and
\begin{equation}
	\label{eq:tildef'-easy}
	\tilde f' \stackrel{\textnormal{def.}}{=}  
	l^{-1} \, \textstyle \sum_{0 \leq k < \lfloor{\frac {L}{2L''}}\rfloor} 
	\big(P_{kL'}(f1_{B_K}) + P_{L/2 + kL'}(f1_{B_K})\big),
\end{equation}
such that, in accordance with the resulting bound in \eqref{eq:regularity_bnd}, the former two are 
independent and their union contains the latter a.s. Applying Lemma~\ref{lem:concatenation}, one 
then concatenates this coupling with the one from Proposition~\ref{thm:short_long'} (their common 
marginal being $({\mathcal I}^{(1-\varepsilon)f'', L'},  {\mathcal I}^{u'1_{B_{K+L}}, L'})$, the latter being 
sampled independently by suitable extension of $\widetilde{\mathbb{Q}}$ in the context of 
Proposition~\ref{thm:short_long'}) with the choice $\varepsilon = L^{-c'}$, $\kappa = L^{-c''}$ for suitable $c', c'' >0$, to find a coupling $\mathbb Q_1$ of three random variables $\mathcal I_1$, $\tilde{\mathcal I}_2$ 
and ${\mathcal I}_3^1$ such that
\begin{equation}
	\label{eq:easy_cover2}
	\begin{split}
		&\mathcal I_1 \stackrel{\text{law}}{=} \mathcal I^{\frac12(1 + P_{L/2})f_1, \frac{L}{2}} \cup {\mathcal I}^{f_2, L} , \quad \tilde{\mathcal I}_2 \stackrel{\text{law}}{=} {\mathcal I}^{(1- C\sqrt{t_g / L'})\tilde f', L'} \text{ and } {\mathcal I}_3^1 \stackrel{\text{law}}{=} {\mathcal I}^{u_1 1_{B_{K+L}}, L'} \text{ with }\\
		&	\mathbb Q_1 \big[ {\mathcal I}_1  \cup {\mathcal I}_3^1 \supset  \tilde {\mathcal I}_2\big] \geq 1 -  C (u \vee 1) (K + L)^de^{- c u L^{c}}.
	\end{split}
\end{equation}
In words, \eqref{eq:easy_cover2} asserts that, at the cost of adding ${\mathcal I}_3^1$, which has 
intensity $u_1 (\ll u)$, one can replace the intensity $\tilde{f}$ by $\tilde{f}'$ in 
Proposition~\ref{thm:short_long'}. Now, comparing \eqref{eq:f'} and \eqref{eq:tildef'-easy}, observe that,
\begin{align}
\label{eq:last_pieces}
P_{L}^{L'}(f1_{B_K}) = \tilde f' + \textstyle l^{-1}\sum_{ \lfloor{\frac {L}{2L''}}\rfloor \le k < \frac {L}{2L'}} \big(P_{kL'} (f1_{B_K}) +  P_{L/2 + kL'} (f1_{B_K})\big)
\end{align}
But since $L''=L'+t_g$, and due to \eqref{eq:gap_bnd1}, one has $0 < \frac {L}{2L'} - \lfloor{\frac {L}{2L''}}\rfloor \le \frac {L}{2L'} - \frac {L}{2L''}   + 1 \le \frac{L t_g}{2(L')^2} + 1 < 2$.
Consequently \eqref{eq:last_pieces} can be recast as
\begin{align}
\label{eq:last_piece}
P_{L}^{L'}(f1_{B_K})  = \tilde f' +l^{-1}(P_{L/2 - L'} (f1_{B_K}) + P_{L - L'} (f1_{B_K})).
\end{align}
One then applies \eqref{eq:regularity} with the choice $\Delta = (k - 1)L' / (L - kL')$ and $n = L - kL'$, 
for $2 \le k \le \sqrt{L / L'}= \sqrt{l}$, which as above can be seen to satisfy the condition $\Delta \in 
(n^{-2/3}, 1)$ as soon as $ L^{1-c'}\leq L' \leq c' L$ for small enough $c'$. Arguing similarly as in 
\eqref{eq:regularity_bnd}, one obtains for all such $k$,
\begin{multline}\label{eq:extra_piece}
\big(1 - Cl^{-\frac12}\big )P_{L - L'}(f 1_{B_K}) \\
\le P_{L - kL'}(f 1_{B_K}) + uP_0\big [\,|X_{L - L'}| > c\sqrt{L} l^{1/8}\,\big]  \le P_{L-kL'}(f 1_{B_K}) + 
u\, e^{-c l^{1/4}}
\end{multline}
pointwise on $B_{K + L}$. The same holds with $P_{L/2 - L'}(f1_{B_K})$ on the left-hand side and $P_{L/2 - kL'}(f1_{B_K})$ on the right. One then averages over $k$ on the right-hand side of \eqref{eq:extra_piece} noting with a view towards \eqref{eq:tildef'-easy} that $\{ L-kL': 2 \le k \le \sqrt{l} \} \subset  \{ L/2 + kL'  : 0 \leq k < \lfloor{\frac {L}{2L''}}\rfloor \}$, and similarly that $\{ \frac L2-kL': 2 \le k \le \sqrt{l} \} \subset  \{ kL'  : 0 \leq k < \lfloor{\frac {L}{2L''}}\rfloor \}$.
Together with \eqref{eq:last_piece} and \eqref{eq:tildef'-easy}, this gives
\begin{equation}\label{eq:extra_piece2}
\begin{split}
P_{L}^{L'}(f1_{B_K})  &\le \tilde f' + \big(Cl^{-\frac12}\tilde f' + C u\, e^{-c l^{1/4}}\big) = (1 + 
Cl^{-\frac12})\tilde f' + C \, e^{-c l^{1/4}}.
\end{split}
\end{equation}
Since $l^{1/4}=(L / L')^{1/4} \leq (2L' / t_g)^{1/2}$ in view of \eqref{eq:gap_bnd1}, proceeding analogously as in the argument leading from \eqref{eq:regularity_bnd} to \eqref{eq:easy_cover2}, with 
\eqref{eq:extra_piece2} and \eqref{eq:easy_cover2} now playing similar roles as \eqref{eq:regularity_bnd} and the coupling \eqref{eq:easy_cover} from Proposition~\ref{thm:short_long'}, respectively, one finds a coupling $\mathbb Q_2$ of three random variables $\mathcal I_1$, ${\mathcal I}_2$ and ${\mathcal I}_3$ 
such that
\begin{equation*}
	\begin{split}
		&\mathcal I_1 \stackrel{\text{law}}{=} \mathcal 
		I^{\frac12(1 + P_{L/2})f_1, \frac{L}{2}} \cup {\mathcal I}^{f_2, L}, \quad {\mathcal I}_2 \stackrel{\text{law}}{=} {\mathcal I}^{(1- Cl^{-1/2})P_L^{L'}(f1_{B_K}), L'} \text{ and } \mathcal 
		I_3 \stackrel{\text{law}}{=} {\mathcal I}^{u_21_{B_{K+L}}, L'} \text{ with }\\
		&	\mathbb Q_2 \left[ {\mathcal I}_1  \cup {\mathcal I}_3 \supset  {\mathcal I}_2\right] \geq 1 -  C(u \vee 1)(K + L)^de^{- c\kappa L^{c}},
	\end{split}
\end{equation*}
where $u_2 \stackrel{\textnormal{def.}}{=} C u l e^{-c l^{1/4}}$. However, since $\mathbb Q[{\mathcal I}_3 = \emptyset] \ge 1 - C(K+L)^du_2$ by standard properties of Poisson variables, we immediately 
obtain \eqref{eq:short_long} with $\mathbb{Q}=\mathbb{Q}_2$, thus yielding 
Theorem~\ref{thm:short_long}. 
\end{proof}

\subsection{Proof of Proposition~\ref{thm:short_long'}}\label{S:proofofeasycoupling}
We will in fact prove a slightly stronger statement, namely that the bound in the second line of \eqref{eq:easy_cover} holds with $\mathcal I_1$ replaced by a (smaller) 
configuration $\hat{\mathcal I}_1 \leq_{\textnormal{st.}} {\mathcal{I}}_1$ to be defined in the course of the proof, see \eqref{eq:stoch_dom} and \eqref{def:hatJ_1k}. By chaining (cf.~\S\ref{A:chaining}) the original statement in~\eqref{eq:easy_cover} then quickly follows. Roughly speaking, the set $\hat{\mathcal I}_1$ comprises fragments of length $L'$ from the length-$\frac{L}{2}$ and length-$L$ trajectories making up ${\mathcal{I}}_1$, cf.~\eqref{eq:easy_cover}, which are merged with corresponding pieces of ${\mathcal{I}}_2$ indexed by $k$ in \eqref{eq:f''_easy}. The merging happens recursively in $k$ in terms of a sequence of couplings supplied by Lemma~\ref{L:main-coupling-easier} below. Taking advantage of Proposition~\ref{thm:short_long'} over Theorem~\ref{thm:short_long}, the fragments of longer trajectories alluded to above can now be separated by a time roughly of order $t_g$. This allows for good mixing at an intermediate scale $\widetilde{L}$, chosen suitably in Lemma~\ref{lem:easy_coup_bnd}, which is ultimately responsible for the good control on the error term, as outlined in the discussion following \eqref{eq:no-loop-l}. We forewarn the perceptive reader that the somewhat intricate definition of ${\mathcal{I}}_1$ in \eqref{eq:easy_cover} or \eqref{eq:short_long} (required for later purposes, see Remark~\ref{R:short_long},\ref{R:profile-compl}) makes the proof slightly more involved. An option is to set $f_1 \equiv 0$, which already yields an interesting special case of Proposition~\ref{thm:short_long'} and effectively makes all matters relating to $\overline{\omega}_1$ and $\overline{\omega}_3$ disappear from the proof below, thus leading to a streamlined argument.

\begin{proof}[Proof of Proposition~\ref{thm:short_long'}]
	The measure $\widetilde{\mathbb{Q}}$ under which the desired coupling will be constructed is assumed to carry independent Poisson processes $\overline{\omega}_j$, $1\leq j \leq 3$ and $\omega^k$, $1 \leq k < 2 k_L$, where $k_L \stackrel{\text{def.}}{=} \lfloor L/2L'' \rfloor$. All processes have state space $[0,1] \times \Z^d \times W^+$, and the processes $\overline{\omega}_j$, $1 \leq j \leq 3$, have respective intensity measure 
	\begin{equation}
		\label{eq:easy_cover0}
		\begin{split}
			&\overline{\nu}_j([0,u] \times S \times A) \stackrel{\textnormal{def.}}{=} u \cdot \frac{4d}{L}\sum_{x \in S} f_j(x) P_x[X \in A],
		\end{split}
	\end{equation}
	for $u \in (0,1]$, $ S\subset \Z^d$ and $A \in \mathcal{W}^+$, with $f_1$, $f_2$ as appearing in the statement of Proposition~\ref{thm:short_long'} and $f_3 = P_{L/2} f_1$; compare with the definition of ${\mathcal{I}}_1$ in \eqref{eq:easy_cover}. The process $\omega^k$ has intensity
	\begin{equation}
		\label{eq:easy_cover1}
		\nu^k([0,u] \times S \times A) \stackrel{\textnormal{def.}}{=}u \cdot \frac{4d }{L}\sum_{x \in S} (P_{t_k} 
		f)(x) P_x[ Y \in A],
	\end{equation}
	with $Y= X \circ \theta_{t_g+1} = (X_{t_g+1+n})_{n \geq 0}$ and (recall that $t_g = L'' - L'$) 
	\begin{equation}\label{def:tk}
		t_k \stackrel{\textnormal{def.}}{=} \begin{cases}
			kL'' - (t_g+1); & \mbox{for } 1\leq k <k_L, \\
			\frac L2 +  (k -k_L)L'' - 1; & \mbox{for $k_L \leq k < 2k_L$}.
		\end{cases}
	\end{equation}
	In the sequel, with $\pi: [0,1] \times \Z^d \times W^+ \to [0,1] \times W^+$ 
	denoting the canonical projection map onto the first and third coordinates, we write $\overline{\eta}_j= \pi \circ \overline{\omega}_j$,  $\eta^k = \pi \circ \omega^k$ for the corresponding push-forward processes on $[0,1] 
	\times W^+$. To understand the relevance of  $\omega^k$, notice that, substituting $S=\Z^d$ in \eqref{eq:easy_cover1} and using reversibility and the semigroup property, by which, for $k< k_L$, 
	$$
	\sum_{x} (P_{t_k} 
	f)(x) P_x[ Y \in A] = \sum_y f(y) P_y[ X \circ \theta_{t_g+1+ t_k} \in A] \stackrel{\eqref{def:tk}}{=} \sum_x (P_{kL''}f)(x)P_x[X \in A]
	$$
	(all sums ranging over $\Z^d$), along with a similar computation when $k \geq k_L$, observing in the latter case that $t_g+1+ t_k= t_g + \frac{L}2 + (k-k_L)L''$, it follows that $\eta^k$ intensity
	\begin{equation}
		\label{eq:projection0}
		\begin{split}
			&\hat{\nu}^k([0,u] \times A)=  u\cdot  \frac{4d }{L}\sum_{x } (\Pi_k f)(x) P_x[X \in A], \text{ where }\\
			&\Pi_k f=  \begin{cases} P_{kL''} f, & \mbox{if }0< k < k_L, \mbox{}\\
				P_{\frac L2 + k'L'' + t_g} f;& \mbox{if } k = k' + k_L \mbox{ for } 0 \leq k' < k_L
			\end{cases}
		\end{split}
	\end{equation}
	(compare with \eqref{eq:f''_easy}). 
	The sets $\hat{\mathcal I}_1$ (smaller than ${\mathcal{I}}_1$) and ${\mathcal I}_2$ that we 
	aim to couple under $\widetilde{\mathbb{Q}}$ will be defined as unions over several parts $\hat{\mathcal I}_1^k$ and ${\mathcal I}_2^k$ indexed by $k$, where $0 \leq k 
	< 2k_L$. Using $\overline{\omega}_j$, $j = 1, 2$, the 
	first of these contributions (corresponding to $k=0$) is simply defined as follows: 
	\begin{align}
		\label{eq:def_first_part}
		\begin{split}
			&\hat{\mathcal I}_1^0= \hat{\mathcal I}_1^0(\overline{\omega}_1 \cup \overline{\omega}_2) \stackrel{\textnormal{def.}}{=} \bigcup_{(v, w) \in\overline{\eta}_1 \cup\overline{\eta}_2} w[0,L'-1], \mbox{ and } \\
			&\mathcal I_2^0= \mathcal I_2^0(\overline{\omega}_1 \cup \overline{\omega}_2) \stackrel{\textnormal{def.}}{=} \bigcup_{\substack{(v, x, w) \in \overline{\omega}_1 \, \cup \, \overline{\omega}_2 :\\ v \leq 1 - \varepsilon, \,  x \in  B_K}} w[0,L'-1].
		\end{split}
	\end{align}
	Recalling \eqref{eq:J}, the fact that $f=f_1+f_2$ and using \eqref{eq:easy_cover0}, it follows that $\hat{\mathcal I}_1^0$ and $\mathcal I_2^0$ are distributed as
	${\mathcal I}^{\frac{L'}{L}f , L'}$ and ${\mathcal I}^{(1 - \varepsilon)f''_0 , L'}$ 
	respectively, with $f_0'' \stackrel{\textnormal{def.}}{=} \frac{L'}{L}f 1_{B_K}$. 
	To understand what the former has to do with ${\mathcal{I}}_1$ in \eqref{eq:easy_cover}, observe that both ${\mathcal I}^{\frac{L'}{L}f , L'}$ and ${\mathcal I}^{\frac{1}{2}f_1 , \frac L2} \cup {\mathcal{I}}^{f_2,L}$ (sampled independently) can be obtained by retaining a certain number of steps from each trajectory in the support of a Poisson process on $W^+$ of intensity $\frac{4d}{L} \sum_x f(x)P_x$. In particular, the laws of their starting points coincide.
	
	In the sequel it will be convenient to let $\zeta^0_{v, w}= w  (\in W^+)$ for $(v, w) \in \overline{\eta}_1 \cup\overline{\eta}_2$ and define $\mathcal{W}^0 =\{\zeta^0_{v, w}: (v, w) \in \overline{\eta}_1 \cup\overline{\eta}_2\}$, by which $\hat{\mathcal I}_1^0$ can be viewed as a function of $\mathcal{W}^0$. 
	
	The next lemma constructs recursively a sequence $(\mathcal W^{k}, \mathcal I_2^{k})_{0\leq k <2k_L}$ under $\widetilde{\mathbb{Q}}$, where $\mathcal W^{k}$ is a certain family of random paths $\zeta^k_{v, w}$ (i.e.~having values in $W^+$) indexed by points $(v,w) \in \overline{\eta}_{j(k)} \cup \overline{\eta}_{2}$, where $j(k)=1$ if $0 \leq k<k_L$ and $j(k)=3 $ if $k_L \leq k < 2k_L$. In terms of $\mathcal W^{k}$, the set $\hat{\mathcal I}_1^k$ is obtained by setting
	\begin{equation}\label{def:hatJ_1k}
		\hat{\mathcal I}_1^k = \bigcup_{(v, w) \in\overline{\eta}_{j(k)} \cup\overline{\eta}_{2}} \zeta^k_{v, 
			w}[0,L'-1],\end{equation} 
which is consistent \eqref{eq:def_first_part} when $k=0$. The requirements on $(\mathcal W^{k}, \mathcal I_2^{k})$ vary depending on the value of $k$ below, which reflects \eqref{def:tk}-\eqref{eq:projection0} and is owed to the specific form of ${\mathcal{I}}_1$ in \eqref{eq:easy_cover}.

\begin{lemma} \label{L:main-coupling-easier} There exists a sequence $(\mathcal W^{k}, \mathcal I_2^{k})_{0\leq k <2k_L}$ under $\widetilde{\mathbb{Q}}$ with the following properties:
	\begin{enumerate}[label = {(\roman*)}]
		\item For all $0< k <2k_L$, conditionally on $(\mathcal W^0, \mathcal 
		J_2^0), \ldots, (\mathcal W^{k-1}, \mathcal I_2^{k-1})$, $\mathcal{W}^k \stackrel{\textnormal{def.}}{=} \{\zeta^k_{v, w} \in 
		W^+: (v, w) \in \overline{\eta}_{j(k)} \cup\overline{\eta}_{2}\}$ is a family of independent random variables and
		\begin{equation}\label{eq:zeta_cond-law}
			\begin{split}
				&\zeta_{v, 
					w}^k \stackrel{\textnormal{law}}{=} P_{\zeta_{v, w}^{k-1}(L'-1)} [ Y 
				\in \cdot], \text{ if $k \neq k_L$; }   \\
				&\zeta_{v, 
					w}^{k_L}  \stackrel{\textnormal{law}}{=}
				\begin{cases} P_{w(0)} [ X\circ{\theta}_{t_g} \in \cdot], & \text{if $(v,w) \in\overline{\eta}_3$,}\\
					P_{\zeta_{v, w}^{k-1}(
						\Delta)}[ Y \in \cdot], & \text{if $(v,w) \in\overline{\eta}_2$}, \text{ where $\Delta= t_{k_L} -t_{k_L-1} -(t_g+1) $.}
				\end{cases}
			\end{split}
		\end{equation} 
		\item For all $0< k <2k_L$, conditionally on $(\mathcal W^0, \mathcal I_2^0), \ldots, (\mathcal W^{k-1}, \mathcal I_2^{k-1})$, the 
		set $\mathcal I_2^{k}$ is distributed as ${\mathcal I}^{(1 - \varepsilon)f''_k, L'}$, with $f''_k = \frac{L'}{L}\Pi_k (f1_{B_K})$ (cf.~\eqref{eq:projection0}).
		
		\item 
		The bound  $\widetilde{\mathbb Q}[ \hat{\mathcal I}_1^k \not \supset  {\mathcal I}_2^k] \le C|B_{K+L}|e^{- 
			c\kappa\varepsilon^2 L^{c}}$ holds for all $0 < k < 2k_L$.
	\end{enumerate}
\end{lemma}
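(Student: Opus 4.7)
The plan is to build $(\mathcal{W}^k, \mathcal{I}_2^k)$ by induction on $k$, using at each step the \emph{independent} Poisson process $\omega^k$ both to produce $\mathcal{I}_2^k$ with the correct conditional marginal law in (ii) and to furnish, via a cell-wise soft local time argument in the spirit of Lemma~\ref{L:RI_basic_coupling}, a coupling of its starting points to the marginal point process $\{\zeta^k_{v,w}(0)\}$ needed to enforce (i). The base case $k=0$ is immediate: by \eqref{eq:def_first_part} both $\hat{\mathcal{I}}_1^0$ and $\mathcal{I}_2^0$ are deterministic functions of $\overline\omega_1\cup\overline\omega_2$, and the inclusion $\hat{\mathcal{I}}_1^0\supset\mathcal{I}_2^0$ holds a.s.~by Poissonian thinning.

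For the inductive step, given $(\mathcal W^j,\mathcal I_2^j)_{j<k}$, I would first check, by a direct semigroup-plus-reversibility computation splitting the cases $k<k_L$, $k=k_L$ (where the $\overline\eta_3$ and $\overline\eta_2$ contributions must be combined) and $k>k_L$, that any family satisfying \eqref{eq:zeta_cond-law} yields for $\{\zeta^k_{v,w}(0):(v,w)\in\overline\eta_{j(k)}\cup\overline\eta_2\}$ a Poisson point process on $\Z^d$ with intensity $\frac{4d}{L}\Pi_kf$, which matches exactly the starting-point intensity of $\eta^k=\pi\circ\omega^k$ obtained in \eqref{eq:projection0}. I would then define $\mathcal{I}_2^k$ as the union of ranges $w[0,L'-1]$ over points $(v,x,w)\in\omega^k$ with $v\le 1-\varepsilon$ and $x\in B_K$, which immediately yields (ii) since $\omega^k$ is independent of the history. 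Using the Markov property for $Y=X\circ\theta_{t_g+1}$, I would represent $\omega^k$ as a Poisson cloud of post-gap starting points $y$ with density $\frac{4d}{L}\Pi_kf(y)$, each carrying an independent $P_y$-distributed walk. The task then reduces to coupling these post-gap starting points with $\{\zeta^k_{v,w}(0)\}$ so that the former (thinned to $v\le 1-\varepsilon$ and restricted to $B_K$) is contained in the latter; any matched $P_y$-walk can be taken as $\zeta^k_{v,w}$, whose required conditional law $P_{\zeta^{k-1}_{v,w}(L'-1)}[Y\in\cdot]$ agrees, by the Markov property, with that of the matched walk, while the unmatched $\zeta^k_{v,w}$'s are freshly sampled using auxiliary independent randomness.

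The starting-point coupling itself is executed cell by cell. I would partition $B_{K+L}$ into cells $D$ of side $\widetilde L$ with $L^{1/d}\ll\widetilde L\ll\sqrt{t_g}$ and, inside each $D$, apply a soft local time construction analogous to that of Lemma~\ref{L:RI_basic_coupling}. Two inputs drive this: the LCLT \eqref{eq:LCLT} applied at time $t_g+1$ shows that the conditional densities $p_{t_g+1}(\zeta^{k-1}_{v,w}(L'-1),\cdot)$, restricted to $D$, are all proportional to a common reference up to a multiplicative error absorbed by the sprinkling factor $(1-\varepsilon)$; and the lower bound $f\ge\kappa$ on $B_{K+L}$ forces an expected count of order $\kappa\widetilde L^d/L$ walk starts per cell, large enough to drive Poissonian concentration with failure probability of order $\exp(-c\kappa\varepsilon^2\widetilde L^d/L)$. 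A union bound over the $\lesssim((K+L)/\widetilde L)^d$ cells, together with the choice $\widetilde L=L^c$ for suitable small $c>0$, yields (iii); property (i) is preserved because at each step the conditional independence of $\{\zeta^k_{v,w}\}$ is built into the construction through the independence of $\omega^k$ from the history and the auxiliary randomness used for unmatched walks.

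The hard part will be the per-cell estimate. The count $\lambda_D=\sum_{(v,w)}P_{\zeta^{k-1}_{v,w}(L'-1)}[X_{t_g+1}\in D]$ is a random function of $\mathcal W^{k-1}$ rather than a deterministic quantity, and the starting positions inside $D$ are only conditionally (not jointly) independent, so one cannot directly invoke Poisson tail bounds. I expect the bulk of the technical work to consist in quantifying the LCLT-based smoothing at scale $\widetilde L$ so that $\lambda_D$ concentrates around its expectation $\frac{4d}{L}\sum_{x\in D}\Pi_kf(x)$ with fluctuations strictly below the $\varepsilon$-gap, and in simultaneously balancing the mixing error, the Poissonian fluctuation scale, and the number of cells $((K+L)/\widetilde L)^d$ through a single tuning of $\widetilde L$. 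The remaining ingredients—the semigroup identities matching marginal laws, the conditional independence inherited from the $\overline\omega_j$'s and $\omega^j$'s, and the soft local time construction itself—are direct adaptations of what is already in place in Section~\ref{sec:prelims}.
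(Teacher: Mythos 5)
Your proposal follows the paper's proof in all essential respects: the same induction on $k$, the same dual use of the independent process $\omega^k$ (generating the new layer $\mathcal I_2^k$ by thinning — your ``$v\le 1-\varepsilon$, $x\in B_K$'' is the same thinning as the Radon--Nikodym form in \eqref{eq:def_second_part_short0} — and serving as the reference process for a cell-wise soft local time coupling of the new walk segments), the same scale window $L^{1/d}\ll\widetilde L\ll\sqrt{t_g}$, the LCLT for the density ratio absorbed by the $\varepsilon$-sprinkling, the ellipticity $f\ge\kappa$ for the per-cell intensity of order $\kappa\widetilde L^d/L$, and a union bound over cells. The one place where you diverge — and where you rightly anticipate the trouble — is the indexing of the cells. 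By grouping walks according to their post-gap positions, you make the relevant per-cell count have a conditional intensity that is a random function of $\mathcal W^{k-1}$, which forces the extra concentration step you describe. The paper sidesteps this entirely by defining $\Lambda_D^k$ in \eqref{eq:wsk} as the set of walks whose \emph{pre-gap} endpoint $\zeta^{k-1}_{v,w}(L'-1)$ lies in $D$: since those endpoints form (marginally) a Poisson process by the induction hypothesis and the displacement theorem, $|\Lambda_D^k|$ is exactly Poisson with the deterministic mean $\lambda_D=\frac{4d}{L}\sum_{x\in D}(P_{t_k}f)(x)$, and the LCLT comparison \eqref{eq:RND} is then between transition densities $p_{t_g+1}(z,\cdot)$ whose sources $z$ all range over the single cell $D$. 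No concentration of a random intensity is needed, and the soft local time failure event reduces directly to large deviations for a Poisson variable and i.i.d.\ exponentials. Your post-gap route could likely be pushed through, but it is strictly more laborious; the pre-gap indexing is the device that dissolves what you identify as the hard part, and I would recommend adopting it.
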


We defer the proof of Lemma~\ref{L:main-coupling-easier} for a few lines and first complete the proof of Proposition~\ref{thm:short_long'} assuming it to hold. With $\mathcal{W}^k =\{\zeta^k_{v, w} \in 
W^+: (v, w) \in \overline{\eta}_{j(k)} \cup\overline{\eta}_{2}\}$ defined by Lemma~\ref{L:main-coupling-easier} for all $0 < k < 2k_L$, the set $\hat{\mathcal I}_1^k $ in \eqref{def:hatJ_1k} is declared and one sets
\begin{align}
	\label{eq:stoch_dom}
	\hat{\mathcal {I}}_1= \bigcup_{0 \le k < 2k_L}\hat{\mathcal {I}}_1^k
\end{align}
(recall that $\hat{\mathcal {I}}_1^0$ is supplied by \eqref{eq:def_first_part}). Similarly, using the sets $\mathcal I_2^{k}$, $0< k < 2k_L$, furnished by Lemma~\ref{L:main-coupling-easier} and ${\mathcal{I}}^2_0$ from \eqref{eq:def_first_part}, one defines
\begin{align}
	\label{eq:I2}
	\mathcal {I}_2 =\bigcup_{0 \le k < 2k_L}\mathcal {I}_2^k 
\end{align}
It then immediately follows by combining the fact that $\hat{\mathcal {I}}_1^0 \supset {\mathcal {I}}_2^0$, which is plain from \eqref{eq:def_first_part}, with item \textit{(iii)} above and a union bound over $k$, noting that $2k_L \leq 2 \frac{L}{L'} = 2l$ (see above \eqref{eq:easy_cover0} regarding $k_L$), that $\widetilde{\mathbb Q}[ \hat{\mathcal I}_1  \supset  {\mathcal I}_2  ]$ is bounded from below by the right-hand side of \eqref{eq:easy_cover}. To conclude the proof, it is thus enough to argue that ${\mathcal{I}}_2$ defined by \eqref{eq:I2} is equal to ${\mathcal I}^{(1 - \varepsilon)f'', L'}$ in law, as prescribed by \eqref{eq:easy_cover}, and that $\hat{\mathcal {I}}_1 \leq_{\text{st.}} {\mathcal {I}}_1$. From this, the original statement in~\eqref{eq:easy_cover} is readily obtained by chaining the coupling $\widetilde{\mathbb{Q}}$ and the one inherent to the stochastic domination using Lemma~\ref{lem:concatenation}; see also Remark~\ref{R:chain},\ref{R:chain-appli}.

The fact that ${\mathcal{I}}_2$ in \eqref{eq:I2} satisfies ${\mathcal{I}}_2 \stackrel{\text{law}}{=}{\mathcal I}^{(1 - \varepsilon)f'', L'}$ is an immediate consequence of \textit{(ii)}, recalling 
$\Pi_k$ from~\eqref{eq:projection0}, by which $f''_k = \frac{L'}{L}P_{kL''}(f1_{B_K})$ if $k <k_L$ (see also below \eqref{eq:def_first_part} regarding $f_0''$) and $f_k'' =  \frac{L'}{L}P_{L + k'L'' + t_g}(f1_{B_K})$ if $k = k' + k_L$ for $k' \geq0$, which implies in turn that $\sum_{0 \leq k< 2k_L} f_k''= f''$ as defined by \eqref{eq:f''_easy}.

We now argue that $\hat{\mathcal {I}}_1 \leq_{\text{st.}} {\mathcal {I}}_1$. To see this, it is simplest to think of $\mathcal I_1$ in \eqref{eq:easy_cover} as the union of the three independent sets ${\mathcal I}^{\frac12 f_1, \frac{L}{2}}$, ${\mathcal I}^{\frac12 P_{L/2}f_1, \frac{L}{2}}$ and ${\mathcal I}^{f_2, L}$. We first consider the contributions to each $\hat{\mathcal I}_1^k$ in \eqref{def:hatJ_1k} stemming from points $(v,w) \in \overline{\eta}_3$. As we now explain, these generate an independent set which is dominated by ${\mathcal I}^{\frac12 P_{L/2}f_1, \frac{L}{2}}$. Since $j(k)=3$ only when $k \geq k_L$, contributions of this kind only arise for these values of $k$. For a given $(v,w) \in \overline{\eta}_3$ then, concatenating the pieces $\zeta^k_{v, 
	w}[0,L'-1]$ in \eqref{def:hatJ_1k} for $k_L \leq k < 2k_L$ while making repeated use of \textit{(i)} yields a set having the same law as 
$$
\bigcup_{0 \leq k  < k_L} X[kL'' + t_g , (k+1)L''-1] \ \big(\subset X\big[0, \textstyle\frac{L}{2}-1\big] \big)
$$
under $P_{w(0)}$. As these fragments of random walk trajectories are independent as $w$ varies, the claim readily follows upon recalling that $\overline{\eta}_3$ has intensity $\pi \circ \overline{\nu}_3$ with $\overline{\nu}_3$ given by \eqref{eq:easy_cover0}. In the same vein, one verifies that the contributions to all $\hat{\mathcal I}_1^k$ stemming from points $(v,w) \in \overline{\eta}_1$ yield an independent set dominated by ${\mathcal I}^{\frac12 f_1, \frac{L}{2}}$, and similarly those in $\overline{\eta}_2$ account for ${\mathcal I}^{f_2, L}$. 
This completes the proof of Proposition~\ref{thm:short_long'}, assuming Lemma~\ref{L:main-coupling-easier}.\end{proof}

\begin{proof}[Proof of Lemma~\ref{L:main-coupling-easier}]
We define the sets $(\mathcal{W}^k, \mathcal I_2^k)$, $0\leq k <  2 k_L$ inductively such that \textit{(i)}-\textit{(iii)} above hold. In addition, the induction will also carry the property that
\begin{equation}\label{e:easy-ind_meas}
	\text{the pair $(\mathcal{W}^k, \mathcal I_2^k)$ is measurable relative to 
	$ \mathcal{F}_k = \sigma\big(\overline{\omega}_{1},\overline{\omega}_{2},\overline{\omega}_{3} , \omega^1, \ldots, \omega^k\big)$}
\end{equation}
(understood as 
$\sigma ((\overline{\omega}_{j})_{1 \leq j \leq 3})$ when $k=0$). The pair $(\mathcal{W}^0, \mathcal I_2^0)$ defined around \eqref{eq:def_first_part} plainly satisfies \eqref{e:easy-ind_meas} and properties \textit{(i)}-\textit{(iii)} are trivially satisfied for $k = 0$.

For arbitrary $k \geq 1$, we now describe how to sample $(\mathcal W^{k}, \mathcal I_2^{k})$ conditionally on $(\mathcal W^0, \mathcal I_2^0)$,\dots,
$(\mathcal W^{k-1}, \mathcal{I}_2^{k-1})$. In doing so, Properties~$(i)$ and $(ii)$, which are the relevant requirements on the (conditional) laws of $\mathcal W^{k}$ and $\mathcal I_2^{k}$ 
, respectively, along with \eqref{e:easy-ind_meas}, will immediately follow. The  construction is such that the (key) property \textit{(iii)} holds, which is verified separately.

The construction of $(\mathcal W^{k}, \mathcal I_2^{k})$ involves $\omega^k$, which we now recall from \eqref{eq:easy_cover1}-\eqref{def:tk}. For any $S \subset \Z^d$, let $\omega_S^k$ denote the restriction of $\omega^k$ to points lying in the slab $\{(v, x, w): x \in 
S\}$. It follows by \eqref{eq:easy_cover1} that the projection $\eta_S^k =\pi \circ  \omega_S^k$ is a Poisson process on $[0,1 ]\times W_+$ with intensity 
\begin{equation}
\label{eq:projection}
\hat{\nu}_S^k([0,u] \times A) = u \mu_S^k(A), \text{ with }  
\mu_S^k(A)\stackrel{\textnormal{def.}}{=}\frac{4d}{L}\sum_{x \in S} (P_{t_k} f)(x) P_x[Y \in A],
\end{equation}
for $A \in \mathcal{W}^+$. Similarly let $\tilde \mu_S^k$ denote the measure on $(W^+,\mathcal{W}^+)$ defined by
\begin{equation}
\label{eq:mu2k}
\tilde \mu_S^k(\cdot)=\frac {4d}{L}\sum_{x \in S}P_{t_k}(f1_{B_K})(x)P_x[Y \in \cdot].
\end{equation}
We now first describe how to sample ${\mathcal{I}}^2_k$, which only involves $\omega^k$.
For any $S \subset\subset \Z^d$, let
\begin{align}
\label{eq:def_second_part_short0}
{\mathcal I}_{2, S}^k \stackrel{\textnormal{def.}}{=}  \bigcup_{(v, w) \in\eta_S^k
	:\, v \leq (1 - \varepsilon) \frac{d\tilde \mu_{S}^{k}}{d\mu_{S}^k}(w)} w[0,L'-1].
\end{align}
For an intermediate scale $\widetilde{L}$ with $1 \leq \widetilde{L} \leq \frac{L}{2}$ to be determined, we consider the partition of $\Z^d$ into disjoint boxes $D= B(x,\widetilde{L})$ as $x$ ranges over $(2\widetilde{L}+1)\Z^d$. Let $\mathcal{D}$ denote the set of all such boxes and $\mathcal{D}'$ the subset of all $D$ with $D \subset B_{K+L}$. This leaves a (possibly empty) boundary region $V = B_{K+L} \setminus \bigcup_{D \in \mathcal{D}'} D$, which satisfies $d(V, B_K) \geq \frac{L}2$ whenever $L \geq C$ irrespectively of the value of $K$. Now set
\begin{equation}
\label{eq:def_second_part_short}
\mathcal I_2^k= \mathcal I_2^k(\omega^k) \stackrel{\textnormal{def.}}{=}  \bigcup_{ D \in \mathcal{D}} {\mathcal I}_{2, D}^k = \Big( \bigcup_{D \in \mathcal{D}'} {\mathcal I}_{2, D}^k \Big) \cup {\mathcal{I}}_{2, V}^k.
\end{equation}
The last equality follows due to the indicator function present in \eqref{eq:mu2k} and the fact that $t_k \leq L$ for any $k$, see \eqref{def:tk}, which together imply that $\tilde{\mu}_S^k=0$ whenever $S \cap B_{K+L} = \emptyset$. It immediately follows on account of \eqref{eq:def_second_part_short0} and the computation leading from \eqref{eq:easy_cover1} to \eqref{eq:projection0} that $(ii)$ holds for $k$, i.e.~$\mathcal I_2^k$ has the required law.

In order to sample the walks in $\mathcal{W}^k$, on the other hand, 
we use a similar method as in the proof of 
Lemma~\ref{L:RI_basic_coupling}.  For any $D \in \mathcal{D}$, let (cf.~\eqref{eq:zeta_cond-law})
\begin{equation}
\label{eq:wsk}
\begin{split}
	&\Lambda_D^k = \{(v, w) \in \overline{\eta}_{j(k)} \cup \overline{\eta}_2: \zeta_{v, 
		w}^{k-1}(L'-1) \in D\}, \, k \neq k_L,\\
	&\Lambda_D^{k_L} = \{(v, w) \in \overline{\eta}_{2}: \zeta_{v, 
		w}^{k_L-1}(\Delta) \in D\}\cup \{(v, w) \in \overline{\eta}_{3}: w(0) \in D\}.
\end{split}
\end{equation}
By \eqref{e:easy-ind_meas} and induction hypothesis, $\Lambda_D^k$ is clearly $\mathcal{F}_{k-1}$-measurable for all $k \geq 1$. Moreover, due to items~\textit{(i)} and~\textit{(iii)} (valid up to $k-1$ by assumption) and the Markov property, the quantity $|\Lambda_D^k|$ is seen to be a Poisson variable with 
mean $\lambda_D$, where
\begin{multline*}
\lambda_D= \big(1_{\{k < k_L\}}\overline{\nu}_1 + \overline{\nu}_2 \big) \big([0,1] \times \Z^d \times \{ w(t_k) \in D \}\big)+\\  1_{\{k \geq k_L\}}\overline{\nu}_3 \big([0,1] \times \Z^d \times \big\{ w\big((k-k_L)L''-1\big) \in D \big\}\big)\stackrel{\eqref{eq:easy_cover0}}{=}
\textstyle\frac{4d}{L} \displaystyle \sum_{x \in \Z^d} \Big[ 1_{\{k < k_L\}}  f_1(x)P_x[X_{t_k} \in D] \\[-0,5em]
+f_2(x)P_x[X_{t_k} \in D] +  1_{\{k \geq k_L\}} (P_{\frac L2} f_1)(x)
\times P_x[X_{(k-k_L)L''-1} \in D] \Big]=
\textstyle \frac{4d}{L} \displaystyle \sum_{x \in D}(P_{t_k}f)(x),
\end{multline*}
where in the last step one uses reversibility and the semigroup property, noting for the third term that $\frac{L}{2}+(k-k_L)L''-1 =t_k$ when $k\geq k_L$ by \eqref{def:tk}. Now, conditionally on $\mathcal{F}_{k-1}$ and for any $(v, w) \in \Lambda_D^k$, let $g_{v, w}: W^+ \to 
\R_+$ denote the Radon-Nikodym density of the law of $\zeta_{v, w}^k$ prescribed by \eqref{eq:zeta_cond-law} with respect to $\mu_D^k$ in \eqref{eq:projection}. That is, for any $(v, w) \in \Lambda_D^k$ and $w' \in W^+$, 
\begin{equation}
\label{eq:RND}
g_{v, w} (w') \stackrel{\textnormal{def.}}{=} \frac{p_{ t_g +1 }(\zeta_{v, w}^{k-1}(L'-1), w'(0))}{\frac {4d}L\sum_{x \in D}(P_{t_k} f)(x)p_{t_g +1}(x, w'(0))},  \text{ for }k \neq k_L,
\end{equation}
where  we adopt the convention $0 / 0 = 1$, and in view of \eqref{eq:zeta_cond-law} and  \eqref{eq:wsk}, the numerator in \eqref{eq:RND} is replaced when $k=k_L$ by $p_{ t_g+1  }(\zeta_{v, w}^{k-1}(\Delta), w'(0))$ if $(v,w) \in \overline{\eta}_2$ and by $p_{ t_g  }(w(0), w'(0))$ if $(v,w) \in \overline{\eta}_3$. Still conditionally on $\mathcal{F}_{k-1}$ and on the event $\{|\Lambda_D^k| = m \}$, which is $\mathcal{F}_{k-1}$-measurable as argued below \eqref{eq:wsk}, fix an enumeration $(v_1, w_1), \ldots, (v_m, w_m)$ of the points in $\Lambda_D^k$ with corresponding densities $g_{i}=g_{v_i, w_i}$, and define a sequence of 
(random) functions 
$G_0 = 0$, $G_1,
\dots, G_{m} \equiv G_D$ on $W^+$ 
inductively as follows. For all $1 \leq n \leq m$,
\begin{equation}
\label{eq:sofloctim}
\begin{split}
	&\xi_n \stackrel{\textnormal{def.}}{=} \inf \big\{t \ge 0: \exists \, (v, w) \in {\eta}_D^k\setminus \{(u_j, z_j): 1 \le j < n\} \mbox{ s.t. } G_{n-1}(w) + tg_{n}(w) \ge v\big\},\\
	& G_n(\cdot) \stackrel{\textnormal{def.}}{=} G_{n-1}(\cdot) + \xi_n g_n(\cdot).
\end{split}
\end{equation}
By \cite[Proposition~4.3]{PopTeix}, one obtains (conditionally on $\mathcal{F}_{k-1}$ and on the event $\{|\Lambda_D^k| = m \}$) that $ \xi_1, \ldots, \xi_{m}$ are distributed as independent exponential random variables with mean $1$. Moreover, denoting by $(u_n^D, z_n^D)$ the unique pair $(v,w) \in {\eta}_D^k \setminus  \{(u_j, w_j): 1 \le j < n\} $ satisfying $G_n(z_n^D) = u_n^D$, it follows that $(z_1^D, \ldots, z_{m}^D)$ are independent and $z_i$ has the law of $\xi_{v_i, w_i}^k$ prescribed by \eqref{eq:zeta_cond-law}. Hence, letting
\begin{equation}
\label{eq:Wk}
\mathcal W^k= \mathcal W^k\big((\overline{\omega}_j)_{1\leq j \leq 3}, (\omega^j)_{1\leq j \leq k}\big) = \bigcup_{D\in \mathcal{D}} \{z_{i}^D : 1 \le i \le |\Lambda_{D}^k|\},
\end{equation}
item \textit{(i)} readily follows. Moreover, by \eqref{eq:def_second_part_short} and \eqref{eq:Wk}, \eqref{e:easy-ind_meas} is immediately verified.

It remains to argue that $\textit{(iii)}$ holds. In view of \eqref{def:hatJ_1k} and by definition of $\mathcal{W}^k$, one can recast $\hat{\mathcal I}_1^k $ as $ \bigcup_{ \zeta \in \mathcal W^k } \zeta[0,L'-1].$ Using \eqref{eq:def_second_part_short}, it then follows that
\begin{align}
\label{eq:inclusion_bnd}
\widetilde{\mathbb Q}\big[ \hat{\mathcal I}_1^k \not \supset  {\mathcal I}_2^k \, \big] \le \widetilde{\mathbb Q}\big[   {{\mathcal I}}_{2,V}^k \neq \emptyset \, \big] +  \sum_{D \in \mathcal{D}'} \widetilde{\mathbb Q}\big[{\eta}_D^k(O_D) > 0\big], 
\end{align}
where (cf.~\eqref{eq:def_second_part_short0})
\begin{align}
\label{eq:profile}
O_D \stackrel{\textnormal{def.}}{=} \Big\{(v, w) \in [0,1]\times W^+: \, G_{D}(w) < v \le  (1 - \varepsilon)\textstyle\frac{d \tilde \mu_{D}^k}{d \mu^k_{D}}(w) \Big\}.
\end{align}
We bound each term on the right-hand side of \eqref{eq:inclusion_bnd} separately. By definition, see \eqref{eq:def_second_part_short0}, the set $ {{\mathcal I}}_{2,V}^k $ is obtained by retaining the first $L'-1$ steps of independent random walk trajectories attached to a Poisson number of starting points with mean
\begin{multline*}
(1-\varepsilon)\tilde{\mu}_{V}^k(W^+) \stackrel{\eqref{eq:mu2k}}{\leq} \frac {4d}{L}\sum_{x \in V} E_x\big[(f1_{B_K})(X_{t_k})\big] \,  \leq  \, C\frac{u}{L}|V| \sup_{t<L} P_0\big[|X_t| > \textstyle \frac L2 \big] \leq C u |B_{K+L}| e^{-cL},
\end{multline*}
where the second bound is obtained by combining the facts that $t_k < L$ for all $k$ and that $d(V,B_K) \geq \frac L2$, see \eqref{def:tk} and above \eqref{eq:def_second_part_short}, and using the assumption $f \leq u$, while the last bound is readily implied 
by a standard Gaussian upper bound on the heat kernel (also recalling that $V \subset B_{K+L}$). Since for $X$ a Poisson variable with mean $\lambda$, one has $P[X > 0]=1-e^{-\lambda} \leq \lambda$, it follows that $\widetilde{\mathbb Q}\big[   {{\mathcal I}}_{2,V}^k \neq \emptyset \, \big] $ is 
also bounded from above by $Cu|B_{K+L}| e^{-cL}$. To estimate the remaining terms in 
\eqref{eq:inclusion_bnd}, one uses the following:

\begin{lem}
\label{lem:easy_coup_bnd}	
For $\widetilde{L}= L^{\frac{1}{d}+ \alpha}$ with $\alpha = \frac1{200}$, all $D = B(x, \widetilde{L}) \subset B_{K + L}$ and $1\leq k < 2k_L$,
\begin{equation}\label{eq:easy_coup_bnd}
\widetilde{\mathbb Q}\big[{\eta}_D^k(O_D) > 0\big] \le C (u \vee 1) e^{-c \kappa \varepsilon^2L^c}.
\end{equation}
\end{lem}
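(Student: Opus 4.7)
The plan is to use the soft local time theory from \cite{PopTeix} to reduce \eqref{eq:easy_coup_bnd} to a lower-tail estimate on $G_D(y)$, uniform in $y$, which is then addressed by combining heat kernel smoothness on $D$ with a compound Poisson concentration bound. First, by Proposition~4.3 of \cite{PopTeix}, conditional on the chosen points $(u_n^D, z_n^D)_{1\leq n \leq |\Lambda_D^k|}$ the unused points of $\eta_D^k$ above $G_D$ form a Poisson process with intensity $dv \otimes \mu_D^k(dw)$ restricted to $\{v > G_D(w)\}$. Hence, using $1-e^{-x}\leq x$,
\begin{equation*}
\widetilde{\mathbb Q}[\eta_D^k(O_D) > 0 \mid G_D] \leq \int \mu_D^k(dw) \bigl[(1-\varepsilon)\tfrac{d\tilde \mu_D^k}{d\mu_D^k}(w) - G_D(w)\bigr]_+.
\end{equation*}
Since both $G_D$ and $d\tilde\mu_D^k/d\mu_D^k$ depend on $w$ only through $y=w(0)$, this bound is supported on the event $\mathcal{E}=\{\exists y: h_D^k(y) G_D(y) < (1-\varepsilon)\Psi(y)\}$, where $h_D^k(y)=\frac{4d}{L}\sum_{x\in D}(P_{t_k}f)(x)p_{t_g+1}(x,y)$ is the marginal density of $w(0)$ under $\mu_D^k$ and $\Psi(y)=\frac{4d}{L}\sum_{x \in D}P_{t_k}(f 1_{B_K})(x)p_{t_g+1}(x,y)$ the analogous one under $\tilde\mu_D^k$. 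Since the integrand is always bounded by $(1-\varepsilon)\tilde\mu_D^k(W^+) \leq Cu\widetilde L^d/L$, we obtain $\widetilde{\mathbb Q}[\eta_D^k(O_D) > 0] \leq Cu(\widetilde L^d/L) \widetilde{\mathbb Q}[\mathcal E]$, so it suffices to prove that $\widetilde{\mathbb Q}[\mathcal E] \leq Ce^{-c \kappa \varepsilon^2 L^c}$.

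Next I would expand, using the Poisson structure of $\Lambda_D^k$ conditionally on $\mathcal{F}_{k-1}$,
\begin{equation*}
h_D^k(y) G_D(y) = \sum_{n=1}^N \xi_n \, p_{t_g+1}(X_n^{\mathrm{end}}, y),
\end{equation*}
where $N \sim \mathrm{Poisson}(\lambda_D)$ with $\lambda_D = \frac{4d}{L}\sum_{x \in D}(P_{t_k}f)(x)$, the endpoints $X_n^{\mathrm{end}} \in D$ are i.i.d.\ with density proportional to $(P_{t_k}f)\cdot\mathbf 1_D$, and the $\xi_n$'s are i.i.d.\ mean-one exponentials, all independent (the case $k = k_L$ requires a small modification due to the $\overline\eta_3$-contribution but is entirely analogous). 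Writing $p^*(y)=\max_{x\in D}p_{t_g+1}(x,y)$, $p_*(y)=\min_{x\in D}p_{t_g+1}(x,y)$, one has $h_D^k(y)G_D(y) \geq p_*(y)S$ with $S=\sum_{n=1}^N\xi_n$ and $\Psi(y) \leq p^*(y)\tilde\lambda_D$ with $\tilde\lambda_D \leq \lambda_D$. The choice $\widetilde L = L^{1/d + 1/200}$ and $t_g=\lfloor L^{7/8}\rfloor$ guarantees $\widetilde L / \sqrt{t_g} \leq L^{-c}$ (as $1/d+1/200 \leq 1/3+1/200 < 7/16$); applying the LCLT \eqref{eq:LCLT}-\eqref{eq:density}, one obtains $p^*(y)/p_*(y) \leq 1 + L^{-c'}$ uniformly for $y \in B(D,R)$ with $R=C\sqrt{t_g \log L}$, while for $y$ outside $B(D,R)$ Gaussian tail bounds give $p_{t_g+1}(x,y) \leq L^{-C\log L}$, so these $y$ contribute a negligible mass in \eqref{eq:easy_coup_bnd}.

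Conclusion. Whenever $\varepsilon \geq 2L^{-c'}$, for any $y \in B(D,R)$ the inequality $h_D^k(y)G_D(y) < (1-\varepsilon)\Psi(y)$ implies $S < (1-\varepsilon/2)\lambda_D$. A direct Chernoff bound on the compound Poisson $S$ (using the Laplace transform $\mathbb E[e^{-sS}] = \exp(-\lambda_D s/(1+s))$) gives $\widetilde{\mathbb Q}[S < (1-\varepsilon/2)\lambda_D] \leq e^{-c\varepsilon^2 \lambda_D}$, and a union bound over the $|B(D,R)| = L^{O(1)}$ relevant values of $y$ is easily absorbed. Finally one needs $\lambda_D \geq c\kappa L^{d\alpha}$ with $d\alpha = d/200$: for $D \subset B_{K+L/2}$ this follows from $f \geq \kappa$ on $B_{K+L}$ combined with the Gaussian estimate $P_x[X_{t_k} \notin B_{K+L}]\leq e^{-cL}$ for $x \in D$; for the few boundary boxes with $d(D, \partial B_{K+L}) \leq \sqrt{L \log L}$ one can displace $D$ slightly inward and reuse the argument, or, when $d(D, B_K) \geq cL$, observe that $\Psi$ itself is super-exponentially small (since $P_{t_k}(f1_{B_K})(x) \leq u e^{-cL}$ for $x \in D$), so the crude bound $\widetilde{\mathbb Q}[\eta_D^k(O_D) > 0] \leq \mathbb E[\eta_D^k(O_D)] \leq Cu\widetilde L^d e^{-cL}/L$ already suffices. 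The main obstacle is achieving uniform-in-$y$ control; this is precisely where the scale separation $\widetilde L \ll \sqrt{t_g}$ enters to give the heat kernel regularity.
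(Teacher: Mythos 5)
Your argument follows essentially the same route as the paper's: soft local times reduce the claim to a lower-tail estimate on the accumulated soft local time, heat-kernel regularity at the scale separation $\widetilde L\ll\sqrt{t_g}$ makes the comparison with the target density uniform over starting points near $D$, and a Chernoff bound on the compound Poisson $S=\sum_{n\le N}\xi_n$ together with the ellipticity bound $\lambda_D\ge c\kappa L^{d\alpha}$ finishes. The paper organizes this slightly differently (it bounds the densities $g_n(w)\ge\lambda_D^{-1}\rho(w)$ directly and removes the far starting points via the event that no point of $\eta_D^k$ lies outside a window around $D$, rather than via the smallness of $\tilde\mu_D^k$ there), but these are the same estimates; your conditional-Poisson representation of the residual process and the resulting extra factor $\widetilde L^d/L$ are correct though not needed. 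Your treatment of the boxes $D$ not contained in $B_{K+L/2}$ also works, since any such $D$ automatically satisfies $d(D,B_K)\ge L/4$, so your second alternative covers all remaining cases (the ``displace $D$ inward'' option is not meaningful and should be dropped).

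There is one quantitative slip: the cutoff $R=C\sqrt{t_g\log L}$ is too small. For $|y-x|\gtrsim R$ the Gaussian bound gives $p_{t_g+1}(x,y)\le Ct_g^{-d/2}e^{-cR^2/t_g}=Ct_g^{-d/2}L^{-cC^2}$, which is only \emph{polynomially} small in $L$ — not $L^{-C\log L}$ as you assert — so the far-field contribution to your integral is $O(L^{-cC^2})$ and does not reach the stretched-exponential target in \eqref{eq:easy_coup_bnd}. You need $R=t_g^{1/2+\beta}$ for a small fixed $\beta>0$ (the paper takes $\beta=1/5$). The regularity estimate $p^*(y)/p_*(y)\le 1+L^{-c'}$ survives this enlargement, since the LCLT error $|y-x|^4/t_g^3\le t_g^{4\beta-1}$ and the Gaussian comparison term $\widetilde L R/t_g=\widetilde L\, t_g^{\beta-1/2}$ remain negative powers of $L$; your own verification that $1/d+1/200<7/16$ shows there is room. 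With that correction the proof goes through.
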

Choosing $\widetilde{L}$ as in Lemma~\ref{lem:easy_coup_bnd} (see also below \eqref{eq:def_second_part_short0}, where the parameter $\widetilde{L}$ is introduced), item \textit{(iii)} follows upon combining the estimate for $\widetilde{\mathbb Q}\big[   {{\mathcal I}}_{2,V}^k \neq \emptyset \, \big] $, the bound \eqref{eq:easy_coup_bnd} while noting that $|\mathcal{D}'| \leq |B_{K+L}|$, thus completing the proof of Lemma~\ref{L:main-coupling-easier}.
\end{proof}

\begin{proof}[Proof of Lemma~\ref{lem:easy_coup_bnd}]
Let $\beta = \frac{1}5$ and recall that $t_g= \lfloor L^{\frac 78} \rfloor$. By choice of $\widetilde{L}$, this implies that
\begin{equation}\label{e:Ltilde-cond}
(t_g)^{\frac12 - \beta} \ge L^{\frac 1d + 2\alpha} \ \big( = \widetilde{L} \cdot L^{\alpha}\big) 
\end{equation}
for all $L \geq C$; indeed, $(\frac 1d + 2\alpha)/(\frac12 - \beta)= \frac{5}{2d}+ 5\alpha \leq \frac{5}{6} + \frac{1}{40}< \frac78 $ for $d \geq 3$. The inequality \eqref{e:Ltilde-cond} should be read as the condition that a slightly sub-diffusive length scale corresponding to 
the gap time $t_g$ is still polynomially larger (in $L$) than the side length $\widetilde{L}$ of the box $D$. 
We first estimate the density $g_n$ entering $G_D$, cf.~\eqref{eq:profile} and \eqref{eq:sofloctim}. In view of \eqref{eq:RND} and the display preceding it, one has \begin{align}
\label{eq:density_bnd}
g_n(w) \ge \lambda_D^{-1}\rho(w),
\end{align}
where $\rho: W^+ \to \R_{+}$ is defined as
$$\rho(w) = \min_{z, z' \in D} \min_{t\in\{t_g, \, t_{g+1}\}}\frac{p_{t}(z, w(0))}{p_{t_g+1}(z', w(0))};$$
in obtaining \eqref{eq:density_bnd}, we also used that $g_n=g_{v_n,u_n}$ corresponds to points $(v_n,u_n) \in \Lambda_D^k$, which belong to $D$ by definition, see \eqref{eq:wsk}, hence the minimum over $z$ and $z'$ in the definition of $\rho$.
An estimate of the ratio $\rho$ can be obtained via \eqref{eq:LCLT} and
\eqref{eq:density}, using \eqref{e:Ltilde-cond}. Doing so, one deduces for any $w \in W^+$ such that $|w(0) - x| < (t_g)^{\frac12 + \beta}$, where $x$ denotes the center of $D$, that
\begin{align}
\label{eq:ratio_lower_bnd_easy}
\rho(w) \ge  1 - C \big( \textstyle \frac{t_g^{4(1/2 + \beta)}}{t_g^3}   - \frac{ \widetilde{L} t_g^{1/2 + \beta}}{t_g} \big) \stackrel{\eqref{e:Ltilde-cond}}{\ge}  1 - C L^{-c};
\end{align}
here, the first term in the parenthesis corresponds to the dominating error in \eqref{eq:LCLT}, whereas the second term accounts for the comparison of (continuous) heat kernels in \eqref{eq:density}, and we also used that for any $z,z' \in D= B(x, \widetilde{L})$,
\begin{equation*}
|w(0)-z|^2 - |w(0)-z'|^2 
\leq \big( |w(0)-x| + \widetilde{L} \big)^2 - \big( |w(0)-x| - \widetilde{L} \big)^2 \leq C |w(0)-x| \widetilde{L}.
\end{equation*} 
Denoting the set of walks $w\in W^+$ with $|w(0) - x| < t_g^{1/2 + \beta}$ by $ F$, observe that the cardinality of the set $S = \big\{(v, w) \in {\eta}_D^k: 
\, w \notin   F\big\}$
follows, under $\widetilde{\mathbb Q}$, a Poisson distribution with mean $
\mu_D^k(F^c)$; see~\eqref{eq:projection}. Recalling that $Y= X \circ{\theta_{t_g+1}}$ and using \eqref{e:Ltilde-cond}, it follows that $P_x[Y \in  F^c] \le C e^{-L^c}$ uniformly in $x \in D$, which, together with the bound $f \le u$ valid on $B_{K + L}$ and in view of~\eqref{eq:projection}, readily implies that
\begin{equation}\label{e:S-easy}
\widetilde{\mathbb Q}[S \ne \emptyset] \le C u e^{-L^c}.
\end{equation}
On the other hand, since the lower bound \eqref{eq:ratio_lower_bnd_easy} on $\rho(\omega)$ is in force for all $w$ such that $(v,w)\in \eta_D^k$ whenever $S=\emptyset$, it follows, recalling $O_D$ from \eqref{eq:profile} and $G_D$ from above \eqref{eq:sofloctim}, that
\begin{multline}
\label{eq:indep_sum}
\widetilde{\mathbb Q}\big[(\eta_{D}^k(O_D) > 0, \, S=\emptyset \big] \\\stackrel{\eqref{eq:density_bnd}}{\leq} \sup_{w \in  F} 
\widetilde{\mathbb Q}\big[\sum_{1 \le n \le |\Lambda_D^k|} \lambda_D^{-1}\rho(w) \xi_n \le  (1 
- \varepsilon) \textstyle\frac{d \tilde{\mu}^k}{d \mu^k}(w)\big ] 
\stackrel{\eqref{eq:ratio_lower_bnd_easy}}{\le} \displaystyle\widetilde{\mathbb Q}\big[\sum_{1 \le n \le |\Lambda_D^k|}\xi_n \le  (1 + CL^{-c} - 
\varepsilon)\lambda_D \big ],
\end{multline}
where we used that $ \frac{d \tilde{\mu}^k}{d \mu^k} \leq 1$, 
as readily implied by
the definitions of $\tilde{\mu}^k$ and $\mu^k$ in \eqref{eq:mu2k} and \eqref{eq:projection}. 
The upper bound on the sum of $\xi_n$'s in the last bound of \eqref{eq:indep_sum} can further be replaced by $(1-\frac{\varepsilon}{2}) \lambda_D$ whenever $\varepsilon \ge 2 
CL^{-c}$, which is no loss of generality in view of the desired estimate in \eqref{lem:easy_coup_bnd}. Now recalling that $ |\Lambda_D^k|$ is a Poisson variable with parameter $\lambda_D$ (see below \eqref{eq:wsk}) and that the $\xi_n$'s are i.i.d.~exponential variables with mean $1$ (see below \eqref{eq:sofloctim}), applying standard large deviation estimates yields that
\begin{multline}
\label{eq:indep_sum_decomp}
\widetilde{\mathbb Q}\big[(\eta_{D}^k(O_D) > 0, \, S=\emptyset \big]\\ \le \widetilde{\mathbb Q}\big[\sum_{1 \le n 
\le (1 - \frac{\varepsilon}{4})\lambda_D}\xi_n \le  (1 - \textstyle\frac{\varepsilon}{2})\lambda_D\big ] + \widetilde{\mathbb Q}\big[ |\Lambda_D^k| \le (1 - \textstyle\frac{\varepsilon}{4})\lambda_D \big] \leq  Ce^{-c \varepsilon^2 \lambda_D}.
\end{multline}
To conclude, one recalls that $f \ge \kappa$ pointwise on $B_{K + L}$ and that $t_k < L$ which together readily yield that $P_{t_k}f(x)= E_x[f(X_{t_k})] \geq 2^{-d} \kappa$ for any $x \in B_{K+L}$, whence
\begin{align*}
\lambda_D = \frac{1}{L}\sum_{x \in D} (P_{t_k}f)(x) \ge  L^{-1}|D| 2^{-d} \kappa \geq  c \kappa L^{d \alpha},
\end{align*}
for any $D \subset B_{K+L}$. This last bound also accounts for the necessity that $\widetilde{L} \gg L^{\frac1d}$ implicit in the choice of $\widetilde{L}$. Substituting the lower bound on $\lambda_D$ into \eqref{eq:indep_sum_decomp} and combining with \eqref{e:S-easy} completes the proof.
\end{proof}

\section{{\large Covering length-$L$ by length-$L'$ interlacements outside a good obstacle set}}\label{sec:hard_couple_obstacle}

In this section we prove Theorem~\ref{thm:long_short_obstacle}. The coarse architecture is similar to that of Section~\ref{sec:easyCOUPLINGS}. In \S\ref{subsec:overlaps}, we first deduce Theorem~\ref{thm:long_short_obstacle} from a more malleable version, Proposition~\ref{thm:long_short'}, which allows for overlaps between successive length-$L'$ trajectories; here, overlaps rather than gaps (cf.~\S\ref{ssec:tgap}) are relevant due to the opposition direction of inclusion in which the coupling operates, cf.~\eqref{eq:short_long-intro} and~\eqref{eq:long_short}. 
The coupling postulated by Proposition~\ref{thm:long_short'}, which barring technical simplifications due to the presence of overlaps, is similar to that of Theorem~\ref{thm:long_short_obstacle}, is built inductively over $k$ (much like in \S\ref{S:proofofeasycoupling}) but now by gluing shorter length-$L'$ trajectories progressively into longer ones, until reaching the desired length $L > L'$.

The proof of Proposition~\ref{thm:long_short'} is presented in full in \S\ref{sec:coup-outside-obs} assuming its one-step version, Lemma~\ref{L:obs-ind-step}, which encapsulates the properties needed to pass from $k$ to $(k+1)$, which roughly amounts to `attaching' a piece of length $L'$. Although reminiscent of Lemma~\ref{L:main-coupling-easier}, which plays a similar role in the context of Theorem~\ref{thm:short_long}, Lemma~\ref{L:obs-ind-step} and its proof, which appears in \S\ref{subsec:gluing}, are much more involved. Indeed, the underlying arguments entail the precise gluing mechanism for trajectories inside the (enlarged) good obstacle set $\widetilde{\mathcal O}$. In particular, the conditions appearing as part of Definition~\ref{def:O} crucially enter in order to witness this gluing occur with high enough probability, by exhibiting a high `surface density' of trajectories on each of the obstacles comprising $\mathcal{O}$. Informally speaking, the proof exploits a property of exchangeability between the trajectories near a given obstacle. We now flesh out these ideas.

\subsection{Overlaps and the timescale $t_o$} \label{subsec:overlaps}
As with 
Proposition~\ref{thm:short_long'}, following is a version of Theorem~\ref{thm:long_short_obstacle} 
with overlaps. Throughout this section, we are always (often implicitly) working under the assumptions of Theorem~\ref{thm:long_short_obstacle}; in particular, \eqref{e:couplings-params} is in force. We omit in the sequel any reference to the parameters $f,K,L, L', \gamma, \ell_{\mathcal{O}}, \delta_{\mathcal{O}}, M_{{\mathcal{O}}}$  as well as $u$ and $\varepsilon$, whose ranges are all specified by Theorem~\ref{thm:long_short_obstacle}.

\begin{prop}\label{thm:long_short'}
Under the assumptions of Theorem~\ref{thm:long_short_obstacle}, with 
\begin{equation}\label{def:gap}
L'' \stackrel{\textnormal{def.}}{=} L' - t_o , \quad \text{ where } \quad t_{o} \stackrel{\textnormal{def.}}{=} 3 \textstyle \lfloor  L {(\log L)^{-10 \gamma}}  \rfloor\, \,(\leq L' \text{ when } L \geq C; \,  see \, \eqref{e:couplings-params})
\end{equation}
there exists a coupling $\widetilde{\mathbb Q}$ of two $\{0, 1\}^{\Z^d}$-valued random variables 
$\mathcal I_1$ and $\mathcal I_2$ such that, 
\begin{equation}
\label{eq:long_short'}
\begin{split}
&\mathcal I_1 \stackrel{\textnormal{law}}{=}  \mathcal{I}^{f1_{B_K}, L},\: \mathcal I_2 
\stackrel{\textnormal{law}}{=} 
\mathcal{I}^{(1 + \varepsilon)f'', L'}; \text{ and, for $L \ge L_0(d, \gamma,u)$,}\\
&\widetilde{\mathbb Q} \big[ {\mathcal{I}}_1\setminus \widetilde{\mathcal O}  \subset  {\mathcal{I}}_2\setminus  \widetilde{\mathcal O}  \big] \geq  1 - 
C(u \vee 1) (K + L)^d\big( e^{-c(\varepsilon M_{\mathcal{O}} \wedge (\log L)^{\gamma})}  \vee  \delta_{\mathcal{O}} \big),
\end{split}
\end{equation}
where (recall that $l = \frac{L}{L'}$, see \eqref{eq:no-loop-l})
\begin{equation}\label{eq:f''_hard}
f'' = l^{-1}\sum_{0 \le k < \lceil \frac{L}{L''}\rceil} P_{kL''}
(f).
\end{equation}
\end{prop}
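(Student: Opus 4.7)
The plan is to mirror the inductive skeleton of Proposition~\ref{thm:short_long'} in \S\ref{S:proofofeasycoupling}, with two essential modifications: we use \emph{overlaps} of size $t_o$ (rather than gaps) between the length-$L'$ segments tiling a length-$L$ trajectory, and the homogenization step becomes a \emph{gluing} step performed inside $\widetilde{\mathcal O}$. Under a single probability measure $\widetilde{\mathbb Q}$, introduce independent Poisson point processes $\omega^k$, $0\le k<K_L$ with $K_L=\lceil L/L''\rceil$, the $k$-th of which generates a batch of length-$L'$ trajectories with starting intensity $l^{-1}P_{kL''}(f)$; the $(1+\varepsilon)$-sprinkled union yields $\mathcal I_2$, matching \eqref{eq:f''_hard}. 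In parallel, build inductively a family $\mathcal W^k$ of $W^+$-valued random variables, each $\zeta\in\mathcal W^k$ representing a prefix of length $(k+1)L''+t_o$ of a length-$L$ trajectory, with $\mathcal W^{k+1}$ obtained from $\mathcal W^k$ by independently extending each path by $L''$ more steps. Then $\mathcal I_1$ arises as the union of the final pieces in $\mathcal W^{K_L-1}$.

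The heart of the argument is a one-step lemma playing the role of Lemma~\ref{L:main-coupling-easier}: conditionally on the data up to step $k$, each extension of a path $\zeta\in\mathcal W^k$ must be matched with some length-$L'$ trajectory $\tilde\zeta$ from the $(k+1)$-th fresh batch in such a way that the two coincide on $\Z^d\setminus\widetilde{\mathcal O}$ over a time window straddling $(k+1)L''$. The visibility condition \eqref{eq:obs-visible} ensures the extension enters $\mathcal O$ within its first $t_o/3$ steps with probability at least $1-\delta_{\mathcal O}$; we then partition the extensions by their first-hit obstacle $B\in\mathcal B_{\mathcal O}$. Within each such $B$, the density condition \eqref{eq:obs-dense} furnishes at least $M_{\mathcal O}L$ fresh trajectories entering $B$, whose first-hit positions are close (by \eqref{normalized_eB}) to the normalized equilibrium $\bar e_B$, as are those of the extensions to be glued. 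A soft-local-time coupling, in the spirit of Lemma~\ref{L:RI_basic_coupling} but applied separately to each obstacle, then matches every extension with a fresh trajectory sharing the same first-hit point; the enlargement $\tilde\ell_{\mathcal O}=\ell_{\mathcal O}^{1+\frac{1}{100}}$ ensures the matched paths agree as subsets of $\Z^d\setminus\widetilde{\mathcal O}$ until they exit $\widetilde B$, after which the Markov property allows us to iterate at the next obstacle encountered.

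The step-$k$ failure probability is bounded by $\delta_{\mathcal O}$ (visibility), $e^{-c\varepsilon M_{\mathcal O}}$ from the soft-local-time step (with the $(1+\varepsilon)$-sprinkling absorbing Poissonian fluctuations of the trajectory count per obstacle, analogously to the role of $(1-C(L'/L)^{1/2})$ in Theorem~\ref{thm:short_long-intro}), and $e^{-c(\log L)^{\gamma}}$ (regularity errors on hitting and exit distributions, in the spirit of Lemma~\ref{lem:easy_coup_bnd}). Summing over the $K_L=O((\log L)^{\gamma})$ steps and the $O((K+L)^d/\widetilde L^d)$ obstacles produces the prefactor $(u\vee 1)(K+L)^d$ and the bound asserted in \eqref{eq:long_short'}. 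The main difficulty will be implementing the obstacle-by-obstacle soft-local-time coupling while preserving both marginal laws exactly: this requires re-rooting the endpoint distribution of length-$L$ pieces at time $kL''$ to align it with the starting distribution of the fresh batch (cf.~Lemma~\ref{L:reroot}), and then matching simultaneously in every obstacle without disturbing the already-coupled past. The delicate interplay between the visibility time $t_o/3$, the density $M_{\mathcal O}$, and the sprinkling parameter $\varepsilon$ is exactly the quantitative content that Definition~\ref{def:O} is tailored to furnish.
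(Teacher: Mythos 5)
Your high-level architecture matches the paper's: overlaps of size $t_o$ rather than gaps, an induction over $k$ with a fresh Poisson batch of length-$L'$ trajectories of intensity $l^{-1}P_{kL''}(f)$ at each step, a per-obstacle soft-local-time matching, and an error budget of the form $\delta_{\mathcal O}\vee e^{-c\varepsilon M_{\mathcal O}}$ per step (modulo the minor normalisation slip: the density condition \eqref{eq:obs-dense} yields of order $M_{\mathcal O}$, not $M_{\mathcal O}L$, trajectories per obstacle, since the starting intensities carry a factor $1/L$). The gap is in the gluing mechanism itself. You propose to match an extension with a fresh trajectory \emph{sharing the same first-hit point of $B$}, justifying the comparability of the two entrance laws by the harmonic-measure regularity \eqref{normalized_eB}. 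That estimate requires the starting point to be at distance at least $\ell_{\mathcal O}^{1+\xi}$ from $B$, and neither family satisfies this uniformly: fresh trajectories have starting intensity spread over all of $U_{\mathcal O}$, including points near or inside obstacles, and the endpoint of a deterministic-time prefix of a length-$L$ trajectory lands within distance $\ell_{\mathcal O}^{1+\xi}$ of $\mathcal O$ with probability that is only polynomially small in $L$. Since on the ``minus'' side \emph{every} extension must be matched, a per-trajectory, per-step failure probability of order $L^{-c}$ aggregates to an error $\gg (K+L)^d e^{-c(\log L)^{\gamma}}$, so the claimed bound cannot be reached this way. Relatedly, your stated role for the enlargement ($\tilde\ell_{\mathcal O}=\ell_{\mathcal O}^{1.01}$) does not cohere with your matching: if the two walks share the same hitting point and are coupled identically thereafter, they agree everywhere after that point and $\widetilde B$ plays no role.

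The paper resolves this differently, and this is the idea your plan is missing. Both families of entry points into a given obstacle $B$ (the return points of the long trajectories, kept at the random times $R_k$ of \eqref{eq:eq:coup101-new}, and the first-hit points of the fresh batch) are homogenised \emph{outward}: each is run until the exit time $t_h=H_{\partial\widetilde{\mathcal O}}$ of the enlarged box $\widetilde B$, and the soft-local-time matching (Lemma~\ref{L:obs-condcoup}) is performed on the resulting exit points $Z^{\pm}$, viewed as multisets. The relevant regularity input is then the exit-distribution estimate \eqref{e:exit-dist1}, which gives a multiplicative error $C\ell_{\mathcal O}/\tilde\ell_{\mathcal O}=C\ell_{\mathcal O}^{-1/100}\le\varepsilon/100$ \emph{uniformly over all entry points in $B$}, with no assumption on where the walks came from; this is exactly what the hypothesis $C\ell_{\mathcal O}^{-1/100}\le\varepsilon$ is for. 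The discrepancy between the extension's actual path and the matched fresh trajectory's actual path during the gluing is carried by bridges that stay inside $\widetilde B\subset\widetilde{\mathcal O}$ by definition of $t_h$, which is precisely why the inclusion in \eqref{eq:long_short'} only holds outside $\widetilde{\mathcal O}$ and why the enlargement is needed. The remaining bookkeeping you defer (preserving both marginals exactly) is handled in the paper by the explicit bridge-and-continuation constructions \eqref{eq:obs_W^+}, \eqref{eq:obs_W^-}, \eqref{eq:obs_W^--}; without the exit-point matching above, that reconstruction does not go through.
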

In analogy with \eqref{eq:f''_easy}, 
\eqref{eq:f''_hard} presents the advantage over $P_L^{L'}(f)$ 
of inducing overlaps (of time $t_o$) in the intensity profile of the (bigger) process $\mathcal{I}_2$ 
comprising length-$L'$ trajectories, which will facilitate the proof of \eqref{eq:long_short'}.\\

Assuming Proposition~\ref{thm:long_short'} to hold, we first give the proof of Theorem~\ref{thm:long_short_obstacle}. 
\begin{proof}[Proof of Theorem~\ref{thm:long_short_obstacle} (assuming Proposition~\ref{thm:long_short'})]
The proof is similar to that of Theorem~\ref{thm:short_long} using Proposition~\ref{thm:short_long'}. 
We highlight the main changes. While stating the inequalities, we will often imply that $L$ is 
large enough in a manner depending only on $\gamma$ and $d$, as allowed in the statement 
of Theorem~\ref{thm:long_short_obstacle}. By choice of $t_o$ in \eqref{def:gap} and assumption on 
$L'$ in \eqref{e:couplings-params}, one has
\begin{equation}\label{eq:gap_bnd_hard}
(L')^2 > 2 L t_o
\end{equation}
(cf.~\eqref{eq:gap_bnd1}). In view of $f''$ in \eqref{eq:f''_hard} and $P_L^{L'}(f)$ in \eqref{eq:f'}, 
one compares $P_{kL''}$ to $P_{kL'}$ using \eqref{eq:regularity} with the choice $\Delta=kt_o / kL'' = 
t_o / L''$ and $n = kL''$ for $1 \le k < \lceil \frac L{L''} \rceil$, for which the 
condition $\Delta \in (n^{-2/3}, 1)$  is met. Now, as in \eqref{eq:regularity_bnd} this yields, pointwise 
on $\Z^d$ and for any $1 \le k < \lceil \frac L{ L''} \rceil$,  
\begin{equation}\label{eq:regularity_bnd_hard}
\begin{split}
P_{kL''}(f)&\le \textstyle \big(1 + C\big({ \frac{t_o}{ L'}}\big)^{1/2}\big) P_{kL'}(f) + u_1'1_{B_{K+L}},
\end{split}
\end{equation}
where $u_1'=  u\, e^{-c (L' / t_o)^{1/2}}$ (recall that $f$ is supported on $B_{K+L}$). From \eqref{eq:regularity_bnd_hard}, one immediately obtains 
	a coupling of three random sets distributed as $\mathcal{I}^{(1 + Cl^{-1/2})\tilde f', L'}$, 
$\mathcal{I}^{u_1'1_{B_{K+ L}}, L'}$ and $\mathcal{I}^{(1 + Cl^{-1/2})f'', L'} $, with  
	$f''$ as in \eqref{eq:f''_hard} 
	and
	\begin{equation*}
\tilde f' \textstyle \stackrel{\textnormal{def.}}{=} l^{-1}\,\sum_{0 \leq k < \lceil{\frac {L}{L''}}\rceil}P_{kL'} (f),
	\end{equation*}
such that the former two are independent and their union contains the latter a.s. Concatenating this 
coupling with the one from \eqref{eq:long_short'} using Lemma~\ref{lem:concatenation} yields a 
coupling $\mathbb Q_1'$ of three random variables $\mathcal I_1 \stackrel{\text{law}}{=} \mathcal{I}^{f 1_{B_{K}}, L}$, $\tilde{\mathcal{I}}_2 \stackrel{\text{law}}{=} \mathcal{I}^{(1 +  Cl^{-1/2})\tilde f', 
	L'}$ and 
$\tilde{\mathcal{I}}_3 \stackrel{\text{law}}{=} \mathcal{I}^{u_1'1_{B_{K+L}}, L'} $ such that
\begin{equation}
\label{eq:easy_cover2_hard}
\mathbb Q_1' \big[ {\mathcal{I}}_1\setminus \widetilde{\mathcal O}  \subset  (\tilde {\mathcal I_2} \cup \tilde{\mathcal{I}}_3)\setminus  \widetilde{\mathcal O} \big] \geq  1 - C(u \vee 1) (K + L)^d\big( e^{-c(\varepsilon 
	M_{\mathcal{O}} \wedge (\log L)^{\gamma})}  \vee  \delta_{\mathcal{O}} \big)
\end{equation}
where $\tilde {\mathcal I_2}$ and $\tilde{\mathcal{I}}_3)$ are sampled independently. 
Now since  $L'' = L' - t_o$ and $L' > 2 t_o$ on account of \eqref{eq:gap_bnd_hard}, one finds that 
$\lceil \frac {L}{L''} \rceil - l \in (0,2)$ and hence that
\begin{align}
\label{eq:last_piece_hard}\textstyle
P_L^{L'}(f) = \tilde f' -l^{-1}\sum_{l \leq k < \lceil{\frac {L}{L''}}\rceil } P_{kL'} (f)= \tilde f' - l^{-1} P_{L} (f).
\end{align}
Then, proceeding as in \eqref{eq:extra_piece}--\eqref{eq:extra_piece2}, one applies \eqref{eq:regularity} again with the choice $\Delta = k L' / (L - kL')$ and $n = L - kL'$, for $1 \le k \le \sqrt{l}$ to replace $P_{L}$ by $P_{L-kL'}$ on the right-hand side of \eqref{eq:last_piece_hard} and  
	to deduce after averaging over $k$, 
\begin{align*}
P_L^{L'}(f)	
		\le (1 + Cl^{-\frac12})\tilde f' +  C u\, e^{-c l^{1/4}}1_{B_{K+L}}
\end{align*}
	Combining the natural coupling induced by the previous estimate with that of \eqref{eq:easy_cover2_hard} via chaining, 
	one obtains a coupling $\mathbb Q_2'$ of three random variables $\mathcal I_1 
	\stackrel{\text{law}}{=} \mathcal{I}^{f1_{B_K}, L}$, 
${\mathcal{I}}_2 \stackrel{\text{law}}{=} \mathcal{I}^{(1+ Cl^{-1/2})P_L^{L'}(f), L'}$ and 
$\mathcal{I}_3 \stackrel{\text{law}}{=} \mathcal{I}^{u_2' 1_{B_{K + L}}, L'}$ with $u_2' = C u e^{-c l^{1/4}}$, such that
\begin{equation*}
\mathbb Q_2' \big[ \mathcal I_1 \setminus \widetilde{\mathcal O}  \subset 
(\mathcal I_2 \cup \mathcal{I}_3) \setminus  \widetilde{\mathcal O}\big] \geq  
1 - C(u \vee 1) (K + L)^d\big( e^{-c(\varepsilon M_{\mathcal{O}} \wedge (\log L)^{\gamma})}  \vee  \delta_{\mathcal{O}} \big),
\end{equation*}
where 
$\mathcal I_2$ and $\mathcal{I}_3$ are sampled independently. Theorem~\ref{thm:long_short_obstacle} now 
readily follows with $\mathbb{Q}'= \mathbb Q_2'$ since $\mathbb Q[\mathcal{I}_3 = \emptyset] \ge 1 
- C(K + L)^d u_2'$ and $l = \tfrac{L}{L'} \le (\log L)^{\gamma}$ by 
\eqref{e:couplings-params}
.\end{proof}

\subsection{Coupling outside enlarged obstacle set $\tilde{\mathcal O}$}\label{sec:coup-outside-obs}
This subsection contains the proof of Proposition~\ref{thm:long_short'}, save for one result, Lemma~\ref{L:obs-ind-step} below, which represents one inductive step in a sequence of couplings to be constructed. A loose analogue of 
Lemma~\ref{L:obs-ind-step} in the context of the (simpler) Proposition~\ref{thm:short_long'} is 
Lemma~\ref{L:main-coupling-easier}, see~in particular 
item $(iii)$ therein. As opposed to the setup of Section~\ref{sec:easyCOUPLINGS}, the reverse inclusion, 
cf.~\eqref{eq:obs-goal} below, effectively requires recombining shorter trajectories into longer ones, 
for which the region $\widetilde{\mathcal{O}}$ acts as a buffer zone, in which the gluing occurs but 
control over the coupled interlacement sets is lost.

\begin{proof}[Proof of Proposition~\ref{thm:long_short'}]
Recall from Definition~\ref{def:O} (see below \eqref{eq:obs-dense}) that $U_{\mathcal O}=B_{K+\sqrt{L} (\log L)^{\gamma}}$. 
The measure $\widetilde {\mathbb Q}$ that will be constructed in the course of this proof will couple 
two random sets denoted by $\mathcal{I}^{\pm}$ with the properties that 
	\begin{equation}
		\label{eq:obs-goal}
		\begin{split}
			&\mathcal{I}^- \stackrel{\text{law}}{=}\mathcal{I}^{f1_{B_K}, L} \cap ( \Z^d \setminus \widetilde{\mathcal{O}} ), \quad \mathcal{I}^+ \stackrel{\text{law}}{=} \mathcal 
			J^{(1+\varepsilon) f'' 1_{U_{\mathcal{O}}}, L'}, \\
			&Q[\mathcal{I}^- \subset \mathcal{I}^+] \geq 1- C(u \vee 1)(K + L)^d  \big(e^{-c(\varepsilon M_{\mathcal{O}} \wedge (\log L)^{\gamma})}  \vee  \delta_{\mathcal{O}} \big),
		\end{split}
	\end{equation}
for $L \geq C(d,\gamma,u)$, which will tacitly be understood to hold from here onwards.
From \eqref{eq:obs-goal}, the claim of Proposition~\ref{thm:long_short'} immediately follows since 
$\mathcal{I}^{f'', L'}$ is readily obtained from $\mathcal{I}^+$ upon adding by extending 
$\widetilde {\mathbb Q}$, an independent process with law $\mathcal{I}^{f'' (1-1_{U_{\mathcal{O}}}), 
L'}$ so $\mathcal{I}^+ \subset \mathcal{I}^{f'',L'}$ $\widetilde{\mathbb Q}$-a.s.

The restriction to $U_{\mathcal{O}}$ 
in the definition of $\mathcal{I}^+$ in \eqref{eq:obs-goal} is convenient and motivated by the fact 
that any trajectory in the support of $\mathcal{I}^-$ (with starting point in $B_K$) will hit 
$\mathcal{O}$ before long, as implied by \eqref{eq:obs-visible}, and that the resulting sequence of 
`$k$-th return points' (in a sense defined precisely below, see~\eqref{eq:eq:coup101-new}) to the 
obstacle set $\mathcal{O}$ belonging to any box $B \in \mathcal{O} \cap U_{\mathcal{O}}$ will be 
both large and vastly outnumbered by the `first' hitting points produced by trajectories in the $k$-th 
`layer' of $\mathcal{I}^+$ (where $f''=\sum_k f_k''$, cf.~\eqref{eq:obs-f_k''}) with starting point in 
$U_{\mathcal{O}}$. Boxes lying outside $U_{\mathcal{O}}$ will typically be insignificant for 
$\mathcal{I}^-$, i.e.~the obstacle set will be hit long before exiting $U_{\mathcal{O}}$.

We now prepare the ground to produce the desired coupling. Recall $f''$ from \eqref{eq:f''_hard} and 
introduce, for $k \in \{ 0,\dots,k_L\}$ with $k_L = \lceil \frac L{L''} \rceil -1$ the function $f_k'':\Z^d\to 
\R_+$ given by
\begin{equation}\label{eq:obs-f_k''}
f_k''(x)=\textstyle\frac{L'}{L} (P_{kL''} f)(x),
\end{equation}
so that $f''=\sum_k f_k''$. Here and throughout the remainder of this proof, $k$ will always be tacitly 
assumed to range over all integers with $0\leq k \leq k_L$. Under an auxiliary probability measure 
$\widetilde{\mathbb Q}^+$, we introduce independent  Poisson processes $\omega_k^+$ (one for 
each $k$) on $\R_+ \times \Z^d  \times W^+$ each having intensity $\nu^+$ given by 
\begin{equation}
\label{eq:obs-nu-k-+}
\nu^+([0,u] \times \{x\} \times A)=u
P_x[X \in A].
\end{equation}
We now define certain quantities induced by the processes $\omega_k^+$. Given a stopping time 
$\tau$ for the random walk under $P_{\cdot}$ and a cemetery state $\Delta$, define the random 
variable
\begin{equation}
	\label{eq:obs-Y}
	Y: W^+ \to \Z^d \cup \{ \Delta\}, \quad w \mapsto Y(w)\stackrel{\text{def.}}{=} \begin{cases} w(\tau), & \text{if } \tau < \infty\\
		\Delta, & \text{else.}
	\end{cases}
\end{equation}
For suitable $X$, denote by $\Omega_X$ the space of $\sigma$-finite point measures on $X$ in the sequel, with its associated canonical $\sigma$-algebra. Recall $\mathcal{O} \subset \Z^d$, a finite set by Definition~\ref{def:O}, and let $g:\Z^d \to [0,\infty)$ have finite support. Attached to $Y$ in \eqref{eq:obs-Y} is the map $\Phi^{\tau}_ g:\Omega_{\R_+ \times \Z^d  \times W^+} \to \Omega_{\mathcal{O}}$ with
\begin{equation}
	\label{eq:obs-Phi}
	\Phi^{\tau}_ g(\omega) = \sum_i \delta_{Y(w_i)}1\big\{ \tau(w_i)< \infty, Y(w_i) \in \mathcal{O},  u_i \leq \textstyle\frac1{L'} g(x_i) \big\}, \quad \text{if }\omega =\sum_i \delta_{(u_i,x_i, w_i)}.
\end{equation}
The point measure $\Phi^{\tau}_ g(\omega)$ is increasing in both $\mathcal{O}$ and $g$ and under 
$\widetilde{\mathbb Q}^+$ and for any $k$,
\begin{equation}
	\label{eq:obs-Phi'}
	\text{$\Phi^{\tau}_ g(\omega_k^+)$ is a Poisson process on $\mathcal{O}$ of intensity $\textstyle\frac1{L'} P_g [  X_{\tau}  = \cdot,  X_{\tau} \in \mathcal{O}, \tau < \infty]$,}
\end{equation}
where, following our usual notation, $P_g =  \sum_x g(x)P_x$. Now, with $f_k''$ as in \eqref{eq:obs-f_k''}, define 
\begin{align}\label{eq:obs-h_k}
g_k^{+}: \Z^d \to \R_{+}, \quad  g_k^+= 
(1 + \varepsilon)f_k'' 1_{U_{\mathcal{O}}}, \ k \geq 0, 
\end{align}
let
\begin{equation}
	\label{eq:obs-tau}
	\tau_k = \begin{cases} L''+ H_{\mathcal{O}} \circ \theta_{L''}  & \text{if } k=0\\
		H_{\mathcal{O}} & \text{if } k\geq 1
	\end{cases}
\end{equation}
and 
\begin{equation}\label{eq:obs-phi_k} 
	\Phi_{k}^+= \Phi^{\tau_k}_{ g_k^+}(\omega_k^+) = \sum_{1 \leq i \leq N_k^+} \delta_{Y_{k,i}^+}, \quad k \geq 0,
\end{equation}
where, with a slight abuse of notation $Y_{k,i}^+= Y_{k,i}^+(\omega_k^+)$, $1\leq k \leq N_k^+$, refer to the random variables $Y(w_i)$ appearing in \eqref{eq:obs-Phi} (with the choices $\tau=\tau_k$ and $g=g_k^+$), ordered for sake of definiteness according to increasing label $u_i$. In words, $\Phi_{k}^+$ is the Poisson process obtained by taking each point $(w_i, x_i, u_i)$ in the support of $\omega_k^+$ with appropriate label $u_i$, and retaining for every trajectory $w_i$ for which $X_{\tau_k (w_i)}$ belongs to $\mathcal{O}$ the corresponding hitting point at time $\tau_k$. The discrepancy in \eqref{eq:obs-tau} between $k=0$ and $k \geq 1$ is due to the special role played by $k=0$ below. For later reference, note that due to \eqref{eq:obs-Phi'},
\begin{equation}
	\label{eq:obs_N_k^+}
	\text{$N_k^+$ is a Poisson variable with parameter $\textstyle\frac1{L'} P_{g_k^+} [\tau_k < \infty]$.}
\end{equation}

Let us briefly pause to give an overview over the remainder of the proof. We next define a sequence 
of couplings $Q_k$ inductively in $k$. For each value of $k$ with $0 \leq k \leq k_L$, the coupling 
$Q_k$ extends $Q_{k-1}$ (in case $k \geq 1$) and gives rise to two random sets 
$\mathcal{I}^{\pm}_k$ sharing similar properties as $\mathcal{I}^{\pm}$ in \eqref{eq:obs-goal}. The 
sets $\mathcal{I}^{\pm}$ will be generated at the terminal value $k=k_L$, i.e.~after $k_L+1 (=\lceil 
\frac L{L''} \rceil )$ iterations. Roughly speaking, $\mathcal{I}^{+}_k$ will correspond to the 
contribution to $\mathcal{I}^+$ stemming from $g_n^+$, $0\leq n \leq k$, cf.~\eqref{eq:obs-h_k} 
and \eqref{eq:obs-f_k''}, whereas the trajectories comprising $\mathcal{I}^{-}_k$ will correspond to 
the first $k+1$ pieces of length $L''$ destined to form the length-$L$ walks constituting 
$\mathcal{I}^-$, which will be reconstructed in the process. The reconstruction is facilitated by the 
presence of the enlarged obstacle set $\widetilde{\mathcal{O}}$, see \eqref{eq:obs-tilde}, inside of 
which the walks need not being tracked and which will serve as a `gluing zone'. The time loss needed 
to perform the gluing inside $\widetilde{\mathcal{O}}$ will turn out to be negligible thanks to the 
conditions listed in Definition~\ref{def:O}. 

\medskip
We now proceed to make the above strategy precise. 

\begin{lemma}[$0\leq k \leq k_L$] \label{L:obs-ind-step} There exists a probability measure $Q_k$ governing $ \mathcal{I}_k^{\pm}$ with
	\begin{align}
		&\mathcal{I}_k^{-}  \stackrel{\textnormal{law}}{=}  \mathcal{I}^{f1_{B_K}, \, (k+1) L''} , \label{eq:obs-indstep0}\\
		& \mathcal{I}_k^{+}  \stackrel{\textnormal{law}}{=}  \mathcal{I}^{ \sum_{0\leq \ell \leq k } g_k^+, L'}, \label{eq:obs-indstep1}\\
		&Q_k \big[ \big(\mathcal{I}_k^{-} \cap ( \Z^d \setminus \widetilde{\mathcal{O}} ) \big) \subset  
		\mathcal{I}_k^{+} \big] \geq  1 - C (u \vee 1) (K + L)^d \big( e^{-c(\varepsilon M_{\mathcal{O}} \wedge (\log L)^{\gamma})}  \vee   \delta_{\mathcal{O}}  \big) \label{eq:obs-indstep2}.
	\end{align}
\end{lemma}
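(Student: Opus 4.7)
The plan is to prove Lemma~\ref{L:obs-ind-step} by induction on $k$, constructing $Q_k$ so that the $k$-th time block of $\mathcal{I}_k^-$ (the portion of each length-$(k+1)L''$ trajectory lying in the time interval $[kL'',(k+1)L''-1]$) is, up to the set $\widetilde{\mathcal{O}}$, contained in the new layer $\mathcal{I}^{g_k^+,L'}$ appended to $\mathcal{I}_{k-1}^+$. The obstacle set $\mathcal{O}$ will serve as a reservoir of ``hub'' points at which the $k$-th blocks of the $\mathcal{I}_k^-$-trajectories and the length-$L'$ trajectories from $\Phi_k^+$ can be identified by matching their first hits of $\mathcal{O}$.

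For the base case $k=0$, length-$L''$ walks starting in $B_K$ must be coupled with length-$L'$ walks starting in $U_\mathcal{O}\supset B_K$. The hybrid stopping time $\tau_0=L''+H_\mathcal{O}\circ\theta_{L''}$ reflects that the $0$-th layer length-$L'$ walks should first be run for $L''$ steps, during which they are directly identified with the length-$L''$ trajectories of $\mathcal{I}_0^-$ (shared starting-point distribution; the intensity discrepancy absorbed by the sprinkling factor $(1+\varepsilon)$ and by restricting to $B_K$), and only then used to seed the hitting-point process $\Phi_0^+$ on $\mathcal{O}$ for the subsequent inductive gluing.

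For the inductive step $k\to k+1$, I would (a) draw an independent Poisson process $\omega_{k+1}^+$ of length-$L'$ trajectories of intensity $g_{k+1}^+$ (yielding $\Phi_{k+1}^+$ and contributing to $\mathcal{I}_{k+1}^+$), and (after a suitable thinning to accommodate the drop in starting-point intensity from $\tfrac{4df1_{B_K}}{(k+1)L''}$ to $\tfrac{4df1_{B_K}}{(k+2)L''}$) draw an independent $L''$-step continuation from the endpoint of every trajectory of $\mathcal{I}_k^-$. For every such continuation $w'$, let $\tau=H_\mathcal{O}(w')$; by the visibility condition \eqref{eq:obs-visible} one has $\tau\le L(\log L)^{-10\gamma}$ outside an event of probability $\le\delta_\mathcal{O}$. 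Then, (b) for each $B\in\mathcal{B}_\mathcal{O}$ with $B\cap U_\mathcal{O}\neq\emptyset$ separately, I match the collection of hitting points on $B$ arising from the $\mathcal{I}_k^-$-continuations to a subset of the points of $\Phi_{k+1}^+$ lying on $B$, via a soft-local-time argument modelled on Lemma~\ref{L:RI_basic_coupling}. On the event that the matching succeeds for every box, (c) I would invoke the strong Markov property at the hitting time $\tau$ to set the remaining $(L''-\tau)$ steps of the continuation equal to the first $(L''-\tau)$ steps of the post-hit portion of the matched $\Phi_{k+1}^+$-trajectory; since $L'>L''$, the latter has enough room, so the continuation is automatically covered outside $\widetilde{\mathcal{O}}$. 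Pre-obstacle portions (between the start of the continuation and the first hit of $\mathcal{O}$) are dealt with analogously, using the sprinkled intensity to produce enough length-$L'$ trajectories in $\Phi_{k+1}^+$ with nearby pre-obstacle traces, and the rare event $\{\tau>L(\log L)^{-10\gamma}\}$ is discarded into the $\delta_\mathcal{O}$ error.

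The main obstacle will be the box-by-box matching in step (b): one needs the number of hits of $\Phi_{k+1}^+$ on each $B\in\mathcal{B}_\mathcal{O}$ to dominate, by a factor of at least $(1+\varepsilon)$, the corresponding number from the $\mathcal{I}_k^-$-continuations. Here the density condition \eqref{eq:obs-dense} provides the lower bound $\sim M_\mathcal{O}$ for the former (after normalisation by $1/L'$), while the count from $\mathcal{I}_k^-$ is Poisson with substantially smaller mean, allowing one to beat the $(1+\varepsilon)$-inflation with a failure probability of order $e^{-c\varepsilon M_\mathcal{O}}$. A delicate point is that the entrance distributions on $B$ need not agree exactly on the two sides: by Lemma~\ref{L:return}~ii) both are within a $1\pm(\log L)^{-c\gamma}$ factor of $\bar e_B$ (an error controlled by the parameter $\alpha$ in \eqref{eq:mu-cond}), and this poly-logarithmic slack is absorbed by the sprinkling factor $\varepsilon$ together with the $(\log L)^\gamma$-type error. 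Combining the per-box matching-failure probability $e^{-c\varepsilon M_\mathcal{O}}$, the visibility error $\delta_\mathcal{O}$, and summing over the $O((K+L)^d)$ relevant obstacle boxes and the $k_L\le l\le(\log L)^\gamma$ induction steps will yield the bound \eqref{eq:obs-indstep2}.
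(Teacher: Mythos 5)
Your overall architecture (induction over layers, obstacle boxes as hubs, per-box soft-local-time matching, the density condition giving the $e^{-c\varepsilon M_{\mathcal O}}$ error and visibility giving $\delta_{\mathcal O}$) matches the paper's, but two steps at the heart of the gluing would fail as proposed. First, the \emph{pre-obstacle portions}: the piece of each long trajectory between the end of block $k$ and its first subsequent hit of $\mathcal O$ is a genuine part of $\mathcal I_{k+1}^-$ that must be covered outside $\widetilde{\mathcal O}$, and covering it by ``nearby pre-obstacle traces'' of other, independent length-$L'$ trajectories cannot give set inclusion — traces of distinct walks do not contain one another. The paper removes this gap structurally: it carries an auxiliary set $\mathcal K_k$ in the induction, in which each long trajectory is run until its \emph{return times} $R_k=(k+1)L''+\tau_k\circ\theta_{(k+1)L''}$ to $\mathcal O$, so that every visible block runs from one point of $\mathcal O$ to the next and is covered \emph{entirely} by the post-hit portion of the matched layer-$k$ trajectory; the overlap $t_o=L'-L''$ (together with the visibility bound $\tau_k\le t_o/3$ on the good event) supplies exactly the slack needed for the overshoot past $(k+1)L''$. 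There is no leftover pre-obstacle piece to treat ``analogously.''

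Second, \emph{where the matching happens}. You match the entrance points on each $B\in\mathcal B_{\mathcal O}$ and then identify the walks from the hitting time onward via the strong Markov property, invoking Lemma~\ref{L:return}~ii) to compare both entrance laws to $\bar e_B$. But the relevant entrance law on $B$ is conditioned on $B$ being the \emph{first obstacle hit} (the walk is killed on $\mathcal O$, not just on $B$), and this conditional law is distorted by the surrounding obstacles in a way Lemma~\ref{L:return}~ii) does not control; the two populations' entrance distributions need not be uniformly comparable. The paper instead applies the soft-local-time coupling to the \emph{homogenized} positions $Z^{\pm}$ obtained by running each walk from its hitting point until it exits the enlarged box $\widetilde B$ of radius $\tilde\ell_{\mathcal O}=\ell_{\mathcal O}^{1.01}$ (Lemma~\ref{L:obs-condcoup}); by the Harnack-type estimate \eqref{e:exit-dist1} these exit laws agree up to a factor $1\pm C\ell_{\mathcal O}^{-1/100}$ uniformly over entrance points in $B$, which is then absorbed by $\varepsilon$. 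The walks are identified only from $\partial\widetilde B$ onward, the connecting bridges inside $\widetilde B$ being discarded — this is precisely why the conclusion excludes $\widetilde{\mathcal O}$ rather than $\mathcal O$, a feature your scheme does not account for.
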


Assuming Lemma~\ref{L:obs-ind-step} to hold for the time being, we first conclude the proof of 
Proposition~\ref{thm:long_short'}, which entails verifying that \eqref{eq:obs-goal} holds. Choosing $Q 
= Q_{k_L}$, it follows on account of \eqref{eq:obs-indstep1} and \eqref{eq:obs-f_k''} that 
$\mathcal{I}^+ \stackrel{\text{def.}}{=}\mathcal{I}_{k_L}^+$ has the law prescribed in 
\eqref{eq:obs-goal}. Moreover, in view of \eqref{eq:obs-indstep0}, as the length of the trajectories in 
the support of $\mathcal{I}_{k_L}^-$ is $(k_L+1)L'' \geq L$, one straightforwardly extends $Q $ by 
suitable thinning 
to a coupling carrying a random set $\mathcal{I}^-$ with the same law as $\mathcal{I}^{f1_{B_K}, L}$ 
in such a way that the inclusion $\mathcal{I}^- \subset \mathcal{I}_{k_L}^-$ holds almost surely. 
Then, the inclusion with high probability asserted in \eqref{eq:obs-goal} is a direct consequence of 
\eqref{eq:obs-indstep2}. Thus, \eqref{eq:obs-goal} holds, which completes the proof of 
Proposition~\ref{thm:long_short'} (conditionally on Lemma~\ref{L:obs-ind-step}). \end{proof}

\subsection{Gluing trajectories}\label{subsec:gluing}

We now give the proof of Lemma~\ref{L:obs-ind-step}.

\begin{proof}[Proof of Lemma~\ref{L:obs-ind-step}]
For each $0 \leq k \leq k_L$ define 
\begin{equation} \label{e:U_k-hard}U_k = B_{K+ \frac12 \sqrt{(k+1) L''} (\log L)^{\gamma}} \ (\subset U_{\mathcal{O}}),
\end{equation}
where the latter inclusion follows on account of the definitions $U_{{\mathcal O}} = B_{K + \sqrt{L} (\log L)^{\gamma}}$, $k_L = \lceil \frac L{L''} \rceil -1 $ 
as well as the displays \eqref{e:couplings-params} and \eqref{def:gap}. We proceed inductively in $k$ 
and show the following stronger statement, which is convenient to 
carry out the inductive step: for every $k$, there exists a coupling $Q_k$, extension of $Q_{k-1}$ for 
all $k \geq 1$, satisfying \eqref{eq:obs-indstep0}-\eqref{eq:obs-indstep1} and carrying one further 
random set $\mathcal{K}_k$ and its associated endpoints $Y_{k}^-= \{ Y_{k,i}^{-} : 1 \leq i \leq 
N_0^-\}$ with the property that, for some event $G_k$ decreasing in $k$ with
\begin{equation}
	\label{eq:obs-G_k-bound}
	\begin{split}
		&Q_0[G_0] \geq 1- u |U_{\mathcal{O}}|( \delta_{\mathcal{O}} \vee e^{-c(\log L)^{\gamma}} ), \\
		&Q_k[G_{k-1} \setminus G_{k}] \leq (u \vee 1) |U_{\mathcal{O}}| \big(  \delta_{\mathcal{O}} \vee  e^{-c(\varepsilon M_{\mathcal{O}} \wedge (\log L)^{\gamma})}  \big),\,  1\leq k \leq k_L,
	\end{split}
\end{equation}
one has the inclusions
\begin{equation}
	\label{eq:coup100}
	\big(\mathcal{I}_k^{-} \cap ( \Z^d \setminus \widetilde{\mathcal{O}} ) \big) \subset \big(\mathcal{K}_k \cap ( \Z^d \setminus \widetilde{\mathcal{O}} ) \big) \subset \mathcal{I}_k^{+} \text{ and } Y_{k}^- \subset U_{k}: \quad \text{on } G_k.
\end{equation}
The set $\mathcal{K}_k$ represents the precise proportion of the set $\mathcal{I}^-$ in \eqref{eq:obs-goal} that is covered by the coupling until step $k$, that is, by using the first $k+1$ pieces of length-$L'$ trajectories only. Intuitively, it comprises the length-$L$ trajectories that will eventually form $\mathcal{I}^-$, run until a time which is slightly larger than the target time $(k+1) L''$ needed for $\mathcal{I}_k^{-}$. 
Specifically, the set $\mathcal{K}_k$ has the following prescribed law. Consider the sequence of `return' times (see \eqref{eq:obs-tau} for notation)
\begin{equation}\label{eq:eq:coup101-new}
		R_0= \tau_0, \quad R_k = (k+1)L'' + \tau_k \circ \theta_{(k+1)L''}, \, k \geq 1.
	\end{equation}
	Note that the sequence of times $R_k$ is a-priori unordered, but, as will turn out, the event $G_k$ (cf.~\eqref{eq:obs-G_k-bound}) to be constructed in the course of the proof will guarantee that the map $\ell \in \{0,\dots k\} \mapsto R_{\ell}$, is in fact increasing when $G_k$ occurs. With $R_k$ given by \eqref{eq:eq:coup101-new}, the distribution of $\mathcal{K}_k=\mathcal{K}_k(\omega_0^+)$ is specified as
	\begin{equation}
		\label{eq:coup102}
		\mathcal{K}_k \stackrel{\text{law}}{=}   \bigcup_{1\leq i \leq N_0^{-}} w_i[0,R_k(w_i)] \text{ (under $Q^+$),}
	\end{equation}
with associated endpoints $Y_{k}^-$ defined as $\{w_i(R_k(w_i)): 1\leq i \leq N_0^- \}$ (note that 
these tacitly require $\{R_k(w_i)< \infty\}$ to occur) and where, recalling that $\omega_0^+=  \sum_{i} \delta_{(u_i,x_i,w_i)}$ has intensity $\nu^+$ given by \eqref{eq:obs-nu-k-+}, we have set
\begin{align}\label{eq:obs-N_0^-}
&N_0^- = \omega_0^+ \big(\{ u \leq \textstyle \frac{1}{L'}g_0^-(X_0) , \tau_0 < \infty \}\big),
\end{align}
	a Poisson variable with intensity $\textstyle\frac1{L'} P_{g_0^-} [\tau_0 < \infty]$ and 
	\begin{equation}
		\label{eq:obs-g_0^-} 
		g_0^- =  f_0'' 1_{B_{K}}, \text{ so that } \textstyle \frac{1}{L'}g_0^-= \frac{1}{L}f1_{B_K}.
	\end{equation}
	In \eqref{eq:coup102}, $w_i$ refers to the (ordered) trajectory in the support of $\omega_0^+$ containing the point $Y_{0,i}^+$ for every $i \leq N_0^+$ and \eqref{eq:coup102} is well-defined because $N_0^- \leq N_0^+$ holds $Q^+$-a.s.~on account of  \eqref{eq:obs_N_k^+} and \eqref{eq:obs-h_k}, \eqref{eq:obs-g_0^-}, which together imply that $g_0^- \leq g_0^+$. 
	
	We now proceed to show the existence of $Q_k$ with the above properties, i.e., satisfying \eqref{eq:obs-G_k-bound} and \eqref{eq:coup100} with $\mathcal{K}_k, Y_k^-$ as in \eqref{eq:coup102}.
	
	\medskip
	\noindent\textbf{The case $k=0$.} We simply define $Q_0 = Q^+$ (see above \eqref{eq:obs-nu-k-+}) and set, with $\omega_0^+$ having intensity $\nu^+$ given by \eqref{eq:obs-nu-k-+},
	\begin{equation}\label{eq:obs-coup0}
		{\mathcal{K}}_0= {\mathcal{K}}_0 (\omega_0^+)= \bigcup_{1\leq i \leq N_0^{-}} w_i[0,\tau_0(w_i)]
	\end{equation}
	with $N_0^-$ as in \eqref{eq:obs-N_0^-} and where $w_i$ refers to the trajectory in the support of $\omega_0^+$ containing the point $Y_{0,i}^+$ for every $i (\leq N_0^+)$, as above.  We now introduce the `good' event (under $Q_0$)
	\begin{equation}
		\begin{split}
			\label{eq:obs-coup2}
			G_0 = &\bigg\{ \begin{array}{c} \omega_0^+\big(\{  u \leq \textstyle \frac{1}{L'}g_0^+(X_0) , \tau_0 - L'' > \frac{t_o}{3} \}\big) =0,\\[0.3em]
				\omega_0^+\big(\{  u \leq \textstyle \frac{1}{L'}g_0^-(X_0) ,\, X_{\tau_0} \notin U_0, \, \tau_0 < \infty \}\big) =0\end{array}\bigg\}
		\end{split}
	\end{equation}
	(observe that the first line includes the possibility that $\tau_0 = \infty$). In view of \eqref{eq:obs-tau}, when $G_0$ occurs, any trajectory $w_i$ involved in the construction of $\mathcal{K}_0$ in \eqref{eq:obs-coup0} satisfies $(L'' \leq) \tau_0(w_i) \leq L''+\frac{t_o}{3}  < L' (=L'' + t_o)$. Hence, as we now explain, on the event $G_0$, one obtains the chain of inclusions (recall \eqref{eq:J} for notation)
	\begin{equation}
		\label{eq:obs-coup3}
		\mathcal{I}^{f1_{B_K}, L''}  \stackrel{\text{law}}{=} \mathcal{I}^{g_0^-, L''}(\omega_0^+) \subset {\mathcal{K}}_0 (\omega_0^+) \subset 
		\mathcal{I}^{g_0^+, L'} (\omega_0^+)\stackrel{\text{law}}{=} \mathcal 
			I^{(1 + \varepsilon) f_0'' 1_{U_{\mathcal{O}}}, L'};
	\end{equation}
	here, the first inclusion follows from the fact that all trajectories $w$ in the support of $\mathcal{I}^{g_0^-, L''} $ have $\tau(w)< \infty$ on $G_0$, hence by definition of ${\mathcal{K}}_0$ in \eqref{eq:obs-coup0}, they also appear in its support and run for a longer time. The second inclusion \eqref{eq:obs-coup3} is due to the combined facts that $N_0^- \leq N_0^+$ and the upper bound $\tau_0 < L'$ noted above, which is valid on the event $G_0$ for all the relevant trajectories. All in all, setting $ \mathcal{I}_0^{\pm} = \mathcal{I}^{g_0^{\pm}, L''}(\omega_0^+) $, it follows on account of \eqref{eq:obs-coup3} that the inclusion \eqref{eq:coup100} holds (in fact the intersection with $(\Z^d \setminus \widetilde{\mathcal{O}})$ can be omitted here) and that the sets $ \mathcal{I}_0^{\pm}$ and $\mathcal{K}_0$ have the required laws prescribed by \eqref{eq:obs-indstep0}, \eqref{eq:obs-indstep1} and \eqref{eq:coup102}, respectively (in the latter case, this follows plainly from \eqref{eq:obs-coup0} and the definition of $R_0$ in \eqref{eq:eq:coup101-new}).
	
	Finally, it remains to argue that the event $G_0$ given by \eqref{eq:obs-coup2} satisfies the required bound in \eqref{eq:obs-G_k-bound}. This is due to the fact that the random variable entering the first line in the definition of $G_0$ is Poisson with parameter
	$$
	\nu^+( u \leq \textstyle \frac{g_0^+(X_0)}{L'} , \tau_0 - L'' > \frac{t_o}{3}) \stackrel{\eqref{eq:obs-nu-k-+}}{=} \frac1{L'}\sum_{x} g_0^+(x)P_{x}[P_{X_{L''}}[H_{\mathcal{O}} > \frac{t_o}{3}]] \leq  (\delta_{\mathcal{O}} \vee e^{-cL^c}) \sum_{x} g_0^+(x),
	$$
	where the last step follows by means of the visibility condition \eqref{eq:obs-visible} upon observing that $\text{supp}(g_0^+) \subset U_{\mathcal{O}}$, whence $X_{L''} \in B_{K+L}$ holds with $P_x$-probability $1-e^{-cL^c}$~for $x \in \text{supp}(g_0^+)$. A similar bound can be derived for relevant intensity of the event appearing in the second line of \eqref{eq:obs-coup2}. From this \eqref{eq:obs-G_k-bound} follows using the elementary inequality $1-e^{-x} \leq x$ valid for $x\geq 0$. Upon setting
	\begin{align*}
		&Y_{0,i}^{-}=Y_{0,i}^{+} (=w_i(\tau_0)), \ 1 \leq i \leq N_0^-,
	\end{align*}
	by which $Y_{0,i}^{-}$, $1 \leq i \leq N_0^-$, correspond on the event $G_0$ to the endpoints of all the trajectories involved in the construction of $\mathcal{K}^0$, cf.~\eqref{eq:obs-coup0}, this concludes the proof in the case $k=0$. \\

	\noindent\textbf{The case $(k-1) \to k$.} We now inductively assume $Q_{k-1}$, extension of $Q_0=Q^+$ for some $1 \leq k  \leq k_L$, to be a coupling of $\mathcal{I}_{k-1}^{\pm}$, $\mathcal{K}_{k-1}$ with the desired laws, such that \eqref{eq:coup100} holds on a suitable event $G_{k-1}$. We further suppose the random variables $Y_{\ell}^- = (Y_{\ell,i}^{-}: \, 1\leq i \leq N_0^-)$, for $1 \leq \ell \leq k-1$ to be defined under $Q_{k-1}$, corresponding to the endpoints of all trajectories in the support of $\mathcal{K}_{\ell}$ on the event $G_{\ell} (\supset G_{k-1})$. In law, these are precisely the points $R_\ell(w_i)$ appearing in \eqref{eq:coup102}. By construction, see \eqref{eq:eq:coup101-new} and \eqref{eq:obs-tau}, these random variables have values in $\mathcal{O}$.

	Recall the random variables $Y_{\ell}^+= (Y_{\ell,i}^+: 1\leq i \leq N_{\ell}^+)$ from \eqref{eq:obs-phi_k}, which are measurable functions of $\omega_{\ell}^+$ for each $\ell$. Our aim is to merge the trajectories stemming from $\mathcal{K}_{k-1}$ (their endpoints are $Y_{k-1,i}^{-}$, $1\leq i \leq N_0^+$) with trajectories from the support of $\omega_k^+$ (entering $\mathcal{O}$ through the points $Y_{k,i}^{+}$, $1 \leq i \leq N_k^+$) after letting them mix, which will take (homogenization) time 
	\begin{equation}\label{eq:obs-t-h-new}
		t_{h} \stackrel{\text{def.}}{=} H_{\partial{\widetilde{\mathcal O}}} 
	\end{equation}
	and take place inside $\widetilde{\mathcal{O}}$ ($ \supset \mathcal{O}$, cf.~\eqref{eq:obs-tilde}). The following lemma is key to this.

	The variables $Z^{\pm}$ below represent the locations of the walk after this homogenisation has taken place. In writing $\{Z^{-}  \subset  Z^{+}\}$, see e.g.~\eqref{eq:obs-Z-inclusion}, we include multiplicities, i.e.~we view $Z^{\pm}$ as multi-sets.
	Let $\mathcal{F}_{k}$ denote the $\sigma$-algebra generated by $\omega_{0}^+$, $\mathcal{I}_{\ell}^{\pm}$, $\mathcal{K}_{\ell}$ and $Y_{\ell}^-$ for $0 \leq \ell < k$ and 
	$\omega_\ell^+$ for $0 \leq \ell \leq k$ (under $Q_{k-1}$). 
	
\begin{lemma}[$1 \leq k \leq k_L$]\label{L:obs-condcoup} 
Conditionally on $\mathcal{F}_{k}$, there exists $q=q_{\mathcal{F}_{k}}$ coupling of
\begin{align*}
Z^- = \{ Z_{i}^{-} : 1 \leq i \leq N_0^+  \}, \quad  Z^+= \{ Z_{i}^{+} : 1 \leq i \leq N_{k}^+  \}
\end{align*}
		with the property that, abbreviating $\mu_z[\, \cdot \,] = P_{z}[X_{t_{h}} =\cdot \, ]$, $z \in \mathcal{O}$, 
		\begin{align}
			& Z_{i}^{-} \stackrel{\textnormal{law}}{=} \mu_{Y_{k-1,i}^{-}} , \
			Z_{i}^{+}   \stackrel{\textnormal{law}}{=}  \mu_{Y_{k,i}^{+}} \label{eq:obs-Z-law} \\ 
			&Q_{k-1}\big[q_{\mathcal{F}_k}[ Z^{-}  \subset  Z^{+}] 1_{G_{k-1}}\big] \geq 1- e^{-c\varepsilon^2 M_{\mathcal{O}}} , 
				\label{eq:obs-Z-inclusion}
			\end{align}
			whenever 
			$C \ell_{\mathcal O}^{-1/100}\leq \varepsilon$ (recall from the discussion before Definition~\ref{def:O} that $\ell_{{\mathcal O}}$ is radius of any box comprising the obstacle 
			set $\mathcal O$) and~\eqref{eq:obs-dense} holds.
		\end{lemma}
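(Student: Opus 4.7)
The plan is to partition both $Z^-$ and $Z^+$ according to which obstacle box $B\in\mathcal B_\mathcal O$ the respective starting point $Y_{k-1,i}^-$ or $Y_{k,j}^+$ sits in. Since $\omega_0^+,\dots,\omega_k^+$ are $\mathcal F_k$-measurable, both families of endpoints and the counts $n_B^-=|\{i:Y_{k-1,i}^-\in B\}|$, $n_B^+=|\{j:Y_{k,j}^+\in B\}|$ are deterministic given $\mathcal F_k$; the remaining randomness is a family of independent simple random walks run for time $t_h=H_{\partial\widetilde{\mathcal O}}$ from each of those endpoints, and the coupling problems on distinct boxes are independent. Within each $B$ I would use the soft local time mechanism of Lemma~\ref{L:RI_basic_coupling}, whose success rests on two pillars: (a) the exit law $\mu_z=P_z[X_{t_h}=\cdot]$ depends only weakly on $z\in B$; and (b) on an $\mathcal F_k$-event of high $Q_{k-1}$-probability inside $G_{k-1}$, $n_B^+$ dominates $n_B^-$ by a factor $(1+c\varepsilon)$.

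For~(a), the geometry is favourable: since obstacles are separated by $\widetilde L\gg\tilde\ell_\mathcal O$, a walk from $B\subset\widetilde B$ reaches $\partial\widetilde{\mathcal O}$ exactly when it first touches $\partial\widetilde B$, and $\mu_z$ therefore reduces (modulo the one-step discrepancy between inner and outer boundary of $\widetilde B$) to the Poisson kernel of $\widetilde B$ evaluated at~$z$. Because $B$ is concentric in $\widetilde B$ with radius $\ell_\mathcal O\le\tilde\ell_\mathcal O/2$, the gradient estimate~\eqref{e:exit-dist1}, applied with $x,y\in B$, supplies a single measure $\mu^B$ with $(1-C\ell_\mathcal O^{-1/100})\mu^B\le\mu_z\le(1+C\ell_\mathcal O^{-1/100})\mu^B$ uniformly in $z\in B$. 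The hypothesis $\varepsilon\ge C\ell_\mathcal O^{-1/100}$ absorbs this distortion into the sprinkling, which puts us in the set-up of Lemma~\ref{L:RI_basic_coupling} (with $A=B$, $U=\widetilde B$, reference measure $\mu^B$, and effective mismatch parameter $\delta\sim\varepsilon$).

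For~(b), by~\eqref{eq:obs-Phi'} the count $n_B^+$ is Poisson with mean $\lambda_B^+=\tfrac1{L'}P_{g_k^+}[H_\mathcal O<\infty,\,X_{H_\mathcal O}\in B]$ and independent of $\omega_0^+$, while on $G_{k-1}$ the variable $n_B^-$ is dominated by an independent Poisson of mean $\lambda_B^-\le\tfrac1{L}\sum_y P_{kL''}(f\mathbf 1_{B_K})(y)\,P_y[H_\mathcal O<\infty,\,X_{H_\mathcal O}\in B]$, obtained by applying the Markov property at time $kL''$ to the trajectories of $\omega_0^+$ selected by $g_0^-$. The two expressions differ essentially by the sprinkling factor $(1+\varepsilon)$ inside $g_k^+$ (cf.~\eqref{eq:obs-h_k}) and by a cutoff $\mathbf 1_{U_\mathcal O}$ that is harmless because heat-kernel decay renders $P_{kL''}(f\mathbf 1_{B_K})$ negligible outside $U_\mathcal O=B_{K+\sqrt L(\log L)^\gamma}$ when $kL''\le L$. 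Hence $\lambda_B^-\le\lambda_B^+/(1+\varepsilon/2)$; moreover, the density condition~\eqref{eq:obs-dense}, combined with $f\ge(\log L)^{-\gamma}$ on $B_{K+L}$ and $\alpha=(\log L)^{10\gamma}$, gives $\lambda_B^+\ge cM_\mathcal O$ (in fact with a huge polylog buffer). A Chernoff bound for independent Poissons then delivers $Q_{k-1}[n_B^+<n_B^-]\le e^{-c\varepsilon^2 M_\mathcal O}$, and a union bound over the at most polynomially-in-$L$ many $B\in\mathcal B_\mathcal O$ intersecting $U_\mathcal O$ yields~\eqref{eq:obs-Z-inclusion}.

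The crux is step~(b): one must pair $n_B^\pm$ as $\omega_k^+$-vs-$\omega_0^+$-independent Poissons, verify the factor-$(1+\varepsilon)$ comparison of their intensities uniformly in $B$, and simultaneously ensure the absolute mean lower bound $\lambda_B^+\ge cM_\mathcal O$ that powers the Chernoff exponent. Both pieces hinge on combining the density condition~\eqref{eq:obs-dense}, the ellipticity of $f$, and a heat-kernel regularity comparison of the profiles $(P_{kL''}f)_{0\le k\le k_L}$ over the time window $[0,L]$—in the spirit of, but more delicate than, the homogenisation argument used in Section~\ref{sec:easyCOUPLINGS}.
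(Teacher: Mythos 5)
Your proposal is correct and follows essentially the same route as the paper: a box-by-box soft local time coupling in which the exit-law distortion is controlled by the gradient estimate \eqref{e:exit-dist1} (giving the factor $C\ell_{\mathcal O}^{-1/100}\le\varepsilon/100$ absorbed by the sprinkling), the counts $N^{\pm}(B)$ are compared as independent Poisson variables whose intensities differ by the factor $(1+\varepsilon)$, the density condition \eqref{eq:obs-dense} supplies the lower bound $\lambda^+(B)\geq M_{\mathcal O}$ powering the Chernoff exponent, and a union bound over boxes finishes. The only cosmetic differences are that the paper invokes the \cite{PopTeix} machinery directly (rather than Lemma~\ref{L:RI_basic_coupling}, whose statement is tailored to interlacement excursions, though the mechanism is identical) and handles the $1_{U_{\mathcal O}}$ cutoff exactly via the $G_{k-1}$-measurable containment $Y_{k-1}^-\subset U_{k-1}\subset U_{\mathcal O}$ instead of a heat-kernel estimate.
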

		
		We defer the proof of Lemma~\ref{L:obs-condcoup} and first complete the induction step.
We start by defining $Q_k$. By suitable extension, we assume that $Q_{k-1}$ further carries two 
independent families $ \{ \beta_{i}^{x,y}: x, y \in \Z^d, i \geq 1\}$ and $\{ X_{ i}^x: x \in  \Z^d, i \geq 1\}$ 
of i.i.d.~random variables, independent of the all the remaining randomness governed by $Q_{k-1}$, 
such that $ \beta_{i}^{x,y}$ has the law of the bridge $X_{\cdot \wedge t_h}$ under $P_x[\, \cdot \,| X_{t_{h}}=y]$ and $X_{i}^x$ has the same law as $(X_t)_{t \geq 0}$ under $P_x$ for every $i \geq 1$. 
For a bridge $\beta$ we write $t_h(\beta)=H_{\partial \widetilde{\mathcal{O}}}(\beta)$ for its 
(time-)length. Let
\begin{equation*}
Q_k[\, \cdot\,] \stackrel{\text{def.}}{=} Q_{k-1}[q_{\mathcal{F}_{k}}[\, \cdot\,]].
\end{equation*}
In order to construct $(\mathcal{I}_{k}^{-},\mathcal{K}_{k}, \mathcal{I}_{k}^{+})$ out of $(\mathcal{I}_{k-1}^{-},\mathcal{K}_{k-1}, \mathcal{I}_{k-1}^{+})$, we will use 
\begin{equation}\label{eq:obs-cond300}
\beta_i^{-} \stackrel{\text{def.}}{=}\beta^{Y_{k-1, i}^{-}, \,Z_i^{-}}_i, \quad \beta_i^{+} \stackrel{\text{def.}}{=}\beta^{Y_{k,i}^{+}, \,Z_i^{+}}_i, \quad X_i^{\pm} \stackrel{\text{def.}}{=} X^{Z_i^{\pm}}_i \text{ for }  1 \leq i \leq N_k^{\pm}
\end{equation}
		where for convenience we have set $N_k^-=N_0^-$. Now write $\underline{\omega}_k^+ $ for the restriction of the projection of $\omega_k^+ $ onto its first and third marginal to points $(u, w)$ satisfying 
		$u_i \leq \textstyle\frac1{L'} g_k^+(x)$, $w_i(0)=x$ and decompose, for $k \geq 1$,
		\begin{equation}
			\label{eq:obs_omega-k-decomp}
			\underline{\omega}_k^+ = \omega_1 + \omega_2 , \quad \omega_1= \underline{\omega}_k^+ 1\{ (u,w) :  \tau_k(w)< \infty 
			\} 
		\end{equation}
		(observe that the number of points in the support of $\omega_1$ is exactly $N_k^+$ given by \eqref{eq:obs_N_k^+})
		and consider for $1\leq i \leq N_k^+$ the concatenated process 
		\begin{equation}\label{eq:obs_W^+}
			W_{k,i}^+(t) = \begin{cases}
				w_i(t), &  0 \leq t \leq \tau_k, \\
				\beta_i^+(t- \tau_k), &\tau_k < t < \tau_k +t_h,  \\
				Z_i^+, &  t= \tau_k +t_h,\\
				X_i^+(t - \tau_k -t_h),&  t > \tau_k +t_h, 
			\end{cases}
		\end{equation}
		where $t_h= t_h(\beta_i^+)$, the point measure $\sum_{1\leq i \leq N_k^+}\delta_{(w_i, u_i)}$ refers to an (ordered) realization of the projection of $\omega_1$ and $\tau_k=\tau_k(w_i)=H_{\mathcal{O}}(w_i)$, cf.~\eqref{eq:obs-tau} (recall that $k \geq 1$). As we now explain, conditionally on $\mathcal{F}_k$ as defined above Lemma~\ref{L:obs-condcoup}, $(W_{k,i}^+(t))_{t \geq 0}$
		has the law of $P_{w_i(0)}$ for all $1\leq i \leq N_k^+$; 
		indeed, the intensity measure \eqref{eq:obs-nu-k-+} of $\omega_k^+$ and \eqref{eq:obs_omega-k-decomp} imply that (conditionally on $N_k^+$), $w_i(t)$, $0 \leq t \leq \tau_k$, follows the law of a random walk path until time $\tau_k= H_{\mathcal{O}}$, at which time the walk is at position $w_i(\tau_k)=Y_{k,i}^{+} = \beta^+_i(0) $, cf.~\eqref{eq:obs-phi_k} and \eqref{eq:obs-cond300}. Using \eqref{eq:obs-Z-law} and applying the strong Markov property at times $\tau_k$ and $\tau_k+ t_h$, the claim then readily follows.
		
		In a similar vein, for $1\leq i \leq N_k^-(=N_0^-)$, define 
		\begin{equation}\label{eq:obs_W^-}
			W_{k,i}(t) = \begin{cases}
				\beta_i^-(t), &0 \leq t < t_h,  \\
				Z_i^-, &  t= t_h,\\
				X_i^-(t - t_h),&  t > t_h
			\end{cases}
		\end{equation}
		with $t_h=t_h(\beta_i^-)$. In view of \eqref{eq:obs-Z-law} and \eqref{eq:obs-cond300}, it follows that under $Q_k$ and conditionally on $\mathcal{F}_k$, the process $(W_{k,i}(t) )_{t \geq 0}$ has law $P_{Y_{k-1,i}^{-}}$ for all $1\leq i \leq N_k^-$.
		
		With $\{ W_{k,i}^+ : 1\leq i \leq N_k^+\}$ and $\{ W_{k,i} : 1\leq i \leq N_k^-\}$ given by \eqref{eq:obs_W^+} and \eqref{eq:obs_W^-}, respectively, we now specify the sets $(\mathcal{I}_{k}^{-},\mathcal{K}_{k}, \mathcal{I}_{k}^{+})$ and proceed to verify that they have the desired marginal laws.
		Recalling $N_k^- =N_0^-$ from \eqref{eq:obs-N_0^-} and abbreviating $W[0,t]= \{ W(s) : 0 \leq s \leq t\}$, let
		\begin{align}
			& \mathcal{K}_k = \mathcal{K}_{k-1} \cup \widetilde{\mathcal{K}}, \quad \ \widetilde{\mathcal{K}}= \bigcup_{1\leq i \leq N_0^-} W_{k,i}[0, t_h + L''+ H_{\mathcal{O}} \circ \theta_{t_h + L''}]
			\label{eq:coup301} \\
			& \mathcal{I}_k^+ = \mathcal{I}_{k-1}^+ \cup \widetilde{\mathcal{I}}, \quad    \widetilde{\mathcal{I}}= \Big(\bigcup_{1\leq i \leq N_k^+} 
			W_{k,i}^+[0, L'] \Big)\cup \mathcal{I}^{g_k^+, L'}(\omega_2)\label{eq:coup303} 
		\end{align}
		(see \eqref{eq:obs_omega-k-decomp} regarding $\omega_2$). \\[0.5cm]
		Then define
		\begin{equation}\label{eq:coup304}
			Y_{k,i}^- =  W_{k,i} (t_h + L''+ H_{\mathcal{O}} \circ \theta_{t_h + L''}), \quad 1 \leq i \leq N_0^-,
		\end{equation}
		which, in view of \eqref{eq:coup301} correspond to the endpoints of the trajectories in $\mathcal{K}_k$.
		
		Before specifying $\mathcal{I}_k^-$, we first argue that the sets $\mathcal{K}_{k}, \mathcal{I}_{k}^{+}$ obtained in this way have the required marginal laws prescribed by \eqref{eq:obs-indstep1} and \eqref{eq:coup102}. In the case of $\mathcal{I}_k^+$, this is a direct consequence of the fact that, conditionally on $\mathcal{F}_k$ as defined above Lemma~\ref{L:obs-condcoup}, and writing $\omega_1= \sum_{1\leq i \leq N_k^+}\delta_{(w_i,x_i,u_i)}$ for a realization of $\omega_k^+$, the process 
		$
		W_i^+(t) $
		has law $P_{w_i(0)}$ for all $1\leq i \leq N_k^+$.  Indeed, since $\mathcal{I}_{k-1}^{+}$ is $\mathcal{F}_k$-measurable, this implies that, conditionally on $\mathcal{I}_{k-1}^+$, the set $\bigcup_{1\leq i \leq N_k^+} W_i^+[0, L']$ has the same law as $\mathcal{I}^{g_k^+, L'}(\omega_1)$ and is independent of $\mathcal{I}^{g_k^+, L'}(\omega_2)$, and thus that $ \widetilde{\mathcal{I}}$ in \eqref{eq:coup303} has the same law as $\mathcal{I}^{g_k^+, L'}(\omega_k^+)$ in view of~\eqref{eq:obs_omega-k-decomp}. Thus,~ \eqref{eq:coup102} holds for $\mathcal{I}_k^+$ defined by \eqref{eq:obs-indstep1}.
		
		The case of $\mathcal{K}_k $ is simpler. As $(W_i(t) )_{t \geq 0}$ has law $P_{Y_{k-1,i}^{-}}$ conditionally on $\mathcal{F}_k$, it readily follows that $\mathcal{K}_k$ given by \eqref{eq:coup301} has the same law as \eqref{eq:coup102}: the set $W_i[0, H_{\mathcal{O}} \circ \theta_{t_h + L''}]$ contributing to $\widetilde{\mathcal{K}}$ plays the same role (in law) as the increment  $ w_i[R_{k-1}(w_i),R_k(w_i)])(\omega_0^+)$, part of \eqref{eq:coup102}, as can be seen immediately from 
		\eqref{eq:coup301} upon recalling $R_k$ from \eqref{eq:eq:coup101-new} and observing that $W_i(0)= Y_{k-1,i}^{-}$ and the points $Y_{k-1,i}^{-}$, $1\leq i \leq N_0^-$ represent the endpoints of trajectories comprising $\mathcal{K}_{k-1}$ by induction assumption. All in all, this shows that $\mathcal{K}_k$ has the same law under $Q_k$ as the set in \eqref{eq:coup102}.
		
		It remains to define $\mathcal{I}_k^-$ and an event $G_k$ of suitably large probability on which \eqref{eq:coup100} holds. We first define $\mathcal{I}_k^-$. For $1\leq i \leq N_0^-$, recall  $W_{k,i}$ from \eqref{eq:obs_W^-}. By induction assumption we have analogues $W_{\ell,i}$, $1 \leq \ell < k$ entering the definition of $\mathcal{K}_{k-1}$. Similarly as in \eqref{eq:obs_omega-k-decomp} let $$ \underline{\omega}_{0}^+= (\pi\circ \omega_0^+)1\big\{ (w,u):  \, u \leq \textstyle \frac1{L'} g_0^-(w(0)) \big\} $$
		where $\pi$ denotes the projection onto the first and third marginal, and decompose
		\begin{equation}
			\label{eq:obs_omega-0-decomp}
			\underline{\omega}_{0}^+ = \mu_1 + \mu_2 , \qquad \mu_1= \underline{\omega}_{0}^+ 1{\{ (w,u):\, \tau_0(w)< \infty \}}
		\end{equation}
		Thus, cf.~\eqref{eq:obs-N_0^-}, the point measure $\mu_1$ has exactly $N_0^-$ elements in its support. Writing $\mu_1= \sum_{1\leq i \leq N_0^-} \delta_{(w_i, u_i)}$ for a generic realization, and with
		$$
		t_0= \tau_0(w_i), \quad t_{\ell} = t_h + L'' + (H_{\mathcal{O}} \circ \theta_{t_h + L''} )(W_{\ell,i}) , \quad 1 \leq \ell < k
		$$
		it follows with $s_{\ell}= \sum_{0 \leq n < \ell} t_n$ that for all $1\leq i \leq N_0^-$, the process $(W_{k,i}^-(t))_{t \geq 0}$ defined as
		\begin{equation}
			\label{eq:obs_W^--}
			W_{k,i}^-(t) = \begin{cases}
				w_i(t), &0 \leq t < s_1,  \\
				W_{\ell,i}(t- s_{\ell}), & s_{\ell} \leq t < s_{\ell+1}, \, 1 \leq \ell < k\\
				W_{k,i}(t- s_{k})&  s_k \leq t 
			\end{cases}
		\end{equation}
		has the same law conditionally on $N_0^-$ as $(X_t)_{t \geq 0}$ under $P_{\bar g_0^-} [ \, \cdot \, , \tau_0 < \infty]$, where $\bar g_0^-(\cdot)=  g_0^-(\cdot)/ P_{ g_0^-} [ \tau_0 < \infty]$, with $\tau_0$ as in \eqref{eq:obs-tau}. Hence, the set
		\begin{equation}
			\label{eq:coup321}
			\mathcal{I}_k^- \stackrel{\text{def.}}{=} \Big(\bigcup_{1 \leq i \leq N_0^+} W_{k,i}^-[0, (k+1)L'']\Big) \cup \mathcal{I}^{g_0^-, (k+1)L''}(\mu_2)
		\end{equation}
		has the law prescribed by \eqref{eq:obs-indstep0} in view of \eqref{eq:obs-g_0^-} and \eqref{eq:obs_omega-0-decomp}. Finally, let
		\begin{equation}\label{eq:obs-coupG_k}
			\begin{split}
				&G_k= G_{k-1} \cap G_k^- \cap G_k^+ \cap \{  Y_{k}^- \subset U_{k} \}, 
			\end{split}
		\end{equation}
		where
		\begin{align*}
			&G_{k}^-= \{ \mu_2(\Omega_{\R_+ \times W^+}) = 0 \},\\[0.5em]
			& G_{k}^+=
			\left\{ \begin{array}{c} \text{ $Z^{-}  \subset  Z^{+}$,  for all $1 \leq i \leq N_k^{-}$: } 
				(H_{\mathcal{O}} \circ \theta_{ L''} )(X_i^-) \leq \textstyle \frac{t_o}{3}, \\[0.5em]
				\text{for all $ 1 \leq i \leq N_k^{+}$: }\tau_k(w_i)  \vee t_h(\beta_i^+) \leq \textstyle \frac{t_o}{3} 
			\end{array}
			\right\},
		\end{align*}
		and $ \tau_k(w_i)$ refer to the $\tau_k$ stopping times attached to trajectories in the support of $\omega_1$ as below \eqref{eq:obs_W^+}.
		Each of $G_{k}^{\pm}$ will separately account for one of the two inclusions in~\eqref{eq:coup100}. We first argue that $G_{k-1} \cap G_k^-$ implies $\mathcal{I}_k^- \subset \mathcal{K}_k$ (the removal of $\widetilde{\mathcal{O}}$ is unnecessary for this inclusion). The event $G_{k}^-$ ensures that $\mathcal{I}^{g_0^-, (k+1)L''}(\mu_2)
		= \emptyset$ in \eqref{eq:coup321}. The induction assumption and the occurrence of $G_{k-1}$ imply that $\mathcal{I}_{k-1}^- \subset \mathcal{K}_{k-1}$. In view of \eqref{eq:obs_W^-}, we may thus assume that 
		\begin{equation}
			\label{eq:coup322}
			W_{k-1, i}^-[0, kL''] \subset \mathcal{K}_{k-1} \,\text{for each $1 \leq i \leq N_0^+$}
		\end{equation}
		as part of the induction hypothesis. But by construction, $s_{\ell+1}- s_{\ell}= t_{\ell} > L''$ for all $1 \leq \ell \leq k$ hence \eqref{eq:coup321} implies that
		$W_{k-1, i}^-[0, kL''] = W_{k, i}^-[0, kL''] $. Inserting this into \eqref{eq:coup322}, and then going back to \eqref{eq:coup321} we deduce that the inclusion $\mathcal{I}_k^- \subset \mathcal{K}_k$ follows at once if we argue that 
		\begin{equation}
			\label{eq:coup323}
			W_{k,i}^-[kL''+1, (k+1)L''] \subset \mathcal{K}_{k}.
		\end{equation} Any contribution to $W_{k,i}^-[kL''+1, (k+1)L'']$ falling onto $W_{\ell,i}$ for some $\ell<k$ in \eqref{eq:obs_W^--} is in fact included in $\mathcal{K}_{k-1}$, as $\{W_{\ell,i}(t- s_{\ell}) :  s_{\ell} \leq t < s_{\ell+1}\}$ appears as part of $\widetilde{\mathcal{K}}$ on the right-hand side of \eqref{eq:coup301} for $k=\ell$. Thus, to deduce \eqref{eq:coup323}, it suffices to show that $W_{k,i}[0, 0 \vee ((k+1)L''-s_k)]\subset \widetilde{\mathcal{K}}$, which is automatic as $kL'' \leq s_k$ by induction assumption and $L'' < t_k$, whence $W_{k,i}[0, 0 \vee ((k+1)L''-s_k)] \subset W_{k,i}[0, t_k]$.
		
		We now show that $G_{k-1} \cap G_k^+$ implies the inclusion $  (\mathcal{K}_k \cap ( \Z^d \setminus \widetilde{\mathcal{O}} ) ) \subset \mathcal{I}_k^{+}$, thereby completing the verification of \eqref{eq:coup100}. Combining the induction assumption, \eqref{eq:coup301} and \eqref{eq:coup303}, we see that it is sufficient to argue that 
		\begin{equation}
			\label{eq:coup324}
			(\widetilde{\mathcal{K}} \cap ( \Z^d \setminus \widetilde{\mathcal{O}} ) ) \ \subset \bigcup_{1\leq i \leq N_k^+} 
			W_{k,i}^+[0, L']  \quad ( \stackrel{\eqref{eq:coup303}}{\subset}\widetilde{\mathcal{I}} ).
		\end{equation}
		By condition on the range of $\beta_i^-$, $1\le i \le N_k^-$ inherent to 
		the definition of $t_h$ and due to \eqref{eq:obs_W^-}-\eqref{eq:coup301}, the set $\widetilde{\mathcal{K}} \cap ( \Z^d \setminus \widetilde{\mathcal{O}} )$ is contained in $$
		\bigcup_{1\leq i \leq N_0^-} W_{k,i}[t_h, t_h + L''+ H_{\mathcal{O}} \circ \theta_{t_h + L''}]=  \bigcup_{1\leq i \leq N_0^-}  X_i^-[0,  L''+ H_{\mathcal{O}} \circ \theta_{ L''}].
		$$
		On the other hand, the set on the right-hand side of \eqref{eq:coup324} contains $$\bigcup_{1\leq i \leq N_k^+}  X_i^+[ 0, (L' - \tau_k -t_h)_+]$$ as a subset on account of \eqref{eq:obs_W^+}. Moreover, on the event $\{ Z^{-}  \subset  Z^{+}\} $ implied by $G_k^+$, by definition of $X_i^{\pm}$ in \eqref{eq:obs-cond300}, each of the trajectories $X_i^-$, $1 \leq i \leq N_0^-$ is equal to one of the trajectories among $X_i^+$, $1 \leq i \leq N_k^+$. From this, \eqref{eq:coup324} follows upon observing that, on the event $G_k^+$, $L''+ (H_{\mathcal{O}} \circ \theta_{ L''})(X_i^-) \leq L'' + \textstyle \frac{t_o}{3}$ for any $1\leq i \leq N_{k}^-=N_0^-$ whereas  
		$L' - \tau_k -t_h \geq L' - \textstyle \frac{\mathrm{2g}}{3}= L'' + \textstyle \frac{t_o}{3}$ (recall that $t_h =t_h(\beta_i^+)\leq \textstyle \frac{t_o}{3}$ by definition of $G_k^+$) for any $1\leq i \leq N_{k}^+$.
		
		To complete the proof of the induction step, it remains to argue that the event $G_{k-1} \setminus G_k$ satisfies the estimate set forth in \eqref{eq:obs-G_k-bound}. The event $\{  
		Y_{k}^- \subset U_{k} \}$ appearing in \eqref{eq:obs-coupG_k} is readily seen to have sufficiently 
		high probability upon observing that, on the event $G_{k-1}$ and for each $q \leq i \leq N_0^+$, 
		the starting point $W_{k,i}(0)=Y_{k-1,i}^-$ (cf.~\eqref{eq:obs_W^-} and recall that 
		$\beta_i^-(0)=Y_{k-1,i}^-$ by \eqref{eq:obs-cond300}) of $W_{k,i}$ lies in $U_{k-1}$. Since 
		conditionally on $\mathcal{F}_{k-1}$, $W_{k,i}(\cdot)$ performs a random walk, it then readily 
		follows that $Y_{k,i}$ defined by \eqref{eq:coup304} lies outside $U_k$ with probability at most $e^{-c (\log L)^{\gamma}}$, cf.~\eqref{e:U_k-hard}. Overall this yields
		$$
		Q_k[G_{k-1} \setminus  \{Y_{k}^- \subset U_{k} \}] \leq \textstyle \frac1{L'} P_{g_0^-}\big[ R_{k}< \infty, X_{R_{k-1}} \in U_{k-1}, X_{R_k} \notin U_k, R_k<\infty \big]\leq u |U_{\mathcal{O}}| e^{-c (\log L)^{\gamma}}.
		$$
		The remaining events $G_{k}^{\pm}$ are handled much like in the $k=0$ case, using \eqref{eq:obs-visible}, \eqref{eq:obs-Z-inclusion} and an estimate akin to \eqref{eq:final-good-obs54} to deal with the (unlikely) event that $t_h(\beta_i^+) > \textstyle \frac{t_o}{3} $.
	\end{proof}

	It remains to give the proof of Lemma~\ref{L:obs-condcoup}.
		
	\begin{proof}[Proof of Lemma~\ref{L:obs-condcoup}] Set 
\begin{equation}\label{e:obs-mu-nu}
\mu_{k} = P_{kL''}(f)1_{U_{\mathcal O}}
\end{equation}
i.e., $\mu_{k}(x)= E_x[f(X_{kL''})]1_{U_{\mathcal O}}(x)$ for $x \in \Z^d$, and similarly		
\begin{equation}
			\label{e:mu_k^pm}
			\mu_{k}^- =  P_{kL''}(f1_{B_{K}})1_{U_{k-1}} \stackrel{\eqref{e:obs-mu-nu}}{\leq} \mu_k, \quad \mu_{k}^+ = 
			\textstyle (1+ \varepsilon) \mu_k.
		\end{equation}
		Conditionally on $\mathcal{F}_k$, if $G_{k-1}( \in \mathcal{F}_k)$ does not occur, we simply sample $Z^-$ and $Z^+$ independently with the right conditional law under $q$. Henceforth, we assume that $G_{k-1}$ occurs. Consider a box $B \in \mathcal{O}$. Referring to $\mathcal{Y}^- = ( Y_{k-1,i}^{-} : 1\leq i \leq N_0^+)$  and $\mathcal{Y}^+ = (Y_{k,i}^{+} : 1 \leq i \leq N_k^+)$ as the relevant collections of starting points (both $\mathcal{F}_{k}$-measurable), let $N^{\pm}(B)=| \{ Y \in \mathcal{Y}^\pm : Y \in B\}|$ and let $Y^\pm_i(B)$, $1\leq i \leq N^{\pm}(B)$ be an arbitrary deterministic (when conditioning on $\mathcal{F}_{k}$) enumeration of the points of $ \mathcal{Y}^\pm$ lying in $B$. 
				By construction, as further detailed below, for $B \cap U_{k-1} \neq \emptyset$,
		\begin{equation}\label{e:hard-slt-coup1}
			\begin{split}
				&\text{under $Q_{k-1}$, }N^{+}(B) \text{ is a Poisson variable with parameter $\lambda^+(B)$, where}\\
				&\text{$\lambda^+(B)= \textstyle \frac1{L}P_{\mu_k^+}[X_{H_{\mathcal{O}}} \in B , \, H_{\mathcal{O}} < \infty]$, and on $G_{k-1}$, $N^{-}(B)$ is a.s.~equal}\\
				&\text{to the Poisson variable $| \{ Y \in \mathcal{Y}^- : Y \in B \cap U_{k-1}\}|$}.
			\end{split}
		\end{equation}
Let $\textstyle\lambda^-(B)$ denote the mean of the latter random variable. In view of 
\eqref{eq:coup102}, \eqref{eq:obs-g_0^-} and recalling that the points in $\mathcal{Y}^-$ represent 
the endpoints of $\mathcal{K}_{k-1}$, one finds, due to \eqref{eq:eq:coup101-new} and 
\eqref{eq:coup100}, applying the simple Markov property at time $kL''$ and reversibility, that
\begin{multline*}
\lambda^-(B)= \frac1{L'} P_{g_0^-} [R_{k-1} < \infty, X_{R_{k-1}} \in (B \cap U_{k-1})] \stackrel{\eqref{eq:obs-g_0^-}}{=}  \frac{1}{L}P_{f1_{B_K}}[R_{k-1} < \infty,  \, X_{R_{k-1}} \in (B \cap U_{k-1})]\\
=\frac{1}{L} \sum_{x,y} (f1_{B_K})(x) p_{kL''}(x,y)1_{U_{k-1}}(y)  P_y [X_{H_{\mathcal{O}}} \in B , \, H_{\mathcal{O}} < \infty] \stackrel{\eqref{e:mu_k^pm}}{=}  \frac1{L}  P_{\mu_k^-}[X_{H_{\mathcal{O}}} \in B , \, H_{\mathcal{O}} < \infty].
\end{multline*}
		Let $\eta_B$, for $B \in \mathcal{O}$ with $B \cap U_{k-1} \neq \emptyset$, denote a family of independent Poisson processes on $[0,\infty) \times \Z^d$, independent of $\mathcal{F}_k$, with respective intensity measure
		\begin{equation*}
			\nu_B([0,u] \times K) = u P_{x_B}[X_{t_h} \in K], \quad u \geq 0, \, K \subset \Z^d,
		\end{equation*}
		where $x_B$ denotes the center of the box $B$ and $t_h$ is the homogenization time \eqref{eq:obs-t-h-new}. The processes $\{ \eta_B : B \in \mathcal{O}, B \cap U_{k-1} \neq \emptyset\}$ will be used to construct the desired coupling. For a given realization $\eta_B = \sum_{\lambda} \delta_{(u_\lambda, z_{\lambda})}$, let
		\begin{equation*}
			\xi_{B,1}^{\pm}= \inf \big\{ t \geq 0 : \exists \lambda \text{ s.t. } t P_{Y^\pm_1(B)}[X_{t_h} = z_{\lambda}] \geq u_{\lambda} \big\},
		\end{equation*}
		and, referring to $(u_1^{\pm},z_1^{\pm})$ as the unique (see \cite{PopTeix}, Prop.~4.1(i)) point achieving the infimum above, define recursively for $2 \leq n \leq N^{\pm}(B)$,
		\begin{equation*}
			\xi_{B,n}^{\pm}= \inf \big\{ t \geq 0 : \exists (u_{\lambda}, z_{\lambda}) \notin (u_k^{\pm}, z_k^{\pm})_{1\leq k < n} \text{ s.t. } G_{B, n-1}^{\pm}(z_{\lambda})+ t P_{Y^\pm_n(B)}[X_{t_h} = z_{\lambda}] \geq u_{\lambda} \big\},
		\end{equation*}
		where 
		\begin{equation}\label{e:hard-slt-coup3}
			G_{B, n}^{\pm}(z)= \sum_{1\leq k \leq n} \xi_{B,k}^{\pm} P_{Y^\pm_k(B)}[X_{t_h} = z], \quad 1 \leq n \leq N^{\pm}(B).
		\end{equation}
		Then, on account of \cite{PopTeix}, Prop.~4.3 (see in particular (4.12)), conditionally on $\mathcal{F}_k$, for each $B$ and $1 \le n \le N^{\pm}(B)$ the random variable $z_n^{\pm} \equiv z_n^{\pm}(B) $ above has law $\mu_{Y^\pm_n(B)}$. Hence the collection $Z^{\pm} \stackrel{\text{def.}}{=} \{ z_n^{\pm}(B) : 1 \le n \le N^{\pm}(B), \, B \in \mathcal{O}, \, B \cap U_{k-1} \neq \emptyset  \}$ satisfies \eqref{eq:obs-Z-law}. Moreover, due to \cite{PopTeix}, Cor.~4.4, on $G_{k-1}$, the inclusion in \eqref{eq:obs-Z-inclusion} holds unless the event
		\begin{equation}\label{e:hard-slt-coup4}
			\big\{ G_{B}^{-}(z) > G_{B}^{+}(z) \text{ for some $z \in \Z^d$ and $B \cap U_{k-1} \neq \emptyset$} \big\}
		\end{equation}
		occurs, where $G_B^{\pm}(z)$ is short for $G_{B, N^{\pm}(B)}^{\pm}(z)$; the restriction to $B$ intersecting $U_{k-1}$ rather than $B \in \mathcal{O}$ in 
		\eqref{e:hard-slt-coup4} is owed to the fact that, on the event $G_{k-1}$, one has $Y^{-}_i(B) = Y_{k-1,i}^-(B)\in U_{k-1}$ for all $i$ due to \eqref{eq:coup100}, which implies that $B\cap U_{k-1} \neq \emptyset$.
		Thus, the proof is complete once we argue that the intersection of $G_{k-1}$ and the event in \eqref{e:hard-slt-coup4} have probability bounded by $e^{-c\varepsilon^2 M_{\mathcal{O}}}$, yielding \eqref{eq:obs-Z-inclusion}. In view of \eqref{e:hard-slt-coup3} and the definition of $\widetilde{\mathcal{O}}$, and owing to \eqref{e:exit-dist1},
		for any $B$, $n$ and $\sigma \in \{ \pm\}$,
		\begin{equation}\label{e:hard-slt-coup5}
			\bigg| \frac{G_{B, n}^{\sigma}(z)}{ P_{x_B}[X_{t_h} = z] \sum_{k=1}^n \xi_{B,k}^{\sigma}} -1 \bigg| \leq C 
			\ell_{\mathcal O}^{-1/100} \ (\leq \textstyle \frac{\varepsilon}{100}),
		\end{equation}
where the bound in parenthesis follows by imposing a suitable assumption on $\ell_{\mathcal O}$, as 
appearing below 
\eqref{eq:obs-Z-inclusion}. As we now explain, from 
\eqref{e:hard-slt-coup3}-\eqref{e:hard-slt-coup5}, one sees that the event in \eqref{e:hard-slt-coup4} is included in the union $E= \bigcup_{B}E_1 \cup E_2 \cup E_3 \cup E_4$, 
where $B$ ranges over all boxes in $\mathcal{O}$ satisfying $B \cap U_{k-1} \neq \emptyset$ and 
with $E_i=E_i(B)$ given by
\begin{align*}
&\textstyle E_1= \{ N^+(B) < (1- \frac{\varepsilon}{100}) \lambda^+(B)\}, \\
&\textstyle E_2= \{ N^-(B) > (1+ \frac{\varepsilon}{100})(1+\varepsilon)^{-1}\lambda^+(B)\},\\
&E_3= \big\{ \textstyle \sum_{1\leq k \leq \lfloor(1- \frac{\varepsilon}{100}) \lambda^+(B)\rfloor} \xi_{B,k}^{+} < \textstyle (1- \frac{\varepsilon}{50}) \lambda^+(B) \big\}, \\
&\textstyle E_4= \big\{ \sum_{1\leq k \leq \lceil(1+ \frac{\varepsilon}{100}) (1+\varepsilon)^{-1} \lambda^+(B)\rceil} \xi_{B,k}^{-} >  (1+ \frac{\varepsilon}{50})(1+\varepsilon)^{-1} \lambda^+(B) \big\};
\end{align*} 
indeed, whenever the complements of $E_1$ and $E_3$ jointly occur, one obtains for any $z \in B$ that 
$$\frac{G_{B}^{+}(z)}{P_{x_B}[X_{t_h} = z]} \geq (1- C \ell_{\mathcal O}^{-1/100}) \sum_{1\leq k \leq \lfloor (1- \frac{\varepsilon}{100})\lambda^+(B)\rfloor} \xi_{B,k}^{+} > \textstyle  (1- \frac{\varepsilon}{50}) \lambda^+(B).$$ 
		A corresponding upper bound for $ \frac{G_{B}^{-}(z)}{P_{x_B}[X_{t_h} = z]}$ holds uniformly in $z \in B$ on the complement of $E_2 \cup E_4$ yielding overall that the bound
		$$\sup_{z,B} \frac{G_{B}^{-}(z)}{G_{B}^{+}(z)} \leq \frac{1+ \frac{\varepsilon}{50}}{1- \frac{\varepsilon}{50}} \cdot (1+ \varepsilon)^{-1}
		\leq 1 : \text{ on the event $E^c$.}$$ 
		Thus, \eqref{eq:obs-Z-inclusion} follows at once if $Q_k[E]$ is bounded by the exponential term on the right-hand side of \eqref{eq:obs-Z-inclusion}, which follows readily using standard large deviation bounds for Poisson and exponential random variables in combination with the following two facts. First, one observes that $\frac{\lambda^-(B)}{\lambda^+(B)} \leq (1+ \varepsilon)^{-1}$, as follows plainly from \eqref{e:mu_k^pm} and \eqref{e:hard-slt-coup1}. In particular this implies that $E_2$ and $E_4$ are indeed deviant events. Second, one uses that 
\begin{equation}\label{eq:lambda_LB}
\lambda^{+}(B) \geq M_{\mathcal{O}},
\end{equation}
 which we explain momentarily. The ensuing large-deviation estimates imply that $\P[E_i] \leq e^{-c \varepsilon^2 \lambda^+(B)}$ for all $1\leq i \leq 4$. In particular, in the case of $E_2$ for instance, one observes that $\P[E_2] \le \P[N^-(B) > \lambda^-(B) + \frac{\varepsilon}{100}(1 + \varepsilon)^{-1}\lambda^+(B)]$ and applies the standard estimate $\P[X \geq \lambda + x] \le \exp\{-\frac{x^2}{2(\lambda + x)}\}$ valid for a Poisson variable $X$ with mean $\lambda$ and all $x>0$.
  
 To see \eqref{eq:lambda_LB}, one first recalls $\lambda^+(B)$ from \eqref{e:hard-slt-coup1} with $\mu_k^+$ as in \eqref{e:mu_k^pm} and applies the density condition \eqref{eq:obs-dense}, observing that $\mu_k^+ \geq \mu$ with $\mu$ as given by \eqref{eq:mu-cond}. The latter follows because $\mu_k^+ \geq \mu_k$, see \eqref{e:obs-mu-nu}, and for every $x \in U_{\mathcal{O}}$, one has that $E_x[f(X_{kL''})] \geq \alpha^{-1}$ for $L \geq C(\gamma)$ and $0 \leq k \leq k_L$ using that $P_x[X_t \in B_{K+L}] \geq c$ for all $x \in U_{\mathcal{O}}$ and $0 \leq t \leq L$ and the assumption that $f\ge (\log L)^{-\gamma}$ on $B_{K+L}$.
		\end{proof}

\section{Good obstacle sets}\label{sec:obstacle_set}

We now construct examples of good obstacle sets $\mathcal{O}$. Recall from Definition~\ref{def:O} that $\mathcal{O}  \subset B_{K+2L}$ for given positive integers $K,L$ is $(\delta_{\mathcal{O}},M_{\mathcal{O}})$-\textit{good} if the random walk satisfies the visibility and density conditions \eqref{eq:obs-visible}-\eqref{eq:obs-dense}. Proposition~\ref{L:final-good-obs} below, which formalizes and extends \eqref{eq:ex-good-obs}, yields a large class of such sets that will cover all applications we have in mind. In particular, as will be seen in the proof of Theorem~\ref{thm:long_short}, presented at the end of this section, this includes relevant situations where the obstacle set is disordered.

Throughout this section, let $B \in \{ B(x,N): x \in \Z^d, \, N \geq 0\} \cup \{ \emptyset\}$ (in the latter case we set $\partial B= \emptyset$) and $K \geq 0$, $L \geq 1$ be integers. For a length scale $\widetilde{L} \geq 1$ recall that $\widetilde{ \mathbb{L}} =3\widetilde{L} \Z^d \cap B_{K + 3L/2}$ (see above \eqref{e:obs-tilde-C}) and from~\eqref{e:obs-tilde-C}
that $\widetilde{\mathcal C}$, the set of cells, consists of all sets of the form $\widetilde{C}= B(z,\widetilde{L})$ such that $z \in \widetilde{ \mathbb{L}}$ and $B(z, 2\widetilde{L}) \cap \partial B= \emptyset $.  

\begin{prop}[$L \geq 1, K \geq 0, \gamma > 10$]
\label{L:final-good-obs}
If 
$(\log L)^{100\gamma} \le \ell=  \ell_{\mathcal{O}} \le L^{\frac1{3d}}$ (as in \eqref{eq:obs-cond-scales}) and $\widetilde{L} =  \lfloor ( \alpha L  \ell^{({d-2})/{2}})^\frac1{d}  \rfloor$ (as in \eqref{e:obs-Ltilde-choice}), for each choice $\{y_{\widetilde{C}} \in\widetilde{C} : \widetilde{C} \in \widetilde{\mathcal C}\}$, 
 the set
\begin{equation}
\label{e:obs-final3}
\mathcal{O} =  \bigcup_{\widetilde{C} \in \widetilde{\mathcal{C}}}  
B(y_{\widetilde{C}}, \ell)\ (\subset B_{K+2L})
\end{equation}
is a $(e^{-c(\log 
L)^{\gamma}}, c\ell^{({d-2})/{2}})$-good obstacle set whenever $L \ge L_0( \gamma,d)$.
\end{prop}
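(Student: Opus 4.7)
The plan is to verify separately the visibility condition~\eqref{eq:obs-visible} and the density condition~\eqref{eq:obs-dense}. Both are controlled by the mean free path timescale
\begin{equation*}
t_{\mathcal O} \stackrel{\textnormal{def.}}{=} \widetilde L^d/\ell^{d-2} \asymp \alpha L/\ell^{(d-2)/2}.
\end{equation*}
The choice of $\widetilde L$ in~\eqref{e:obs-Ltilde-choice} is tuned so that the total capacity of the obstacles in a ball of radius $r = \sqrt{t_{\mathcal O}}$ is of order $r^{d-2}$, precisely the threshold at which the walk is trapped with probability of order one over time $t_{\mathcal O}$. Since $\alpha = (\log L)^{10\gamma}$ and $\ell \ge (\log L)^{100\gamma}$ give (using $d \ge 3$) $\ell^{(d-2)/2} \ge (\log L)^{50\gamma}$, one obtains $t_{\mathcal O} \le L(\log L)^{-40\gamma}$ and $\sqrt{t_{\mathcal O}} \le \sqrt L(\log L)^{-20\gamma}$, which is much smaller than the excess $\sqrt L(\log L)^{\gamma}$ of $U_{\mathcal O}$ over $B_K$. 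One also has $\widetilde L^2 \ll t_{\mathcal O}$, so the obstacle spacing is well below the mean free path.

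For the visibility bound, I would cut $[0, L(\log L)^{-10\gamma}]$ into $M = \lfloor L(\log L)^{-10\gamma}/t_{\mathcal O}\rfloor \ge (\log L)^{30\gamma}$ blocks of length $t_{\mathcal O}$ and use the strong Markov property to bound $P_x[H_{\mathcal O} > L(\log L)^{-10\gamma}]$ by $(1-c)^M \le e^{-c(\log L)^{30\gamma}}$, well beyond the target. The uniform lower bound $P_y[H_{\mathcal O} \le t_{\mathcal O}] \ge c$ for $y$ in the obstacle region follows by confining the walk to a ball $B(y, r)$ of radius $r \asymp \sqrt{t_{\mathcal O}}$ via~\eqref{e:exittime-mgf}, computing by near-independence and~\eqref{e:cap-box} that the $\asymp (r/\widetilde L)^d$ obstacles therein contribute a total capacity $\asymp r^d\ell^{d-2}/\widetilde L^d = r^{d-2}$, and concluding via a capacity-based lower bound on the hitting probability (a standard application of~\eqref{eq:lastexit} and~\eqref{eq:Greenasympt}). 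A minor technicality---that $x$ may lie within $2\widetilde L$ of $\partial B$, where cells are excluded in~\eqref{e:obs-tilde-C}---is handled by sacrificing the first block to drive the walk into the bulk, exploiting $\widetilde L^2 \ll t_{\mathcal O}$.

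For the density bound, I would first use reversibility (valid because $a_x = 4d$ is constant) to establish, for $B \in \mathcal B_{\mathcal O}$, the identity
\begin{equation*}
P_{\mu}[X_{H_{\mathcal O}} \in B, H_{\mathcal O} < \infty] \geq \alpha^{-1}\sum_{x \in B} E_x\!\left[\left|\{1 \le n < \widetilde H_{\mathcal O} : X_n \in U_{\mathcal O}\}\right|\right].
\end{equation*}
Only the $\asymp \ell^{d-1}$ boundary vertices $x \in \partial B$ contribute non-trivially. For each such $x$, the walk steps off $\mathcal O$ with probability $\ge c$ by uniform ellipticity and, conditionally on this, performs an excursion that (i) lasts time $\gtrsim t_{\mathcal O}$ with positive probability---by the capacity computation from the visibility step now applied to a slightly smaller ball (subcritical in capacity)---and (ii) remains in $U_{\mathcal O}$ throughout, since $B \cap U_{\mathcal O} \ne \emptyset$ and the excursion displacement is at most $\sqrt{t_{\mathcal O}}(\log L)^{c} \ll \sqrt L(\log L)^{\gamma}$. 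Summing yields $P_{\mu}[\ldots] \geq c\alpha^{-1}\ell^{d-1}t_{\mathcal O} \asymp L\ell^{d/2}$, which exceeds the required $M_{\mathcal O}L = cL\ell^{(d-2)/2}$ by a factor of $\ell$.

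The hardest step will be the lower bound on the excursion duration in~(i), since an accurate estimate of the capacity of the obstacle union inside a ball of radius $\sqrt{t_{\mathcal O}}/C$ requires controlling the near-independence of the individual obstacles' contributions; this is the classical calculation in the subcritical regime of random walk among periodic traps. A secondary concern is the confinement in~(ii) when $B \in \mathcal B_{\mathcal O}$ lies close to $\partial U_{\mathcal O}$, handled by a half-space argument exploiting the vast gap $\sqrt L(\log L)^{\gamma} \gg \sqrt{t_{\mathcal O}}$.
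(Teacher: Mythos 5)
Your reduction of the density condition via reversibility to the occupation-time identity
\begin{equation*}
P_{\mu}[X_{H_{\mathcal O}} \in B,\, H_{\mathcal O}<\infty] \;\geq\; \alpha^{-1}\sum_{x\in B}E_x\big[\,|\{1\le n<\widetilde H_{\mathcal O}: X_n\in U_{\mathcal O}\}|\,\big]
\end{equation*}
is correct, and your visibility argument (blocks of length $t_{\mathcal O}=\widetilde L^d/\ell^{d-2}$, order-one trapping per block via a lower bound on the capacity of the obstacle union in $B(y,\sqrt{t_{\mathcal O}})$) is a viable variant of the paper's, which instead uses shorter blocks of length $\widetilde L^2(\log L)^\gamma$ and only a single-obstacle hitting estimate per block. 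But step~(i) of your density argument contains a genuine error. You claim that, starting from a point $y$ adjacent to $x\in\partial B$, the excursion avoids $\mathcal O$ for time $\gtrsim t_{\mathcal O}$ with probability bounded below by a constant. This is false: $y$ sits at distance $1$ from the obstacle $B$ itself, and the walk returns to $B$ before escaping to distance $\gg\ell_{\mathcal O}$ with probability $1-O(1/\ell_{\mathcal O})$ (indeed $\sum_{y\in\partial_{\rm out}B}P_y[H_B=\infty]\asymp \mathrm{cap}(B)\asymp\ell_{\mathcal O}^{d-2}$ while $|\partial_{\rm out}B|\asymp\ell_{\mathcal O}^{d-1}$). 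The survival probability you need is therefore only of order $\ell_{\mathcal O}^{-1}$, not of order $1$; the capacity computation "applied to a slightly smaller ball" cannot rescue this because the dominant trap for $y$ is the parent obstacle at distance $1$, not the far-away lattice of obstacles.

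This matters quantitatively: the correct per-boundary-vertex contribution is $\asymp\ell_{\mathcal O}^{-1}\,t_{\mathcal O}$ (escape the parent box to scale $\widetilde L/10$ with probability $\asymp\ell_{\mathcal O}^{-1}$, then survive for time $\asymp t_{\mathcal O}$), so the total is $\asymp\ell_{\mathcal O}^{d-1}\cdot\ell_{\mathcal O}^{-1}\cdot t_{\mathcal O}=\ell_{\mathcal O}^{d-2}t_{\mathcal O}=\widetilde L^d$, which after dividing by $\alpha$ equals $M_{\mathcal O}L$ \emph{exactly}, with no room to spare. Your claimed surplus "by a factor of $\ell$" is an artefact of the error, and the paper's corresponding bound \eqref{e:dense825}, which amounts to $\mathrm{cap}(B)\cdot t_{\mathcal O}=\widetilde L^d$, confirms there is no such slack. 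The strategy is salvageable — replace (i) by: with probability $\geq c\,\ell_{\mathcal O}^{-1}$ (using \eqref{eq:bnd_equil_box} to pick, for each $x\in\partial B$, an exterior neighbour from which the walk reaches $\partial B(y_B,\widetilde L/10)$ before returning to $B$), the walk reaches distance $\widetilde L/10$ from all obstacles, and only \emph{then} invoke the mean-free-path survival estimate — but as written the key quantitative claim is wrong and the stated margin of safety does not exist.
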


The bulk of this section is devoted to proving Proposition~\ref{L:final-good-obs}. To ease notations, we will routinely
 assume that $L$ is large enough in a manner possibly depending on $\gamma$ (and $d$) 
as might be required for various bounds to hold. We first isolate the following estimate, 
which is key. It exhibits a `mean free path' for the random walk among the (obstacle) set $\mathcal{O}$ in \eqref{e:obs-final3}. 
In the sequel we set $\mathcal{B}_{\mathcal{O}}= \{ B(y_{\widetilde{C}}, \ell) : \widetilde{C} \in \widetilde{\mathcal{C}}\}$ so that 
\eqref{e:obs-final3} is of the form \eqref{eq:obstacles}.

\begin{lemma}
\label{L:mfp} 
Under the assumptions of Proposition~\ref{L:final-good-obs}, there exists 
$\lambda$ such that, defining
	\begin{equation}\label{e:mfp1}
		t_{\mathcal{O}}=t_{\mathcal{O}}(\lambda) = \textstyle \lambda \ell^2 \big(\frac{\widetilde{L}}{ \ell}\big)^d, 
	\end{equation}
	the following holds. For all $B \in \mathcal{B}_\mathcal{O}$, all $x \in \Z^d$ such that both $d(x,\mathcal{O}) \geq \frac1{10} \widetilde{L}$ and $ \frac12  \sqrt{ t_{\mathcal{O}}} \leq d(x,B) \leq  \sqrt{ t_{\mathcal{O}}}$ hold, and all $y \in \Z^d$ such that $d(y, \mathcal{O}) \geq \frac1{10} \widetilde{L}$ and $ d(y,B ) \leq 10 \widetilde{L}$, one has
	\begin{equation}
		\label{e:mfp2}
		g_{\mathcal{O}}(x,y) \geq \Cl[c]{c:obs-green} g(x,y),
	\end{equation}
	where $g_{\mathcal{O}}$ denotes the Green's function killed upon entering $\mathcal{O}$ (cf.~\eqref{eq:Greenkilled}).
\end{lemma}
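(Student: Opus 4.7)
The plan is to derive \eqref{e:mfp2} from a \emph{mean-free-path} estimate, combined with a standard Green's function comparison via a source point near $y$.

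\textbf{Step 1: mean-free-path.} I will first show that, for $\lambda$ in \eqref{e:mfp1} chosen sufficiently small,
\[
P_z[H_\mathcal{O} > t_\mathcal{O}] \geq \tfrac12 \qquad\text{for all $z\in\Z^d$ with $d(z,\mathcal{O})\geq \widetilde{L}/10$.}
\]
Bounding $P_z[H_\mathcal{O}\leq t_\mathcal{O}]\leq \sum_{B'\in\mathcal{B}_\mathcal{O}}P_z[H_{B'}\leq t_\mathcal{O}]$ and splitting into near ($d(z,B')\leq C\sqrt{t_\mathcal{O}}$) and far contributions, the near obstacles are handled by~\eqref{eq:hit1}, giving $P_z[H_{B'}<\infty]\leq C(\ell/d(z,B'))^{d-2}$; the $\widetilde{L}$-spacing of obstacle centers makes the associated dyadic sum at most a constant multiple of $\ell^{d-2}t_\mathcal{O}/\widetilde{L}^d = C\lambda$. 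Far obstacles contribute exponentially small terms via a sub-Gaussian displacement tail derived from~\eqref{e:exittime-mgf}. Fixing $\lambda$ small enough then yields the claim.

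\textbf{Step 2: reduction to a hitting probability.} With $r=\widetilde{L}/20$, the ball $B(y,r)$ lies in the obstacle-free bulk $W:=\{d(\cdot,\mathcal{O})\geq\widetilde{L}/10\}$, so a standard domain decomposition of the walk at first entry into $B(y,r)$ gives
\[
g_\mathcal{O}(x,y)\geq P_x[H_{B(y,r)}<H_\mathcal{O}]\cdot \min_{z\in\partial B(y,r)} g_\mathcal{O}(z,y).
\]
The second factor is at least $cr^{2-d}$ by a local comparison with the Green's function of the walk killed upon exiting $B(y,2r)\subset W$, combined with~\eqref{eq:Greenasympt}. Since $g(x,y)\asymp |x-y|^{2-d} \asymp t_\mathcal{O}^{(2-d)/2}$ and $P_x[H_{B(y,r)}<\infty]\asymp (r/|x-y|)^{d-2}$ by standard capacity estimates (using \eqref{eq:lastexit} and \eqref{e:cap-box}), \eqref{e:mfp2} reduces to
\[
P_x[H_{B(y,r)}<H_\mathcal{O}] \geq c\,P_x[H_{B(y,r)}<\infty],
\]
i.e.~conditioning on hitting the obstacle-free neighborhood of $y$ does not unduly favor paths that first touch $\mathcal{O}$.

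\textbf{Main obstacle.} The critical difficulty is the last displayed inequality. My approach is a Chapman--Kolmogorov lower bound for the $\mathcal{O}$-killed transition kernel $p^\mathcal{O}_n$: for $n\asymp t_\mathcal{O}$,
\[
P_x[H_{B(y,r)}<H_\mathcal{O}\wedge n]\geq \sum_{w\in W'}p^\mathcal{O}_{n/2}(x,w)\cdot P_w[H_{B(y,r)}<H_\mathcal{O}\wedge n/2],
\]
where $W'\subset W$ is a bulk set of cardinality $\asymp n^{d/2}$ centered near $(x+y)/2$; note that $W$ has density bounded away from zero in the relevant region since the $\widetilde{L}/10$-thickening of $\mathcal{O}$ occupies only a fixed fraction of space by~\eqref{eq:obs-cond-scales}. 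Step~1 allows one to show $p^\mathcal{O}_{n/2}(x,w)\geq c\,p_{n/2}(x,w)$ uniformly for $w\in W'$ by propagating bulk avoidance via the strong Markov property at $H_\mathcal{O}$ together with the local CLT~\eqref{eq:LCLT}, and a symmetric argument handles the second factor. Summing over $w\in W'$ and comparing with the free Chapman--Kolmogorov identity recovers a lower bound of the desired order. The balancing scale $\ell^{d-2}t_\mathcal{O}/\widetilde{L}^d=\lambda$ built into~\eqref{e:obs-Ltilde-choice}--\eqref{e:mfp1} is precisely what forces the retained heat-kernel mass in the bulk to dominate the contributions from paths hitting $\mathcal{O}$.
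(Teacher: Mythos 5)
Your Step 1 is correct, and the dyadic obstacle-count computation there (at most $Ck^{d-1}$ obstacles in the annulus at distance $k\widetilde L$, each hit with probability $C(\ell/(k\widetilde L))^{d-2}$, summing to $C\ell^{d-2}t_{\mathcal O}/\widetilde L^{d}=C\lambda$) is precisely the key estimate of the paper's proof, cf.~\eqref{e:mfp6}. The paper, however, then proceeds quite differently: it never decomposes in time, but starts from the exact identity $g_{\mathcal O}(x,y)=g(x,y)-E_x[g(X_{H_{\mathcal O}},y)1\{H_{\mathcal O}<\infty\}]$ (see \eqref{e:mfp3}), cuts off the hitting points beyond distance $M\sqrt{t_{\mathcal O}}$ by \eqref{eq:Greenasympt}, bounds the remaining contribution by $C\ell^{d-2}\sum_{y_{\mathcal O}}g(x,B_{y_{\mathcal O}})g(B_{y_{\mathcal O}},y)$ via a capacity estimate, and then splits according to whether the obstacle is near $x$ or near $y$ so that in each case one Green's factor is $\le Cg(x,y)$ and the other is summed via \eqref{e:mfp6}. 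This yields the subtracted term $\le \tfrac23 g(x,y)$ directly for $\lambda$ small.

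The gap in your argument is the paragraph labelled ``Main obstacle'': it is not resolved, and the scheme as written is circular. Step 1 gives only the integrated bound $P_x[H_{\mathcal O}\le t_{\mathcal O}]\le C\lambda$; it does not yield the pointwise killed-kernel bound $p^{\mathcal O}_{n/2}(x,w)\ge c\,p_{n/2}(x,w)$. To get that pointwise bound one must control $E_x[1\{H_{\mathcal O}\le n/2\}\,p_{n/2-H_{\mathcal O}}(X_{H_{\mathcal O}},w)]$, whose time-sum is exactly the error term $E_x[g(X_{H_{\mathcal O}},w)1\{H_{\mathcal O}<\infty\}]$ of \eqref{e:mfp3}; the obstacles close to $w$ contribute on-diagonal terms that cannot be dismissed by the local CLT, so the parabolic statement is at least as hard as the lemma itself. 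Worse, even granting the first factor, your second factor $P_w[H_{B(y,r)}<H_{\mathcal O}\wedge n/2]$ is a killed hitting probability of a small ball from a point at distance of order $\sqrt{t_{\mathcal O}}$ --- i.e.~exactly the quantity you set out to bound in Step 2 --- and there is no ``symmetric argument'': reversibility would convert it either into the same killed Green's function (circular) or, if you replace the hitting event by a fixed-time event $X_{n/2}\in B(y,r)$, into a statement that loses a polynomial factor of order $r^2/t_{\mathcal O}=\widetilde L^2/t_{\mathcal O}\ll 1$ relative to $P_x[H_{B(y,r)}<\infty]$. To close the argument you need, in one form or another, the two-Green's-function bound of \eqref{e:mfp5}--\eqref{e:mfp7} with its near-$x$/near-$y$ case distinction; the midpoint Chapman--Kolmogorov decomposition does not substitute for it.
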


\begin{remark}\label{R:MFP}
It is worth highlighting that $\widetilde{L} \ll \sqrt{t_{\mathcal{O}}}$ by our choice of scales in Proposition~\ref{L:final-good-obs}, so $\sqrt{t_{\mathcal{O}}}$ gives a (large) scale for which, despite 
their presence, the effect of the obstacles comprising $\mathcal{O}$ is not too strongly felt, as indicated by \eqref{e:mfp2}. 
\end{remark}

\begin{proof}
For all $B \in \mathcal{B}_{\mathcal{O}}$ and a parameter $M \geq 1$ to be chosen momentarily, introduce the set 
$V= V_B =\{ z \in \Z^d :  d(z,B) \leq  M \sqrt{ t_{\mathcal{O}}} \}$. Using the strong Markov property at time $H_{\mathcal{O}}$, one sees that for all $x,y  \in \Z^d$,
	\begin{equation}
		\label{e:mfp3}
		g_{\mathcal{O}}(x,y) = g(x,y) - E_x\big[g(X_{H_{\mathcal{O}}},y) 1\{H_{\mathcal{O}} < \infty\}\big].
	\end{equation}
	Applying the bound \eqref{eq:Greenasympt} repeatedly and choosing $M \geq 1$ large enough (depending 
	only on $d$) yields that for all $y$ satisfying $ d(y,B ) \leq 10 \widetilde{L}$ and $x$ such that 
	$d(x,B) \leq \sqrt{t_{\mathcal{O}}}$,
	\begin{multline}
		\label{e:mfp4}
		E_x\big[g(X_{H_{\mathcal{O}}},y) 1\{H_{\mathcal{O}} < \infty, X_{H_{\mathcal{O}}} \notin V \}\big] \\
		\leq \sup_{z \in \Z^d \setminus V} g(z,y) \leq \frac{C}{M^{d-2}} \frac1 {(t_{\mathcal{O}}\vee 1)^{\frac{d-2}{2}}} \leq \frac{C'}{M^{d-2}} g(x,y) \leq  \frac{1}{3} g(x,y),
	\end{multline}
where in the second bound we use that $\widetilde L /\sqrt{t_{\mathcal O}} \to 0$ as $L \to \infty$ (see Remark~\ref{R:MFP} above) while the last bound requires $M$ to be sufficiently large, which is henceforth fixed. In what follows, let $y_{\mathcal{O}}$ range over all centers of balls 
$B(y_{\mathcal{O}}, \ell) \equiv B_{y_{\mathcal{O}}}$ in the obstacle set $\mathcal{O}$ having 
non-empty intersection with $V$. For a set $U$ we also introduce the handy notation $g(z, U)= \sup_ 
{w\in U} g(z,w) = g(U,z)$. It follows that for all $x,y \in \Z^d$,
	\begin{multline}
		\label{e:mfp5}
		E_x\big[g(X_{H_{\mathcal{O}}},y) 1\{H_{\mathcal{O}} < \infty, X_{H_{\mathcal{O}}} \in V \}\big] \\
		\leq \sum_{y_{\mathcal{O}}} \sum_{z } P_x\big[ H_{B_{y_\mathcal{O}}}< \infty, \, X_{H_{B_{y_\mathcal{O}}}}=z \big] g(z,y) \stackrel{\eqref{eq:lastexit}, \eqref{e:cap-box}}{\leq} C \ell^{d-2} \sum_{y_{\mathcal{O}}} g(x,B_{y_{\mathcal{O}}}) g(B_{y_{\mathcal{O}}}, y) .
	\end{multline}
	The next estimate is key in handling \eqref{e:mfp5}. For all $z \in \Z^d$ satisfying $d(z,\mathcal{O}) \geq \frac{\widetilde{L}}{10}$ and with the sum ranging over all centers  $y_{\mathcal{O}}$ of balls in $\mathcal{O}$ satisfying the given constraint below (recall that $M$ is fixed), one finds that for all $\lambda > 0$,
	\begin{equation}
		\label{e:mfp6}
		\sum_{| y_{\mathcal{O}}-z| \leq 3M\sqrt{t_{\mathcal{O}}}} g(z,B_{y_{\mathcal{O}}}) \stackrel{\eqref{e:obs-tilde-C}}{\leq} C \sum_{1 \leq k \leq C' \frac{\sqrt{t_{\mathcal{O}}}}{\widetilde{L}}} \frac{k^{d-1} }{(\widetilde{L}k)^{d-2}} \leq  \frac{Ct_{\mathcal{O}}}{\widetilde{L}^d} \stackrel{\eqref{e:mfp1}}{\leq} C\lambda \ell^{2-d};
	\end{equation}
	here, in obtaining the first estimate, we crucially used the fact that, by construction of $\mathcal{O}$, in each annulus $B(z, c(k+1)\widetilde{L}) \setminus B(z, ck\widetilde{L})$ with $c= 20^{-1}$, there are at most $Ck^{d-1}$ obstacles $B_{y_{\mathcal{O}}}$, each contributing an amount $g(z,B_{y_{\mathcal{O}}})  \leq C'(k\widetilde{L})^{2-d}$,
	and that $d(z,\mathcal{O}) \geq \frac1{10}\widetilde{L}$, whence the sum over $k$ starts at $k=1$. Returning to \eqref{e:mfp5}, one then considers separately the contributions to the sum according to whether i) $d(B_{y_{\mathcal{O}}}, y) > \frac{\sqrt{t_{\mathcal{O}}}}{10}$, or ii) $d(B_{y_{\mathcal{O}}}, y) \leq \frac{\sqrt{t_{\mathcal{O}}}}{10}$. Recall that the centers 
	$y_{\mathcal{O}}$ appearing in \eqref{e:mfp5} all have the property that $B_{y_{\mathcal{O}}}\cap 
	V \neq \emptyset$. From here on we tacitly assume that $x,y$  in \eqref{e:mfp5} satisfy all 
	assumptions above \eqref{e:mfp2}. In case i), using the fact that $g(B_{y_{\mathcal{O}}}, y) \leq C t_{\mathcal{O}}^{-(d-2)/2} \leq C' g(x,y)$, one applies \eqref{e:mfp6} with $z=x$ to control the 
	resulting sum over $g(x, B_{y_{\mathcal{O}}})$. In case ii), one uses instead that $d(x, B_{y_{\mathcal{O}}}) \geq c \sqrt{t_{\mathcal{O}}}$ (which follows because $|x-y| \geq \frac{1}5 
	\sqrt{t_{\mathcal{O}}}$ when $L \geq C$), whence $g(x, B_{y_{\mathcal{O}}}) \leq Cg(x,y)$ and one applies \eqref{e:mfp6} with $z=y$ instead, yielding overall that
	\begin{equation}
		\label{e:mfp7}
		E_x\big[g(X_{H_{\mathcal{O}}},y) 1\{H_{\mathcal{O}} < \infty, X_{H_{\mathcal{O}}} \in V \}\big] \leq C \lambda g(x,y) ,
	\end{equation}
	for all $\lambda > 0$ and  $x,y$ as above \eqref{e:mfp2}. Choosing $\lambda$ small enough so the last expression in \eqref{e:mfp7} is at most $\frac13g(x,y)$ and inserting the resulting estimate along with \eqref{e:mfp4} into \eqref{e:mfp3} yields \eqref{e:mfp2}.
\end{proof}

With Lemma~\ref{L:mfp} at our disposal, we supply the proof of Proposition~\ref{L:final-good-obs}.

\begin{proof}[Proof of Proposition~\ref{L:final-good-obs}]
We 
need to verify the two conditions \eqref{eq:obs-visible} and \eqref{eq:obs-dense} with 
\begin{equation}\label{e:obs-final4}
\delta_{\mathcal{O}} = e^{-c(\log L)^{\gamma}} \mbox{ and } M_{\mathcal{O}} = c \ell^{(d-2)/2}.
\end{equation}
We first show \eqref{eq:obs-visible}. Recalling $\widetilde{L}$ from \eqref{e:dense401}, let
	\begin{equation}
		\label{eq:final-good-obs51}
		\widetilde{T}= \widetilde{L}^2 (\log L)^{\gamma}.
	\end{equation}
	With $n_0 = \lfloor {t_o}/{3\widetilde{T}} \rfloor$ and $t_o$ as in \eqref{def:gap}, applying the Markov property successively at times $n \widetilde{T}$, $1\leq n \leq n_0$, one obtains for all $x \in B_{K+L}$, with $R= K+L+\sqrt{t_o (\log L)^{\gamma}}$, that
	\begin{equation}
		\label{eq:final-good-obs52}
		\begin{split}
			P_x[H_{\mathcal{O}} > \textstyle \frac{t_o}{3} ] &\leq P_x\big[ X_{[(n-1)\widetilde{T}, n \widetilde{T}]} \cap \mathcal{O} = \emptyset, \, 1 \leq n \leq n_0, \, \textstyle\frac{t_o}{3}  < T_{B_R} \big] + P_x\big[  T_{B_R} \leq\textstyle \frac{t_o}{3} \big]\\[0.3em]
			& \leq \sup_{y \in B_{R}} P_y [H_{\mathcal{O}} > \widetilde{T}]^{n_0} + \sum_{0 \leq n \leq \frac{t_o}3}P_0\big[|X_n| > \textstyle \sqrt{t_o (\log L)^{\gamma}} \, \big].
		\end{split}
	\end{equation}
	Recalling $t_o$ from \eqref{def:gap} and applying a standard upper bound on the heat kernel, one finds that the last term in the second line of \eqref{eq:final-good-obs52} is bounded by $e^{-c(\log L)^{\gamma}}$, which is affordable with a view towards showing \eqref{eq:obs-visible} with $\delta_{\mathcal{O}}$ as in \eqref{e:obs-final4}. To bound $P_y [H_{\mathcal{O}} > \widetilde{T}]$ uniformly in $y \in B_{R}$, one observes that, by construction of $\mathcal{O}$, see \eqref{e:obs-final3} and \eqref{e:obs-tilde-C}, for any such $y$, there exists a box $B=B(y_{\tilde{C}}, \ell) \in \mathcal{B}_\mathcal{O}$ with the property that 
	\begin{equation}
		\label{eq:final-good-obs53}
		d(y,B) \leq \Cl{c:dist-obst} \widetilde{L}
	\end{equation}
	in particular, one notes to this effect that the proximity of a box $B$ asserted in \eqref{eq:final-good-obs53} is not spoiled by the removal of the boxes near the boundary of $B(x,N)$ which is happening in \eqref{e:obs-tilde-C}. Let $U = B(y, 2 \Cr{c:dist-obst} \widetilde{L})$ and $T_U$ denote the exit time from $U$. Classically $E_y[T_U] \leq C \widetilde{L}^2$ and by a Khas'minskii-type argument, see e.g.~\cite[(2.22) and (2.24)]{zbMATH07049491} in this setup, one has that
	\begin{equation}
		\label{eq:final-good-obs54}
		P_y[T_U \geq \widetilde{T}] \leq \exp\big\{- \textstyle \frac{\widetilde{T}}{2 E_y[T_U]} \big\} \stackrel{\eqref{eq:final-good-obs51}}{\leq} e^{-c (\log L)^{\gamma}}. 
	\end{equation}
	On the other hand,\begin{equation}
		\label{eq:final-good-obs55}
		P_y[H_{\mathcal{O}}  > T_U] \stackrel{\eqref{eq:final-good-obs53}}{\leq}  P_y[H_{B}  > T_U] = 1 - \sum_{z \in B} g_U(y,z) e_{B,U}(z) \leq 1- c \big({\ell}/{\widetilde{L}}\big)^{d-2},
	\end{equation}
	where the last inequality follows by using that for all $y,z \in B(y,  \Cr{c:dist-obst} \widetilde{L}) \subset U$, one has $g_U(y,z) \geq c g(y,z)$ and similarly $\text{cap}_U(B)= \int d e_{B,U} \geq c \ell^{d-2}$. Since $P_y[H_{\mathcal{O}} > \widetilde{T}]$ is bounded by the sum of the two probabilities on the left of \eqref{eq:final-good-obs54} and \eqref{eq:final-good-obs55}, feeding the bounds into \eqref{eq:final-good-obs52} yields that $P_x[H_{\mathcal{O}} > \textstyle \frac{t_o}{3} ] $ is bounded up to a term of order $e^{-c (\log L)^{\gamma}}$ by
	$$
	\big( 1- c \big({\ell}/{\widetilde{L}}\big)^{d-2} \big)^{n_0} \leq e^{-c' n_0 (\frac{\ell}{\widetilde{L}})^{d-2}} \leq e^{-c' \ell^{d-2}\frac{L}{\widetilde{L}^d (\log L)^{21\gamma}}} \leq \exp\big\{-c' \textstyle \frac{\ell^{d-2}}{M_{\mathcal{O}} (\log L)^{11\gamma}}\big\},
	$$
	where we used that $n_0 \geq c {L}{[(\log L)^{11\gamma} \widetilde{L}^2]^{-1}}$ and substituted $\widetilde{L}$ as chosen in the statement of Proposition~\ref{L:final-good-obs}; see also \eqref{eq:mu-cond} regarding $\alpha$. But the last term in the previous display is bounded by $\exp\{-\ell^{c'}\}$ on account of 
	\eqref{e:obs-final4} and the fact that $\ell \ge (\log L)^{100\gamma}$, and 
	\eqref{eq:obs-visible} follows.

	It remains to argue that \eqref{eq:obs-dense} holds (with $M_{\mathcal{O}}$ as in 
	\eqref{e:obs-final4}) in order to complete the proof. 
	In the sequel it is always tacitly assumed that $B=B(y_B, \ell) \in \mathcal{B}_\mathcal{O}$ satisfies $B \cap U_{\mathcal{O}} \neq \emptyset$. All subsequent estimates are uniform in $B$ unless explicitly said otherwise.
	For such $B (=B(y_B, \ell))$, define the sets (depending on $B$, with $B(K,r)= \bigcup_{x \in K} B(x,r)$ for $K \subset \Z^d$)
	\begin{equation}
		\label{e:dense401}
		\begin{split}
			&U = \textstyle B\big(y_B, \frac{\widetilde{L}}{10}\big), \\ 
			&A =  \big(\Z^d \setminus B(\mathcal{O}, \textstyle \frac{\widetilde{L}}{10}) \big) \cap \big\{ z: \,  \sqrt{t_{\mathcal{O}}}  \leq d(z,B) \leq2 \sqrt{t_{\mathcal{O}}} \big\} \cap U_{\mathcal{O}} 
		\end{split}
	\end{equation}
	with $t_{\mathcal{O}}$ as in \eqref{e:mfp1} and $\lambda$ inherent to the definition of $t_{\mathcal{O}}$ fixed in such a way that the conclusions of Lemma~\ref{L:mfp} hold. Let
	\begin{equation}
		\label{e:obs-h_B}
		h_B(x) =  P_{x}[X_{H_{\mathcal{O}}} \in B , \, H_{\mathcal{O}} < \infty], \, x\in \Z^d.
	\end{equation}
	By construction, see \eqref{e:obs-final3} and the discussion leading up to it, $U \cap \mathcal{O}= B$ and $\Z^d\setminus ( \mathcal{O} \cup U)$ is a connected set. Hence, for $x \in \Z^d \setminus U$, by a last-exit type decomposition on $\partial U$, one gets that
	\begin{equation}
		\label{e:dense823}
		\begin{split}
			h_B(x)&=  \sum_{z \in \partial U} \sum_{n \geq 0} P_x\big[X_1 \notin \mathcal{O},\dots, X_{n-1} \notin \mathcal{O}, X_n = z, \, \{ H_B< \widetilde{H}_{\partial U}\} \circ \theta_n \big]\\
			& = \sum_{z \in \partial U} g_{\mathcal{O}}(x,z) P_z[H_B< \widetilde{H}_{\partial U}].
		\end{split}
	\end{equation}
	To produce a meaningful lower bound on the second line of \eqref{e:dense823}, first note that, applying \cite[Proposition 1.5.10]{Law91}, one has uniformly in $z \in \partial U$,
	\begin{equation}
		\label{e:dense824}
		P_z[H_B< \widetilde{H}_{\partial U}] \geq c \, \text{cap}(B)|\partial U|^{-1}.
	\end{equation}
	Feeding \eqref{e:dense824} into \eqref{e:dense823}, and noting that Lemma~\ref{L:mfp}, which yields a lower bound on the killed Green's function $g_{\mathcal{O}}(x,z)$ appearing in the second line of \eqref{e:dense823}, is in force whenever $x \in A$, cf.~\eqref{e:dense401}, it follows that
	\begin{multline} \label{e:dense825}
		\int h_B d\mu \stackrel{\eqref{eq:mu-cond}}{\geq} \alpha^{-1} \sum_{x\in A}  \sum_{z \in \partial U} g_{\mathcal{O}}(x,z) P_z[H_B< \widetilde{H}_{\partial U}]
		\stackrel{\eqref{e:mfp2}, \eqref{e:dense824}}{\geq} c \, \alpha^{-1}   \text{cap}(B) \times \\ \inf_{z}  \sum_{x\in A}  g(x,z)  \stackrel{(\ast)}{\geq} c' \alpha^{-1}  \ell^{d-2} \sum_{\sqrt{t_{\mathcal{O}}} \leq k \leq 2 \sqrt{t_{\mathcal{O}}} } k = c \alpha^{-1} \ell^{d-2} t_{\mathcal{O}} \stackrel{\eqref{e:mfp1}}=  c \alpha^{-1} \widetilde{L}^d \stackrel{\eqref{e:obs-Ltilde-choice}, \eqref{e:obs-final4} }{\geq}  M_{\mathcal{O}}L;
	\end{multline}
	in deducing the lower bound $(\ast)$ in the previous chain of estimates, we have also crucially used that when summing over $A$, the distribution of the obstacles, manifest through the condition that $x \notin B(\mathcal{O}, \textstyle \frac{\widetilde{L}}{10})$ present in \eqref{e:dense401}, is sufficiently sparse to make this sum comparable to one over the full annulus $\sqrt{t_{\mathcal{O}}} \leq k= d(x,B) \leq 2 \sqrt{t_{\mathcal{O}}}$. The bound \eqref{e:dense825} is precisely \eqref{eq:obs-dense}.
\end{proof}

We conclude this section by presenting the proof of Theorem~\ref{thm:long_short}, which will follow by combining Theorem~\ref{thm:long_short_obstacle} and Proposition~\ref{L:final-good-obs} for a well-chosen (random) obstacle set $\mathcal{O}= \mathcal{O}(\omega)$ governed by the probability $\mathbf{P}$, which allows to couple the boundary clusters appearing in \eqref{eq:long-j-i}.

\begin{proof}[Proof of Theorem~\ref{thm:long_short}]
Recall the event $\mathscr{D}$ from \eqref{e:obs-final2} defined under $\mathbf{P}$. For definiteness, we set $\mathbb Q_{\omega}$ to be an independent coupling of $\mathcal{I}_1$ and $\mathcal{I}_2$ (or any coupling for that matter) whenever $\omega \notin \mathscr{D}$. From here onwards, we always assume that $\mathscr{D}$ occurs.
Since this entails that $\mathrm{Disc}(\widetilde{C})$ is non-empty for any $\widetilde{C} \in \widetilde{\mathcal{C}}$, we can pick in view of \eqref{eq:disc-coup} a point $y_{\widetilde{C}} \in \widetilde{C}$ such that
\begin{equation}\label{eq:choice-y}
\mathrm{Disc}(y_{\widetilde{C}}) \text{ occurs for each $\widetilde{C} \in \widetilde{\mathcal{C}}$.} 
\end{equation}
For any choice of $\ell= \ell_{\mathcal{O}}$ satisfying \eqref{eq:obs-cond-scales}, the collection $\{y_{\widetilde{C}} : \widetilde{C} \in \widetilde{\mathcal{C}}\}$ thereby obtained defines an obstacle set $\mathcal{O}= \mathcal{O}(\omega)$ by \eqref{e:obs-final3}, which is $(\delta_{\mathcal{O}},M_{\mathcal{O}})$-good with $\delta_{\mathcal{O}}$,$M_{\mathcal{O}}$ as in \eqref{e:obs-final4} by virtue of Proposition~\ref{L:final-good-obs}. In particular, Theorem~\ref{thm:long_short_obstacle} applies with this choice of obstacle set and yields a coupling $\mathbb{Q}=\mathbb{Q}_{\omega}$ satisfying \eqref{eq:long_short}. In view of the values for $\delta_{\mathcal{O}}$,$M_{\mathcal{O}}$ given by \eqref{e:obs-final4} and using one of the assumptions on $\varepsilon$ inherent to Theorem~\ref{thm:long_short} (the other one is needed for Theorem~\ref{thm:long_short_obstacle} to apply), one readily sees that the error term appearing on the right-hand side of \eqref{eq:long_short} can be replaced by that of \eqref{eq:long-j-i}. 

To finish the proof, it thus remains to argue that the specific choice of obstacle set $\mathcal{O}= \mathcal{O}(\omega)$ above implies that the inclusion $({\mathcal I}_1\setminus \widetilde{\mathcal O} ) \subset  ({\mathcal I}_2\setminus  \widetilde{\mathcal O} )$ in \eqref{eq:long_short} can be lifted to boundary clusters. More precisely, since the previous inclusion is preserved by taking a union with $\mathcal{I}(\omega)$ on either side, it is sufficient to argue that $\mathbf{P}[\cdot, \mathscr{D}]$-a.s., under $\mathbb{Q}_{\omega}$,
\begin{equation}
\label{e:obs-final101}
\big\{ ({\mathcal J}_1\setminus \widetilde{\mathcal O}(\omega) ) \subset  ({\mathcal J}_2\setminus  \widetilde{\mathcal O}(\omega) ) \big\} \subset
 \big\{ \mathscr{C}^{\partial}_{B}\big(\mathcal V(\mathcal J_{1} )\big)  \supset  \mathscr{C}^{\partial}_{B}\big(\mathcal V(\mathcal J_{2} )  \big)  \big\},
\end{equation}
where $\mathcal J_{i}= \mathcal I_{i} \cup \mathcal{I} (\omega)$, $i=1,2$, with  $\mathcal{I}_1$, $\mathcal{I}_2$ having the laws specified in \eqref{eq:long_short} and (coupled through $\mathbb{Q}_{\omega}$). If it weren't for the removal of the set $ 
\widetilde{\mathcal{O}}$ from the region of inclusion on the left of \eqref{e:obs-final101}, the asserted 
inclusion would be trivial. Thus, to deduce \eqref{e:obs-final101}, it suffices to argue that
\begin{equation}
\label{e:obs-final102}
\text{ $\mathscr{C}^{\partial}_{B}\big(\mathcal V(\mathcal J_{i}) \big) \cap \widetilde{\mathcal{O}}(\omega) = \emptyset $, $i=1,2$;}
\end{equation}
for, recalling that $\mathscr{C}^{\partial}_{B}\big(\mathcal V(\mathcal J_{i})\big)$ refers to the connected component of $\partial B$ in $\mathcal V(\mathcal J_{i}) \cap B$, the inclusion 
$(\mathcal J_1 \cap B) \subset (\mathcal J_2 \cap B)$ implies in particular $\mathscr{C}^{\partial}_{B}\big(\mathcal V(\mathcal J_{2})   
\big)  \subset \mathscr{C}^{\partial}_{B}\big(\mathcal V(\mathcal J_{1})  \big)$, and \eqref{e:obs-final102} ensures that the latter inclusion remains unaffected when altering the configuration of $\mathcal J_1$ or $\mathcal 
J_2$ inside~$\widetilde{\mathcal{O}}= \widetilde{\mathcal{O}}(\omega)$.

We proceed to show \eqref{e:obs-final102}. By \eqref{e:obs-final3} and \eqref{eq:obs-tilde}, the enlarged obstacle set $\widetilde{\mathcal{O}}$ consists of boxes $\widetilde{B}'$ of the form $\widetilde{B}'= B(y_{\tilde{C}}, \tilde{\ell})$, where $y_{\tilde{C}}$ satisfies \eqref{eq:choice-y}. Fix such a box $\widetilde{B}'$. By construction of $\widetilde{\mathcal{O}}$ and since $\mathcal V(\mathcal J_{i}) \subset \mathcal{V}(\mathcal{I}(\omega)) \equiv  \mathcal{V} $, it suffices to show that 
\begin{equation}
\label{e:obs-final103}
\mathscr{C}^{\partial}_{B}( \mathcal{V}) \cap \widetilde{B}' = \emptyset, \ i=1,2;
\end{equation}
cf.~Fig.~\ref{F:discon}.
\bigskip
\begin{figure}[h!]
 \hspace{2cm} \includegraphics[scale=0.85]{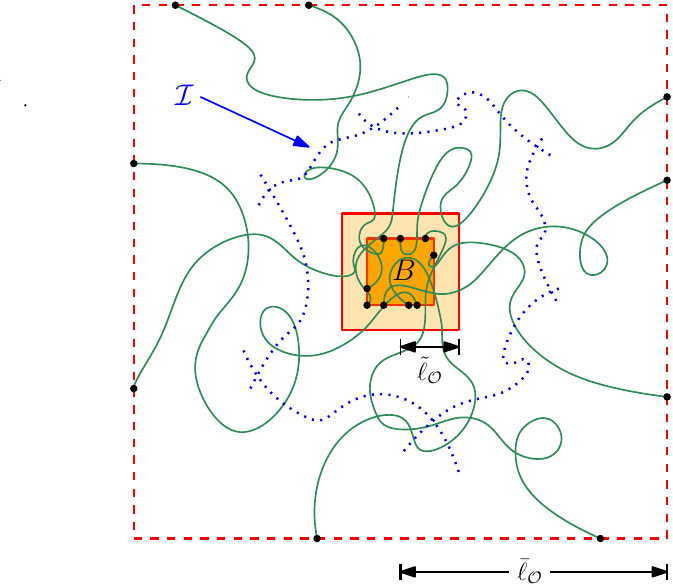}
  \caption{`Shielding' an obstacle $B$ (orange) by enforcing its disconnection in the environment configuration $\mathcal{I}$ (blue). Any vacant cluster in the configuration obtained as the union of $\mathcal{I}_i$ (green) with $\mathcal{I}$ that intersects the complement of the large box of radius $\bar{\ell}_{\mathcal{O}}$ does not `see' $\widetilde{B}$.}
  \label{F:discon}
\end{figure}
Let ${\mathscr{C}}_{B}( \mathcal{V})$ denote the cluster of $\partial B$ (in $ \Z^d$) for $\mathcal{V}$, of which $\mathscr{C}_{B}^{\partial}( \mathcal{V})$ is a part. That is, 
${\mathscr{C}}_{B}( \mathcal{V})$ is the union of all clusters (i.e.~maximal connected 
components) in $\mathcal{V}$ intersecting $\partial B$. Now, omitting subscripts `$\mathcal{O}$,' if $\widetilde{B}'= B(y_{\tilde{C}}, 
\tilde{\ell})$, for some $\tilde{C} = B(z, \widetilde{L}) \in \widetilde{\mathcal{C}}$, is the box in question in 
\eqref{e:obs-final103}, then $B(y_{\tilde{C}}, \bar\ell) \subset B(z, 2\widetilde{L})$ as $\bar\ell \leq \widetilde{L}$ by assumption. But by definition of $\widetilde{\mathcal{C}}$, the last inclusion implies that
\begin{equation}
\label{e:obs-final104}
B(y_{\tilde{C}}, \bar\ell) \cap \partial B \stackrel{\eqref{e:obs-tilde-C}}{=} \emptyset.\, 
\end{equation}
Moreover, owing to \eqref{eq:choice-y} one has that $\widetilde{B}'=B(y_{\tilde{C}}, \tilde{\ell})$ is not connected to $B(y_{\tilde{C}}, \bar\ell )$ in $\mathcal{V}$. Together with \eqref{e:obs-final104}, this implies that $\widetilde{B}$ is not part of ${\mathscr{C}}_{B}( \mathcal{V})$ and \eqref{e:obs-final103} follows since $\mathscr{C}^{\partial}_{B}( \mathcal{V}) \subset {\mathscr{C}}_{B}( \mathcal{V})$.
\end{proof}

\begin{remark}\label{rm:gen_S}
By inspection of the above proof, one finds that $ \mathscr{C}^{\partial}_{B}$ in \eqref{eq:long-j-i} can be replaced by $ \mathscr{C}^{\partial}_{S}$ for any $S \in \{ B' \text{ a box}: B' = B \text{ or } B' \supset B_{K + 2L}\}$. Indeed the proof (in particular, the coupling) remains unchanged, one simply observes that the arguments used to deduce \eqref{e:obs-final101} continue to hold with $ \mathscr{C}^{\partial}_{S}$ in place of $ \mathscr{C}^{\partial}_{B}$. In particular, \eqref{e:obs-final104} holds more generally in that $B(y_{\tilde{C}}, \bar\ell ) \cap \partial S =  \emptyset$ for any $S$ as above. For an example where this flexibility in the choice of $S$ is used, see \cite{RI-I}.
\end{remark}

\section{Coupling between $\mathcal V^{u, L}$ and $\mathcal V^u$}\label{sec:VuVuLcouple}
This section is devoted to proving Theorem~\ref{thm:main III}, which will follow readily by iterating its `one-step' version, Proposition~\ref{P:ind-step} below, over dyadic scales and concatenating the ensuing `one-step' couplings. The proof of Proposition~\ref{P:ind-step}, which relates $\mathcal V^{u, L}$ and $\mathcal V^{u, 2L}$ appears in \S\ref{subsec:VuL2L} and combines Theorems~\ref{thm:short_long-intro} (actually, its enhanced version, Theorem~\ref{thm:short_long}) and Theorem~\ref{thm:long_short} (but see next paragraph), applied multiple times and for a well-chosen environment configuration $\mathcal{I}=\mathcal{I}(\omega)$, as delineated below Theorem~\ref{thm:main III}. 

As it turns out, all relevant environment configurations are of the form $\mathcal{I}\stackrel{\text{law}}{=} \mathcal{I}^{\rho}$ with $\rho$ as in \eqref{e:this-is-rho}, i.e.~they are in fact $\rho$-interlacements as introduced in Section~\ref{e:this-is-rho} for suitable choices of $\rho$. In light of this,  we derive in Theorem~\ref{P:long_short} a refinement of Theorem~\ref{thm:long_short} specific to this type of environment (i.e.~with $\mathbf{P}= \P_{\rho}$, cf.~below \eqref{eq:prelim2}), which includes a very practical (i.e.~verifiable) condition $(\textnormal{C}_{\textnormal{obst}})$, see Definition~\ref{def:background} below, expressed solely in terms of $\rho$, which effectively allows to control the term $ \mathbf{P}[\mathscr D]$ appearing in \eqref{eq:long-j-i_annealed}. Together with Theorem~\ref{thm:short_long}, Theorem~\ref{P:long_short} drives the proof of Proposition~\ref{P:ind-step}. The proof of Theorem~\ref{P:long_short} is given at the end of \S\ref{subsec:VuL2L}. A key input is the control on $\P_{\rho}[\mathscr D]$ implied by the condition $(\textnormal{C}_{\textnormal{obst}})$, stated in Proposition~\ref{P:traps}, which is proved separately in \S\ref{sec:disconnection}.

The main task of this section is to derive the  following `one-step' version of Theorem~\ref{thm:main III}, which is quantitative, see Remark~\ref{R:quanti} below. 
\begin{proposition}\label{P:ind-step} For any $\gamma > 10$, $u \in (0,\infty)$ and dyadic integer $L \geq 1$, if 
\begin{equation}
\label{eq:def_M-quant}
\P[\nlr{}{ \mathcal{V}^{u}}{B_{(\log L)^{200\gamma}}}{\partial B_{L^{1/10d}}}] \geq L^{-\frac1{10}},
   \end{equation}
 then for all $ v\geq u(1+ (\log L)^{-3})$, there exists a coupling $\mathbb Q$ of $\mathcal I_1$, $\mathcal I_2^+$, $\mathcal I_2^-$ satisfying
\begin{align}\label{eq:upper_L2L}
&\mathbb Q\big[\mathscr C^\partial_{B_R}(\mathcal V(\mathcal I_2^+))\subset\mathscr 
C^\partial_{B_R}(\mathcal V(\mathcal I_1))\subset \mathscr C^\partial_{B_R}(\mathcal V(\mathcal I_2^-))\big] \ge 1 - \Cl{C:entropy-2scale} (R + L)^d e^{-c(\log L)^\gamma},
\end{align}
for all $R \geq 1$, where $\Cr{C:entropy-2scale} =\Cr{C:entropy-2scale} (\gamma, u)$, $\mathcal I_1 \stackrel{{\rm law}}{=} \mathcal I^{v, 2L}$, $\mathcal I_2^\pm \stackrel{{\rm law}}{=} \mathcal I^{v_\pm^L, \, L}$ and $v_{\pm}^L = v(1 \pm (\log L)^{-4})$.
\end{proposition}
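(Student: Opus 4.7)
The plan is to interpolate between $\mathcal I^{v_+^L, L}$, $\mathcal I^{v,2L}$ and $\mathcal I^{v_-^L, L}$ by Poissonian slicing and iterated one-slice swap couplings. Fix $n = \lceil(\log L)^{10}\rceil$ and an intermediate length $L'$ with $\lfloor L/(\log L)^{\gamma/2}\rfloor \le L' \le \lfloor L/(\log L)^{10}\rfloor$, so that $L'$ satisfies the hypotheses of both Theorems~\ref{thm:short_long} and~\ref{thm:long_short}. Decompose each of $\mathcal I^{v, 2L}$ and $\mathcal I^{v_\pm^L, L}$ as a union of $n$ independent Poissonian slices of intensities $v/n$ and $v_\pm^L/n$ respectively, and for $0 \le k \le n$ put
\[
\mathcal J^{(k)}_\pm \;=\; \bigcup_{j=1}^{k} \mathcal I^{v/n,\,2L}_j \,\cup\, \bigcup_{j=k+1}^{n} \mathcal I^{v_\pm^L/n,\,L}_j,
\]
so that $\mathcal J^{(0)}_\pm \stackrel{\text{law}}{=} \mathcal I^{v_\pm^L, L}$ and $\mathcal J^{(n)}_\pm\stackrel{\text{law}}{=}\mathcal I^{v,2L}$. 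I would then build one-step couplings enforcing both
\[
\mathscr C^\partial_{B_R}(\mathcal V(\mathcal J^{(k+1)}_+)) \subset \mathscr C^\partial_{B_R}(\mathcal V(\mathcal J^{(k)}_+)),
\qquad
\mathscr C^\partial_{B_R}(\mathcal V(\mathcal J^{(k+1)}_-)) \subset \mathscr C^\partial_{B_R}(\mathcal V(\mathcal J^{(k)}_-))
\]
with failure probability $\tfrac1n(R+L)^d e^{-c(\log L)^\gamma}$ each. Chaining over $k$ via Lemma~\ref{lem:concatenation} and then concatenating the upper and lower chains along their common $\mathcal I^{v, 2L}$-marginal yields \eqref{eq:upper_L2L}.

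Each one-step swap only affects the $(k+1)$-th slice; the environment $\mathcal I := \mathcal J^{(k)}_\pm \setminus \mathcal I^{v_\pm^L/n, L}_{k+1}$ is kept frozen. For the upper direction one needs $\mathcal I^{v/n, 2L}\subset \mathcal I^{v_+^L/n, L}$ at the joint boundary-cluster level with this environment, which I would obtain by chaining two couplings through the scale $L'$: (i) the annealed form (Theorem~\ref{P:long_short}) of Theorem~\ref{thm:long_short}, applied with $L_{\text{thm}}=2L$ and $L'_{\text{thm}}=L'$, embeds $\mathcal I^{v/n, 2L}$ into $\mathcal I^{(1+\varepsilon) P_{2L}^{L'}(v/n), L'}$ outside an enlarged obstacle set $\widetilde{\mathcal O}$; (ii) Theorem~\ref{thm:short_long}, applied with $L_{\text{thm}}=L$, $L'_{\text{thm}}=L'$, $f_1=0$, $f_2=v_+^L/n$, covers $\mathcal I^{(1-C(L/L')^{-1/2}) v_+^L/n, L'}$ by $\mathcal I^{v_+^L/n, L}$. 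Because $P_L^{L'}$ and $P_{2L}^{L'}$ act as the identity on constants and the sprinkling budget $v_+^L/v-1 = (\log L)^{-4}$ comfortably dominates both $\varepsilon \sim \ell_{\mathcal O}^{-1/100}$ and $(L/L')^{-1/2} \ll (\log L)^{-5}$, the two couplings compose; the boundary-cluster-shielding argument of \eqref{e:obs-final101}--\eqref{e:obs-final103} then handles the exclusion of $\widetilde{\mathcal O}$. The lower direction is symmetric: Theorems~\ref{thm:long_short} and~\ref{thm:short_long} switch roles and are applied to the length-$L$ slice (intensity $v_-^L/n$) and the length-$2L$ slice (intensity $v/n$), respectively.

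The main obstacle is verifying the condition $(\textnormal{C}_{\textnormal{obst}})$ of Definition~\ref{def:background} for the environment $\mathcal I$ at every step, as Proposition~\ref{P:traps} then turns it into the uniform bound $\P_\rho[\mathscr D]\ge 1-e^{-c(\log L)^\gamma}$ and the annealed coupling \eqref{eq:long-j-i_annealed} does the rest. The environment is itself a $\rho$-interlacement whose mean occupation time (in the sense of \eqref{eq:occtime}) equals $\bar\ell^\rho_x = \tfrac{k}{n}v + \tfrac{n-1-k}{n}v_\pm^L = v + O((\log L)^{-14})$, hence at least $u(1+\tfrac12(\log L)^{-3})$ by the assumption $v \ge u(1+(\log L)^{-3})$; this is precisely where the slack between $u$ and $v$ is consumed. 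With the choices $\ell_{\mathcal O} = \lfloor(\log L)^{200\gamma}\rfloor$ and $\bar\ell_{\mathcal O} = \lfloor L^{1/(10d)}\rfloor$ (matched to the scales in \eqref{eq:def_M-quant}), condition $(\textnormal{C}_{\textnormal{obst}})$ reduces to a cell-wise disconnection lower bound $\P_\rho[\mathrm{Disc}(\widetilde C)\ne\emptyset]\ge L^{-1/10}$ for the environment; this I would obtain by locally sandwiching $\mathcal V(\mathcal I)$ inside $\mathcal V^u$ via Proposition~\ref{prop2:cube} applied on each cell, and then invoking \eqref{eq:def_M-quant} directly. An independence bound over the $O((R+L)^d/\widetilde L^d)$ cells of $\widetilde{\mathcal C}$ finally upgrades this cell-wise estimate to $\P_\rho[\mathscr D] \ge 1-e^{-c(\log L)^\gamma}$, closing the argument.
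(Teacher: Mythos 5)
Your overall architecture matches the paper's: interpolate between the two models by swapping, one small slice at a time, length-$L$ for length-$2L$ trajectories, freeze the remaining slices as a $\rho$-interlacement environment, pass through the intermediate scale $L'$ by composing Theorem~\ref{P:long_short} (hard direction, with obstacles) with Theorem~\ref{thm:short_long} (easy direction), verify $(\textnormal{C}_{\textnormal{obst}})$ for the environment, and chain via Lemma~\ref{lem:concatenation}. However, the uniform constant-intensity slicing hides a real gap. Theorems~\ref{thm:short_long} and~\ref{thm:long_short} only convert trajectories started in a finite box ($f1_{B_K}$, with $f=0$ outside $B_{K+L}$ in Theorem~\ref{thm:long_short}); they cannot turn the whole slice $\mathcal I^{v_\pm^L/n,L}_j$ into $\mathcal I^{v/n,2L}_j$ globally, so the exact marginals $\mathcal J^{(0)}_\pm=\mathcal I^{v_\pm^L,L}$ and $\mathcal J^{(n)}_\pm=\mathcal I^{v,2L}$ you posit cannot be produced by the per-slice swaps. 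Once you truncate the swapped part to a box and keep the far field at a fixed length (as the paper does via the $1_{B_{R+2L}^c}$ term in $h^a$, settling for equality of marginals only inside $B_R$ plus a domination at the other end), the environment acquires a boundary layer mixing truncated length-$L$ and length-$2L$ pieces, and your identity $\bar\ell^\rho_x=\tfrac kn v+\tfrac{n-1-k}{n}v_\pm^L$ fails there: $\tfrac1L\sum_{\ell<L}P_\ell 1_B$ and $\tfrac1{2L}\sum_{\ell<2L}P_\ell 1_B$ do not agree pointwise near $\partial B$. This is exactly why the paper gives the length-$L$ part the profile $\tfrac{1+P_L}{2}1_{B_{R+2L}}$ and proves the exact identity \eqref{eq:equality-localtimes-new} in Lemma~\ref{L:Cobst-ver}; some such device is needed to verify \eqref{eq:disconnect_background0} on all of $B_{K+5(L\vee\hat L)}$.

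The second gap is your route to $\P_\rho[\mathscr D]\ge 1-e^{-c(\log L)^\gamma}$: a cell-wise bound $\P_\rho[\mathrm{Disc}(\widetilde C)\ne\emptyset]\ge L^{-1/10}$ followed by ``an independence bound over the cells'' cannot work. One success probability of size $L^{-1/10}$ per cell, even granting independence across cells, gives nothing — you need \emph{each} cell to contain a disconnected obstacle with probability $1-e^{-c(\log L)^\gamma}$, hence many near-independent trials \emph{within} each cell. Moreover those trials are not independent under $\P_\rho$: the environment's trajectories have length $\asymp L$, vastly exceeding both the cell size $\widetilde L$ and the separation $L^{1/2d}$ of candidate centers, so the events $\mathrm{Disc}(y)$ are strongly correlated. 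Manufacturing the independence is precisely the content of Proposition~\ref{P:traps}: the environment is first re-coupled, by iterating Theorem~\ref{thm:short_long}, with a configuration of trajectories of length at most $L^{1/3d}$ (shorter than the separation), under which the per-point events decouple; only then are Proposition~\ref{prop2:cube} and \eqref{eq:def_M-quant} applied per point and a product bound taken over the $\ge L^{1/2}$ points of a separated set in each cell. If you invoke Theorem~\ref{P:long_short}/Proposition~\ref{P:traps} as black boxes this step closes, but the argument you sketch in their place would not. (Also take $\ell_{\mathcal O}=(\log L)^{100\gamma}$ rather than $(\log L)^{200\gamma}$: the event in \eqref{eq:disc-coup} disconnects $B(y,\tilde\ell_{\mathcal O})$ with $\tilde\ell_{\mathcal O}=\ell_{\mathcal O}^{1.01}$, and \eqref{eq:def_M-quant} only controls disconnection of $B_{(\log L)^{200\gamma}}$, so one needs $\tilde\ell_{\mathcal O}\le(\log L)^{200\gamma}$. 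Finally, your displayed one-step inclusions for the ``$+$'' chain are oriented oppositely to what the coupling you describe actually delivers, though the chained conclusion comes out right once this is fixed.)
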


\begin{remark}\label{R:quanti}
In comparison with Theorem~\ref{thm:main III}, the scale $\ell$ at which disconnection is used (cf.~\eqref{eq:def_M-quant} and \eqref{eq:def_M}) in Proposition~\ref{P:ind-step} is quantitative (in $L$). By means of Proposition~\ref{prop1:cube} one could further replace $\mathcal{V}^u$ by $\mathcal{V}^{u,L}$ in \eqref{eq:def_M-quant}.
\end{remark}

Assuming Proposition~\ref{P:ind-step} to hold, we first complete the proof of Theorem~\ref{thm:main III}.

\begin{proof}[Proof of Theorem~\ref{thm:main III}]
In view of \eqref{eq:def_M}, we see that \eqref{eq:def_M-quant} is in force for any $L \geq C( \gamma,u)$, upon choosing $c,c' \in (0,\frac1{200})$ in the definition of $D(\ell)$ and \eqref{eq:def_M} suitably small. 
We may further assume with $L_n=2^n L$ when $L \geq C(\gamma,u)$, that $\sum_n (\log L_n)^{-k-1} \leq (\log L)^{-k}$ for $k=2,3$ and that $\sum_n (R+L_n)^d \leq C (R+L)^d$, for all $R \geq0$. It then follows by applying Proposition~\ref{P:ind-step} with $L=L_n$ and combining with a straightforward induction argument involving Lemma~\ref{lem:concatenation} to concatenate the resulting couplings that for all $n \geq 0$, $L \geq C(\gamma,u)$ and  $v \geq u(1 + (\log L)^{-2})$, there exists a coupling 
$\mathbb Q_n$ of $\mathcal I_1 \stackrel{{\rm law}}{=} \mathcal I^{v, L_n}$, $\mathcal I_2^+ \stackrel{{\rm 
law}}{=} \mathcal I^{v_+, \, L}$ and $\mathcal I_2^- \stackrel{{\rm law}}{=} \mathcal I^{v_-, \, L}$ such that
\begin{align}\label{eq:upper_LL'}
&\mathbb Q[\mathscr C^\partial_{B_R}(\mathcal V(\mathcal I_2^+))\subset\mathscr C^\partial_{B_R}(\mathcal V(\mathcal I_1))\subset \mathscr C^\partial_{B_R}(\mathcal V(\mathcal I_2^-))] \ge 1 - C (R + L)^d e^{-c(\log L)^\gamma}.
\end{align}
From \eqref{eq:upper_LL'}, \eqref{eq:upper} follows immediately upon choosing $n$ large enough in a manner depending on $R, L,\gamma$ and $v$ and applying Propositions~\ref{prop1:cube} and \ref{prop2:cube} with $\rho=\rho_{v,L_n}$ (see below \eqref{eq:prelim3.1}).
\end{proof}

\subsection{Coupling between $\mathcal V^{u, L}$ and $\mathcal V^{u, 2L}$}\label{subsec:VuL2L}
We proceed to prove Proposition~\ref{P:ind-step}.
In order to show the inclusion \eqref{eq:upper_L2L}, we will soon apply 
Theorem~\ref{thm:long_short} to environment configurations $\mathcal{I}=\mathcal{I}^{\rho}$ attached to certain $\rho$-interlacement sets, as introduced in 
\eqref{eq:prelim2}, with $\rho$ guaranteeing a very high probability of the 
event $\mathscr D$ in \eqref{eq:long-j-i_annealed}. The next result, Theorem~\ref{P:long_short}, is key to this. 
It hinges on the following definition. To provide some intuition, a simple (but for our later purposes insufficient) 
example of admissible profile $\rho$ to keep in mind for the next definition is that of 
uniform trajectories of length $\hat{L}=L$ (i.e.~$\rho(\ell,x)=\rho_{u,L}(\ell,x)= \frac{u}L 1_{L}(\ell), \, x 
\in \Z^d$, cf.~\eqref{eq:prelim3.1}). Recall the average occupation time density $\bar\ell_x=\bar\ell_x^{\rho}$ from~\eqref{eq:occtime}.

\begin{defn}\label{def:background} The function $\rho: \mathbb{N}^* \times \Z^d  \to \R_+$ is said to 
	satisfy~$(\textnormal{C}_{\textnormal{obst}})$ 
(with parameters $(u', u, \gamma, L,K)$) if for some dyadic integer $\hat{L} \in [\frac L8, 8L]$, the following conditions 
are satisfied:
\begin{align}
		&\:\:\bar\ell_x^{\rho}\in [u', u] \text{ and } \rho(\mathbb{N}^*, x) \leq \textstyle \frac{4du}{\hat{L}} \text{ for all $x \in B_{K + 5(L\vee \hat{L})}$;} 
	\label{eq:disconnect_background0} 
	\\[0.6em]
	&\begin{array}{l}\text{$\rho(\ell, \cdot) = \rho(\ell, \cdot)1_{\ell \in \{\hat{L}, \hat{L}/2\}}$, $\frac{\hat{L}}{ 8d} \rho(\frac{\hat{L}}{2}, \cdot) =
			\big(\frac{1 + P_{\hat{L}/2}}{2} \big)f_1$ and $\frac{\hat{L}}{4d} \rho(\hat{L}, \cdot) = f_2$,  where} \\
		\text{$f_1,f_2: \Z^d \to [0,\infty)$ satisfy 
			$u \ge f_1 + f_2 \ge (\log L)^{-{\gamma}}$ for all $x \in B_{K + 5(L\vee \hat{L})}$.
	} \end{array}\label{eq:disconnect_background2} 
\end{align}
\end{defn}

With this we have the following result. Recall the canonical law $\P_{\rho}$ of the Poisson process of $\rho$-interlacements introduced above \eqref{eq:prelim3}, defined on the canonical space denoted $(\Omega_{\rho}, \mathcal{A}_{\rho})$ below.
\begin{theorem}[$\gamma > 10$, \eqref{e:couplings-params}, $l = \frac L{L'}$]\label{P:long_short}
For all $u \in (0,\infty)$, integer $K, N \ge 0$, $B=B(x,N)$ for $x \in \Z^d$, and $f:\Z^d \to [0, u]$ as in Theorem~\ref{thm:long_short_obstacle}, the following holds.
If $\rho$ satisfies $(\textnormal{C}_{\textnormal{obst}})$ with parameters 
$(u', u, \gamma, L, K)$, for some $u'< u$ and \eqref{eq:def_M-quant} holds with $u'(1- (\log L)^{-4})$ in place of $u$, 
there exists for each $\omega \in \Omega_{\rho}$ a coupling $\mathbb Q_{\omega}$ of $\mathcal I_1, \mathcal I_2$ such that
\begin{equation}\label{eq:long-j-i'}
	\mathcal I_1 
	\stackrel{\textnormal{law}}{=} \mathcal I^{f1_{B_{K}},L}, \quad \mathcal I_2 
	\stackrel{\textnormal{law}}{=} \mathcal I^{(1+\varepsilon)P_L^{L'}(f),L'}, \text{ with } \varepsilon 
	=
 l^{-\frac12},
\end{equation}
and for all $L \ge C(u', u, \gamma)$, with $\eta= 1 - C(u \vee 1)(K + 
L)^de^{-c l^{1 / 4}} $, one has
\begin{equation}\label{e:hard-coup-final3-quenched}
\mathbb{P}_{\rho}\big[ \omega \in \Omega_{\rho} : \mathbb Q_{\omega} \big[ \mathscr{C}^{\partial}_{B}\big(\mathcal V(\mathcal I_{1} \cup \mathcal{I}^{\rho}(\omega))  \big)  \supset  \mathscr{C}^{\partial}_{B}\big(\mathcal V(\mathcal I_{2} \cup \mathcal{I}^{\rho}(\omega))  \big)   \big] \geq  \eta \big] \geq  \eta.
\end{equation}
\end{theorem}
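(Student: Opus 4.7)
I would deduce Theorem~\ref{P:long_short} from Theorem~\ref{thm:long_short} applied with $\mathbf{P}=\P_\rho$. The key reduction is to show that, under $(\textnormal{C}_{\textnormal{obst}})$ and \eqref{eq:def_M-quant},
\begin{equation}\label{eq:D_target}
\P_\rho[\mathscr{D}^{\mathsf c}] \leq C(K+L)^d\,e^{-c(\log L)^\gamma}
\end{equation}
(with $\mathscr{D}$ as in \eqref{e:obs-final2}); this is essentially the content of Proposition~\ref{P:traps}. Granting \eqref{eq:D_target} and combining with the annealed inclusion \eqref{eq:long-j-i_annealed}, one obtains the annealed analogue of \eqref{e:hard-coup-final3-quenched} with total error dominated by $C(u\vee 1)(K+L)^de^{-cl^{1/4}}$; applying Markov's inequality to the non-negative random variable $\omega\mapsto 1-\mathbb{Q}_\omega[\cdots]$ at level $1-\eta$ then upgrades the annealed bound to the quenched conclusion (after choosing the exponent $c$ in $\eta$ small enough).

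\textbf{Choice of scales.} I would take $\ell_{\mathcal{O}}=\lceil(\log L)^{100\gamma}\rceil$ (so $\tilde{\ell}_{\mathcal{O}}\leq (\log L)^{200\gamma}$, matching the inner scale in \eqref{eq:def_M-quant}) and $\bar{\ell}_{\mathcal{O}}=\lceil L^{1/(10d)}\rceil$ (the outer scale). With \eqref{e:obs-Ltilde-choice} this gives $\widetilde{L}\asymp L^{1/d}(\log L)^{C\gamma}$, so $\tilde{\ell}_{\mathcal{O}}<\bar{\ell}_{\mathcal{O}}\leq \widetilde{L}$ and $|\widetilde{\mathcal C}|\leq C(K+L)^d/\widetilde{L}^d$. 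Proposition~\ref{L:final-good-obs} renders obstacle sets of the form \eqref{e:obs-final3} $(e^{-c(\log L)^\gamma}, c\ell_{\mathcal{O}}^{(d-2)/2})$-good, and with $\varepsilon=l^{-1/2}$ the constraints on $\varepsilon$ in Theorem~\ref{thm:long_short} follow from \eqref{e:couplings-params}; the error term $e^{-c(\varepsilon^2 M_{\mathcal{O}}\wedge(L/L')^{1/4})}\vee \delta_{\mathcal{O}}$ present in~\eqref{eq:long-j-i_annealed} is then of order $e^{-c(\log L)^\gamma}$, in particular dominated by $e^{-cl^{1/4}}$.

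\textbf{Proof of \eqref{eq:D_target}.} Since $|\widetilde{\mathcal C}|\leq C(K+L)^d/\widetilde{L}^d$, by union bound it suffices to show $\P_\rho[\mathrm{Disc}(\widetilde{C})=\emptyset]\leq C e^{-c(\log L)^\gamma}$ for each fixed $\widetilde{C}\in\widetilde{\mathcal C}$. Pack $\widetilde{C}$ with $M\gtrsim(\widetilde{L}/3\bar{\ell}_{\mathcal{O}})^d\gtrsim L^{9/10}/(\log L)^{C\gamma}$ disjoint candidate balls $B(y_i,\bar{\ell}_{\mathcal{O}})$ with centers $y_i\in\widetilde{C}$. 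For a single $y_i$, I would apply Proposition~\ref{prop2:cube} on $B(y_i,\bar{\ell}_{\mathcal{O}})$ with $a=10d$ and $N=\bar{\ell}_{\mathcal{O}}$: hypothesis \eqref{eq:cubecond2} is met by~\eqref{eq:disconnect_background2} (which concentrates $\rho$ on $\{\hat{L}/2,\hat{L}\}$ with $\hat{L}\asymp L\asymp N^a$) and hypothesis \eqref{eq:cubecond2.2} is met at level $u'(1-(\log L)^{-4})$ by~\eqref{eq:disconnect_background0} (which also supplies the required uniform upper bound $\bar\ell^\rho_x\leq u$). The conclusion of Proposition~\ref{prop2:cube} yields an inclusion $\mathcal I^{u'(1-(\log L)^{-4})}\cap B(y_i,\bar{\ell}_{\mathcal{O}})\subset \mathcal I^\rho\cap B(y_i,\bar{\ell}_{\mathcal{O}})$ with error $e^{-L^c}$; since $\mathrm{Disc}(y_i)$ is increasing in the interlacement, the disconnection assumption \eqref{eq:def_M-quant} at level $u'(1-(\log L)^{-4})$, combined with translation invariance, then gives $\P_\rho[\mathrm{Disc}(y_i)]\ge cL^{-1/10}$.

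\textbf{Main obstacle.} The hard step is to amplify this per-ball probability $cL^{-1/10}$ to a joint lower bound on $\bigcap_i\mathrm{Disc}(y_i)^{\mathsf c}$ of exponentially small order $e^{-c(\log L)^\gamma}$, despite the long-range dependence of $\mathcal I^\rho$: the candidate balls, separated only by $\bar{\ell}_{\mathcal{O}}\sim L^{1/(10d)}$, lie well inside the diffusive range of trajectories in $\mathrm{supp}(\rho)$ and hence are coupled by many common trajectories. The approach I would take is to expose the balls sequentially and, at each step, invoke Proposition~\ref{prop2:cube} conditionally on the field already revealed elsewhere by means of a soft-local-time construction in the vein of Lemma~\ref{L:RI_basic_coupling}, treating trajectories that hit two distinct candidate balls as an error term controlled via the capacity estimate \eqref{eq:hit1} (each hit at a far ball being diluted by the factor $(\bar{\ell}_{\mathcal{O}}/\widetilde{L})^{d-2}\ll 1$). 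This should deliver an effective independence giving $\P_\rho\big[\bigcap_i \mathrm{Disc}(y_i)^{\mathsf c}\big]\leq (1-cL^{-1/10})^M+e^{-cL^c}$, which amply closes \eqref{eq:D_target}. Making the scheme robust against the spatial inhomogeneity of $\rho$ allowed by $(\textnormal{C}_{\textnormal{obst}})$ is precisely where the detailed form of \eqref{eq:disconnect_background0}--\eqref{eq:disconnect_background2} enters, and is the main technical obstacle.
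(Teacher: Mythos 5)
Your reduction has the right skeleton and matches the paper's: apply Theorem~\ref{thm:long_short} with $\mathbf{P}=\P_\rho$, choose $\ell_{\mathcal O}=(\log L)^{100\gamma}$ and $\bar\ell_{\mathcal O}=L^{1/(10d)}$ so that Proposition~\ref{L:final-good-obs} applies and the error terms are $e^{-c(\log L)^\gamma}\le e^{-cl^{1/4}}$, and reduce everything to a lower bound on $\P_\rho[\mathscr D]$. Your single-ball estimate is also essentially the paper's: Proposition~\ref{prop2:cube} with $N^a\asymp \hat L\asymp L$ plus hypothesis \eqref{eq:def_M-quant} gives a per-point disconnection probability $\gtrsim L^{-1/10}$. (One small remark: the Markov-inequality step to pass from annealed to quenched is not needed, since Theorem~\ref{thm:long_short} is already quenched — on $\mathscr D(\omega)$ the bound \eqref{eq:long-j-i} holds deterministically, so $\P_\rho[\mathscr D]\ge \eta$ yields \eqref{e:hard-coup-final3-quenched} directly.)

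The genuine gap is exactly where you locate it: the amplification of $L^{-1/10}$ to a product bound over many candidate points in one cell. Your proposed mechanism — sequential exposure with soft local times, "treating trajectories that hit two distinct candidate balls as an error term" diluted by $(\bar\ell_{\mathcal O}/\widetilde L)^{d-2}$ — does not work, because such trajectories are not rare but typical. The trajectories in $\mathrm{supp}(\rho)$ have length $\hat L\asymp L$ and diffusive diameter $\sqrt L\gg\widetilde L$, so a trajectory traversing the cell $\widetilde C$ spends time $\gtrsim\widetilde L^2$ there; by the same Green's function computation as in \eqref{e:mfp6}, the expected number of distinct candidate balls of radius $\bar\ell_{\mathcal O}$ it hits is of order $\bar\ell_{\mathcal O}^{\,d-2}\sum_{k\ge1}k^{d-1}(k\bar\ell_{\mathcal O})^{2-d}\asymp(\widetilde L/\bar\ell_{\mathcal O})^2\gg1$. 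Hence the events $\mathrm{Disc}(y_i)$ under $\P_\rho$ are strongly positively correlated through shared trajectories, and no per-trajectory dilution factor rescues the product bound $(1-cL^{-1/10})^M$. The paper's resolution (Proposition~\ref{P:traps}) is different in kind: it first applies Theorem~\ref{thm:short_long} \emph{iteratively} over a decreasing cascade of dyadic scales $L_k$ (with ratios $(\log L_k)^{-4\gamma}$, cf.~\eqref{eq:traps5}--\eqref{eq:traps6.0}) to produce a monotone coupling $\mathcal I^{\tilde\rho}\subset\mathcal I^\rho$ in which $\tilde\rho$ is supported on trajectories of terminal length $L_{k^+}\le L^{1/(3d)}$, strictly below the $L^{1/(2d)}$-separation of the chosen lattice $S\subset\widetilde C$. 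Under $\P_{\tilde\rho}$ the events $\mathrm{Disc}(y)$, $y\in S$, are then \emph{exactly} independent, each with probability $\ge\P_{u'}[\mathrm{Disc}(y)]-e^{-L^c}$ by Proposition~\ref{prop2:cube}; monotonicity of $\mathrm{Disc}$ transfers the product bound back to $\P_\rho$. The cumulative sprinkling $\prod_k(1-C l_k^{-1/2})\ge1-C(\log L)^{-\gamma}$ from the cascade is precisely why the hypothesis \eqref{eq:def_M-quant} must be assumed at the reduced level $u'(1-(\log L)^{-4})$. This trajectory-fragmentation step is the missing idea in your proposal.
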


\begin{remark}\label{rmk:background}
\begin{enumerate}[label*=\arabic*)]
\item When combined, Theorems~\ref{thm:short_long} and~\ref{P:long_short} are very useful in practice. The proof of Proposition~\ref{P:ind-step} below is a testimony to this. For further applications, we refer the reader to the proofs of \cite[Proposition 4.3]{RI-I} and \cite[Lemma 7.2]{RI-I}. In particular, as will be seen shortly, cf.~the proof of Lemma~\ref{L:Cobst-ver}, the set of conditions forming $(\textnormal{C}_{\textnormal{obst}})$ are relatively straightforward to verify in practice, even in cases where $\rho$ itself is rather involved.

\item (Range of $\hat L$). \label{R:lhat} The reason we require the scale $\hat L$ in Definition~\ref{def:background} to be dyadic is technical, and has to do with iterated applications of Theorem~\ref{thm:short_long} (see the proof of  Proposition~\ref{P:traps} below), which are facilitated if the `starting' scale $\hat L$ is an integer power of $2$.
Moreover, by inspecting the proof of Theorem~\ref{P:long_short}, one sees that the condition on $\hat L$ can be relaxed to the requirement that $\hat{L} \in [\frac L8, 8L]$ be of the form $\hat L= \hat{L}_1 - \hat{L}_2$, where $\hat{L}_1, \hat{L}_2$ are dyadic integers such that $\hat{L}_2 \ge 8L(\log \frac{L}{8})^{-4\gamma}$. This slight extension of the range of $\hat L$ allowing for differences of dyadics is convenient at times, see~Section 7 in \cite{RI-I} for instance.
\end{enumerate}
\end{remark}

Assuming Theorem~\ref{P:long_short} to which we will return later, we can now give the proof of Proposition~\ref{P:ind-step}. The following consequence of Theorem~\ref{P:long_short} will be enough for this purpose.

\begin{corollary}[Annealed coupling] \label{P:long_short_ann}
Under the assumptions of Theorem~\ref{P:long_short}, there exists a coupling $\mathbb{Q}$ of $\mathcal I_1, \mathcal I_2, \mathcal{I}^{\rho}$ with marginals specified by \eqref{eq:long-j-i'} and \eqref{eq:prelim2}, such that  $\mathcal I^{\rho}, \mathcal{I}_1$ are independent, $\mathcal I^{\rho},\mathcal{I}_2$ are independent, and for  $L \ge C(u', u ,\gamma)$: 
\begin{equation}\label{e:hard-coup-final3}
\mathbb Q \big[ \mathscr{C}^{\partial}_{B}\big(\mathcal V(\mathcal I_{1} \cup \mathcal I^{\rho})  \big)  \supset  \mathscr{C}^{\partial}_{B}\big(\mathcal V(\mathcal I_{2}\cup \mathcal I^{\rho})  \big)   \big] \geq 1 - C'(u \vee 1)(K + 
L)^de^{-c l^{1 / 4}}.
\end{equation}
\end{corollary}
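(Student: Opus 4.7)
The strategy is a direct disintegration/integration argument against $\mathbb{P}_\rho$, with the key observation that the marginal laws of $\mathcal{I}_1$ and $\mathcal{I}_2$ under the quenched coupling $\mathbb{Q}_\omega$ from Theorem~\ref{P:long_short} do not depend on $\omega$. Concretely, I would work on a probability space carrying a triple $(\omega, \mathcal{I}_1, \mathcal{I}_2)$ with joint law
\begin{equation*}
\mathbb{Q}(d\omega, d\mathcal{I}_1, d\mathcal{I}_2) \stackrel{\text{def.}}{=} \mathbb{P}_\rho(d\omega)\, \mathbb{Q}_\omega(d\mathcal{I}_1, d\mathcal{I}_2),
\end{equation*}
and set $\mathcal{I}^\rho = \mathcal{I}^\rho(\omega)$ as in \eqref{eq:prelim2}. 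The measurability of $\omega \mapsto \mathbb{Q}_\omega$ needed to make this definition legitimate is inherent in the construction carried out in the proof of Theorem~\ref{P:long_short} (it ultimately reduces to a regular conditional distribution, cf.~\eqref{a:chain1}). The marginal laws in \eqref{eq:long-j-i'} and that of $\mathcal{I}^\rho$ are then immediate.

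\textbf{Independence.} Since $\mathbb{Q}_\omega[\mathcal{I}_1 \in A] = \P[\mathcal{I}^{f 1_{B_K}, L} \in A]$ does not depend on $\omega$ (and similarly for $\mathcal{I}_2$), for any measurable $A, B$ one has
\begin{equation*}
\mathbb{Q}[\mathcal{I}_1 \in A,\, \mathcal{I}^\rho \in B] = \int 1_{\mathcal{I}^\rho(\omega) \in B}\, \mathbb{Q}_\omega[\mathcal{I}_1 \in A]\, d\mathbb{P}_\rho(\omega) = \mathbb{Q}[\mathcal{I}_1 \in A]\,\mathbb{Q}[\mathcal{I}^\rho \in B],
\end{equation*}
yielding the asserted independence of $\mathcal{I}^\rho$ and $\mathcal{I}_1$; the same argument handles $\mathcal{I}^\rho$ and $\mathcal{I}_2$.

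\textbf{Inclusion bound.} Let $G(\omega)$ denote the event $\{\mathscr{C}^\partial_B(\mathcal{V}(\mathcal{I}_1 \cup \mathcal{I}^\rho)) \supset \mathscr{C}^\partial_B(\mathcal{V}(\mathcal{I}_2 \cup \mathcal{I}^\rho))\}$ and write $\eta = 1 - C(u \vee 1)(K+L)^d e^{-c l^{1/4}}$ as in the statement of Theorem~\ref{P:long_short}. By \eqref{e:hard-coup-final3-quenched}, the set $E = \{\omega : \mathbb{Q}_\omega[G] \geq \eta\}$ has $\mathbb{P}_\rho(E) \geq \eta$, whence
\begin{equation*}
\mathbb{Q}[G] = \int \mathbb{Q}_\omega[G]\, d\mathbb{P}_\rho(\omega) \;\geq\; \eta\, \mathbb{P}_\rho(E) \;\geq\; \eta^2 \;\geq\; 1 - 2(1-\eta),
\end{equation*}
which yields \eqref{e:hard-coup-final3} after absorbing the factor $2$ into the constant $C'$.

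I do not anticipate any genuine obstacle here: once Theorem~\ref{P:long_short} is granted, the corollary is essentially a tower-property bookkeeping combined with the fact that equal (deterministic) quenched marginals automatically give annealed independence against the driving $\sigma$-field. The only mildly delicate point is the measurability of $\omega \mapsto \mathbb{Q}_\omega$, which is not addressed explicitly in the statement but which is guaranteed by the explicit construction of $\mathbb{Q}_\omega$ in the preceding proof.
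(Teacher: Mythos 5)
Your proposal is correct and is essentially the paper's own argument: the paper likewise defines $\mathbb{Q}$ by integrating $\mathbb{Q}_\omega$ against $\mathbb{P}_\rho$, obtains independence from the fact that the quenched marginals of $\mathcal I_1,\mathcal I_2$ do not depend on $\omega$, and reads off \eqref{e:hard-coup-final3} from $\eta\cdot\mathbb{P}_\rho(E)\ge\eta^2\ge 1-2(1-\eta)$.
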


\begin{proof} This follows immediately upon defining
\begin{equation*}
{\mathbb{Q}}\big[ f_1( \mathcal{I}_1) f_2( \mathcal{I}_2) f_3( \mathcal{I}^\rho) \big] =\\
\int d\P_{\rho}(\omega)  f_3( \mathcal{I}^\rho(\omega)) 
E^{\mathbb Q_{\omega}}\big[f_1( \mathcal{I}_1) f_2( \mathcal{I}_2) \big],
\end{equation*}
for bounded measurable  functions $f_i: \{ 0,1\}^{\Z^d} \to \R$, $1 \leq i \leq 3$, with $\mathbb Q_{\omega}$ as provided by Theorem~\ref{P:long_short}, and interpreting the right-hand side of \eqref{e:hard-coup-final3} as $1-2(1-\eta)$ with $\eta$ as in \eqref{e:hard-coup-final3-quenched}.
\end{proof}

\begin{proof}[Proof of Proposition~\ref{P:ind-step}]
We will only show the existence of a coupling exhibiting the second inclusion in \eqref{eq:upper_L2L} as the proof for the other inclusion is similar and the final 
result follows via chaining (using Lemma~\ref{lem:concatenation}). We will tacitly assume 
that $L \geq C( u, \gamma)$, and that the measure $\mathbb P$ carries two independent 
Poisson point processes $\omega_1$ and $\omega_2$ on $ W_+ \times \R_+$ each having intensity $\nu$ as in \eqref{eq:mu_intensity}.
To avoid clutter of notations, we will abbreviate $v_-=v_-^L$ below.

In the course of the proof, we will define a succession of configurations $\{{\mathcal I}^a; \,0 \le 
a \le A\}$ interpolating (in law) between  $\mathcal{I}^{v_-, L}$ and $\mathcal I^{v, 2L}$, where 
$\mathcal I^{a+1}$ is obtained from ${\mathcal I}^a$ by replacing a small fraction of the 
$L$-trajectories with $2L$-trajectories; see \eqref{eq:intermed_decomp} below. For each $0 \leq a < 
A$, we will then provide a coupling $\mathbb Q_a$ between the laws of $\mathcal I^a$ and 
$\mathcal I^{a+1}$ by combined application of Theorems~\ref{thm:short_long} and \ref{P:long_short}. 
We will arrive at the desired coupling $\mathbb Q$ by concatenating the $\mathbb Q_a$'s.
	
We now introduce the relevant intermediate configurations (in law). Let $\delta_1 = (\log L)^{-4}$ and $A \stackrel{{\rm def}.}{=} \lceil 
\delta_1^{-1} \rceil$ so that $\frac{1}{A} \le {\delta_1}$. For any integer $a$ satisfying $0 \leq a 
\leq A$, let
\begin{equation}
\label{eq:defgh^ka}
g^a(\cdot) = \big(1 - \textstyle\frac aA\big) \, \big(\textstyle\frac{1 + P_L}{2}\big)1_{B_{R+2L}}(\cdot),\quad 
h^a(\cdot) = 1_{B_{R + 2L}^c}(\cdot)  + \frac aA\,  \big(1_{B_{R+2L}}(\cdot) + \delta_1 1_{B_{R + 6L}}(\cdot)\big),
\end{equation}
with $P_L$ as in \eqref{eq:P_n-def}. Clearly $g^0 = (\textstyle\frac{1 + P_L}{2})1_{B_{R+2L}}$, 
$g^A = 0$ and similarly for $h^a$. Moreover, since $P_n$ has range $n$, we get 
$g^0_{|B_{R+L}} = 1$. Therefore introducing under $\P$ the configurations 
\begin{align}\label{eq:intermediate_config}
\mathcal{I}^{a} (\omega_1, \omega_2) \stackrel{{\rm def}.}{=} {\mathcal{I}}^{\, v_-  g^a, 
L}(\omega_1)  \cup  {\mathcal{I}}^{\, v_-  h^a,2L} (\omega_2),
\end{align}
for $0 \le a \le A$, we note that ${\mathcal{I}}^0 \cap B_R = \mathcal{I}^{v_-, L}(\omega_1) \cap B_R$ and ${\mathcal{I}}^{A} \subset {\mathcal{I}}^{v, 2L}(\omega_2)$. For any $0 \le a < A$, one extracts 
from both $\mathcal{I}^a$ and $\mathcal{I}^{a+1}$  a joint `bulk' contribution $\mathcal{I}^{a,1}$ by 
decomposing
\begin{align}\label{eq:intermed_decomp}
{\mathcal{I}}^{a} = {\mathcal{I}}^{a,1} \cup {\mathcal{I}}^{a,2}, \:\:\: 
{\mathcal{I}}^{a+1} = {\mathcal{I}}^{a,1} \cup {\mathcal{I}}^{a,3},
\end{align}
	where (under $\P$) 
		\begin{equation}\label{def:I'ka}
		\begin{split}
			&{\mathcal{I}}^{a,1}  \stackrel{{\rm def}.}{=} {\mathcal{I}}^{\, v_- g^{a+1},L}(\omega_1) \cup  {\mathcal{I}}^{\, v_- h^a,2L} (\omega_2), \\
			&{\mathcal{I}}^{a,2} \stackrel{{\rm def}.}{=} \mathcal I^{[v_- g^{a+1},\, v_- g^a], L}(\omega_1),\\
			&{\mathcal{I}}^{a,3} \stackrel{{\rm def}.}{=} \mathcal I^{[v_- h^{a},\, v_- h^{a+1}], 2L}(\omega_2). \end{split}
	\end{equation}
	The bulk contribution ${\mathcal{I}}^{a,1}$ is further split as
	\begin{equation}\label{eq:I'ka_decomp}
		{\mathcal{I}}^{a,1} =  {\mathcal{I}}^{a, 1,1} \cup {\mathcal{I}}^{a, 1,2},
	\end{equation}
where, with $U = B_{R + 15L}$ and using that $g^{a+1}_{|U^c} = 0$ and $h^{a}_{|U^c} = 1_{U^c}$,
\begin{equation}\label{eq:I'kaj}
		\begin{split}
			&{\mathcal{I}}^{a, 1,1}  \stackrel{{\rm def}.}{=} {\mathcal{I}}^{\, v_- \,g^{a+1}_{| U}, \,L}(\omega_1) \cup  {\mathcal{I}}^{\, v_- h^a_{| U},\, 2L} (\omega_2), \text{ and} \\
			&{\mathcal{I}}^{a, 1,2} \stackrel{{\rm def}.}{=} {\mathcal{I}}^{\, v_- 
				1_{U^c}, \,2L} (\omega_2).
		\end{split}
	\end{equation}
By construction, the random sets ${\mathcal{I}}^{a, 1,1}, {\mathcal{I}}^{a, 1,2}$, ${\mathcal{I}}^{a,2}$ 
and ${\mathcal{I}}^{a,3}$ are independent under $\mathbb P$. 
	
The set ${\mathcal{I}}^{a, 1,1}$ is going to play the role of `environment' configuration $\mathcal I^\rho$ appearing in the context of Theorem~\ref{P:long_short}. Clearly, in view of \eqref{eq:prelim2}, the set ${\mathcal{I}}^{a, 1,1}$ has the same law (under $\mathbb P$) as $\mathcal I^\rho$ (under $\P_{\rho}$), where $\rho: 
\mathbb{N}^\ast \times \Z^d  \to \R_+$ is given by
\begin{equation}\label{def:v} \textstyle
\rho(\ell, x) = \frac{4d v_-}{L}\big( 1_L(\ell) g^{a+1}(x) + \frac12 1_{2L}(\ell) \, 
h^a(x)\big)1_{{U}}(x).\end{equation}
Recall the obstacle condition $(\textnormal{C}_{\textnormal{obst}})$ from 
Definition~\ref{def:background}. In what follows, $\rho$ is said to satisfy 
$(\textnormal{C}_{\textnormal{obst}})(x)$ for some $x \in \Z^d$ if $ \rho_0$ given by $\rho_0(\ell,y)= \rho (\ell,x+y)$ for all $\ell \in \mathbb{N}^\ast$ and $y \in \Z^d$ satisfies 
$(\textnormal{C}_{\textnormal{obst}})$. We first isolate the following result.
\begin{lemma} \label{L:Cobst-ver} For all $>0$, $\gamma >1$ and $L \geq C(\gamma)$, the density $\rho$ in \eqref{def:v} satisfies $(\textnormal{C}_{\textnormal{obst}})$ with parameters 
$(v(1-3 (\log L)^{-4}), 4v, \gamma , L, K = R + 3L)$.
\end{lemma}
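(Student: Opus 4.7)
The plan is to take $\hat L = 2L$ as the dyadic scale required in Definition~\ref{def:background}; this lies in $[L/8, 8L]$ and makes $\hat L/2 = L$. Then $\rho$ in \eqref{def:v} is supported on $\ell \in \{\hat L/2, \hat L\}$, which is the support requirement in \eqref{eq:disconnect_background2}. With this choice, the identities $\frac{\hat L}{8d}\rho(\hat L/2,\cdot)=(\frac{1+P_{\hat L/2}}{2})f_1$ and $\frac{\hat L}{4d}\rho(\hat L,\cdot)=f_2$ become, after a direct computation from \eqref{def:v}, the requirements
\[
v_-\,g^{a+1}(x)1_U(x) = \tfrac{1+P_L}{2}f_1(x),\qquad v_-\,h^a(x)1_U(x) = f_2(x),
\]
which I will meet with the explicit choices $f_1(x) = v_-(1-\tfrac{a+1}{A})1_{B_{R+2L}}(x)$ and $f_2(x) = v_-\,h^a(x)1_U(x)$. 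The first identity holds as an equality on all of $\Z^d$ because $g^{a+1}$, being $(1-\tfrac{a+1}{A})$ times $(\tfrac{1+P_L}{2})1_{B_{R+2L}}$, is supported in $B_{R+3L}\subset U$, so the indicator $1_U$ on the left is vacuous.

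Next I verify the remaining conditions on $B_{K+5(L\vee\hat L)}=B_{R+13L}$ (here $K=R+3L$ and $L\vee\hat L=2L$). For the density upper bound in \eqref{eq:disconnect_background0}, I use $g^{a+1}\le 1$ and $h^a\le 1+\delta_1\le 2$ to bound $\rho(\mathbb{N}^{\ast},x)\le \tfrac{4dv_-}{L}+\tfrac{2dv_-}{L}\cdot 2 = \tfrac{8dv_-}{L} \le \tfrac{4du}{\hat L}$ with $u=4v$. For the bounds on $f_1+f_2$ in \eqref{eq:disconnect_background2}, a case split on whether $x$ lies in $B_{R+2L}$, in $B_{R+6L}\setminus B_{R+2L}$, or in $B_{R+13L}\setminus B_{R+6L}$ (all of which are contained in $U$) gives $f_1(x)+f_2(x)\in [v_-(1-\tfrac{1}{A}),v_-(1+\delta_1)]\subset[v_-/2,2v]$, which sits inside $[(\log L)^{-\gamma},u]$ for $L\ge C(\gamma)$.

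The main technical step, and the only one requiring more than routine algebra, is to verify $\bar\ell_x^\rho\in[u',u]$ with $u'=v(1-3(\log L)^{-4})$ and $u=4v$, uniformly in $x\in B_{R+13L}$. From \eqref{eq:occtime} and the fact that $\rho$ is supported on $\{L,2L\}$ one gets
\[
\bar\ell_x = \tfrac{v_-}{L}\sum_{\ell=0}^{L-1}E_x\!\big[g^{a+1}(X_\ell)1_U(X_\ell)\big] + \tfrac{v_-}{2L}\sum_{\ell=0}^{2L-1}E_x\!\big[h^a(X_\ell)1_U(X_\ell)\big].
\]
Because $d(B_{R+13L},U^{\mathsf c})=2L$, a standard Gaussian heat-kernel tail bound yields $P_x[T_U\le 2L]\le Ce^{-cL}$, so the indicators $1_U(X_\ell)$ can be dropped at the cost of an additive error $O(ve^{-cL})$. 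For the remaining (unrestricted) sums I use the definition of $g^{a+1}$ to telescope $\tfrac{1}{L}\sum_{\ell=0}^{L-1}P_\ell(g^{a+1}) = (1-\tfrac{a+1}{A})\tfrac{1}{2L}\sum_{\ell=0}^{2L-1}P_\ell(1_{B_{R+2L}})$, and I expand $h^a=1-(1-\tfrac{a}{A})1_{B_{R+2L}}+\tfrac{a\delta_1}{A}1_{B_{R+6L}}$. Combining the two pieces, the coefficients of the sum $\tfrac{1}{2L}\sum_{\ell=0}^{2L-1}P_\ell(1_{B_{R+2L}})$ cancel up to $-\tfrac{1}{A}$, and one finds
\[
\bar\ell_x = v_-\Big(1 - \tfrac{1}{A}S(x) + \tfrac{a\delta_1}{A}T(x)\Big) + O(ve^{-cL}),
\]
where $S(x),T(x)\in[0,1]$ are the averaged hitting-densities of $B_{R+2L}$ and $B_{R+6L}$ respectively. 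Since $1/A\le\delta_1=(\log L)^{-4}$, the main term lies in $[v_-(1-\delta_1),v_-(1+\delta_1)]$; using $v_-=v(1-\delta_1)$ and absorbing the exponentially small correction into the slack between $\delta_1$ and $3\delta_1$, this is contained in $[v(1-3\delta_1),v]\subset[u',u]$ for $L\ge C(\gamma)$. This completes the verification.

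The only subtlety I anticipate is bookkeeping the telescoping of the semigroup sums cleanly; every other step is routine. No genuine obstacle arises, since the construction of $\rho$ in \eqref{def:v} was made precisely to fit Definition~\ref{def:background} with $\hat L=2L$.
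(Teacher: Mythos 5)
Your proposal is correct and follows essentially the same route as the paper: choose $\hat L = 2L$, read off $f_1,f_2$ from \eqref{eq:defgh^ka}, and reduce the occupation-time bound to the flat case via the semigroup identity $\tfrac1L\sum_{0\le \ell<L}P_\ell\big(\tfrac{1+P_L}{2}\big)f=\tfrac1{2L}\sum_{0\le \ell<2L}P_\ell f$, which is exactly the paper's \eqref{eq:equality-localtimes-new}. The only (harmless) differences are that you compute $\bar\ell_x^\rho$ exactly rather than bounding it from below by monotonicity in $\rho$ as the paper does, and that your Gaussian bound for discarding $1_U$ is superfluous: since $\bar\ell_x^\rho$ only involves $\rho(k,y)$ for $|y-x|<k$ with $k\le 2L$, a nearest-neighbour walk started in $B_{R+13L}$ cannot reach $U^{\mathsf c}$ with $U=B_{R+15L}$ within the relevant time horizon, which is the deterministic observation the paper uses.
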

	
\begin{proof}[Proof of Lemma~\ref{L:Cobst-ver}] Verifying $(\textnormal{C}_{\textnormal{obst}})$ amounts to checking conditions~\eqref{eq:disconnect_background0} and 
	\eqref{eq:disconnect_background2} inside the box $B_{K + 5(L\vee \hat{L})}$. We choose $\hat L = 2L$. Since $K = R + 3L$ we thus need 
	\eqref{eq:disconnect_background0}-\eqref{eq:disconnect_background2} to hold in $B_{R + 3 L + 5 \times 2L} = B_{R + 13L} \stackrel{\text{def.}}{=} \hat B$. By \eqref{eq:occtime}, $\bar \ell_x^\rho$ only depends on $\rho(k,y)$ if 
	$|y-x|< k$ and the relevant values of $k$ are $L$ and $2L$, so we can drop the indicator 
	function $1_{U}$ in \eqref{def:v} when dealing with the quantities $\bar \ell_x^\rho$ 
	and $\rho(\ell, x)$ for $ x\in \hat B$, as for the verification of 
	\eqref{eq:disconnect_background0}-\eqref{eq:disconnect_background2} with the above choices. 
		
We first verify condition \eqref{eq:disconnect_background2} for $\rho$ as in \eqref{def:v}. It follows 
directly from the definition of $g^a, h^a$, see~\eqref{eq:defgh^ka}, that $\rho$ 
satisfies~\eqref{eq:disconnect_background2} with $\hat L = 2L$: indeed, $2L \rho(2L, x) = 4d\,f_2$ and  $L \rho(L, x) = 4d\,(\frac{1 + P_{L}}{2})f_1$ with 
		$f_1 + f_2 \in \left[v_-(1 - \frac1A),  v_- (1 + \delta_1)\right]$ for all $x \in \hat{B}$.  
		
We now verify~\eqref{eq:disconnect_background0}. The previous paragraph further yields that 
$\rho(\N^\ast, x) \le 4d\frac{4v}{\hat L}$ with $\hat L = 2L$ for all $L \geq C$. We 
proceed to check the required lower bound on $\bar \ell_x^\rho$. For convenience, we use the 
notation $ \bar \ell_x(\rho) = \bar \ell_x^\rho$ in the sequel. Let $\tilde{\rho} \leq \rho$ be obtained by 
replacing $h^a$ in \eqref{def:v} by $f^a$, defined similarly as $h^a$ but with the function $f 
\stackrel{{\rm def}.}{=} 1_{B_{R + 2L}}$ playing the role of $1_{B_{R + 2L}} + \delta_1 1_{B_{R + 6L}}$. 
By monotonicity and linearity of $\rho \mapsto \bar \ell_{\cdot}(\rho)$ (see~\eqref{eq:occtime}) and by definition of $g^{a+1}$ and $f^a$ (see \eqref{eq:defgh^ka}), it follows that for all $x \in \hat{B}$,
\begin{multline*}
\bar \ell_x(\rho)  \geq \bar \ell_x(\tilde \rho) =  \bar \ell_x\big( \textstyle 4d\, \frac {v_-}{2L}1_{2L}(\cdot) \, f^a(\cdot)\big) + \bar \ell_x\big( \textstyle4d \, \frac{v_-} L1_L(\cdot)\, g^{a+1}(\cdot)\big)\\
\textstyle= \bar \ell_x(\tilde \rho_{{\rm out}}) +  \frac a A \, \bar \ell_x(\tilde \rho_{f}) +  \big(1 -  \frac{a+1}{A}\big)\bar \ell_x(\tilde \rho_{g}),
\end{multline*} 
where $\tilde \rho_{{\rm out}} = 4d \,\frac {v_-}{2L}1_{2L}(\ell)1_{B_{R + 2L}^c}$, $\tilde \rho_{f}(x, \ell) = 4d \,\frac {v_-}{2L}1_{2L}(\ell)1_{B_{R + 2L}}$ and $\tilde \rho_{g}(x, \ell) =  4d 
\frac {v_-}{L}1_{L}(\ell) g^0(x)$ (recall \eqref{eq:defgh^ka}). We will momentarily show that 
\begin{equation}\label{eq:equality-localtimes-new}
\bar \ell_x(\tilde \rho_{g}) = \bar \ell_x(\tilde \rho_{f}), \text{ for all $x \in \hat{B}$.}
\end{equation} 
Together with the previous display, and recalling from above \eqref{eq:defgh^ka} that $A^{-1} \leq (\log L)^{-4}$, this then yields that
 \begin{equation*}
\bar \ell_x(\rho) \ge (1 - A^{-1})\bar \ell_x(\tilde \rho_{{\rm out}} + \tilde \rho_{f}) = (1 - A^{-1}) \bar \ell_x\big( \textstyle4d\frac {v_-}{2L}1_{2L}(\cdot) \big) \stackrel{\eqref{eq:loc-time-uL}}{=} (1 - A^{-1}) v_- \ge  v(1-3 (\log L)^{-4}).
 \end{equation*}
 The upper bound condition on $\bar \ell_x(\rho)$  in \eqref{eq:disconnect_background0} is straightforward since 
$\bar \ell(4d\frac {v_-}{L}1_{L}(\ell)) = v_- < v$ by \eqref{eq:loc-time-uL}. This completes the verification of \eqref{eq:disconnect_background0}. It remains to argue that \eqref{eq:equality-localtimes-new} holds. Indeed,
\begin{equation*}
\begin{split}
\bar \ell_x(\tilde \rho_{g}) &= \bar \ell_x \big(4d \textstyle \frac {v_-}{L}1_{L}(\cdot)g^0(\cdot)\big) \displaystyle\stackrel{\eqref{eq:occtime}}{=} \frac {v_-}{L}\sum_{\ell \geq 0} E_x \Big[ \, \sum_{\ell' > 
	\ell}1_{L}(\ell') g^0(X_\ell)\,\Big]	= \frac {v_-}{L}\sum_{0 \le \ell < L} E_x [g^0(X_\ell)]
\\ &= \frac {v_-}{L}\sum_{0 \le \ell < L} (P_\ell g^0)(x) \stackrel{\eqref{eq:defgh^ka}}{=} \frac u{L}\sum_{0 \le \ell < L} P_\ell \Big(\frac{1+P_L}{2}\Big)\tilde f(x) = \frac {v_-}{2L}\sum_{0 \le \ell < 2L} P_\ell f(x) = \bar \ell_x(\tilde \rho_{f}),
\end{split}
\end{equation*}
where in the penultimate step we applied the semigroup property and we omitted the details leading to the final inequality, which are similar to the first four steps. 
	\end{proof}
	
With Lemma~\ref{L:Cobst-ver} at our disposal, we resume the proof of \eqref{eq:upper_L2L}. We proceed to define the desired coupling 
$\mathbb Q_a$ of $ {\mathcal I}^{a}$ and $\mathcal I^{a+1}$, for $0 \leq a < A$. In view of \eqref{eq:intermed_decomp}, this 
amounts to replacing ${\mathcal{I}}^{a,2}$ by ${\mathcal{I}}^{a,3}$. This will rely on a combination of Corollary~\ref{P:long_short_ann} and
Theorem~\ref{thm:short_long}, which will involve an intermediate 
 configuration $\widetilde{\mathcal I}$ comprising shorter (fragmented) trajectories of length $L' \ll L$ and 
appropriate intensity.
	
For convenience, rather than working with ${\mathcal{I}}^{a,2}$ directly, we will work with a `larger' configuration $\mathcal I^{f, L}$ (cf.~\eqref{eq:J-k-a-2-dom} below) instead. Let 
\begin{equation}\label{def:f}
\textstyle f(x) = \frac 1A\, v_-\, g^0(x) + \frac{\delta_1}{80 A}\,v_-1_{B_{R + 4L}}(x), \, x \in \Z^d,\end{equation}
and $L' = {L}{(\log L)^{-4\gamma}}$. One readily verifies that $f$ is supported on a subset of $B_{R + 4L}$ and 
that $2v \ge f \ge    cv \, (\log L)^{-8}$ pointwise on $B_{R + 4L}$ for $L \ge C$. One then straightforwardly deduces from \eqref{def:I'ka} and \eqref{def:f} that for all $L \ge C$,
	\begin{equation}\label{eq:J-k-a-2-dom}{\mathcal{I}}^{a,2} \text{ (under $\P$) }
		\leq_{\text{st.}} \mathcal I^{f, L};
	\end{equation}
the benefit of the second term in \eqref{def:f}, which guarantees ellipticity of $f$ but is unnecessary for \eqref{eq:J-k-a-2-dom} to hold, will soon become clear.
Recall now that ${\mathcal{I}}^{a, 1,1}\stackrel{\text{law}}{=} \mathcal{I}^{\rho}$ with $\rho$ as in 
\eqref{def:v}. Corollary~\ref{P:long_short_ann} is in 
effect for this choice of $\rho$ and with $L'$ as defined below 
\eqref{def:f}, in view of the discussion following \eqref{def:f}, Lemma~\ref{L:Cobst-ver}, and the fact that $v'= v(1-3 (\log L)^{-4})(1- (\log L)^{-4}) \geq u$ for $L \geq C$ by assumption on $v$, so that condition \eqref{eq:def_M-quant} holds with $v'$ in place of $u$ by monotonicity. 
Thus, combining Corollary~\ref{P:long_short_ann} with \eqref{eq:J-k-a-2-dom},  it follows 
that for all $L \geq C( u, \gamma)$, there exists a coupling $\mathbb Q_{1}^a$ of $({\mathcal{I}}^{a,2}\cup 
	{\mathcal{I}}^{a, 1,1})$ and $(\widetilde{\mathcal I} \cup 
	{\mathcal{I}}^{a, 1,1})$, with the two configurations sampled independently for either pair, where $\widetilde{\mathcal I} \stackrel{\text{law}}{=}  \mathcal I^{(1 + l^{-\frac12})P_L^{L'}(f), L'}$, and such that for $B = B_R$,
	\begin{equation}\label{eq:couple_L_by_L'}\mathbb Q_{1}^a \big[ \mathscr{C}^{\partial}_{B}\big(\mathcal V\big({\mathcal{I}}^{a,2}\cup 
		{\mathcal{I}}^{a, 1,1} \big)  \big)  \supset  \mathscr{C}^{\partial}_{B}\big(\mathcal V \big( \widetilde{\mathcal I} \cup 
		{\mathcal{I}}^{a, 1,1}\big)  
		\big)   \big] \geq 1 - C(u \vee 1)(R + L)^d e^{- c (\log L)^{\gamma}}.
	\end{equation} 
		
Next we replace $\widetilde{\mathcal I}$ by ${\mathcal{I}}^{a,3}$. To this effect, we apply 
Theorem~\ref{thm:short_long} with $2L$ playing the role of $L$ (and thus $2l$ replacing $l$), 
$K = R + 4L$ and $$ \textstyle f = f_2= \frac{1 +  l^{-\frac12}}{1 - \Cr{C:sprinkle-easy} l^{-\frac12}} \bar f, \quad \text{ where } \bar f = \frac 1A\, v_- 1_{B_{R + 2L}}(\cdot) + \frac{\delta_1}{40 A}\,v_-1_{B_{R + 6L}}(\cdot).$$ 
As we now explain, this yields for $L \geq C(u, \gamma)$ a coupling of $\widetilde{\mathcal I}$ and ${\mathcal{I}}^{a,3}$ such that
\begin{equation}\label{eq:couple_L'_by_2L}\mathbb Q_{2}^a \big[\mathscr{C}^{\partial}_{B}\big(\mathcal V(\widetilde{\mathcal I})  \big)  \supset  \mathscr{C}^{\partial}_{B}\big(\mathcal V\big({\mathcal{I}}^{a,3}\big) \big)   \big] \geq 1 - C(u \vee 1) (R + L)^de^{- c (\log L)^{\gamma}}.
\end{equation}
Indeed, one readily verifies that the function $f_2$ above \eqref{eq:couple_L'_by_2L} satisfies 
$c(\log L)^{-8} \leq f_2 \leq u$ pointwise on $B_{R + 5L}$, so that the
conditions of Theorem~\ref{thm:short_long} are met for our choice of parameters when $L \ge 
C$, whence Theorem~\ref{thm:short_long} provides a coupling of 
$\mathcal{I}_1=\mathcal{I}^{f_2, 2L}$ and $\mathcal{I}_2 =\mathcal I^{(1 -  \Cr{C:sprinkle-easy}l^{-1/2})P_{2L}^{L'}(f_2 
1_{B_{K}}), L'}=\mathcal I^{(1 + l^{-1/2})P_{2L}^{L'}(\bar f 1_{B_{R + 4L}}), L'}$, with $l =\frac L{L'}$, such that the inclusion 
$\mathcal{V}(\mathcal{I}_1)\subset \mathcal{V}(\mathcal{I}_2)$ holds with probability as in \eqref{eq:couple_L'_by_2L} on account of \eqref{eq:short_long} and by choice of $L'$. 
But  
\begin{align*}
P_L^{L'}(f) &\stackrel{\eqref{eq:f'}}{=} l^{-1} \sum_{0 \le k < l}P_{kL'} (f) \stackrel{\eqref{def:f}}{=} (lA)^{-1} \sum_{0 \le k < l}P_{kL'} (v_- g^0) + \frac{\delta_2}{80} (lA)^{-1}\sum_{0 \le k < l}P_{kL'} (v_- 1_{B_{R+4L}})\\
& \stackrel{\eqref{eq:defgh^ka}}{\le}  (lA)^{-1} \sum_{0 \le k < l}P_{kL'}\Big(\frac{1+P_L}{2} \Big) (v_-\,1_{B_{R + 2L}}) + \frac{\delta_1}{40}(lA)^{-1} \sum_{0 \le k < l} P_{kL'}\Big(\frac{1+P_L}{2} \Big)(v_- 1_{B_{R + 4L}}), 
\end{align*}
and the second line is readily seen to equal $= (2l)^{-1} \sum_{0 \le k < 2l} P_{kL'} (\bar f 1_{B_{R + 4L}})$
with $\bar f$ as above. Since $\widetilde{\mathcal I}$ has the same law as $\mathcal I^{(1 +  l^{-1/2})P_L^{L'}(f), L'}$, one immediately infers from this that $\widetilde{\mathcal I} \leq_{\text{st.}} \mathcal{I}_2$. Moreover, by choice of $L'$ and since $l=\frac L{L'}$, keeping in mind that $\delta_1 =  (\log L)^{-4}$ whilst $\gamma > 10$, one readily sees that 
whenever $L \geq C$,
\begin{equation*} \textstyle f_2 \le (1 + C (\delta_1)^2) \bar f \le  \textstyle \frac {v_-}A\, 1_{B_{R + 2L}}(\cdot ) + \frac{v_- \delta_1}{A} 1_{B_{R+ 6L}}(\cdot ) 
	\stackrel{\eqref{eq:defgh^ka}}{=} v_-(h^{a+1}-  h^{a}),
\end{equation*}
	whence $\mathcal{I}_1 \leq_{\text{st.}} {\mathcal{I}}^{a,3} $ in view of 
	\eqref{def:I'ka}. Applying Lemma~\ref{lem:concatenation} twice to concatenate the coupling of $\mathcal{I}_1$ and $\mathcal{I}_2$ supplied by Theorem~\ref{thm:short_long} with those implied by the two dominations $\widetilde{\mathcal I}\leq_{\text{st.}} \mathcal{I}_2$ and $\mathcal{I}_1 \leq_{\text{st.}} {\mathcal{I}}^{a,3} $ yields $\mathbb Q_{2}^a$ satisfying \eqref{eq:couple_L'_by_2L}, as desired.

	\medskip
	
	Having obtained $\mathbb Q_{1}^a$ and $\mathbb Q_{2}^a$ satisfying \eqref{eq:couple_L_by_L'} and \eqref{eq:couple_L'_by_2L} for each $0 \le a < A$, the remaining task is to extend and 
	concatenate these so as to produce a measure $\mathbb Q$ satisfying \eqref{eq:upper_L2L}. We 
	start by reconstructing for individual $a$'s the full configurations ${\mathcal{I}}^{a}$ and 
	${\mathcal{I}}^{a+1}$. In light of \eqref{eq:intermed_decomp}, \eqref{eq:I'ka_decomp} and the sets 
	coupled under $\mathbb Q_{1}^a$ and $\mathbb Q_{2}^a$, this boils down to 
	incorporating ${\mathcal{I}}^{a, 1,2}$, which is not involved in either of the two measures. The set ${\mathcal{I}}^{a, 1,2} = {\mathcal{I}}^{1,2}$ does not depend on $a$ (see  \eqref{eq:I'kaj}) and thus evolves trivially as $a\to (a+1)$. Hence feeding 
${\mathcal{I}}^{1,2}$ into \eqref{eq:I'ka_decomp} and subsequently \eqref{eq:intermed_decomp} yields the rewrite (still under $\P$)
	\begin{equation}\label{eq:tildeIdecomp}\begin{split}
			 {\mathcal I}^a &= \mathcal I^{1, 2} \cup  \widehat{\mathcal I}^a, \quad \qquad   \widehat{\mathcal I}^a = {\mathcal{I}}^{a,2}
			\cup {\mathcal{I}}^{a, 1,1}, 
			\\
			{\mathcal{I}}^{a+1} &= \mathcal I^{1, 2} \cup \overline{\mathcal I}^{a+1}, \quad  \overline{\mathcal I}^{a+1} = {\mathcal{I}}^{a,3} \cup 
			{\mathcal{I}}^{a, 1,1}, 
		\end{split}
	\end{equation} 
	valid	for all $0 \le a < A$, and 
	and each of the union is over independent sets. 
	
	Returning to $\mathbb Q_{1}^a$ and $\mathbb Q_{2}^a$, observe that the inclusion in \eqref{eq:couple_L'_by_2L} remains true if the pair $(\widetilde{\mathcal I}, {\mathcal{I}}^{a,3})$ is replaced by $(\widetilde{\mathcal I} \cup \mathcal{J}, {\mathcal{I}}^{a,3} \cup \mathcal{J})$, for arbitrary $\mathcal{J} \subset \Z^d$. We apply this observation to $\mathbb Q_{2}^a$ with the choice 
 $\mathcal{J}\stackrel{\text{law}}{=} {\mathcal{I}}^{a, 1,1} $, sampled independently and incorporated into 
 $\mathbb Q_{2}^a$ by 
	suitable extension. In view of \eqref{eq:tildeIdecomp}, \eqref{eq:couple_L_by_L'} asserts that the inclusion $\mathscr{C}^{\partial}_{B}(\mathcal V(  \widehat{\mathcal I}^a )  )  \supset  
	\mathscr{C}^{\partial}_{B}(\mathcal V ( \mathcal I' ))$, where $\mathcal I'\stackrel{\text{law}}{=} \widetilde{\mathcal I} \cup 
	{\mathcal{I}}^{a, 1,1}$, holds with $\mathbb Q_{1}^a$-probability $1 - C(u \vee 1)(R + L)^de^{- c (\log L)^{ \gamma}}$. In the same vein, \eqref{eq:couple_L'_by_2L} lifts to the event $ \mathscr{C}^{\partial}_{B}(\mathcal V ( \mathcal I' )) \supset \mathscr{C}^{\partial}_{B}(\mathcal V(  \overline{\mathcal I}^{a+1} )  )$ under $\mathbb Q_{2}^a$. 
	Thus, concatenating $\mathbb Q_{1}^a$ and $\mathbb Q_{2}^a$ by means of Lemma~\ref{lem:concatenation} produces a coupling $\mathbb Q_a$ of $(  \widehat{\mathcal I}^{a} ,   \overline{\mathcal I}^{a+1} )$ satisfying
	\begin{equation}\label{eq:couple_Qa}
		\mathbb Q^{a} \big[ \mathscr{C}^{\partial}_{B}\big(\mathcal 
		V(  \widehat{\mathcal I}^{a} )  \big)  \supset  \mathscr{C}^{\partial}_{B}\big(\mathcal V(  \overline{\mathcal I}^{a+1} ) \big)   
		\big] \geq 1 - C(u \vee 1)(R + L)^de^{- c (\log L)^{ \gamma}}.\end{equation}	
	Further chaining the couplings $\mathbb Q_a$'s over all $a$ with $0 \leq a < A$ by repeated application of Lemma~\ref{lem:concatenation} and extending the resulting measure by an independent sample of $\mathcal I^{1, 2}$, 
	we arrive in view of \eqref{eq:tildeIdecomp} at a coupling $\mathbb Q$
	of $\mathcal I^{1, 2} \cup  \widehat{\mathcal I}^0\stackrel{\text{law}}{=}  {\mathcal I}^0$ and $ {\mathcal I}^{1, 2} \cup  \overline{\mathcal I}^{A} \stackrel{\text{law}}{=}  {\mathcal I}^{A}$. Combining \eqref{eq:couple_Qa}, the same observation as following \eqref{eq:tildeIdecomp} and a union 
	bound, cf.~Remark~\ref{R:chain},2), it follows that
	\begin{equation}\label{eq:final_couple_Q}
		\mathbb Q \big[ \mathscr{C}^{\partial}_{B}\big(\mathcal 
		V( {\mathcal I}^{1,2} \cup  \widehat{\mathcal I}^{0} )  \big)  \supset  \mathscr{C}^{\partial}_{B}\big(\mathcal V( {\mathcal I}^{1,2} \cup  \overline{\mathcal I}^{A} )  \big)   \big] \geq 1 - 2A e^{- c (\log L)^{ \gamma}},
	\end{equation}
	with $B = B_R$. The measure $\mathbb Q$ is our final coupling satisfying \eqref{eq:upper_L2L} which is an immediate consequence of 
\eqref{eq:final_couple_Q} for $L \ge C(u, \gamma)$ upon recalling the observation following \eqref{eq:intermediate_config}.
\end{proof}

It remains to give the proof of Theorem~\ref{P:long_short} which employs 
Theorem~\ref{thm:long_short} in its annealed version as given by \eqref{eq:long-j-i_annealed} for the choice of environment $\mathcal{I}=\mathcal{I}^{\rho}$. The applicability of \eqref{eq:long-j-i_annealed} rests on a lower bound on the probability of the disconnection event $\mathscr{D}$ introduced in \eqref{e:obs-final2}. The following result is key towards this. Recall the definitions of ${\rm Disc(y)}$ and ${\rm Disc}(S)$ from \eqref{eq:disc-coup} 
and the associated scales $\tilde{\ell}_{\mathcal O}= \ell_{\mathcal O}^{1.01}$ and $\bar \ell_{\mathcal O}> \tilde{\ell}_{\mathcal O}$. 
A set $S \subset \Z^d$ is 
called $r$-separated if the ($\ell^{\infty}$-) distance between any two points in $S$ is larger than 
$r$.
\begin{proposition}[Disconnection estimate] \label{P:traps}
	For all $\gamma > 10$, $K \geq 0$, $L \geq 1$, the following hold. If $\ell_{\mathcal O}$ satisfies \eqref{eq:obs-cond-scales}, $ \bar \ell_{\mathcal O}  \le L^{\frac{1}{10d}}$, 
 $\rho$ fulfills $(\textnormal{C}_{\textnormal{obst}})$ with parameters $(u, Du, \gamma, L,K)$  for some $D>1$,  then for all $L \ge L_0(\gamma, u)$ and any $L^{\frac{1}{2d}}$-separated set $S \subset B_{K + 1.5(L \vee \hat{L})}$, one has \begin{equation}\label{eq:trap}
\P_{\rho}[\mathrm{Disc}(S) = \emptyset]	\le  \big( 1- \P_{u'} [ {{\rm Disc}}(y)] + e^{-c'(u \wedge1) L^{c'}}\big)^{|S|} + C(u \vee 1)(K + L)^d e^{-c (\log L)^{\gamma}}
	\end{equation}
 for some $c' = c'(D) > 0$, where $u' = u(1 - C(\log L)^{-\gamma})$ and $\P_{u'}$ is the law of $\mathcal I^{u'}$.
\end{proposition}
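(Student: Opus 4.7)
The plan is to transform $\mathcal I^\rho$ into a finite-range configuration that is locally dominated by $\mathcal I^{u'}$ and that decouples across the $L^{1/(2d)}$-separated set $S$. Concretely, I aim to produce a coupling in which, for each $y \in S$,
\begin{equation*}
\mathcal I^{u'} \cap B(y, \bar\ell_{\mathcal O}) \subset \mathcal I^{h, L_*} \cap B(y, \bar\ell_{\mathcal O}) \subset \mathcal I^{\rho} \cap B(y, \bar\ell_{\mathcal O}),
\end{equation*}
where $L_* = L^\beta$ is an intermediate length scale with $\beta \in (\tfrac{1}{5d}, \tfrac{1}{d})$ and $h \geq u(1 - C(\log L)^{-\gamma})$. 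The outer (right) inclusion is produced by iterating Theorem~\ref{thm:short_long} down a geometric sequence $\hat L = L_0 > L_1 > \cdots > L_K = L_*$ with ratio $(\log L)^{-10\gamma}$ (within the range of \eqref{e:couplings-params}), and chaining the resulting couplings via Lemma~\ref{lem:concatenation}. The very first iteration uses Theorem~\ref{thm:short_long} in the two-scale form $\tfrac12(1+P_{\hat L/2})f_1 + f_2$ prescribed by \eqref{eq:disconnect_background2}; later iterations reduce to Theorem~\ref{thm:short_long-intro}. Over $K = O(\log L / \log\log L)$ steps the cumulative sprinkling is $\prod_k (1 - C(L_{k+1}/L_k)^{1/2}) \geq 1 - C(\log L)^{-4\gamma}$, and the aggregate error is $C(K+L)^d e^{-c(\log L)^\gamma}$. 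Ellipticity $h \geq c(\log L)^{-\gamma}$ on the relevant region is preserved at each step since the heat-kernel average $P^{L_{k+1}}_{L_k}(f\cdot 1_{B_{K_k}})$ of an elliptic profile $f$ remains elliptic on a slightly contracted box.

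To turn this inclusion into the product bound on $\mathbb{P}_\rho[\mathrm{Disc}(S) = \emptyset]$, I first use the finite-range property: on the event $\mathcal E$ that every trajectory of $\mathcal I^{h, L_*}$ has diameter at most $D_* = \sqrt{L_*(\log L)^{2\gamma}}$ (which has probability $\geq 1 - Cu(K+L)^d e^{-c(\log L)^{2\gamma}}$ by Gaussian heat kernel estimates), the choice $D_* + \bar\ell_{\mathcal O} \leq \tfrac14 L^{1/(2d)}$ makes the balls $B(y, \bar\ell_{\mathcal O} + D_*)$, $y \in S$, pairwise disjoint. Consequently the events $\mathrm{Disc}_{\mathcal V(\mathcal I^{h, L_*})}(y)$, $y \in S$, depend on independent Poisson sub-configurations of $\mathcal I^{h, L_*}$ and are therefore independent. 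For the inner (left) inclusion, I apply Proposition~\ref{prop2:cube} at each $y$ to the homogeneous profile $\rho_* = \rho_{\tilde u, L_*}$ with $\tilde u = u(1 - C(\log L)^{-\gamma})$ in the box $B(y, \bar\ell_{\mathcal O})$, targeting intensity $u' = u(1 - C'(\log L)^{-\gamma})$ with $C' > C$. The hypotheses of Proposition~\ref{prop2:cube} are met with some $a \in (2, 10)$ for which $L_* \asymp \bar\ell_{\mathcal O}^a$: the occupation density equals $\bar\ell^{\rho_*}_x = \tilde u \geq u'(1+\delta)$ with $\delta = C(\log L)^{-\gamma}$, and the coupling error is $e^{-cL^{c'}}$. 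A stochastic-domination argument (using $h \geq \tilde u$ pointwise on the relevant region) upgrades the inclusion from $\rho_*$ to $h$, yielding $\mathcal I^{u'} \cap B(y, \bar\ell_{\mathcal O}) \subset \mathcal I^{h, L_*} \cap B(y, \bar\ell_{\mathcal O})$ and hence $\mathrm{Disc}_{\mathcal V^{u'}}(y) \subset \mathrm{Disc}_{\mathcal V(\mathcal I^{h, L_*})}(y)$. Inserting the resulting single-site bound $1 - \mathbb P_{u'}[\mathrm{Disc}(y)] + e^{-cL^{c'}}$ into the product from the independence step and summing the errors from all three steps yields \eqref{eq:trap}.

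The main obstacle is the ellipticity bookkeeping across the iterations: each application of Theorem~\ref{thm:short_long-intro} replaces the current profile $f$ by $P^{L_{k+1}}_{L_k}(f\cdot 1_{B_{K_k}})$, which averages $f$ (helpful) but truncates it outside $B_{K_k}$ (harmful near the boundary of the region where one needs the inclusion). One has to track the contraction $K_{k+1} = K_k - L_k$ step by step and verify that the final box $B_{K_K}$ still comfortably contains the $\bar\ell_{\mathcal O}$-neighborhood of every $y \in S$. The geometric decay $L_k = \hat L\cdot (\log L)^{-10\gamma k}$ gives $\sum_k L_k \leq 2\hat L \leq 16L$, and the generous margin $5(L \vee \hat L)$ built into $(\textnormal{C}_{\textnormal{obst}})$ relative to the bulk $B_{K + 1.5(L \vee \hat L)}$ containing $S$ provides ample room. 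A secondary care lies in reconciling the two different parametrizations of the scales and intensities (the $a$ in Proposition~\ref{prop2:cube}, the $\beta$ in Step 1, and the ambient ratio $\bar\ell_{\mathcal O}^a/L_*$) so that all three Steps can be run with a single consistent choice of $L_*$; this is possible precisely because of the assumption $\bar\ell_{\mathcal O} \leq L^{1/(10d)}$.
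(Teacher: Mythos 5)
Your overall architecture coincides with the paper's: iterate Theorem~\ref{thm:short_long} to replace $\mathcal I^{\rho}$ (with very high probability, from inside) by a configuration of much shorter trajectories, use the short range to decouple the events $\mathrm{Disc}(y)$ across the $L^{1/(2d)}$-separated set $S$, and then compare each single-site marginal to $\mathcal I^{u'}$ via Proposition~\ref{prop2:cube}. However, there is one genuine gap in the single-site step. You assert that the iterated starting-density profile satisfies $h \geq u(1-C(\log L)^{-\gamma})$ \emph{pointwise} and use this for the stochastic domination $\mathcal I^{\tilde u, L_*} \leq_{\mathrm{st.}} \mathcal I^{h,L_*}$. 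Under $(\textnormal{C}_{\textnormal{obst}})$ this is not available: condition \eqref{eq:disconnect_background2} only gives the pointwise bound $f_1+f_2 \geq (\log L)^{-\gamma}$, while it is the \emph{occupation density} $\bar\ell^\rho_x$ from \eqref{eq:disconnect_background0} that is of order $u$; the profile $f_1+f_2$ may well dip to $(\log L)^{-\gamma} \ll u$ on a set of points whose diffusive average is still $u$. The iteration preserves this dichotomy: each step replaces the profile by a Riemann-sum average $P^{L_{k+1}}_{L_k}(\cdot)$ sampled at time-multiples of $L_{k+1}$, which smooths but does not a priori produce a pointwise lower bound of order $u$. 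The paper sidesteps this entirely by applying Proposition~\ref{prop2:cube} directly to the (inhomogeneous) shortened profile $\tilde\rho(\ell,x)=\frac{g_{k^+}(x)}{L_{k^+}}1_{\{\ell=L_{k^+}\}}$ and verifying its occupation density via the exact telescoping identity $\frac1{L_{k^+}}\sum_{\ell'<L_{k^+}}P_{\ell'}g_{k^+} = \Gamma\cdot\frac1{\hat L}\sum_{\ell'<\hat L}P_{\ell'}(f_1+f_2) = \Gamma\,\bar\ell^\rho \geq \Gamma u$ (see \eqref{eq:traps11} and the computation closing the proof); the sum over $\ell'<L_{k^+}$ inherent in the occupation density exactly fills the gaps of the Riemann sum, so no pointwise bound on $g_{k^+}$ of order $u$ is ever needed. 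To repair your route you would either have to adopt this verification, or prove the pointwise claim by quantitatively comparing $P^{L_*}_{\hat L}(f)$ with $\frac1{\hat L}\sum_{\ell<\hat L}P_\ell f$ using heat-kernel regularity (a feasible but nontrivial estimate of order $u\sqrt{L_*/\hat L}$ that you neither carry out nor flag).

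Two secondary points. First, your prescription $L_*\asymp\bar\ell_{\mathcal O}^{\,a}$ with $a\in(2,10)$ breaks down when $\bar\ell_{\mathcal O}$ is small (it is only bounded \emph{above} by $L^{1/(10d)}$ and may be polylogarithmic); the correct move, as in the paper, is to run Proposition~\ref{prop2:cube} on a box of radius $N=L^{1/(9d)}$, which contains $B(y,\bar\ell_{\mathcal O})$ by the hypothesis $\bar\ell_{\mathcal O}\leq L^{1/(10d)}$, with $a=3$ and $L_*\asymp L^{1/(3d)}$. Second, your reference to \eqref{e:couplings-params} for the iteration ratio is immaterial: Theorem~\ref{thm:short_long} only requires $L'\mid L/2$ and $L'\geq L^{1-c}$, but the divisibility does force the per-step ratio to be a power of $2$, which is why the paper's \eqref{eq:traps5} rounds $(\log L_k)^{-4\gamma}$ to a dyadic and why $\hat L$ is assumed dyadic in Definition~\ref{def:background}. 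The remaining steps (chaining via Lemma~\ref{lem:concatenation}, the ellipticity and boundary-contraction bookkeeping, and the decoupling — which you do via a diffusive-diameter event rather than the paper's deterministic choice $L_{k^+}\ll L^{1/(2d)}$, both being fine) are sound.
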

With hopefully obvious notation, in writing $\P_{\rho}[\mathrm{Disc}(S) = \emptyset]$ in \eqref{eq:trap}, we mean that the relevant vacant set $\mathcal{V}= \mathcal{V}^{\rho}= \Z^d \setminus \mathcal{I}^{\rho}$ in \eqref{eq:disc-coup}, and similarly for $ \P_{u'} [ {{\rm Disc}}(y)] $, where $\mathcal{V}=\mathcal{V}^{u'}$. We will prove Proposition~\ref{P:traps} in the next paragraph. Assuming this result we can finish the
\begin{proof}[Proof of Theorem~\ref{P:long_short}] We aim to apply Theorem~\ref{thm:long_short} with the choices $\mathbf{P}=\mathbb{P}_{\rho}$, $\mathcal{I}= \mathcal{I}^{\rho}$ and the length scales $\ell_{\mathcal O}$ and $\bar{\ell}_{\mathcal O}$ as 
\begin{equation}\label{e:obs-l-choice}
\ell_{\mathcal O} = (\log L)^{100 \gamma}, \quad \bar \ell_{\mathcal O}  = L^{ \frac{1}{10d}};
\end{equation}
note that this also fixes the (enlarged) obstacle size $\tilde{\ell}_{\mathcal 
O}= {\ell}_{\mathcal 
O}^{1.01}$, see above \eqref{eq:obs-tilde}, which satisfies $\tilde{\ell}_{\mathcal 
O} < \bar \ell_{\mathcal O}  $ whenever $L \geq C(\gamma)$. Clearly, $\ell_{\mathcal O}$ given by \eqref{e:obs-l-choice} satisfies \eqref{eq:obs-cond-scales} and $\bar \ell_{\mathcal O}  \leq \widetilde{L}$ (see \eqref{e:obs-Ltilde-choice}). Moreover, on account of \eqref{e:couplings-params} and with $l=L/L'$, one has that $ \varepsilon = l^{-\frac12}$ (see \eqref{eq:long-j-i'}) satisfies $\varepsilon \geq C\ell_{\mathcal O}^{-1/100}  $ and $\varepsilon \geq \ell_{\mathcal{O}}^{-1/(d-2)} (\log L)^{\gamma/2}$ for any $L \geq C(\gamma)$.
In view of this, Theorem~\ref{thm:long_short} is in force and \eqref{eq:long-j-i} (see also~\eqref{eq:long-j-i_annealed}) immediately yields \eqref{e:hard-coup-final3}, provided we can argue that
\begin{equation}\label{eq:disc-finalDD}
\mathbb{P}_{\rho}[\mathscr{D}] \geq  1 - C(u \vee 1)(K + 
L)^de^{-c l^{1 / 4}}
\end{equation}
under the assumptions on $\rho$ inherent to Theorem~\ref{P:long_short}, namely, if $\rho$ satisfies $(\textnormal{C}_{\textnormal{obst}})$ with parameters 
$(u', u, \gamma, L, K)$, for some $u'< u$ and \eqref{eq:def_M-quant} holds with $u'' = u'(1- (\log L)^{-4})$ in place of $u$. For later reference, spelling out the latter part using the  scales \eqref{e:obs-l-choice} and \eqref{eq:disc-coup}, we find that 
\begin{equation}
\label{eq:traps14}
\P_{u''} [ {{\rm Disc}}(y)]
\ge  L^{-\frac{1}{10}} \text{ for all $y \in \Z^d$.}
\end{equation}

We will use Proposition~\ref{P:traps} to show \eqref{eq:disc-finalDD}. To ensure the required separatedness for disconnection events occurring within a given cell $\widetilde{C}$, we introduce the event $\widetilde{\mathscr{D}} \subset \mathscr{D}$ as follows. With the notation from \eqref{eq:disc-coup}, where the event $\mathrm{Disc}(y)$ under $\P_{\rho}$ refers to the disconnection event in the configuration $\mathcal{V}=\mathcal{V}^{\rho}$, we set
\begin{equation}
\label{e:obs-final-Dtilde}
\widetilde{\mathscr{D}}= \big\{ \mathrm{Disc}(\mathcal{L}(\widetilde{C})) \neq \emptyset \text{ for all } \widetilde{C} \in \widetilde{\mathcal{C}}\big\},
\end{equation}
where $\mathcal{L}(\widetilde{C}) \stackrel{\text{def.}}{=} (  \lceil L^{\frac1{2d}} +1 \rceil \Z^d ) \cap \widetilde{C}$  for  $\widetilde{C} \in \widetilde{\mathcal{C}}$. In words, \eqref{e:obs-final-Dtilde} requires that each cell $\widetilde{C}$ witnesses a disconnection event $\mathrm{Disc}(y)$ centered around a point $y$ belonging to the `lattice' $\mathcal{L}(\widetilde{C})$. With a view towards applying Proposition~\ref{P:traps}, we now set $S= \mathcal{L}(\widetilde{C})$ for a fixed cell $\widetilde{C} \in \widetilde{\mathcal{C}}$, which is $L^{1/2d}$-separated and readily seen to satisfy $|S| \geq L^{1/2}$. The estimate \eqref{eq:disc-finalDD} now follows upon bounding $\mathbb{P}_{\rho}[\mathscr{D}] \geq \mathbb{P}_{\rho}[\widetilde{\mathscr{D}}]$, feeding \eqref{e:obs-final-Dtilde}, applying a union bound over $\widetilde{\mathcal{C}}$, and using \eqref{eq:trap} together with
\eqref{eq:traps14} to bound $\P_{\rho}[\mathrm{Disc}(\mathcal{L}(\widetilde{C})) = \emptyset]$.
\end{proof}

\subsection{Disconnection estimate}\label{sec:disconnection}
In this section we supply the proof of Proposition~\ref{P:traps}. One issue is that the family of events $\{ {\rm Disc}(y): \, y \in S \}$ involved in the event $\mathrm{Disc}(S)$ is not at all independent. Indeed, by assumption the points in $S$ have a separation scale which is a (small) polynomial in $L$, whereas $\mathcal{I}^{\rho}$ involves trajectories having length of order $L$, cf.~Definition~\ref{def:background}. To generate decoupling, Theorem~\ref{thm:short_long} will be  used repeatedly 
to couple $\mathcal I^{\rho}$ with a configuration $\mathcal I^{\tilde \rho}$ involving suitably 
shortened trajectories.

\begin{proof}[Proof of Proposition~\ref{P:traps}:]
We will work throughout the proof under the weaker assumption that $\hat{L}$, which appears as part of $(\textnormal{C}_{\textnormal{obst}})$, be of the more general form specified in Remark~\ref{rmk:background},\ref{R:lhat}. We will apply Theorem~\ref{thm:short_long} repeatedly to $\mathcal I^{\rho}$, with $\rho$ satisfying $(\textnormal{C}_{\textnormal{obst}})$, see Definition~\ref{def:background}. First note that, by \eqref{eq:disconnect_background2} and \eqref{eq:prelim3.1}, the set $\mathcal I^{\rho}$ has the same law as
	$\mathcal I^{\frac12(1 + P_{\hat{L}/2})f_1, \frac{\hat{L}}{2}} \cup \mathcal I^{f_2, \hat{L}}$ where the latter two configurations are sampled independently. Now define the triplets $(L, N, g)$ for integer $k \geq 0$ (with $L$ and $N$ decreasing in $k$) as follows: 
\begin{equation}\label{eq:traps6.0.0}
L_0 = \hat{L},\quad N_0 = K + 5 (L \vee \hat{L}), \quad g_0 =  f_1 +  f_2,
\end{equation}
and for $k \geq 0$, define recursively
\begin{equation}
\label{eq:traps5}
l^{-1} \stackrel{\textnormal{def.}}{=} \frac{L_{k+1}}{L} =  \max \{ 2^{-m}: 2^{-m} \leq \tfrac1{10} \wedge  (\log (L)^{-4\gamma})\}, \text{ for }k= 0, 1, \dots,
\end{equation}
as well as, with $\Cr{C:sprinkle-easy}$ as in Theorem~\ref{thm:short_long},
\begin{equation}
	\label{eq:traps6.0}
	g_{k+1} \stackrel{\textnormal{def.}}{=} (1- \Cr{C:sprinkle-easy} l^{-1/2}) l^{-1}  \sum_{n=0}^{ l -1} P_{nL_{k+1}}(g1_{B_{N - L}}), \quad N_{k+1} = N - 2L. 
\end{equation}
The recursive definition in \eqref{eq:traps5} is valid since $L_1$ is dyadic by our particular choice of 
the scale $L_0 = \hat L$ as the difference of two dyadic integers $\hat L_1 - \hat L_2$ with $\hat L_2 
\ge 8L(\log \frac L8)^{-4\gamma}$. For later reference, we note that, with the above choices and $k^+ 
\stackrel{\textnormal{def.}}{=} \max\{ k : L \geq L^{\frac{1}{3d}}\}$,
\begin{equation}
	\label{eq:traps5.1}
	\Gamma \stackrel{\text{def.}}{=}  \prod_{k=0}^{k^+} (1-\Cr{C:sprinkle-easy} l^{-1/2})  \stackrel{\eqref{eq:traps5}}{\geq}\prod_{m = \lceil c \log L\rceil}^{\infty} (1-  
	\Cr{C:sprinkle-easy}m^{-2\gamma}) \geq 1 - C(\log L)^{\gamma},
\end{equation}
 for $L \ge C$. Also using that $l^{-1} \leq \frac1{10}$ and $L_0 = \hat{L}$, we obtain
	\begin{equation}
		\label{eq:traps6.1}
		N = N_0 -2L_0 \sum_{n=0}^{k-1}\Big( \prod_{i=0}^{k-2}l_i^{-1} \Big)  \geq N_0 - 3L_0 \geq K + 
		2.6 (L \vee \hat{L}), 
		\text{ for any $k \geq 1$}
	\end{equation}
	(where we interpret an empty product as 1). Now 
	notice that $k^+ \geq 1$ and 
	$L_{k+1} \ge L^{1 - c'}$, where $c'$ is from Theorem~\ref{thm:short_long}), for every $k$ whenever $L \ge C(\gamma)$, which will be tacitly assumed henceforth. We aim to apply Theorem~\ref{thm:short_long} for all $0\leq k < k^+$ with the choices $L = 
	L$, $L' = L_{k+1}$, 
	$K = N - L$ and $f = g$. This hinges on obtaining a suitable lower bound for $g$, see the hypotheses of Theorem~\ref{thm:short_long}. To this effect, we now proceed to verify that for all $x \in 
	B_{N}$, 
\begin{align}
&u \geq g(y) \geq \Big[ \prod_{i=0}^{k-1} (1- \Cr{C:sprinkle-easy}l_{i}^{-1/2})\Big] (\log L_0)^{-{{\gamma}}}, \text{ for $0\leq k \leq k^+$} \label{eq:traps6}.
\end{align}
	Indeed, for $k=0$, by definition of $g_0$ in \eqref{eq:traps6.0.0}, upper and lower 
	bounds in \eqref{eq:traps6} follow immediately from \eqref{eq:disconnect_background2} since 
	$N_0 = K + 5(L \vee \hat{L})$. Suppose now that \eqref{eq:traps6} holds for some $k $
	with $0\leq k <k^+$. The upper and lower bounds for $g_{k+1}$ then follow from 
	\eqref{eq:traps6.0}, by substituting the corresponding bounds for $g$ from 
	\eqref{eq:traps6} valid by induction hypothesis. In doing so, note that the indicator 
	function present in \eqref{eq:traps6.0} is inconsequential for the purpose of bounding $ g_{k+1}(x)$ when $x \in B_{N_{k+1}+L_{k+1}}$ 
	since one applies $P_t$ for $t=nL_{k+1} \leq L - L_{k+1}$ and 
	$N_{k+1} + L_{k + 1} + ( L - L_{k+1}) = N - L$ by the definition of $N$. 
	Together with \eqref{eq:traps5} and \eqref{eq:traps5.1}, \eqref{eq:traps6} implies that
\begin{equation}
\label{eq:traps8}
Du \geq g(x) \geq 
c(\log L)^{- {{{\gamma}}}}, \text{ for } x \in B_{N}, \text{ }0\leq k \leq k^+,
\end{equation}
whenever 
$L \ge C$. In view of \eqref{eq:traps8}, 
	for each $0\leq k < k^+$, and chaining the resulting couplings using Lemma~\ref{lem:concatenation} yields a coupling  $\mathbb{Q}$ of $\mathcal I^{\rho}$ and $\mathcal I^{\tilde{\rho}}$ (see \eqref{eq:prelim2} for notation), where $\tilde \rho:\mathbb{N}^\ast \times \Z^d \to \mathbb{R}_{+}$ with
	\begin{equation} \label{e:rho-tilde-disc}
		\tilde{\rho}(\ell, x)  \stackrel{\textnormal{def.}}{=} \textstyle\frac{g_{k^+}(x)}{L_{k^+}}1_{\{\ell = L_{k^+}\}}
	\end{equation}
	and the inclusion $\mathcal I^{\tilde{\rho}} \subset \mathcal I^{{\rho}}$ holds with $\mathbb{Q}$-probability at least 
	$1-e^{-c (\log L)^{\gamma}}$, for suitable $c > 0$. Since $\tilde{\rho}$ in 
	\eqref{e:rho-tilde-disc} involves trajectories of length $L_{k^+}-1$ and $\frac{L_{k^+}}{L^{1/2d}}\to 
	0$ as $L \to \infty$ be definition of $k^+$, the events $ {{\rm Disc}}(y)$, $y \in S$ are independent 
	under $\P_{\tilde \rho} $ for large enough $L$ since $S$ is $L^{1/2d}$-separated by assumption. 
	We will now argue that
\begin{equation}
\label{eq:traps14}
\P_{\tilde \rho} [ {{\rm Disc}}(y)] \ge 
\P_{u'} [ {{\rm Disc}}(y)] - e^{-c'(u' \wedge1) L^{c'}}, \text{ for all $y \in S$,}
\end{equation}
where $u' = 	u'(1 - C(\log L)^{-\gamma})$ and $c'$ depends on $D$ in addition to dimension $d$.
If \eqref{eq:traps14} holds then applying the coupling and using independence immediately gives \eqref{eq:trap}.
	
	To complete the proof, it thus remains to show \eqref{eq:traps14} with $\tilde{\rho}$ given by \eqref{e:rho-tilde-disc}.  This follows from a direct application of Proposition~\ref{prop2:cube} (with, say,  $N= L^{1/9d}$, $a=3$ and small enough $\varepsilon > 0$), 
	by which $\mathcal{V}^{\tilde{\rho}}$ inherits up to a 
	small coupling error the disconnection lower bound 
	$\P_{u'} [ {{\rm Disc}}(y)]$, provided we can 
	show that $\tilde{\rho}$ satisfies for all $x \in B_{K + 2.6 (L \vee \hat{L})}$,
	\begin{align}
		&\label{eq:traps11}
		\bar{\ell}_x^{\, \tilde \rho} = \sum_{\ell' \geq 0} E_x[\tilde \rho(\ell'+\mathbb{N}^*, X_{\ell'})]  
		\in \textstyle  
		[u(1 - C(\log L)^{\gamma}), u] \quad \mbox{ and }  \quad \tilde \rho(\mathbb{N}^*, x) \leq {Du}{L^{-\frac{1}{3d}}}.
	\end{align}
	The second of these conditions follows immediately by \eqref{eq:traps8} and \eqref{e:rho-tilde-disc}. In order to obtain the required estimates on $\bar{\ell}_x^{\, \tilde \rho}$ in \eqref{eq:traps11}, we first observe that, for all $x \in B_{N+L}$, 
	\begin{align}
	& g(y) =  \Big[ \prod_{j=0}^{k-1} 
	(1 - \Cr{C:sprinkle-easy} l_{j}^{-1/2})\Big]\frac{L}{L_0} \sum_{ 0 \leq n < \frac{L_0}{L}}(P_{nL} g_0)(x),  
	\text{ for $1\leq k \leq k^+$.} \label{eq:traps7}
\end{align}
	For $k=1$, \eqref{eq:traps7} is \eqref{eq:traps6.0}, noting that for points $x$ of 
	interest, the indicator function in \eqref{eq:traps6.0} can be omitted. For the induction 
	step, one uses again the observation that 
	$N_{k+1} + L_{k + 1} + ( L - L_{k+1}) = N - L$ together with the semigroup property for 
	$(P_n)_{n\geq 0}$. Now, for all $x \in B_{K + 2.6 (L\vee \hat{L})}$, 
	noting that \eqref{eq:traps7} is in force for any such $x$ due to \eqref{eq:traps6.1}, we have
	\begin{multline*}
		\sum_{\ell' \geq 0} E_x[\tilde \rho(\ell'+\mathbb{N}^*, X_{\ell'})] =\sum_{m \geq 1} 
		\sum_{0 \leq \ell' < m} E_x[\tilde \rho(m, X_{\ell'})] \stackrel{\eqref{e:rho-tilde-disc}}{=} \sum_{0 \leq \ell' < L_{k^+}}L_{k^+}^{-1} \big( P_{\ell'}g_{k^+} \big)(x)\\
		\stackrel{\eqref{eq:traps7}, \eqref{eq:traps5.1}}{\geq}  \frac{\Gamma}{L_0} \displaystyle\sum_{ 0 \leq n < \frac{L_0}{L_{k^+}}}  
		\sum_{ 0 \leq \ell' < L_{k^+}}\big(P_{nL_{k^+} + \ell'}g_0\big)(x)\stackrel{\eqref{eq:traps5}, \eqref{eq:traps6.0.0}
		}{=} \frac{\Gamma}{\hat{L}}\sum_{\ell'=0}^{\hat{L}-1}\big[ P_{\ell'}(f_1 + f_2)\big](x)
		\\
		\stackrel{\eqref{eq:disconnect_background2}}{=}  \Gamma   \sum_{\ell \in \{\hat{L}/2, \hat{L}\}}\sum_{0 \leq \ell' < \ell}E_x[\rho(\ell, X_{\ell'})] = \Gamma  \sum_{\ell' \geq 0} E_y[ 
		\rho(\ell'+\mathbb{N}^*, X_{\ell'})], 
	\end{multline*}
	from which the first condition in \eqref{eq:traps11} follows by \eqref{eq:disconnect_background0} and 
	\eqref{eq:traps5.1}. Overall \eqref{eq:traps11}  thus holds, which completes the proof.
\end{proof}

\textbf{Acknowledgements.} This work has received funding from the European Research Council 
(ERC) under the European Union’s Horizon 2020 research and innovation programme (grant 
agreement No.~757296). HDC acknowledges funding from the NCCR SwissMap, the Swiss FNS, and 
the Simons collaboration on localization of waves. SG’s research was supported by the SERB grant 
SRG/2021/000032, a grant from the Department of Atomic Energy, Government of India, under project 
12–R\&D–TFR–5.01–0500 and in part by a grant from the Infosys Foundation as a member of the 
Infosys-Chandrasekharan virtual center for Random Geometry. PFR thanks the IMO in Orsay for its hospitality during the final stages of this project, with funding from ERC Grant agreement No.~740943. FS has received funding from the 
European Research Council (ERC) under the European Union’s Horizon 2020 research and innovation 
program (grant agreement No 851565). During this period, AT has been supported by grants
``Projeto Universal'' (406250/2016-2) and ``Produtividade em Pesquisa'' (304437/2018-2) from
CNPq and ``Jovem Cientista do Nosso Estado'', (202.716/2018) from FAPERJ. SG, PFR, FS and AT
thank IH\'ES and the University of Geneva for their hospitality on several occasions.

\bibliography{biblicomplete}
\bibliographystyle{abbrv}

\end{document}